% chrom paper 2
\documentclass[12pt]{amsart}  %[12pt] while developing, omitted for final
%\documentclass{amsart}  %used for final

%packages
\usepackage[latin1]{inputenc}
\usepackage{amsmath} 
\usepackage{amsfonts}
\usepackage{amssymb}
\usepackage{stmaryrd}
\usepackage{latexsym} 
\usepackage{graphicx}
\usepackage{subfigure}
\usepackage{color}
\usepackage[colorlinks = true,
            linkcolor = blue,
            urlcolor  = blue,
            citecolor = blue,
            anchorcolor = blue]{hyperref}
\usepackage{verbatim}
\usepackage[all]{xy}
\usepackage{graphics}
\usepackage{pdfsync}
\usepackage{tikz}
\usepackage{url}
\usepackage{mathtools}
\usepackage{enumerate}
\usepackage{mathrsfs}
\usepackage{amsthm}
\usepackage{setspace}
\usepackage{chngcntr}
\usepackage{youngtab} % for Young tableaux
\usepackage{young} % for Young tableaux
\usepackage[nocompress]{cite}
\usetikzlibrary{decorations.pathreplacing,angles,quotes}
\usetikzlibrary{decorations.pathreplacing}

% 1in margins at left and right
\oddsidemargin=0in
\evensidemargin=0in
\textwidth=6.50in             % paper is 8.50in wide

% 1in margins at top and bottom
\headheight=10pt
\headsep=10pt
\topmargin=.5in
\textheight=8in

%% Theorem definitions
\theoremstyle{plain}
\newtheorem{thm}{Theorem}[subsection]
\counterwithin*{thm}{section}% ? and also on `section`.

\renewcommand*{\thethm}{% Output of `Definition` counter:
  \ifnum\value{subsection}<1 % if `subsection` counter is less than 1 (mostly `\subsection` not yet used in this `\section`)
    \thesection% show \thesection, e.g., 1
  \else% otherwise
    \thesubsection% show \thesubsection, e.g., 1.1
  \fi
  .\arabic{thm}% followed by "." and Arabic number of Definition
}

\newtheorem{prop}[thm]{Proposition}

\newtheorem{lemma}[thm]{Lemma}
\newtheorem{cor}[thm]{Corollary}
\newtheorem{conj}[thm]{Conjecture}

\theoremstyle{remark}

\numberwithin{equation}{section}
 % A typesetting pun?!
\setcounter{MaxMatrixCols}{20}

%%%%%%blackboard letters

\newcommand{\fS}{{\mathfrak S}}
\newcommand{\ii}{{\mathfrak i}}
\newcommand{\dd}{{\mathfrak d}}
\DeclareMathOperator{\inv}{inv}
\DeclareMathOperator{\coinv}{coinv}
\DeclareMathOperator{\Coinv}{Coinv}
\DeclareMathOperator{\maj}{maj}
\DeclareMathOperator{\comaj}{comaj}
\DeclareMathOperator{\des}{des}
\DeclareMathOperator{\Des}{Des}
\DeclareMathOperator{\Asc}{Asc}
\DeclareMathOperator{\asc}{asc}
\DeclareMathOperator{\Inv}{Inv}

\DeclarePairedDelimiter{\ceil}{\lceil}{\rceil}
\DeclarePairedDelimiter{\floor}{\lfloor}{\rfloor}
\definecolor{vividviolet}{rgb}{0.62, 0.0, 1.0}

\newcommand{\cI}{{\mathcal I}}
\newcommand{\ol}{\overline}

\newcommand{\hqed}{\hfill \qed}

%:Sym definitions

%: compositions and tableaux

%%% tableau macros %%%
\newlength\cellsize \setlength\cellsize{15\unitlength}
\savebox2{%
\begin{picture}(15,15)
\put(0,0){\line(1,0){15}}
\put(0,0){\line(0,1){15}}
\put(15,0){\line(0,1){15}}
\put(0,15){\line(1,0){15}}
\end{picture}}
\newcommand\cellify[1]{\def\thearg{#1}\def\nothing{}%
\ifx\thearg\nothing
\vrule width0pt height\cellsize depth0pt\else
\hbox to 0pt{\usebox2\hss}\fi%
\vbox to 15\unitlength{
\vss
\hbox to 15\unitlength{\hss$#1$\hss}
\vss}}
\newcommand\tableau[1]{\vtop{\let\\=\cr
\setlength\baselineskip{-16000pt}
\setlength\lineskiplimit{16000pt}
\setlength\lineskip{0pt}
\halign{&\cellify{##}\cr#1\crcr}}}
\savebox3{%
\begin{picture}(15,15)
\put(0,0){\line(1,0){15}}
\put(0,0){\line(0,1){15}}
\put(15,0){\line(0,1){15}}
\put(0,15){\line(1,0){15}}
\end{picture}}
\newcommand\expath[1]{%
\hbox to 0pt{\usebox3\hss}%
\vbox to 15\unitlength{
\vss
\hbox to 15\unitlength{\hss$#1$\hss}
\vss}}
\newcommand\bas[1]{\omit \vbox to \cellsize{ \vss \hbox to \cellsize{\hss$#1$\hss} \vss}}

%%%%% end tableaux macros %%%%%%%%%%%%%%%%%%%%%%%%%%%%

\setcounter{tocdepth}{3}
\makeatletter
\def\l@subsection{\@tocline{2}{0pt}{2.5pc}{5pc}{}}
\makeatother
\begin{document}

\title[Permutation Statistics and Pattern Avoidance in Involutions]{Permutation Statistics and Pattern Avoidance in Involutions}

\author{Samantha Dahlberg}
\address{
Department of Mathematics,
 University of British Columbia,
 Vancouver BC V6T 1Z2, Canada}
\email{samadahl@math.ubc.ca}

\thanks{
The author was supported  in part by the National Sciences and Engineering Research Council of Canada.}
\subjclass[2010]{Primary 05E05; Secondary 05A05, 05A15, 05A19}
\keywords{pattern avoidance, involutions, major index, number of inversions, descents, ascents, permutations, standard Young Tableaux, standard q-analogues}

\begin{abstract}
Dokos et.\ al.\  studied the distribution of two statistics over permutations $\mathfrak{S}_n$ of $\{1,2,\dots, n\}$ that avoid one or more length three patterns. A permutation $\sigma\in\mathfrak{S}_n$ contains a pattern $\pi\in\mathfrak{S}_k$ if $\sigma$ has a subsequence of length $k$ whose letters are in the same relative order as $\pi$.
This paper is a  comprehensive study of the same two statistics, number of inversions and major index, over involutions $\mathcal{I}_n=\{\sigma\in\mathfrak{S}_n:\sigma^2=\text{id}\}$ that avoid one or more length three patterns. The equalities between the generating functions are {consequently} determined via symmetries and we conjecture this happens for longer patterns as well.
We describe  the generating functions for each set of patterns  including the fixed-point-free case, $\sigma(i)\neq i$ for all $i$. Notating $M\mathcal{I}_n(\pi)$ as the generating function for the major index over the avoidance class of involutions associated to $\pi$
 we particularly present an independent determination that  $M\mathcal{I}_n(321)$ is the $q$-analogue for the central binomial coefficient that first appeared in a paper by Barnebei, Bonetti, Elizalde and Silimbani. A shorter proof is presented that establishes a connection to core, a central topic in poset theory. We also prove that  $M\mathcal{I}_n(132;q)=q^{\binom{n}{2}}M\mathcal{I}_n(213;q^{-1})$ and that the same symmetry holds for the larger class of permutations conjecturing that the same equality is true for involutions and permutations  given any pair of patterns of the form $k(k-1)\dots 1(k+1)(k+2)\dots m$ and $12\dots (k-1) m(m-1)\dots k$, $k\leq m$. 
\end{abstract}

\maketitle
\tableofcontents

\section{Introduction}
The topic of pattern avoidance has received a lot of attention since Knuth's work in~\cite{knu:acp3}. 
To define pattern avoidance we start with saying that two words of integers $a_1a_2 \ldots a_k$ and $b_1 b_2 \ldots  b_k$ are {\it order isomorphic} when $a_i \leq a_j$ if and only if $b_i \leq b_j$  and $a_i \geq a_j$ if and only if $b_i \geq b_j$ for all $i$ and $j$. A permutation $\sigma \in \fS_n$ of the set $[n]=\{1,2,\dots, n\}$ is said to {\it contain the pattern } $\pi \in \fS_k$ if there exists an increasing sequence of indices $m_1, m_2, \ldots , m_k$ such that $\sigma(m_1)\sigma(m_2)\ldots \sigma(m_k)$ is order isomorphic to $\pi$. We say that $\sigma$ $\mathit{avoids}$ the pattern $\pi $ if $\sigma$ does not contain the pattern $\pi$. We notate  these pattern avoidance classes  by
$$\fS_n(\pi) = \{ \sigma \in \fS_n : \sigma \text{ avoids } \pi \}.$$ 
Two patterns $\pi_1$ and $\pi_2$ are {\it Wilf-equivalent} if $|\fS_n(\pi_1)| =|\fS_n(\pi_2)| $ for all $n\geq 0$. Knuth~\cite{knu:acp3} found that there is only one Wilf-equivalence class, $|\fS_n(\pi)|=\frac{1}{n+1}\binom{2n}{n}$ the $n$th Catalan number, for patterns $\pi\in\fS_3$. 
For length four patterns there are three Wilf-equivalence classes. Though the original proof of these classes includes the work  of~\cite{S96} and~\cite{W95}  it is sufficient to use symmetries, Stankova's equality in~\cite{S94} $|\fS_n(4132)|=|\fS_n(3142)|$ (Theorem 3.1) and Backelin, West,  and Xin's  result~\cite{BWX07} (Theorem 2.1) that $12\dots i \pi(i+1)\dots\pi(k)$ and $i\dots 21 \pi(i+1)\dots\pi(k)$ are Wilf-equivalent. The class represented by 1234 was enumerated by Gessel~\cite{G90} (page 281) using symmetric functions and the class represented by 1342 was enumerated by B\'{o}na~\cite{B97} (Theorem 3), however, the last class represented by 1324 has not yet been enumerated. 

Simion and Schmidt in~\cite{ss:rp} enumerated classes avoiding  multiple length three patterns. They also considered pattern avoidance  for the subclasses of involutions $\cI_n=\{\sigma\in \fS_n:\sigma^2=\text{id}\}$, even permutations and odd permutations. 
If we denote $${\mathcal I}_n(\pi) = \{ \iota \in {\mathcal I}_n : \iota \text{ avoids } \pi \}$$ as the pattern avoidance class for involutions avoiding $\pi$ we say that $\pi_1$ and $\pi_2$ are {\it ${\mathcal I}$-Wilf equivalent} if $|{\mathcal I}_n(\pi_1)| =|{\mathcal I}_n(\pi_2)| $.  
Simion and Schmidt found that there are two ${\mathcal I}$-Wilf equivalent classes for patterns in  $\fS_3$. 
\begin{thm}[Simion and Schmidt~\cite{ss:rp} Propositions 3, 5 and 6] There are two $\cI$-Wilf equivalence classes for patterns in $\fS_3$. 
$$
   |{\mathcal I}_n(\pi)| = \left\{
   \setstretch{1.5}
     \begin{array}{ll}
        \binom{n}{\ceil{ n/2}} &\pi \in \{123, 132, 321, 213\}, \\
       2^{n-1} &  \pi \in\{ 231, 312\}.
     \end{array}
   \right.
$$
\label{thm:SimionSchmidt}
\vspace{-.4in}

\hqed
\vspace{.5cm}
\end{thm}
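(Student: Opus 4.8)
The plan is to prove Theorem~\ref{thm:SimionSchmidt} in three stages: reduce to four representative patterns using the symmetries that preserve $\mathcal{I}$-Wilf equivalence, dispatch $123$ and $321$ via the Robinson--Schensted correspondence, and handle $132$ and $231$ by recursions on the position of the largest letter. For the symmetry step, note that inversion $\sigma\mapsto\sigma^{-1}$ restricts to the identity on $\mathcal{I}_n$, so $\mathcal{I}_n(\pi)=\mathcal{I}_n(\pi^{-1})$ as sets, while the reverse--complement $\sigma\mapsto\sigma^{\mathrm{rc}}$ commutes with inversion, hence maps $\mathcal{I}_n$ bijectively to itself and carries $\mathcal{I}_n(\pi)$ onto $\mathcal{I}_n(\pi^{\mathrm{rc}})$. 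Because $123^{\mathrm{rc}}=123$, $321^{\mathrm{rc}}=321$, $132^{\mathrm{rc}}=213$, and $231^{\mathrm{rc}}=312$, the six patterns of $\fS_3$ split into orbits $\{123\}$, $\{321\}$, $\{132,213\}$, $\{231,312\}$, so it suffices to evaluate $|\mathcal{I}_n(\pi)|$ for $\pi\in\{123,321,132,231\}$.

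For $\pi=123$ and $\pi=321$ I would use that Robinson--Schensted restricts to a bijection between $\mathcal{I}_n$ and standard Young tableaux with $n$ cells. By Schensted's theorem $\sigma$ avoids $123$ (respectively $321$) precisely when its longest increasing (respectively decreasing) subsequence has length at most two, that is, when the associated tableau has at most two columns (respectively two rows). Conjugation of tableaux is then a bijection between these two families --- which is why $123$ and $321$, \emph{not} related by any involution-preserving symmetry, are nonetheless $\mathcal{I}$-Wilf equivalent --- and counting standard Young tableaux with $n$ cells and at most two rows (shapes $(n-j,j)$ for $0\le j\le\floor{n/2}$, with $\binom{n}{j}-\binom{n}{j-1}$ tableaux of each shape) telescopes to $\binom{n}{\floor{n/2}}=\binom{n}{\ceil{n/2}}$.

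For $\pi=231$ and $\pi=132$ I would condition on the position $k$ with $\sigma(k)=n$. When $\pi=231$, avoidance forces every entry left of position $k$ to lie below every entry to its right; the involution property then forces $(k,n)$ to be a $2$-cycle when $k<n$, with positions $k+1,\dots,n-1$ carrying the decreasing involution and positions $1,\dots,k-1$ carrying an arbitrary member of $\mathcal{I}_{k-1}(231)$. After checking that no occurrence of $231$ straddles the blocks, this gives $|\mathcal{I}_n(231)|=\sum_{k=1}^{n}|\mathcal{I}_{k-1}(231)|$, which solves to $|\mathcal{I}_n(231)|=2^{n-1}$ for $n\ge 1$. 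When $\pi=132$, avoidance instead forces every entry left of $k$ to exceed every entry to its right, and the involution structure then splits $\sigma$ into a fixed-point-free involution between positions $1,\dots,k-1$ and positions $n-k+1,\dots,n-1$ that is the ``block swap'' determined by an arbitrary $132$-avoiding $\pi_A\in\fS_{k-1}$ and its inverse (there are $C_{k-1}=\frac1k\binom{2k-2}{k-1}$ of these), the $2$-cycle $(k,n)$, and an arbitrary member of $\mathcal{I}_{n-2k}(132)$ on the middle block; together with the fixed-point case $k=n$ this yields
\[
|\mathcal{I}_n(132)|=|\mathcal{I}_{n-1}(132)|+\sum_{k\ge 1}C_{k-1}\,|\mathcal{I}_{n-2k}(132)|.
\]

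To finish the $132$ case I would pass to generating functions: with $I(x)=\sum_n|\mathcal{I}_n(132)|x^n$ and the Catalan series $C(y)=\tfrac{1-\sqrt{1-4y}}{2y}$, the recursion becomes $I(x)-1=xI(x)+x^2C(x^2)I(x)=xI(x)+\tfrac12\bigl(1-\sqrt{1-4x^2}\bigr)I(x)$, hence
\[
I(x)=\frac{2}{\,1-2x+\sqrt{1-4x^2}\,}=\frac1{2x}\Bigl(\sqrt{\tfrac{1+2x}{1-2x}}-1\Bigr)=\sum_{n\ge 0}\binom{n}{\ceil{n/2}}x^n,
\]
which settles the class $\{132,213\}$ and completes the proof. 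The main obstacle is the $132$ case above: one must verify the block decomposition in full detail --- that the large-value block (positions $1,\dots,k-1$) and the small-value block (positions $n-k+1,\dots,n-1$) are coupled only through a single $132$-avoiding permutation and its inverse, that the middle block is an independent smaller instance, and that no occurrence of $132$ can straddle two of these pieces, the $2$-cycle $(k,n)$, or the letter $n$ itself --- since it is exactly this decoupling that produces the clean Catalan-weighted recursion; the same but lighter bookkeeping underlies the $231$ case.
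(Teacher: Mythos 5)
Your proposal is correct. Note, though, that the paper itself gives no proof of this statement: it is quoted from Simion and Schmidt, so the comparison is with their argument and with the machinery the paper develops later for its refined statistics. Your reduction via $\sigma\mapsto\sigma^{-1}$ (trivial on $\mathcal{I}_n$) and reverse--complement is exactly the paper's use of the diagram symmetries $r_1$, $r_{-1}$, $R_{180}$ (Lemma~\ref{lemma:operations and involutions}); your RSK--Schensted--transposition treatment of $\{123,321\}$ is the same bijection the paper uses later in Proposition~\ref{thm:123&321 symmetry} (there to flip descent sets, here just to count tableaux with at most two rows via the ballot-number telescoping); and your block decompositions for $231$ and $132$ reproduce Proposition~\ref{prop:SSdecompI(231)} and Lemma~\ref{lemma:132decomp}, including the key point that the first and fourth blocks of a $132$-avoiding involution are an arbitrary $132$-avoider $\alpha\in\fS_{k-1}$ coupled with $r_1(\alpha)$, giving the Catalan weight $C_{k-1}$. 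Where you genuinely diverge from Simion--Schmidt is the finish: they close the $\binom{n}{\lceil n/2\rceil}$ count with lattice-path/bijective arguments and the recursion $a_n=a_{n-1}+\sum_k C_{k-1}a_{n-2k}$ (the Gould--Kaucky identity the paper cites as their equation (5)), whereas you verify that recursion analytically through the generating function
$I(x)=\frac{2}{1-2x+\sqrt{1-4x^2}}=\frac{1}{2x}\bigl(\sqrt{\tfrac{1+2x}{1-2x}}-1\bigr)$,
which is a correct and self-contained alternative (your algebra checks out). The only soft spot is one you flag yourself: the claim that no occurrence of $132$ (resp.\ $231$) straddles the blocks is asserted rather than checked; the verification is routine and is carried out in the paper's proof of Lemma~\ref{lemma:132decomp}, so this is a matter of filling in detail rather than a gap in the idea. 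You might also record explicitly that the middle block in the $231$ case is forced to be decreasing by the entry $\sigma(n)=k$ sitting below it, and that the two classes are distinct (e.g.\ $n=3$ gives $3\neq 4$), but neither affects correctness.
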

For length four patterns there are eight different $\cI$-Wilf equivalence classes. The original classification of these classes includes the work of~\cite{GPP01} and~\cite{J02}, however, we can piece together the eight classes from the following. From Guibert's work~\cite{G95} we have $|\cI_n(3412)|=|\cI_n(4321)|$. 
%\tvi{(A Thesis which was published as a book in French.)}. 
Bousquet-M\'{e}lou and Steingr\'{i}msson in~\cite{BS05} (Theorem 1) proved that the map Backelin, West,  and Xin defined in~\cite{BWX07}  commutes with inverses so showed that $12\dots i \pi(i+1)\dots\pi(k)$ and $i\dots 21 \pi(i+1)\dots\pi(k)$ are $\cI$-Wilf equivalent. Bloom and Saracino in~\cite{BS12} present a shortened  proof using growth diagrams. Using some implications of the Robinson-Schensted-Knuth map and symmetries, the classification can be completed. 
Of these eight classes only a few have been enumerated. Regev~\cite{R81} (Section 4.5) enumerated the class represented by 1234, which is the Motzkin numbers. This class has received special attention as equalities between the pattern avoidance sets were established using bijections to 1-2 trees with $n$ edges~\cite{G95,GPP01,J02}. The class represented by 2413 was enumerated by Brignall, Huczynska and  Vatter~\cite{BHV08} (Example 6.6) where $|\cI_n(2413)|$ equals the number separable involutions. Two other classes represented by 2341 and 1342 were enumerated by B\'{o}na et.\ al.~\cite{BHPV16} (Section 5 and 6) who also did work on the asymptotic growth of all classes. 
The cardinalities for multiple pattern avoidance in involutions has been classified and enumerated by Guibert and  Mansour in~\cite{GM02a} (Examples 2.6, 2.8, 2.12, 2.18, 2.20)
% 2.18, 2.20
who consider sets of patterns containing 132, Egge and Mansour~\cite{EM04} who enumerate the sets containing the pattern 231, and Wulcan who enumerates all pairs of length three patterns in~\cite{W02}. 

Though there is much more work on pattern avoidance in permutations and involutions for longer patterns, multiple patterns and many other kinds of restrictions or generalizations, we instead turn our focus to the more refined equivalence class defined by the distribution of statistics. 
Sagan and Savage~\cite{SS12} asked about the distribution of statistics over pattern avoidance classes of permutations, which Dokos et.\ al.\ answer in~\cite{DDJSS12}. 
In~\cite{DDJSS12} they consider the distribution of two permutation statistics, number of inversions and  major index, over the permutation  avoidance classes for any set of length three patterns. 
An {\it inversion} is a pair of indices $i<j$ such that $\pi(i)>\pi(j)$. The {\it set of inversions}  is 
$$\Inv (\sigma) = \{ (i,j):i<j \text{ and }  \sigma (i)>\sigma(j) \}$$ 
and the {\it inversion number} is  $\inv(\sigma) = |\Inv(\sigma)|$. The {\it decent set} of an integer word $w=w_1w_2\dots w_n$  is $\Des(w) = \{i\in [n-1]:w_i>w_{i+1}\}$ from which we define $\des(w)=|\Des(w)|$ and the {\it major index}, 
$$\maj(w)=\sum_{i\in \Des(w)} i.$$ 
One reason these statistics hold interest is because of a result by Major Percy MacMahon who found that the generating function of $\fS_n$ for $\maj$ or $\inv$ is 
$$\sum_{\sigma\in\fS_n}q^{\maj(\sigma)}=\sum_{\sigma\in\fS_n}q^{\inv(\sigma)}=[n]_q!=[n]_q[n-1]_q\cdots[1]_q$$
the standard $q$-analogue for $n!$ where $[n]_q=1+q+\cdots + q^{n-1}$ is the standard $q$-analogue for $n$. This result can  be found in~\cite{S97} (Corollary 1.3.13 and Proposition 1.4.6).
This function has many beautiful properties including symmetry and log-concavity~\cite{S97} (Exercise 1.50(e)). A polynomial $a_0+a_1q+\cdots +a_kq^k$ is said to be {\it symmetric} if $a_{i}=a_{k-i}$ for all $i$ and is {\it log-concave } if $a_i^2 \geq a_{i-1}a_{i+1}$ for all $i$. Log-concavity is particularly interesting because it implies that the polynomial is {\it unimodal}~\cite{S97} (Exercise 1.50(a)), that $a_0\leq a_1\leq \dots \leq a_j \geq a_{j+1}\geq \dots\geq a_k$ for some $j$. 
The associated generating functions for the restricted class of involutions have been studied by Dukes~\cite{D07}. He found that the generating function for $\maj$ over $\cI_n$  is symmetric (Corollary 2.4), he conjectured it to be additionally log-concave and proved some partial results about its unimodality. 
The generating function for involutions and $\inv$ was studied by D\'{e}sarm\'{e}nien~\cite{D82} who related the function to $q$-Hermite polynomials, however, this function is not unimodal nor log-concave. 

Dokos et.\ al.\ in~\cite{DDJSS12} have a full study for the permutation generating functions for any avoidance class that is a subset of $\fS_3$  for $\inv$ and $\maj$. Specifically, the generating functions they studied were 
$$I_n(\pi) = I_n(\pi;q) = \sum_{\sigma \in \fS_n} q^{\inv (\sigma)}$$
and
$$M_n(\pi) = M_n(\pi ;q) = \sum_{\sigma \in \fS_n(\pi)} q^{\maj (\sigma)}.$$  
They defined that $\pi_1$ and $\pi_2$ are {\it $I$-Wilf equivalent} if $I_n(\pi_1) =I_n(\pi_2)$ and {\it $M$-Wilf equivalent} if $M_n(\pi_1) =M_n(\pi_2)$. Let $[\pi]_I$ and $[\pi]_M$ denote the associated equivalence classes. They determined these classes for length three patterns and described the generating functions. 

\begin{thm}[Dokos et.\ al.~\cite{DDJSS12} Theorem 2.3 and 2.6] The $I$-Wilf equivalence and $M$-Wilf equivalence classes for single length three patterns are as follows. 
\begin{enumerate}[(i)]
\item The non-singular $I$-Wilf  classes  are $[132]_I=\{132,213\}$ and $[231]_I=\{231,312\}$.
\item The non-singular $M$-Wilf  classes  are $[132]_M=\{132,231\}$ and $[213]_M=\{213,312\}$.\hqed
\end{enumerate}
\label{thm:Dokos}
\end{thm}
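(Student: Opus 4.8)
We sketch how the theorem can be proved. The plan is to exhibit, for each claimed equivalence, an explicit statistic-preserving bijection between the two avoidance classes, and then to certify that nothing further collapses by evaluating the generating functions at a single small value of $n$.

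\emph{The $\inv$ classes.} Two symmetries do the work. The inverse map $\sigma\mapsto\sigma^{-1}$ carries $\fS_n(\pi)$ bijectively onto $\fS_n(\pi^{-1})$ with $\inv(\sigma^{-1})=\inv(\sigma)$, and the reverse-complement map $\sigma\mapsto\sigma^{\mathrm{rc}}$, $\sigma^{\mathrm{rc}}(i)=n+1-\sigma(n+1-i)$, carries $\fS_n(\pi)$ bijectively onto $\fS_n(\pi^{\mathrm{rc}})$; since reversal and complementation each replace $\inv$ by $\binom{n}{2}-\inv$, their composite fixes $\inv$. These two commuting involutions generate a group whose orbits on $\fS_3$ are $\{123\}$, $\{321\}$, $\{132,213\}$ and $\{231,312\}$, so $I_n$ is constant on each, giving $I_n(132)=I_n(213)$ and $I_n(231)=I_n(312)$, which is (i).

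\emph{The $\maj$ classes.} Now $\sigma\mapsto\sigma^{\mathrm{rc}}$ only yields $\maj(\sigma^{\mathrm{rc}})=n\des(\sigma)-\maj(\sigma)$, so the $\maj$ equivalences need a separate argument, and I would read them off the recursive block structure of a $132$-avoider. If $\sigma\in\fS_n(132)$ has $n$ in position $j$, then $\sigma=L\,n\,R$ where $L$ occupies the values $\{n-j+1,\dots,n-1\}$, $R$ occupies $\{1,\dots,n-j\}$, and the standardizations of $L$ and $R$ range freely over $\fS_{j-1}(132)$ and $\fS_{n-j}(132)$. The only descent not internal to $L$ or $R$ sits at position $j$ (present exactly when $R\neq\varnothing$), and the descents of $R$ are all shifted up by $j$; carrying $\des$ along with $\maj$ in $F_n(\pi;q,t)=\sum_{\sigma\in\fS_n(\pi)}q^{\maj\sigma}t^{\des\sigma}$ then gives
\[
F_n(132;q,t)=F_{n-1}(132;q,t)+\sum_{j=1}^{n-1}F_{j-1}(132;q,t)\,q^{j}t\,F_{n-j}(132;q,q^{j}t),\qquad F_0=1.
\]
The point is that a $231$-avoider decomposes identically, with $L$ now occupying $\{1,\dots,j-1\}$ and $R$ occupying $\{j,\dots,n-1\}$, and the junction descents behave exactly the same way, so $F_n(231;q,t)$ obeys the same recursion with the same initial value; hence $F_n(132;q,t)=F_n(231;q,t)$ for all $n$, and $t=1$ gives $M_n(132)=M_n(231)$. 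Since $132^{\mathrm{rc}}=213$ and $231^{\mathrm{rc}}=312$, the substitution $\sigma=\tau^{\mathrm{rc}}$ rewrites $M_n(213;q)=\sum_{\tau\in\fS_n(132)}q^{\,n\des\tau-\maj\tau}$ and $M_n(312;q)=\sum_{\tau\in\fS_n(231)}q^{\,n\des\tau-\maj\tau}$, and these agree because the joint $(\maj,\des)$-distributions over $\fS_n(132)$ and $\fS_n(231)$ do. This is (ii).

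\emph{No further coincidences.} It remains to check that the four orbit representatives yield four distinct generating functions, so that $123$ and $321$ really are singular and no classes merge. This is a finite computation at, say, $n=3$: $I_3(123)=2q+2q^2+q^3$, $I_3(321)=1+2q+2q^2$, $I_3(132)=1+q+2q^2+q^3$, $I_3(231)=1+2q+q^2+q^3$ are pairwise distinct, as are the four polynomials $M_3(123)$, $M_3(321)$, $M_3(132)$, $M_3(213)$. The one place demanding care is the descent bookkeeping behind the $\maj$ recursion: no element of the usual symmetry group of permutation matrices simultaneously preserves $\maj$ and sends $132$ to $231$, which is exactly why one tracks the joint statistic $(\maj,\des)$ and compares recursions rather than hunting for a direct bijection.
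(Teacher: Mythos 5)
Your argument is correct, and it is worth noting that the paper itself offers no proof of this statement: it is imported verbatim from Dokos et al.\ (hence the immediate \qedsymbol), so there is no in-paper argument to match against. Your route is the natural self-contained one and is consistent with the tools the paper uses elsewhere. For (i), the inverse map $r_1$ and the rotation $R_{180}$ (your reverse-complement) both preserve $\inv$ and have orbits $\{123\},\{321\},\{132,213\},\{231,312\}$ on $\fS_3$; this is precisely the ``trivial symmetry'' argument the paper runs for the involution analogue in Proposition~\ref{prop:IIWilfequiv}, and the $n=3$ computation (your four polynomials are right) correctly rules out any further mergers. For (ii), your decomposition at the position of $n$ is exactly the inflation structure $\sigma=231[\sigma_1,1,\sigma_2]$ for $132$-avoiders and $\sigma=132[\sigma_1,1,\sigma_2]$ for $231$-avoiders from Proposition~\ref{prop:decomp}; I checked the junction bookkeeping and in both cases the only non-internal descent is at position $j$, occurring iff the right block is nonempty, with the right block's descents shifted by $j$, so the two bivariate series do satisfy the same recursion with $F_0=1$ and hence coincide. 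Carrying $t$ for $\des$ is indeed the essential point, since the transfer to $\{213,312\}$ uses $\maj(R_{180}(\sigma))=n\des(\sigma)-\maj(\sigma)$ together with $132^{\mathrm{rc}}=213$, $231^{\mathrm{rc}}=312$, and that substitution needs the joint $(\maj,\des)$ equidistribution, not just $\maj$. Two cosmetic remarks: whether the two symmetries commute is irrelevant (though true, as $R_{180}$ is central in the dihedral group) --- only the orbits matter; and the distinctness of the four $M_3$ polynomials, which you assert but do not display, does check out ($M_3(123)=2q+2q^2+q^3$, $M_3(321)=1+2q+2q^2$, $M_3(132)=1+2q+q^2+q^3$, $M_3(213)=1+q+2q^2+q^3$), completing the claim that no $M$-classes beyond the stated pairs coincide.
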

Since all these pattern classes are counted by the Catalan numbers the generating functions  described are all $q$-analogues for $C_n$. Further work on these generating functions can be found in~\cite{B14,C15,CEKS13,GM09,T15,YGZ15}.
%~\cite{B14},~\cite{C15},~\cite{CEKS13},~\cite{GM09},~\cite{T15} and~\cite{YGZ15}.

Our work in this paper parallels the work of Dokos et.\ al.\ since we aim to describe the generating functions
$$I{\mathcal I}_n (\pi)= I{\mathcal I}_n(\pi;q) = \sum_{\iota \in {\mathcal I}_n(\pi)} q^{\inv (\iota)}$$
and
$$M{\mathcal I}_n(\pi) = M{\mathcal I}_n(\pi ;q) = \sum_{\iota \in {\mathcal I}_n(\pi)} q^{\maj (\iota)}$$  
for single and later multiple patterns of length three as well as determine which patterns give equal generating functions. 

This is the first full study of these functions, though, some of these functions have been  well-studied individually by others. 
The generating function for $\cI_n(132)$ has been studied before by Guibert and Mansour in~\cite{GM02} (Theorem 4.2) who studied the generating function for $\des$ and  the number of occurrences of the pattern $12\dots k$, which counts $\binom{n}{2}$ minus $\inv$ when $k = 2$. 
The function $M\cI_n(321)$ was studied by Barnabei et.\ al.~\cite{BBES14} (Theorem 3.3) who found that this function is the standard $q$-analogue for the central binomial coefficient where the standard $q$-analogue for a general  binomial coefficient is
\begin{equation}{n\brack{k}}_q=\frac{[n]_q!}{[n-k]_q![k]_q!}.
\label{eq:qbinom}
\end{equation}
In their proof they establish a connection to hook decompositions. We independently determined this result with a shorter proof that establishes a connection to core, a topic that  is usually used to prove symmetric chain decomposition in poset theory. Additionally, our proof is easily  modified to prove another interpretation of the standard $q$-analogue for any  binomial coefficient, which is a result that also appears in~\cite{BBES16} (Corollary 14) by  Barnabei et.\ al. Some ideas  of the bijection we present later can be seen in~\cite{EFPT15, BBES16} in their discussions of associating involutions avoiding 321 to Dyke paths, though, our phrasing of it in terms of core is new.
In~\cite{E04} Egge considers the length four pattern 3412 and studies $I\cI_n(3412)$. There seems to be no more work done on these generating functions for longer patterns.

One goal of this paper is to determine which length three patterns give equivalent generating functions. 
We will say that $\pi_1$ and $\pi_2$ are {\it $I{\mathcal I}$-Wilf equivalent} if $I{\mathcal I}_n(\pi_1) =I{\mathcal I}_n(\pi_2)$,  $M{\mathcal I}$-Wilf equivalent if $M{\mathcal I}_n(\pi_1) =M{\mathcal I}_n(\pi_2)$ and write $[\pi]_{I\cI}$ and $[\pi]_{M\cI}$ for the associated equivalence classes. However, these equivalence classes can be established quickly  so this paper moreover considers the description of the generating functions. Some of these generating functions already have explicit descriptions. Since $\fS_n(231,312)=\cI_n(231)$, which can be concluded from the work in~\cite{ss:rp} (Propositions 6 and 8),  all generating functions regarding the pattern $231$ have been determined by Dokos et.\ al.~\cite{DDJSS12}. We  include a description for all patterns and generating function for the completeness of this study.

This paper is organized  as follows. In the next section we introduce some background information about using symmetries of the square and writing permutations as inflations. Section~\ref{inv} focuses on describing $I\cI_n(\pi)$ for length three patterns and considers the fixed-point-free case, $\iota(i)\neq i$ for all $i$. We find a connection to a $q$-Catalan analogue defined by Carlitz and Riordan in~\cite{CR64}  when determining $I\cI_n(132)$ in Proposition~\ref{II132}. 
%We find the concept of core is highly useful in general when studying the pattern 321, so we introduce this concept in this section first, but will also use it in Section~\ref{maj}. 
This section finishes with the result that any $\iota\in\cI_{2k+1}(123)$ has $\inv(\iota)$  even if and only if $k$ is even in Corollary~\ref{cor:123inv}. In Section~\ref{maj}  we present our results about $M\cI_n(\pi)$ including our result about $M\cI_n(321)$ and $q$-analogues for  binomial coefficients in Theorem~\ref{theorem:321qanalougeequiv}. For each pattern we also consider the fixed-point-free case. In this section we also re-present the symmetry $M\cI_n(\pi_1;q)=q^{\binom{n}{2}}M\cI_n(\pi_2;q^{-1})$ between the patterns $\pi_1 =123$ and $\pi_2=321$ in Proposition~\ref{thm:123&321 symmetry}, which has been shown before in~\cite{BBES14,DRS07,ss:rp}. We particularly present this symmetry because we also prove this symmetry  for the pair of patterns 132 and 213 in Theorem~\ref{thm:132symm213}. We then summarize  in Section~\ref{multi} the generating functions for multiple pattern avoidance in involutions. We finish the paper with Section~\ref{permsymm} that includes a result about the symmetries for the larger class of permutations, $M_n(\pi_1;q)=q^{\binom{n}{2}}M_n(\pi_2;q^{-1})$,  between patterns 123 and 321 as well as 132 and 213. Both of these results are natural and the result about the pair of patterns 123 and 321 is classical as it is an elegant generalization of the involution case. What is innovative is the map we define between $\fS_n(132)$ and $\fS_n(213)$ in equations~\eqref{eq:theta1} and~\eqref{eq:theta2} that restricts to involutions. We conclude with Conjectures~\ref{conj} and~\ref{conj:invo} that this symmetry always happens for involutions and permutations for the pair of patterns $k(k-1)\dots 1(k+1)(k+2)\dots m$ and $12\dots (k-1) m(m-1)\dots k$ for any $1\leq k \leq m$.

%%%%%%%%%%%%%%%%%%%%%%%%%%%%%%%%%%%%%%%%%%%%%%%%%
\section{Diagrams and inflations of permutations}
The proofs behind the $M\cI$-Wilf and $I\cI$-Wilf equivalence classes are quick and can be shown using a  geometrical approach to permutations. The {\it diagram} of a permutation  $\sigma \in \fS_n$   is the collection of points $(i,\sigma(i))$ in the coordinate plane inside the square with corners at $(1,1)$ and $(n,n)$. Figure \ref{fig:boxexample} illustrates the involution $\iota = 216543$.

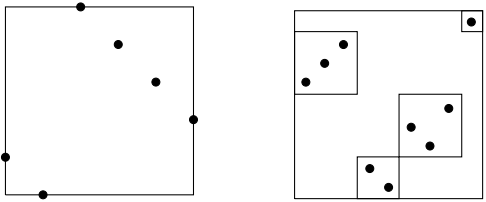
\begin{figure}[h]
\centering
\begin{tikzpicture} [scale=.5]
\filldraw [black] 
(1,2) circle (3pt)
(2,1) circle (3pt) 
(3,6) circle (3pt) 
(4,5) circle (3pt)
(5,4) circle (3pt)
(6,3) circle (3pt);
\draw (1,1) --  (6,1) -- (6,6) -- (1,6) -- (1,1);
 \end{tikzpicture}
 \hspace{1cm}
 \begin{tikzpicture} [scale=.5]
\filldraw [black] 
(2.3,5.1) circle (3pt)
(1.8,4.6) circle (3pt) 
(1.3,4.1) circle (3pt) 
(3,1.8) circle (3pt)
(3.5,1.3) circle (3pt)
(4.1,2.9) circle (3pt)
(4.6,2.4) circle (3pt)
(5.1,3.4) circle (3pt)
(5.7,5.7) circle (3pt);
\draw (1,34/9) rectangle (24/9,49/9);
\draw (34/9,19/9) rectangle (49/9,34/9);
\draw (24/9,1) rectangle (34/9,19/9);
\draw (49/9,49/9) rectangle (6,6);
\draw (1,1) --  (6,1) -- (6,6) -- (1,6) -- (1,1);
 \end{tikzpicture}
 \caption{From left to right we have the diagrams of $216543$ and $3124[\ii_3,\dd_2,213,1] = 678214359$.}
  \label{fig:boxexample}
\end{figure}

On this square we can do a number of operations that preserve the square. We can reflect the square across a line through the center of the square. If the square is preserved then the reflected diagram represents a permutation and we will notate this new permutation  $r_m(\sigma)$ where $m$ is the slope of the line. We can also perform rotations about the center of the square and those rotations that preserve the square will also produce a diagram associated to a permutation  $R_{\theta}(\sigma)$ where we rotate counterclockwise by $\theta$. The reflections  $r_1$, $r_0$, $r_{-1}$, and $r_{\infty}$ and rotations $R_0$, $R_{90}$, $R_{180}$ and $R_{270}$ all preserve the square and give us bijections $\fS_n\rightarrow\fS_n$. We will say equivalence classes between patterns are proven {\it trivially} if they can be proven purely from these maps.

Since we are particularly interested in involutions we are only going to be interested in the operations that map an involution to another involution. A two-cycle $(i,j)$ in a permutation implies we have the points $(i,j)$ and $(j,i)$, which are  symmetric around the line with slope $m=1$. If a permutation has only two-cycles and one-cycles then the diagram must be symmetric around the line with slope $m=1$ or its main diagonal. This means that any operation that maps a permutation with this symmetry to another with this symmetry will be a bijection $\cI_n\rightarrow\cI_n$.

\begin{lemma} We have the following properties for the operations on the square. 
\begin{enumerate}[(i)]
\item The operations  $r_1$, $r_{-1}$, $R_0$ and $R_{180}$ are bijective maps $\cI_n\rightarrow \cI_n$. 
\item The map $r_1$ and $R_0$ are both  the identity map on involutions. 
\item The maps $r_{-1}$ and $R_{180}$ are the same map $\cI_n\rightarrow \cI_n$. \hqed
\end{enumerate}
\label{lemma:operations and involutions}
\end{lemma}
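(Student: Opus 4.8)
The plan is to work entirely with diagrams and to exploit the fact that the eight symmetries of the square form a copy of the dihedral group $D_4$. The only feature of involutions I will need is the observation recorded just before the statement: a diagram $D$ is the diagram of some $\iota\in\cI_n$ if and only if $D$ is symmetric about the main diagonal, i.e. $r_1(D)=D$. (A two-cycle $(i,j)$ of $\sigma$ contributes the diagonal-symmetric pair of points $(i,j),(j,i)$, and a fixed point contributes a point on the diagonal; conversely diagonal symmetry forces $\sigma=\sigma^{-1}$.) It is also worth writing down once and for all what these operations do to a permutation: $R_0=\operatorname{id}$, $r_1(\sigma)=\sigma^{-1}$, and $R_{180}(\sigma)=w_0\sigma w_0$ where $w_0(k)=n+1-k$; and $r_{-1}=r_1\circ R_{180}$ since the two diagonal reflections have perpendicular mirror lines.

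For (ii): $R_0$ is by definition rotation by $0$, hence the identity on all of $\fS_n$; and if $\iota\in\cI_n$ then its diagram $D$ satisfies $r_1(D)=D$ by the criterion above, so $r_1$ fixes $\iota$, i.e. $r_1$ restricted to $\cI_n$ is the identity (equivalently, $\iota^{-1}=\iota$). For (i): nothing is needed for $R_0$, and $r_1|_{\cI_n}=\operatorname{id}$ by (ii) is certainly a bijection. For $R_{180}$ I would use that the half-turn is central in $D_4$, so it commutes with $r_1$: if $r_1(D)=D$ then $r_1(R_{180}(D))=R_{180}(r_1(D))=R_{180}(D)$, so $R_{180}(D)$ is again diagonal-symmetric and hence an involution diagram; thus $R_{180}$ maps $\cI_n$ into $\cI_n$, and since $R_{180}$ is an involution on $\fS_n$ (a rotation of order two) and $\cI_n$ is finite, this restriction is a bijection. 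The two diagonal reflections also satisfy $r_1 r_{-1}=r_{-1}r_1=R_{180}$, so $r_{-1}$ commutes with $r_1$ as well and the identical argument gives $r_{-1}(\cI_n)\subseteq\cI_n$; as $r_{-1}$ is an involution on $\fS_n$, it restricts to a bijection $\cI_n\to\cI_n$.

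Part (iii) then comes essentially for free: for $\iota\in\cI_n$ with diagram $D=r_1(D)$ we get $r_{-1}(D)=r_{-1}(r_1(D))=(r_{-1}r_1)(D)=R_{180}(D)$, so $r_{-1}$ and $R_{180}$ agree on $\cI_n$. The only thing that takes any care is the bookkeeping: verifying the dihedral relations being invoked (that $R_{180}$ is central and that the two diagonal reflections compose, in either order, to $R_{180}$) and checking the dictionary between the geometric operations and their effect on $\sigma$ so that the diagonal-symmetry criterion is applied correctly; after that every step is routine.
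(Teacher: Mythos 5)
Your argument is correct and follows exactly the route the paper intends: the paper states this lemma without proof, treating it as immediate from the preceding observation that a diagram is an involution diagram precisely when it is symmetric about the main diagonal, which is the same diagonal-symmetry criterion (together with the standard dihedral relations $r_1r_{-1}=r_{-1}r_1=R_{180}$ and the centrality of the half-turn) that you use. Your write-up simply makes explicit the bookkeeping the paper leaves to the reader, so there is nothing to add.
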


We say that a map {\it preserves a statistic} if  the statistic remains unchanged under the map. For example if we have a map $\phi :\fS_n \rightarrow \fS_n$ that preserves $\inv$ then we mean that $\inv(\sigma)=\inv(\phi(\sigma))$ for all $\sigma$ in the domain $\fS_n$. 
Dokos et.\ al.~\cite{DDJSS12}  detailed which operations preserve $\inv$ and $\maj$. We find that all the maps that map involutions to involutions preserve $\inv$ and no operation except the identity preserves $\maj$.

\begin{lemma}[Dokos et.\ al.~\cite{DDJSS12} Lemma 2.1] For $\iota \in \cI_n$ the operations $r_1$, $r_{-1}$, $R_0$ and $R_{180}$ preserve $\inv$. \hqed
 \label{lemma:inv preserved}
\end{lemma}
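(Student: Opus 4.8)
The plan is to identify each of the four geometric operations with a familiar operation on permutations and then check its effect on the inversion set by exhibiting an explicit bijection $\Inv(\sigma)\to\Inv(\text{image of }\sigma)$. Since $R_0$ is the identity rotation there is nothing to prove for it. By Lemma~\ref{lemma:operations and involutions}(ii), (iii), on $\cI_n$ the map $r_1$ is the identity and $r_{-1}$ coincides with $R_{180}$, so it would in fact suffice to treat $R_{180}$; but I would prove the slightly stronger statement that all four operations preserve $\inv$ on all of $\fS_n$, since the involution hypothesis is never used.

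For $r_1$, reflection across the main diagonal (slope $1$), the point $(i,\sigma(i))$ is sent to $(\sigma(i),i)$, so $r_1(\sigma)=\sigma^{-1}$. The map $(i,j)\mapsto(\sigma(j),\sigma(i))$ is a bijection from $\Inv(\sigma)$ to $\Inv(\sigma^{-1})$: if $i<j$ and $\sigma(i)>\sigma(j)$, set $a=\sigma(j)<b=\sigma(i)$; then $\sigma^{-1}(a)=j>i=\sigma^{-1}(b)$, so $(a,b)\in\Inv(\sigma^{-1})$, and the construction is visibly invertible. Hence $\inv(r_1(\sigma))=\inv(\sigma)$.

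For $R_{180}$, rotation by $180^{\circ}$ about the center of the square, the point $(i,\sigma(i))$ is sent to $(n+1-i,\,n+1-\sigma(i))$, so $R_{180}(\sigma)$ is the permutation $\tau$ with $\tau(k)=n+1-\sigma(n+1-k)$. I would check that $(i,j)\mapsto(n+1-j,\,n+1-i)$ is a bijection $\Inv(\sigma)\to\Inv(\tau)$: if $i<j$ and $\sigma(i)>\sigma(j)$ then $n+1-j<n+1-i$ and
$$\tau(n+1-j)=n+1-\sigma(j)>n+1-\sigma(i)=\tau(n+1-i),$$
so $(n+1-j,\,n+1-i)\in\Inv(\tau)$, and this map is its own inverse. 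Therefore $\inv(R_{180}(\sigma))=\inv(\sigma)$. Finally $r_{-1}$ is the composite of $r_1$ and $R_{180}$ (in either order), so it preserves $\inv$ as well — or one simply invokes Lemma~\ref{lemma:operations and involutions}(iii) on $\cI_n$.

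There is no genuine obstacle here; the only thing requiring care is reading off the correct permutation from each geometric operation and keeping the index arithmetic $k\mapsto n+1-k$ consistent. (One could alternatively route the argument through the standard facts $\inv(\sigma^{r})=\inv(\sigma^{c})=\binom{n}{2}-\inv(\sigma)$ for the reverse and complement and then compose, but the direct bijections above are cleaner and self-contained.)
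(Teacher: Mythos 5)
Your proof is correct. Note that the paper itself gives no argument for this lemma: it is quoted from Dokos et.\ al.\ and stated with an immediate \qed, so there is no internal proof to compare against. Your route is the natural one and is sound: identifying $r_1$ with $\sigma\mapsto\sigma^{-1}$, $R_{180}$ with the reverse--complement $\tau(k)=n+1-\sigma(n+1-k)$, and exhibiting the explicit bijections $(i,j)\mapsto(\sigma(j),\sigma(i))$ and $(i,j)\mapsto(n+1-j,\,n+1-i)$ between inversion sets verifies the claim directly, and writing $r_{-1}=r_1\circ R_{180}$ handles the remaining case. You also correctly observe that the involution hypothesis is never needed, so you obtain the stronger statement that these operations preserve $\inv$ on all of $\fS_n$, which is in fact the form in which the result appears in Dokos et.\ al.; restricting to $\cI_n$ then gives the lemma as stated.
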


Another tool that we will use often in this paper is describing a permutation as an inflation of another permutation. A {\it block} in a permutation is a  subsequence on some indices $[i,j]=\{i,i+1,\dots, j\}$ whose values $\sigma(i),\dots,\sigma(j)$ in union form  an  interval $[a,b]$ for some $b-a=j-i$.   Given  a  permutation $\tau\in \fS_k$ and a collection of  permutations $\sigma_1,\sigma_2,\dots,\sigma_k$ the {\it inflation} of $\tau$ by  the  collection $\sigma_1,\sigma_2,\dots,\sigma_k$ is  the permutation we get from $\tau$ by replacing the point $(i,\tau(i))$  with a block order-isomorphic to $\sigma_i$.  Note that this definition also works if we have an empty block $\sigma_j=\epsilon\in \fS_0$ for some $j$. Often times we will have blocks  order-isomorphic to a strictly increasing or decreasing sequence, so for convenience we define $\ii_k=12\dots k$ and $\dd_k=k\dots 21$.  For example $3124[\ii_3,\dd_2,213,1] = 678214359$, which is displayed in Figure~\ref{fig:boxexample}.

One can describe many pattern avoidance classes using inflations. The following proposition contains several well-known descriptions of permutations that avoid a certain pattern as  inflations. 
\begin{prop}We have the following descriptions of pattern avoiding permutations. 
\begin{enumerate}[(i)]
\item If $\sigma$ avoids 132 then $\sigma=231[\sigma_1,1,\sigma_2]$ for some $\sigma_1,\sigma_2$ that avoid 132. 
\item If $\sigma$ avoids 213 then $\sigma=312[\sigma_1,1,\sigma_2]$ for some $\sigma_1,\sigma_2$ that avoid 213. 
\item If $\sigma$ avoids 231 then $\sigma=132[\sigma_1,1,\sigma_2]$ for some $\sigma_1,\sigma_2$ that avoid 231. 
\item If $\sigma$ avoids 312 then $\sigma=213[\sigma_1,1,\sigma_2]$ for some $\sigma_1,\sigma_2$ that avoid 312. \hqed
\end{enumerate}
\label{prop:decomp}
\end{prop}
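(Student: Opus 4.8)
The four statements are all of the same flavor, and indeed parts (iii) and (iv) follow from parts (i) and (ii) by applying one of the square symmetries from Lemma~\ref{lemma:operations and involutions}; so the plan is to prove (i) carefully and then indicate how the symmetries transfer it to the others. For (i), suppose $\sigma \in \fS_n$ avoids $132$. The idea is to locate the position of the value $n$ in $\sigma$: say $\sigma(p) = n$. The key claim is that every entry to the left of position $p$ must be larger than every entry to the right of position $p$. Indeed, if there were indices $i < p < j$ with $\sigma(i) < \sigma(j)$, then $\sigma(i)\,\sigma(p)\,\sigma(j) = \sigma(i)\, n\, \sigma(j)$ would be order-isomorphic to $132$, contradicting the avoidance hypothesis.

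Granting this claim, write $\sigma_1$ for the sequence $\sigma(1)\cdots\sigma(p-1)$ (reduced to a permutation of $[p-1]$, since its values form an interval — precisely the top $p-1$ values below $n$) and $\sigma_2$ for the sequence $\sigma(p+1)\cdots\sigma(n)$ (which is then a permutation of the bottom $n-p$ values). By construction $\sigma = 231[\sigma_1, 1, \sigma_2]$: the first block sits highest, the singleton $\{n\}$ in the middle position sits on top, and the last block sits lowest, which is exactly the pattern $231$ inflated. Finally, $\sigma_1$ and $\sigma_2$ each avoid $132$ because any occurrence of $132$ in a block would be an occurrence of $132$ in $\sigma$ itself (order-isomorphism is inherited by subsequences). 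This establishes (i).

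For (ii), one can either run the mirror-image argument — locate the position $p$ of the value $1$, observe that an occurrence $\sigma(i)<\sigma(j)>1$ with $i<p<j$ wait, more carefully: if $i<p<j$ and $\sigma(i) > \sigma(j)$ then $\sigma(i)\,\sigma(p)\,\sigma(j)$ with $\sigma(p)=1$ gives a $213$ pattern, so everything left of $p$ is below everything right of $p$, and $\sigma = 312[\sigma_1,1,\sigma_2]$ with both parts avoiding $213$ — or simply note that $213 = r_{-1}(132)$ in the sense of Lemma~\ref{lemma:operations and involutions} and push the statement of (i) through that bijection (it sends the value $n$ to the value $1$ and reverses the relevant inequalities, and it carries the inflation structure $231[\cdot,1,\cdot]$ to $312[\cdot,1,\cdot]$). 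Then (iii) and (iv) follow from (i) and (ii) respectively by the reflection exchanging the patterns $132 \leftrightarrow 231$ and $213 \leftrightarrow 312$ (equivalently, by the reverse-complement symmetry of the square), which again sends inflations to inflations.

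The only place that requires genuine care is the positional claim — that the entries split cleanly around the location of $n$ (respectively $1$) — and verifying that the two resulting blocks really are \emph{blocks} in the sense of the paper, i.e.\ that their value sets are intervals; but this is immediate once one knows the left block uses the top values and the right block uses the bottom values. Everything else is bookkeeping with order-isomorphism, which is hereditary under taking subsequences, so no real obstacle is expected.
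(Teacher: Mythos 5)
The paper states this proposition without proof (it is flagged as well known), so there is nothing to compare against on that side; judged on its own, your part (i) is correct and complete: locating $n$ at position $p$, the observation that $i<p<j$ with $\sigma(i)<\sigma(j)$ yields the $132$ occurrence $\sigma(i)\,n\,\sigma(j)$ does force the left entries to lie above the right entries, the two value sets are then intervals, and heredity of pattern containment finishes it. The problems are in how you transfer this to the other parts. Your direct argument for (ii) is backwards: with $\sigma(p)=1$, a pair $i<p<j$ with $\sigma(i)>\sigma(j)$ gives $\sigma(i)\,1\,\sigma(j)$, which is an occurrence of $312$, not $213$; the $213$ occurrence is produced by $\sigma(i)<\sigma(j)$, and the correct conclusion is that everything left of $p$ lies \emph{above} everything right of $p$ (your stated conclusion, "everything left of $p$ is below everything right of $p$," would give the decomposition $213[\sigma_1,1,\sigma_2]$ of part (iv), contradicting the $312[\sigma_1,1,\sigma_2]$ you then assert). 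The fix is one line, but as written the step fails.

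The symmetry bookkeeping is also off. The map $r_{-1}$ does biject $\fS_n(132)$ with $\fS_n(213)$, but it fixes the patterns $231$ and $312$ and permutes the blocks of an inflation, so it carries $231[\sigma_1,1,\sigma_2]$ to $231[1,r_{-1}(\sigma_1),r_{-1}(\sigma_2)]$, not to $312[\,\cdot\,,1,\cdot\,]$; pushing (i) through $r_{-1}$ therefore proves a different (true) decomposition of $213$-avoiders, not statement (ii). The symmetry that does carry (i) to (ii) is the reverse--complement $R_{180}$, which swaps $132\leftrightarrow 213$ and sends $231[\sigma_1,1,\sigma_2]$ to $312[R_{180}(\sigma_2),1,R_{180}(\sigma_1)]$ (it sends the value $n$ to the value $1$, which $r_{-1}$ does not). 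Relatedly, your parenthetical for (iii) and (iv) mislabels the needed map: the reflection exchanging $132\leftrightarrow 231$ and $213\leftrightarrow 312$ is the reversal $r_{\infty}$, not reverse--complement; and note that Lemma~\ref{lemma:operations and involutions} only lists the involution-preserving operations, so it does not actually supply $r_{\infty}$ or $R_{180}$'s block bookkeeping --- you should invoke the dihedral symmetries of the square acting on all of $\fS_n$ directly. With the inequality in (ii) corrected and the symmetries relabelled ($R_{180}$ for (i)$\Rightarrow$(ii), $r_{\infty}$ for (i)$\Rightarrow$(iii) and (ii)$\Rightarrow$(iv)), your argument becomes a complete and standard proof.
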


We illustrate this for the pattern 132 in Figure~\ref{fig:132}. Typically knowing how to write $\sigma$ as an inflation makes calculating $\maj$ or $\inv$ easier.  For a set $A=\{a_1,a_2,\dots, a_i\}$  we write $A+j=\{a_1+j,a_2+j,\dots,a_i+j\}$ to be the set of all the elements in $A$ increased by $j$ and let $|\sigma|$ be the length of $\sigma$. We can calculate the descent set, $\Des(\sigma)$, by considering the descents in the blocks and the descents between the blocks of the inflation. 
For example say $\sigma$ avoids 132 and is written as $\sigma=231[\sigma_1,1,\sigma_2]$ for some $\sigma_1,\sigma_2$ that avoid 132 with $|\sigma_2|\neq 0$. We then have $\Des(\sigma)=\Des(\sigma_1)\cup\{|\sigma_1|+1\}\cup (\Des(\sigma_2)+|\sigma_1|+1)$. Further we can calculate $\maj$ by adding up the descents between the blocks and the descents in  each block in the inflation by noting that $\maj$ contributed by the descents in $(\Des(\sigma_2)+|\sigma_1|+1)$ is  $\maj(\sigma_2)+(|\sigma_1|+1)\des(\sigma_2)$ so $\maj(\sigma)=\maj(\sigma_1)+|\sigma_1|+1+\maj(\sigma_2)+(|\sigma_1|+1)\des(\sigma_2)$.

%%%%%%%%%%%%%%%%Introduction ends%%%%%%%%%%%%%

\section{Number of inversions and length three patterns}
\label{inv}
We find that the $I\cI$-Wilf equivalence classes for length three patterns are trivially determined.  As result, most of this section will be spent discussing the decomposition of involutions that avoid a single pattern of length three with the goal of describing the generating functions for $\inv$. Some of these generating functions have been studied by others including Guibert and Mansour~\cite{GM02} (Theorem 4.2) who studied involutions avoiding 132.  Their generating function  counts the number of occurrences of the pattern $\ii_k$, which counts $\binom{n}{2}$ minus the number of inversions when $k=2$.  Dokos et.\ al.~\cite{DDJSS12} studied permutations avoiding length three patterns and their generating functions and since $\fS_n(231,312)=\cI_n(231)$, by Simion and Schmidt~\cite{ss:rp},  their work determines the generating function for involutions avoiding the pattern $231$. The goal of this section is to give a complete description for all the generating functions of all length three patterns. 
In this section we show connections to some $q$-analogues of the Catalan numbers and standard Young Tableau. We prove   a formula that quickly computes $\inv$ for $\iota\in\cI_n(321)$ using the two-cycles in Lemma~\ref{lemma:321inv} and  for $\iota\in\cI_{2k+1}(123)$ we discover that $\inv(\iota)$ is even if and only if $k$ is even, which is stated in Corollary~\ref{cor:123inv}.

We describe some generating functions, not directly, but in steps by first considering the subset of fixed-point-free involutions. A permutation $\sigma$ has a {\it fixed point} if there exists a $j$ such that $\sigma(j)=j$ and we call $\sigma$ {\it fixed-point-free} if $\sigma$  does not have any fixed points. To notate the subsets we will write $F\cI_n=\{\iota\in \cI_n:\iota(i)\neq i, \forall i\}$, $F\cI_n(\pi) = \cI_n(\pi)\cap F\cI_n$ and let the associated generating function be
$$IF\cI_n(\pi)=IF\cI_n(\pi;q)=\sum_{\iota\in F\cI_n(\pi)} q^{\inv \pi}.$$
Since there are no fixed-point-free involutions of odd length we will let $IF\cI_n(\pi)=0$ when $n$ is odd.
We also find in some cases it is easier to determine the generating function for the number of coinversions rather than the number of inversions. A {\it coinversion} of $\sigma \in \fS_n$ is a pair of indices $(i,j)$ such that $i<j$ and $\sigma(i)<\sigma(j)$. Let $\Coinv(\sigma)$ be the {\it set of coinversions} and the {\it number of coinversions} be $\coinv(\sigma)=|\Coinv(\sigma)|$. The associated generating functions are
$$\ol{  I\cI}_n(\pi)=\ol{I\cI}_n(\pi;q)=\sum_{\iota \in \cI_n(\pi)} q^{\coinv(\iota)}$$
and
$$\ol{  IF\cI}_n(\pi)=\ol{IF\cI}_n(\pi;q)=\sum_{\iota \in F\cI_n(\pi)} q^{\coinv(\iota)}.$$
The two statistics $\inv$ and $\coinv$ are closely related, and their generating functions determine each other. 

\begin{lemma}We have the following equalities involving inversions and coinversions.
\begin{enumerate}[(i)]
\item For $\sigma \in \fS_n$ we have $\inv(\sigma)=\binom{n}{2}-\coinv(\sigma)$. 
\item $\displaystyle q^{\binom{n}{2}}I\cI_n(\pi;q^{-1})=\ol{  I\cI}_n(\pi;q).$
\item $\displaystyle q^{\binom{n}{2}}IF\cI_n(\pi;q^{-1})=\ol{  IF\cI}_n(\pi;q).$
\end{enumerate}
\label{lemma:invproperties}
\end{lemma}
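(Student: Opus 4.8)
The plan is to prove the three parts in sequence, since (ii) and (iii) will follow almost immediately once (i) is in hand.

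For part (i), the observation is that every pair of indices $(i,j)$ with $i<j$ is counted exactly once: either $\sigma(i)>\sigma(j)$, in which case $(i,j)\in\Inv(\sigma)$, or $\sigma(i)<\sigma(j)$ (equality is impossible since $\sigma$ is a permutation), in which case $(i,j)\in\Coinv(\sigma)$. Hence $\Inv(\sigma)$ and $\Coinv(\sigma)$ partition the set of all $\binom{n}{2}$ two-element subsets of $[n]$, giving $\inv(\sigma)+\coinv(\sigma)=\binom{n}{2}$, which rearranges to the claimed identity. There is genuinely nothing subtle here; the only thing to be careful about is stating clearly that $\sigma(i)\neq\sigma(j)$ for $i\neq j$ so that the two cases are exhaustive and exclusive.

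For part (ii), I would apply part (i) inside the generating function. Writing the defining sum,
\[
q^{\binom{n}{2}}I\cI_n(\pi;q^{-1})=q^{\binom{n}{2}}\sum_{\iota\in\cI_n(\pi)}q^{-\inv(\iota)}=\sum_{\iota\in\cI_n(\pi)}q^{\binom{n}{2}-\inv(\iota)}=\sum_{\iota\in\cI_n(\pi)}q^{\coinv(\iota)}=\ol{I\cI}_n(\pi;q),
\]
where the third equality is exactly part (i) applied to each $\iota\in\cI_n(\pi)\subseteq\fS_n$. Part (iii) is identical, replacing the summation set $\cI_n(\pi)$ by $F\cI_n(\pi)$ throughout; since $F\cI_n(\pi)\subseteq\fS_n$ as well, part (i) still applies termwise, and one also notes the degenerate case $n$ odd where both sides are $0$ by the convention $IF\cI_n(\pi)=\ol{IF\cI}_n(\pi)=0$, so the identity holds trivially there too.

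I do not anticipate any real obstacle: the entire content is the bijective partition of index pairs in part (i), and parts (ii) and (iii) are bookkeeping with the substitution $q\mapsto q^{-1}$ and multiplication by $q^{\binom{n}{2}}$. The one place to be slightly careful is to make sure the exponent manipulation $q^{\binom{n}{2}}\cdot q^{-\inv(\iota)}=q^{\binom{n}{2}-\inv(\iota)}$ is performed inside the sum before invoking part (i), and that $\binom{n}{2}-\inv(\iota)\geq 0$ so the right-hand sides are genuine polynomials rather than Laurent polynomials — this is immediate from (i) since $\coinv(\iota)\geq 0$.
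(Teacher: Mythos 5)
Your proof is correct and follows essentially the same route as the paper: part (i) comes from partitioning the $\binom{n}{2}$ index pairs into inversions and coinversions, and (ii) and (iii) follow by substituting $q^{-1}$ and multiplying by $q^{\binom{n}{2}}$ termwise. The paper's proof is just a terser version of the same argument.
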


\begin{proof}
For a length $n$ permutation the total number of pairs of indices $(i,j)$ such that $i<j$ is $\binom{n}{2}$. Since all such pairs are either an inversion or a coinversion we have the equality in (i). The equations  in (ii) and (iii) follow. 
\end{proof}

For some patterns it will be simpler to describe properties using ascent sets rather than descent sets. The {\it ascent set} of $\sigma \in \fS_n$ is $\Asc(\sigma)=\{i:\sigma(i)<\sigma(i+1)\}$ with  the {\it number of ascents} equal to $\asc(\sigma)=|\Asc(\sigma)|$.

\subsection{The $I\cI$-Wilf equivalence classes for length three patterns}

The $I{\mathcal I}$-Wilf equivalences classes are all determined trivially. 

\begin{prop} There are only two non-singular  $I{\mathcal I}$-Wilf equivalences classes for length three patterns that are $\{132,213\}$ and $\{231,312\}$.
 \label{prop:IIWilfequiv}
\end{prop}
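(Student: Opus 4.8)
The plan is to establish both equivalences using only the symmetries of the square from Lemma~\ref{lemma:operations and involutions}, combined with the fact (Lemma~\ref{lemma:inv preserved}) that the maps $r_{-1}$ and $R_{180}$ preserve $\inv$ on involutions. First I would show $\{132,213\}$ is an $I\cI$-Wilf class. The idea is that the reflection $r_{-1}$ (equivalently $R_{180}$) sends the pattern $132$ to the pattern $213$: reflecting the diagram of a permutation across the anti-diagonal reverses and complements, and one checks directly that the length-three pattern $132$ becomes $213$ under this operation. Since $r_{-1}$ restricts to a bijection $\cI_n\to\cI_n$, it carries $\cI_n(132)$ bijectively onto $\cI_n(213)$, and since it preserves $\inv$, we get $I\cI_n(132;q)=I\cI_n(213;q)$ for all $n$, hence $\{132,213\}\subseteq[132]_{I\cI}$.

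Next I would handle $\{231,312\}$ in exactly the same way: under $r_{-1}$ the pattern $231$ is sent to $312$ (again a direct check on the three points), so $r_{-1}$ gives an $\inv$-preserving bijection $\cI_n(231)\to\cI_n(312)$ and therefore $I\cI_n(231;q)=I\cI_n(312;q)$, giving $\{231,312\}\subseteq[231]_{I\cI}$. To see these are the \emph{only} non-singular classes, I would invoke the cardinality information already available: by Theorem~\ref{thm:SimionSchmidt}, $\{123,132,321,213\}$ is one $\cI$-Wilf class and $\{231,312\}$ the other, so no pattern from the first group can be $I\cI$-equivalent to one from the second (they already differ in the $q=1$ specialization, i.e.\ in cardinality). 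It then remains to rule out further coincidences \emph{within} $\{123,132,321,213\}$, namely to check that $123$, $321$, and the pair $\{132,213\}$ give pairwise distinct inversion generating functions. This can be done by exhibiting a small value of $n$ where the three polynomials $I\cI_n(123;q)$, $I\cI_n(321;q)$, $I\cI_n(132;q)$ are distinct — e.g.\ computing them directly for $n=4$ or $n=5$ by listing the relevant involutions — which suffices since $I\cI$-Wilf equivalence demands equality for all $n$.

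The only genuinely delicate point is the last one: verifying that no unexpected equalities hold among $123$, $321$, and $132$. Everything else is a routine check that the square symmetries act on three-element patterns as claimed and a citation of Lemmas~\ref{lemma:operations and involutions} and~\ref{lemma:inv preserved}. I expect the main obstacle to be purely computational bookkeeping — making sure the small-$n$ generating functions are tabulated correctly so that the separation of the three classes is unambiguous — rather than any conceptual difficulty. One should also note for completeness that $123$ and $321$ are \emph{not} $I\cI$-equivalent even though they are related by a symmetry ($r_0$ or $R_{90}$), because those symmetries either do not preserve $\cI_n$ or do not preserve $\inv$; this is consistent with the proposition asserting only two non-singular classes.
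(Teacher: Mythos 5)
Your treatment of the pair $\{132,213\}$ is correct and is exactly the paper's argument: $r_{-1}(132)=213$, $r_{-1}$ restricts to an $\inv$-preserving bijection $\cI_n\to\cI_n$, done. But the step for $\{231,312\}$ fails as written. The ``direct check on the three points'' does not give $r_{-1}(231)=312$: reflecting the diagram of $231$, with points $(1,2),(2,3),(3,1)$, across the antidiagonal via $(i,j)\mapsto(4-j,4-i)$ returns the same three points, so $r_{-1}(231)=231$ and likewise $r_{-1}(312)=312$. The source of the slip is identifying the antidiagonal reflection with reverse--complement; in fact $r_{-1}$ is reverse--complement composed with inversion, while reverse--complement itself is $R_{180}$. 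The two maps agree on involutions (Lemma~\ref{lemma:operations and involutions}(iii)) but not on the non-involution pattern $231$, and the justification must track how the symmetry acts on the pattern. The repair is immediate and you have two options: argue via $R_{180}$, which does satisfy $R_{180}(231)=312$, maps $\cI_n$ to $\cI_n$, and preserves $\inv$ (Lemma~\ref{lemma:inv preserved}); or do what the paper does and use $r_1$, which sends $231$ to $312$ and is the identity on involutions, so that $\cI_n(231)=\cI_n(312)$ as sets and the equality of generating functions is immediate.

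The remainder of your argument is sound but heavier than necessary. The cardinality separation via Theorem~\ref{thm:SimionSchmidt} plus a small-$n$ computation within $\{123,132,321,213\}$ does work, but the paper simply observes that $n=3$ already separates everything: there $I\cI_3(123)=2q+q^3$, $I\cI_3(321)=1+2q$, $I\cI_3(132)=I\cI_3(213)=1+q+q^3$, and $I\cI_3(231)=I\cI_3(312)=1+2q+q^3$ are pairwise distinct, so no computation at $n=4$ or $5$ is needed.
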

\begin{proof} An involution $\iota$ avoids 132 if and only if $r_{-1}(\iota)$ avoids 213 since $r_{-1}(132)=213$. 
By Lemma~\ref{lemma:operations and involutions} the operation $r_{-1}$ is a bijection from $\cI_n$ to itself so restricts to a bijection between ${\mathcal I}_n(132)$ and ${\mathcal I}_n(213)$. Finally since this operation also preserves $\inv$ by Lemma~\ref{lemma:inv preserved} we must have  that $213$ and $132$  are $I\cI$-Wilf equivalent.

By a similar argument using the map $r_1$ we can show that $231$ and $312$ are $I\cI$-Wilf equivalent. Lastly, we can see that we have four distinct classes just by looking  at the case of $n=3$. 
\end{proof}

We conjecture that all $I\cI$-Wilf equivalence classes are trivially determined. 

\begin{conj}
The $I\cI$-Will equivalence class for any pattern $\pi$ is $[\pi]_{I\cI}=\{\pi, r_1(\pi), r_{-1}(\pi), R_{180}(\pi)\}$.
\end{conj}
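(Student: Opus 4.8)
That the four patterns $\pi,\ r_1(\pi)=\pi^{-1},\ r_{-1}(\pi),\ R_{180}(\pi)$ are always $I\cI$-Wilf equivalent is immediate from the tools already in place. Since $\iota^{-1}=\iota$ for every $\iota\in\cI_n$ and $\iota$ contains $\tau$ if and only if $\iota^{-1}$ contains $\tau^{-1}$, an involution avoids $\pi$ exactly when it avoids $\pi^{-1}$; hence $\cI_n(\pi)=\cI_n(r_1(\pi))$ \emph{as sets}, and in particular $I\cI_n(\pi)=I\cI_n(r_1(\pi))$. For $r_{-1}$ the map $\iota\mapsto r_{-1}(\iota)$ restricts to a bijection $\cI_n(\pi)\to\cI_n(r_{-1}(\pi))$ by Lemma~\ref{lemma:operations and involutions}(i), and it preserves $\inv$ by Lemma~\ref{lemma:inv preserved}, so $I\cI_n(\pi)=I\cI_n(r_{-1}(\pi))$, exactly as in the proof of Proposition~\ref{prop:IIWilfequiv}; composing the two gives $R_{180}(\pi)$ as well. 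Thus the real content of the conjecture is the reverse inclusion: these are the \emph{only} patterns $I\cI$-Wilf equivalent to $\pi$.

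\textbf{Plan for the reverse inclusion.} The first reduction is that $I\cI$-Wilf equivalence implies $\cI$-Wilf equivalence (set $q=1$), so it suffices to show that each $\cI$-Wilf class is partitioned into $I\cI$-Wilf classes no coarser than the orbits of the group $\langle r_1,r_{-1}\rangle$ acting on patterns. To separate two patterns $\pi,\rho$ in the same $\cI$-Wilf class but in different orbits, the plan is to extract from the polynomials $I\cI_n(\pi;q)$ numerical invariants that I expect to be orbit-sensitive: (i) at $n=|\pi|$ one has $\cI_n(\pi)=\cI_n\setminus\{\pi\}$, so $I\cI_n(\pi;q)$ equals the full involution inversion polynomial minus $q^{\inv(\pi)}$, which already records $\inv(\pi)$ and hence often $\pi$ up to the square symmetries; (ii) the low-order coefficients, where the coefficient of $q^0$ detects whether $\pi=\ii_k$, the coefficient of $q^1$ counts single adjacent transpositions avoiding $\pi$, and more generally the coefficient of $q^j$ for small $j$ counts near-identity involutions avoiding $\pi$; and (iii), dually, the top coefficients, accessed through Lemma~\ref{lemma:invproperties}(ii) in terms of $\coinv$, which count involutions avoiding $\pi$ with the maximal number of inversions — an extremal, rigid condition. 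The concrete technical task is to show that within a fixed $\cI$-Wilf class these data already pin down the $\langle r_1,r_{-1}\rangle$-orbit; as a first step one should verify the conjecture by direct computation for all patterns up to some modest length (say $\le 7$ or $8$), which is routine and confirms the statement is not accidentally false.

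\textbf{The main obstacle.} Taken in full generality the reverse inclusion contains the still-open classification of $\cI$-Wilf classes for long patterns as a sub-problem, and, more seriously, it asserts that \emph{every} $\cI$-Wilf equivalence that additionally preserves the $\inv$-distribution on involutions is a composition of square symmetries. For plain permutations the analogous statement is \emph{false}: the Backelin--West--Xin and Stankova equivalences are genuinely non-trivial Wilf equivalences of $\fS_n$. The conjecture is plausible only because those exotic equivalences are not expected to preserve $\inv$ — indeed Dokos et al.\ verified for length three that the $I$-Wilf classes of $\fS_n$ are exactly the trivial ones. A genuine proof would therefore have to (a) understand, or at least usefully bound, all $\cI$-Wilf equivalences, and (b) show that each non-trivial one changes the $\inv$-distribution, presumably by exhibiting an explicit involution whose inversion number is not matched. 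I expect (a) to be the true bottleneck, since even the corresponding classification for $\fS_n$ is unsolved; a realistic near-term target is the length-bounded version together with a structural lemma showing that the Backelin--West--Xin map fails to preserve $\inv$ on involutions except when it degenerates to a square symmetry, which would at least rule out the one known infinite family of non-trivial equivalences.
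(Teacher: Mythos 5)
The statement you are addressing is stated in the paper as a conjecture, not a theorem: the paper offers no proof of it, only the remark that it has been verified by computer for all patterns up to length $5$, together with the cautionary example (due to Chan) showing that the analogous statement for $I$-Wilf equivalence of permutations fails. Your ``easy inclusion'' is correct and is essentially the paper's own argument in the length-three case (Proposition~\ref{prop:IIWilfequiv}): $r_1$ fixes involutions pointwise so $\cI_n(\pi)=\cI_n(r_1(\pi))$, and $r_{-1}$ is an $\inv$-preserving bijection $\cI_n\to\cI_n$ by Lemmas~\ref{lemma:operations and involutions} and~\ref{lemma:inv preserved}, with $R_{180}=r_1\circ r_{-1}$ handled by composition. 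But this inclusion is not the content of the conjecture; the conjecture asserts equality, i.e.\ that no other pattern is $I\cI$-Wilf equivalent to $\pi$, and your proposal does not establish that direction. What you offer instead is a plan (invariants extracted from the polynomials, plus a hoped-for analysis of all $\cI$-Wilf equivalences), and you yourself identify that the plan runs through problems that are open even at the level of enumeration. So as a proof the proposal has a genuine gap: the reverse inclusion is asserted only conditionally, and no new argument is supplied that goes beyond what the paper already records.

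One concrete defect in the proposed invariants: in item (i) you claim that at $n=|\pi|$ one has $\cI_n(\pi)=\cI_n\setminus\{\pi\}$, but this holds only when $\pi$ is itself an involution; otherwise $\cI_n(\pi)=\cI_n$ and the polynomial at $n=|\pi|$ carries no information about $\pi$. Moreover $\inv(\pi)$ is constant on the whole orbit $\{\pi,r_1(\pi),r_{-1}(\pi),R_{180}(\pi)\}$ and is far from injective across orbits, so even when $\pi$ is an involution this invariant cannot by itself separate distinct orbits inside a $\cI$-Wilf class. If you want to contribute something checkable beyond the paper's computations, the most tractable piece of your plan is the one you name at the end: showing that the Backelin--West--Xin/Bousquet-M\'elou--Steingr\'{\i}msson equivalences (which do restrict to involutions) fail to preserve the $\inv$-distribution on $\cI_n$ unless they coincide with a square symmetry. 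That would rule out the one known systematic source of non-trivial $\cI$-Wilf equivalences, but it would still fall short of the conjecture, which quantifies over all possible equivalences.
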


The above conjecture is confirmed for patterns up to length 5. However, for permutations the $I$-Wilf equivalence classes are not always determined trivially.  Chan in~\cite{C15} (Proposition 5) proved if $\pi_1$ and $\pi_2$ are shape and $I$-Wilf equivalent, a stronger condition than $I$-Wilf equivalence, then so are $12[\pi_1,\gamma]$ and $12[\pi_2,\gamma]$ for any permutation $\gamma$. This particularly applies to the pair $12[231,231]$ and $12[312,231]$ (Chan~\cite{C15} Corollary 6), which are not in the same symmetry class on the square. We note that this particular pair is not $I\cI$-Wilf equivalent because the generating functions are not equal when $n=8$.

%%%%%%%%%%%%%%%%%%%%%%%%%%%%%
\subsection{The patterns $231$ and $312$}

It turns out that any  permutation that avoids both $231$ and $312$ is actually an involution and these involutions are precisely those that avoid $231$, which was first determine by  Simion and Schmidt ~\cite{ss:rp}. They also determined the decomposition of involutions in $\cI_n(213)$ so the generating function $I\cI_n(231)$ has been previously determined by
 Dokos et.\ al.~\cite{DDJSS12}. This section includes this result for completeness. 
 
\begin{prop}[Simion and Schmidt~\cite{ss:rp} Proposition 6]
All involutions $\iota \in \cI_n(231)$  have decomposition $\ii_k[\dd_{j_1},\dd_{j_2},\dots ,\dd_{j_k}]$ for some $k$ with $j_i\geq1$ for all $i\in[k]$.\hqed
\label{prop:SSdecompI(231)}
\end{prop}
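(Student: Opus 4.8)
The plan is to prove the two claims in Proposition~\ref{prop:SSdecompI(231)} in sequence: first that every $\iota \in \cI_n(231)$ has a decomposition of the form $\ii_k[\dd_{j_1},\dots,\dd_{j_k}]$, and then (implicitly needed for the statement to be meaningful) that the $j_i$ can be taken to satisfy $j_i \geq 1$, i.e.\ the blocks are nonempty. I would argue by induction on $n$, using the inflation decomposition for $231$-avoiders already recorded in Proposition~\ref{prop:decomp}(iii): any $\sigma$ avoiding $231$ can be written $\sigma = 132[\sigma_1,1,\sigma_2]$ with $\sigma_1,\sigma_2$ avoiding $231$. The first task is to understand what the involution hypothesis forces on this decomposition.

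The key observation is positional. In $\sigma = 132[\sigma_1,1,\sigma_2]$, write $a = |\sigma_1|$; then position $a+1$ carries the value $n$, since the middle block of the $132$-pattern is the largest. But $\iota$ is an involution, so $\iota(a+1) = n$ forces $\iota(n) = a+1$. Now the block $\sigma_2$ occupies positions $a+2,\dots,n$ and, in the $132$-inflation, these positions carry the largest available values \emph{after} $n$ is removed — namely the top block — which is a contradiction with $\iota(n) = a+1$ being small \emph{unless} $\sigma_2$ is empty. Wait: I should be careful — in $132[\sigma_1,1,\sigma_2]$ the block $\sigma_1$ sits at the \emph{bottom left}, the singleton $n$ at the \emph{middle top}, and $\sigma_2$ at the \emph{top right}, so positions $a+2,\dots,n$ carry values $a+2,\dots,n-1$ (the second-largest block). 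Then $\iota(n) \in \{a+2,\dots,n-1\}$, but also $\iota(a+1) = n \Rightarrow \iota(n) = a+1 \notin \{a+2,\dots,n-1\}$ unless the block $\sigma_2$ is empty, i.e.\ $a+1 = n$. Hence $\sigma_2 = \epsilon$, so $\iota = 132[\iota',1] = 12[\iota',1]$ where $\iota' = \sigma_1 \in \cI_{n-1}(231)$. So far this says the value $n$ sits in the last position but one more than... let me restate cleanly: $\iota$ fixes the pattern so that $n$ is at position $n-1$? No — $a+1 = n$ means $n$ is at position $n$, i.e.\ $\iota(n) = n$, a fixed point. That can't be the whole story, since e.g.\ $21 \in \cI_2(231)$ has no fixed point. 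Let me recheck the orientation of $132$: reading $132$ left to right, the values are $1,3,2$, so the middle entry is the max. Positions of a block: block $1$ (lowest values, leftmost), block $2$ = singleton with the top value, block $3$ (middle-range values, rightmost). For $21$: this is $12[\epsilon,\dots]$? Actually $21 = 132[\epsilon, 1, 1]$ gives values: empty, then top value $2$ at position $1$, then value $1$ at position $2$ — yes, $21$, with $\sigma_1 = \epsilon$, $\sigma_2 = 1$. Then my argument gives $\iota(1) = 2 \Rightarrow \iota(2) = 1$, consistent, and $\sigma_2 = 1$ is \emph{not} empty. So the contradiction I claimed is wrong in general; it only forces something when $|\sigma_2| \geq 2$, because a singleton $\sigma_2 = 1$ occupies position $n$ with value $n-1$, and $\iota(n) = n-1$ is compatible with $\iota(n-1) = n$. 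The hard part, and the crux of the argument, is therefore the careful case analysis on $|\sigma_2| \in \{0, 1, \geq 2\}$: show $|\sigma_2| \geq 2$ is impossible, $|\sigma_2| = 1$ peels off a leading $\dd_?$ block appropriately, and $|\sigma_2| = 0$ also reduces to a smaller involution, then invoke the inductive hypothesis on $\sigma_1$ to get a decomposition $\ii_{k-1}[\dd_{j_1},\dots,\dd_{j_{k-1}}]$ and glue.

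Concretely, after establishing $|\sigma_2| \leq 1$, in the case $|\sigma_2| = 1$ we have $\iota = 132[\sigma_1, 1, 1]$, whose last two positions carry values $n, n-1$ in that order — i.e.\ $\iota$ ends with $\dots\, n\, (n-1)$, so $\iota(n-1) = n$, $\iota(n) = n-1$, and deleting both $n-1$ and $n$ leaves an involution $\iota'' \in \cI_{n-2}(231)$ with $\iota = \iota'' \oplus 21$ (direct sum). Hmm, but $\iota'' \oplus 21$ is $\ii_2[\iota'', \dd_2]$, and if $\iota''$ itself decomposes as $\ii_{k-1}[\dd_{j_1},\dots]$ then $\iota = \ii_k[\dd_{j_1},\dots,\dd_{j_{k-1}}, \dd_2]$, giving the desired form with the new last block $\dd_2$ — but I also need to handle longer trailing decreasing runs, so I'd instead peel off a maximal trailing block. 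The case $|\sigma_2| = 0$ gives $\iota(n) = n$, so $\iota = \iota' \oplus 1$ with $\iota' \in \cI_{n-1}(231)$; inductively $\iota' = \ii_{k-1}[\dd_{j_1},\dots,\dd_{j_{k-1}}]$ and then $\iota = \ii_k[\dd_{j_1},\dots,\dd_{j_{k-1}}, \dd_1]$. In both surviving cases the new block $\dd_{j_k}$ has $j_k \geq 1$, so nonemptiness is automatic, and the induction base $n = 0$ (empty permutation, $k = 0$) is trivial. The main obstacle I anticipate is getting the positional bookkeeping of the $132$-inflation exactly right — in particular correctly ruling out $|\sigma_2| \geq 2$ by tracking that the top block $\sigma_2$, were it of size $\geq 2$, would contain a value $v$ at some position $p > a+1$ with $\iota(p) = v$ and $\iota(v) = p$, and showing the resulting configuration either recreates a $231$ pattern or contradicts $\iota(a+1) = n, \iota(n) = a+1$; once that positional lemma is nailed down, the rest is a routine glue-up via induction.
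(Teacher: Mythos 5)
There is a genuine gap: the step you identify as ``the crux'' --- showing $|\sigma_2|\geq 2$ is impossible in the decomposition $\iota=132[\sigma_1,1,\sigma_2]$ --- is false. For example $\iota=321\in\cI_3(231)$ has $\sigma_1=\epsilon$ and $\sigma_2=21$, and $\iota=1432\in\cI_4(231)$ has $\sigma_1=1$, $\sigma_2=21$; in general, if the final block of the target decomposition is $\dd_{j_k}$ then $|\sigma_2|=j_k-1$, which can be arbitrarily large. Consequently your case analysis $|\sigma_2|\in\{0,1,\geq 2\}$ cannot work, and the glue-up you describe would only ever append blocks $\dd_1$ or $\dd_2$, so it could never produce involutions such as $\dd_3=321$ (you sense this when you say you would ``instead peel off a maximal trailing block,'' but that directly contradicts the claim that $|\sigma_2|\geq 2$ cannot occur). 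Part of the confusion is an off-by-one in the positional bookkeeping: in $132[\sigma_1,1,\sigma_2]$ with $a=|\sigma_1|$, the positions $a+2,\dots,n$ carry the values $a+1,\dots,n-1$ (not $a+2,\dots,n-1$), so $\iota(n)=a+1$ is perfectly compatible with a nonempty $\sigma_2$ of any size --- it just forces the \emph{last} entry to be the minimum of the right block.

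The statement you are proving is cited in the paper from Simion and Schmidt without proof, but the correct argument is the one used in the paper's proof of the neighboring Proposition~\ref{prop:S(213,312)=I(231)}: the right claim about $\sigma_2$ is not that it is short but that it is \emph{decreasing}. In your setup this follows from the involution condition: $\iota(a+1)=n$ forces $\iota(n)=a+1$, the smallest value of the right block, so if $\sigma_2$ had an ascent it would occur at positions $p<q\leq n-1$ and then $\iota(p)\,\iota(q)\,\iota(n)$ would be a $231$ pattern, a contradiction. Hence $\sigma_2=\dd_{b}$, the singleton $n$ together with $\sigma_2$ forms the final block $\dd_{b+1}$, the positions $a+1,\dots,n$ carry exactly the values $a+1,\dots,n$ so $\sigma_1$ is itself an involution in $\cI_a(231)$, and induction on $\sigma_1$ gives $\iota=\ii_k[\dd_{j_1},\dots,\dd_{j_k}]$ with all $j_i\geq 1$. (In the paper's version for $\fS_n(231,312)$ the same conclusion is reached by noting that $n$ followed by an ascent in $\sigma_2$ would be a $312$; the involution hypothesis replaces the $312$-avoidance here.)
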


Using this we can show any permutation that avoids both $231$ and $312$ is actually an involution in $\cI_n(231)$. 

\begin{prop}[Simion and Schmidt~\cite{ss:rp} Propositions 6 and 8] For $n\geq 1$ we have $\fS_n(231,312)=\cI_n(231)$ and further $\cI_n(231)=\cI_n(312)$.
\label{prop:S(213,312)=I(231)}
\end{prop}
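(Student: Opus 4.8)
The plan is to prove the two equalities $\fS_n(231,312)=\cI_n(231)$ and $\cI_n(231)=\cI_n(312)$ separately, using Proposition~\ref{prop:SSdecompI(231)} as the main engine. The inclusions $\cI_n(231)\supseteq \fS_n(231,312)$ and $\cI_n(231)\subseteq\cI_n(231)$ are trivial for one direction, so the work is in showing $\fS_n(231,312)\supseteq\cI_n(231)$ and establishing that avoidance of $231$ and avoidance of $312$ cut out the same set of involutions.

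First I would handle $\cI_n(231)=\cI_n(312)$. By Lemma~\ref{lemma:operations and involutions}, the map $r_1$ is the identity on involutions, and since $r_1(231)=312$, an involution $\iota$ avoids $231$ if and only if $r_1(\iota)=\iota$ avoids $312$; this gives the equality immediately. (Alternatively one can read it off from the decomposition in Proposition~\ref{prop:SSdecompI(231)}: a permutation of the form $\ii_k[\dd_{j_1},\dots,\dd_{j_k}]$ visibly avoids $312$ as well, since any occurrence of $312$ would need its ``1'' and its ``2'' in a single decreasing block with the ``3'' before them, impossible because within a block everything is decreasing and across blocks everything is increasing.)

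Next I would show $\cI_n(231)\subseteq\fS_n(231,312)$, i.e.\ every involution avoiding $231$ also avoids $312$; but that is exactly the content of $\cI_n(231)=\cI_n(312)$ just proved, so $\cI_n(231)=\cI_n(231)\cap\cI_n(312)=\cI_n(231,312)\subseteq\fS_n(231,312)$. For the reverse inclusion $\fS_n(231,312)\subseteq\cI_n(231)$ it suffices to show that any $\sigma\in\fS_n$ avoiding both $231$ and $312$ is an involution, after which $\sigma\in\cI_n(231)$ is automatic. The cleanest route is induction on $n$: if $\sigma$ avoids $231$ then by Proposition~\ref{prop:decomp}(iii) we can write $\sigma=132[\sigma_1,1,\sigma_2]$ with $\sigma_1,\sigma_2$ avoiding $231$; here the singleton ``$1$'' of the pattern $132$ sits at some position and carries the value that makes $\sigma_1$ occupy the smallest values and $\sigma_2$ the middle values while the forced point is the largest. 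I would then use the additional hypothesis that $\sigma$ avoids $312$ to force $\sigma_1$ to be empty (otherwise any point of $\sigma_1$ together with the ``$1$''-point of the $132$-pattern and any point of $\sigma_2$ would create a $312$, since $\sigma_1$'s values lie below $\sigma_2$'s and the forced point's value lies above both). This collapses the decomposition to $\sigma=12[1,\sigma']=1\oplus\sigma'$ — i.e.\ $\sigma(1)=1$ — and one applies the inductive hypothesis to $\sigma'$, which still avoids $231$ and $312$, to conclude $\sigma'$ and hence $\sigma$ is an involution. The base case $n\le 1$ is clear.

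The only real obstacle is getting the direction of the $132$-inflation right and checking that the ``$312$-avoidance forces $\sigma_1=\epsilon$'' step is correct with the value ranges as stated in Proposition~\ref{prop:decomp}(iii): one must be careful that in $132[\sigma_1,1,\sigma_2]$ the middle entry is the \emph{largest} value, so that a point of $\sigma_1$ (small values, leftmost), followed by that large middle point, followed by any point of $\sigma_2$ (middle values), indeed reads as a $312$-pattern whenever $\sigma_1$ and $\sigma_2$ are both nonempty — and also to treat the subcase $\sigma_2=\epsilon$, where $\sigma$ ends in its maximum and one argues slightly differently (or simply notes $\sigma=\sigma_1\oplus 1$ and recurses on $\sigma_1$). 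Everything else is bookkeeping, and the involution property propagates through $1\oplus\sigma'$ because adding a fixed point at the start of an involution yields an involution.
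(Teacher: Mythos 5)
Your first two steps are fine: $\cI_n(231)=\cI_n(312)$ via $r_1$ being the identity on involutions, and the consequent inclusion $\cI_n(231)\subseteq\fS_n(231,312)$, both go through. The genuine gap is in the reverse inclusion, precisely at the step you yourself flagged: in the inflation $\sigma=132[\sigma_1,1,\sigma_2]$ of Proposition~\ref{prop:decomp}(iii), a point of $\sigma_1$ carries a value \emph{below} every value of $\sigma_2$, and the middle singleton carries the maximum, so the triple (point of $\sigma_1$, maximum, point of $\sigma_2$) is order isomorphic to $132$, not $312$. Hence $312$-avoidance does \emph{not} force $\sigma_1=\epsilon$: the permutation $\sigma=132\in\fS_3(231,312)$ has $\sigma_1=1$ and $\sigma_2=1$ both nonempty. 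Your resulting structural conclusion, that $\sigma=12[1,\sigma']$ so $\sigma(1)=1$, is also false on its face ($21$, $213$, $321$ all lie in $\fS_n(231,312)$ and do not begin with $1$), so the induction as set up cannot be run.

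What $312$-avoidance actually constrains is $\sigma_2$, not $\sigma_1$: if $\sigma_2$ contained an ascent $b<c$, then the maximum followed by $b,c$ would be an occurrence of $312$, so $\sigma_2$ must be decreasing, $\sigma_2=\dd_{n_2}$. Then $\sigma=12[\sigma_1,\dd_{n_2+1}]$, and induction on $\sigma_1$ (which still avoids both patterns) gives the Simion--Schmidt form $\ii_k[\dd_{j_1},\dots,\dd_{j_k}]$ of Proposition~\ref{prop:SSdecompI(231)}, which is visibly an involution; this is exactly the paper's argument. If you replace your forcing step by this one, the rest of your proposal (the $r_1$ argument and the set-theoretic bookkeeping) stands as written.
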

\begin{proof}
Obviously $\cI_n(231)\subseteq\fS_n(231,312)$. Since all $\sigma \in \fS_n(231,312)$ avoid $231$ it is really a matter of showing that $\sigma$ is really an involution.  We will do this by showing $\sigma= \ii_k[\dd_{j_1},\dd_{j_2},\dots ,\dd_{j_k}]$ for some $k$ with $j_i\geq1$ for all $i\in[k]$, which we will do using induction. Since this is an involution we will be done at this point. 

This is easy to see for $n=1$, so we assume $n>1$ and all permutations in $\fS_m(231,312)$ have this form for $m<n$. Since $\sigma$ avoids $231$  we can write $\sigma= 132[\sigma_1,1,\sigma_2]$ as we noted in Proposition~\ref{prop:decomp} for some $\sigma_1\in \fS_{n_1}(213,312)$ and $\sigma_2\in \fS_{n_2}(213,312)$ with $n_1<n$. Since $\sigma_2$ also avoids $312$ we know $\sigma_2$  must have no ascents so is equal to $\dd_{n_2}$. Hence, $\sigma= 12[\sigma_1,\dd_{n_2+1}]$. By induction $\sigma_1$ has the decomposition stated, so we can conclude that $\sigma$  does as well. 

The map $r_1$ and the decomposition in Proposition~\ref{prop:SSdecompI(231)} imply $\cI_n(231)=\cI_n(312)$.
\end{proof}

Using the set equality shown in Proposition~\ref{prop:S(213,312)=I(231)} we have $I\cI_n(231)$, which was originally shown by  Dokos et.\ al. who determined $I_n(132,213)=\ol{I\cI}_n(231)$ using~\cite{DDJSS12} (Lemma 2.1).

\begin{prop} [Dokos et.\ al.~\cite{DDJSS12} Proposition 4.3] With $I\cI_0(231)=1$ we have for $n\geq 1$
that
$$\displaystyle I\cI_n(231)=\sum_{j=1}^n q^{\binom{j}{2}}I\cI_{n-j}(231).$$
\end{prop}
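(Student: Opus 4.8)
The plan is to use the inflation decomposition of Proposition~\ref{prop:SSdecompI(231)}, which says every involution $\iota\in\cI_n(231)$ has the form $\ii_k[\dd_{j_1},\dd_{j_2},\dots,\dd_{j_k}]$ with each $j_i\geq 1$. The key observation is to isolate the \emph{first} block $\dd_{j_1}$: setting $j=j_1$, the remaining blocks $\dd_{j_2},\dots,\dd_{j_k}$ assemble into an involution $\iota'=\ii_{k-1}[\dd_{j_2},\dots,\dd_{j_k}]$ of length $n-j$ which again avoids $231$, and conversely any such $\iota'\in\cI_{n-j}(231)$ together with a choice of $j\in\{1,\dots,n\}$ reconstructs a unique $\iota\in\cI_n(231)$. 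This gives a bijection $\cI_n(231)\longleftrightarrow\bigsqcup_{j=1}^n\cI_{n-j}(231)$.

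Next I would track what this bijection does to $\inv$. In the inflation $\ii_k[\dd_{j_1},\dots,\dd_{j_k}]$, since the outer pattern is the increasing permutation $\ii_k$, there are no inversions \emph{between} distinct blocks; all inversions come from \emph{within} the blocks. Within the first block $\dd_{j_1}=\dd_j$ we get exactly $\binom{j}{2}$ inversions, and the inversions contributed by the remaining blocks are precisely $\inv(\iota')$ where $\iota'=\ii_{k-1}[\dd_{j_2},\dots,\dd_{j_k}]$. Hence $\inv(\iota)=\binom{j}{2}+\inv(\iota')$. Summing $q^{\inv(\iota)}$ over $\cI_n(231)$ and splitting according to the value of $j=j_1$ yields
\[
I\cI_n(231)=\sum_{j=1}^n q^{\binom{j}{2}}I\cI_{n-j}(231),
\]
with the convention $I\cI_0(231)=1$ accounting for the empty tail when $j=n$.

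I don't anticipate a serious obstacle here: the only point requiring a little care is verifying that deleting the first block and reindexing genuinely lands in $\cI_{n-j}(231)$ and is reversible (i.e.\ that $\dd_{j_1}$ is unambiguously recoverable as the first maximal decreasing block, so the decomposition in Proposition~\ref{prop:SSdecompI(231)} is essentially unique once we fix that the blocks are the minimal ones). This follows from the structure of the decomposition: the blocks $\dd_{j_i}$ are the maximal descending runs of $\iota$, so $j_1$ is determined by $\iota$. The inversion count $\inv(\iota)=\binom{j_1}{2}+\binom{j_2}{2}+\cdots+\binom{j_k}{2}$ — a special case of Lemma~\ref{lemma:321inv}-type reasoning but here immediate because the outer shape $\ii_k$ contributes nothing — then makes the recursion fall out by grouping terms by $j_1$. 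As a sanity check one verifies $I\cI_1(231)=q^{\binom{1}{2}}I\cI_0(231)=1$ and $I\cI_2(231)=q^0\cdot 1+q^1\cdot 1=1+q$, matching the involutions $12$ and $21$.
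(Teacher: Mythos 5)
Your proposal is correct and matches the paper's argument: the paper likewise peels off the first decreasing block, writing $\iota = 12[\dd_j,\tau]$ with $\tau\in\cI_{n-j}(231)$ via the Simion--Schmidt decomposition, and uses $\inv(\iota)=\binom{j}{2}+\inv(\tau)$ to obtain the recursion. Your extra remarks on uniqueness of the first block (as the first maximal descending run) and the sanity checks are fine but not needed beyond what the paper states.
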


\begin{proof}
We can define  $I\cI_0(231)=1$ so let $n>0$. For $\iota \in \cI_n(213)$ we can use Simion and Schmidt's decomposition in Proposition~\ref{prop:SSdecompI(231)} to write $\iota = 12[\dd_j,\tau]$ for some $j\geq 1$ and $\tau\in \cI_{n-j}(231)$. Since there are $\binom{j}{2}$ inversions in $\dd_j$ and no inversions between $\dd_j$ and $\tau$ we find $\inv(\iota)=\binom{j}{2}+\tau$, which proves the equation.
\end{proof}

%%%%%%%%%%%%%%%%%%%%%%%%%%%%%
\subsection{The patterns $132$ and $213$}
\label{subsec132inv}

Guibert and Mansour in~\cite{GM02} study involutions avoiding 132 and describe  a decomposition and a generation function that counts  the number of occurrences of the patterns $\ii_k$. When $k = 2$ this counts the number of coinversions. Specifically,  their Theorem 4.2 in~\cite{GM02} produces the  generating function for involutions $C_I(x,q)=\sum_{\iota \text{ avoids }132}x^{|\iota|}q^{\coinv{\iota}}$ using the generating function for permutations $C_I(x,q)=\sum_{\sigma \text{ avoids }132}x^{|\sigma|}q^{\coinv{\sigma}}$, which is
$$C_I(x,q)=\frac{1+xC_I(xq,q)}{1-x^2C_S(x^2q^2,q^2)}\text{ where } C_S(x,q)=\frac{1}{1-xC_S(xq,q)}.$$
We begin this section by recounting a decomposition of involutions in $\cI_n(132)$ that can be found in~\cite{GM02} and~\cite{ss:rp} and then give a recursive definition of the generating function $\ol{I\cI_n}(132)$.  We also describe the generating function for fixed-point-free involutions avoiding 132 as this will be very useful in determining $\ol{I\cI}_n(132)$. 

\begin{lemma} [Guibert and Mansour~\cite{GM02} Proposition 3.17] The set ${\mathcal I}_n(132)$ is the disjoint union of 
\begin{enumerate}[(i)]
\item $\{12[\alpha,1]: \alpha \in \cI_{n-1}(132)\}$ and
\item $\{45312[\alpha,1, \beta, r_1(\alpha),1]: \alpha \in \fS_{k-1}(132), \beta \in \cI_{n-2k}(132) \text{ and } 1\leq k\leq \floor{\frac{n}{2}}\}$. 
\item Also, $F\cI_{2m}(132)=\{21[\alpha,r_1(\alpha)]:\alpha \in \fS_{m}(132)\}$.
\end{enumerate}
\label{lemma:132decomp}
\end{lemma}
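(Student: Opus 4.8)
# Proof proposal for Lemma~\ref{lemma:132decomp}

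\textbf{Overall strategy.} The plan is to prove all three parts by analyzing the position and value of $n$ (equivalently, the structure around the largest point) in the diagram of an involution avoiding $132$. Since this is a structural decomposition lemma, I would proceed by case analysis driven by whether $\iota(n)=n$ or not, and then — in the nontrivial case — by tracking what the avoidance of $132$ forces on the two-cycle containing $n$ and the points lying between its two endpoints. Throughout I would freely use Proposition~\ref{prop:decomp}(i), which says any $132$-avoider can be written as $231[\sigma_1,1,\sigma_2]$, and the fact that the two-cycle structure of an involution makes its diagram symmetric across the main diagonal (as noted before Lemma~\ref{lemma:operations and involutions}).

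\textbf{Part (i) and the split.} First I would observe that if $\iota\in\cI_n(132)$ has $\iota(n)=n$, then deleting the point $(n,n)$ yields an involution $\alpha\in\cI_{n-1}(132)$, and conversely $12[\alpha,1]$ is an involution avoiding $132$ whenever $\alpha$ is; this gives case (i) and shows these permutations form one part of the union. So the work is to show that every $\iota\in\cI_n(132)$ with $\iota(n)\neq n$ has the form in (ii), and that (i) and (ii) are disjoint — disjointness is immediate since in (ii) the value $n$ sits at position $n-k+1<n$ (as $k\ge 1$) while in (i) it sits at position $n$, wait, more carefully: in the inflation $45312[\alpha,1,\beta,r_1(\alpha),1]$ the final block is a single point which is the largest value $n$ but it is \emph{not} in the last position unless the $r_1(\alpha)$ block is empty; one must check the fixed point $\iota(n)=n$ never occurs here, which follows because the block $r_1(\alpha)$ together with the trailing $1$ occupies positions whose values are strictly below $n$ except for that last point, and that last point has value $n$ at position $n$ only when $\alpha=\epsilon$, in which case the pattern $45312[\epsilon,1,\beta,\epsilon,1]$ degenerates to $231[1,\beta,1]$ — I would handle this degenerate boundary case explicitly to confirm it still has no fixed point at $n$ (the point $(n,n)$ would force... ) — this bookkeeping is where I'd be most careful.

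\textbf{Part (ii): the main case.} Suppose $\iota(n)\neq n$, so $n$ lies in a two-cycle $(a,n)$ with $a<n$, i.e.\ $\iota(a)=n$ and $\iota(n)=a$. By symmetry the points $(a,n)$ and $(n,a)$ are both in the diagram. Now consider any point $(i,\iota(i))$ with $a<i<n$. If $\iota(i)>a$, then since $\iota(i)<n$ (as $\iota(a)=n$ and $i\neq a$), the three positions $a<i<n$ carry values $n,\iota(i),a$ — but I want to produce a $132$ pattern, so instead I should look at positions $i,\,?,\,?$; the cleaner observation is: the subdiagram on rows and columns in the open interval $(a,n)$ must itself be "staircase-like." Concretely, avoidance of $132$ applied with the point $(a,n)$ as the "3" forces: for positions $j,k$ with $a<j<k$, we cannot have $\iota(j)<\iota(k)$ both lying in the interval $(a,n)$ — wait, that would need the "1" before the "3". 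Let me instead use $(n,a)$ as the low point. The correct extraction: any two points strictly inside the box $(a,n)\times(a,n)$, together with the point $(a,n)$ sitting above and to the left, would create $132$ if the inside two points form an ascent. Hence the points with both coordinates in $(a,n)$ form a decreasing sequence; combined with involution symmetry, they form a symmetric decreasing staircase, which is exactly the block $\beta\in\cI_{n-2k}(132)$ after relabeling. The remaining points — those in positions $(a,n)$ with values \emph{below} $a$ — pair up via the involution with points in positions below $a$ having values in $(a,n)$; calling the former block $r_1(\alpha)$ forces the latter to be $\alpha$, and $\alpha$ must avoid $132$. Assembling the five blocks (the points left of column $a$ and below row $a$; the point $(a,n)$... ) in the order dictated by their positions and values yields precisely $45312[\alpha,1,\beta,r_1(\alpha),1]$. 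I would then verify the converse: any such inflation is an involution (check $r_1(\alpha)$ is the transpose block forced by symmetry) and avoids $132$ (check no $132$ straddles the five blocks, using that $45312$ itself avoids $132$ and each block does).

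\textbf{Part (iii): fixed-point-free case.} This should drop out of the analysis in (ii): if $\iota\in F\cI_{2m}(132)$ has no fixed points, then in the decomposition of (ii) the block $\beta$, being a symmetric decreasing staircase that is itself fixed-point-free and $132$-avoiding, must be empty (a nonempty symmetric decreasing staircase has a fixed point at its center), and the leftover single fixed points "$1$" in positions $|\alpha|+1$ and $n$ must also be absent — which collapses $45312[\alpha,1,\epsilon,r_1(\alpha),1]$ down to $21[\alpha,r_1(\alpha)]$ with $\alpha\in\fS_m(132)$ (here $\alpha$ ranges over all $132$-avoiding permutations, not just involutions, because its symmetric partner $r_1(\alpha)$ is supplied separately). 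Conversely $21[\alpha,r_1(\alpha)]$ is visibly fixed-point-free and one checks it avoids $132$.

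\textbf{Expected main obstacle.} The real delicacy is (a) getting the \emph{order} of the five inflation blocks right and matching it to the pattern $45312$ rather than some symmetry of it, and (b) the degenerate boundary cases ($\alpha=\epsilon$, $\beta=\epsilon$, $k=\lfloor n/2\rfloor$) where blocks vanish and one must confirm the union remains disjoint and exhaustive. The $132$-avoidance argument itself — showing the inside-the-box points decrease and the outside points split as $\alpha/r_1(\alpha)$ — is the conceptual heart, and I expect it to hinge on a clean choice of which extremal point plays the role of each letter of $132$; choosing $(a,n)$ as the "$3$" and reading off what "$1$" and "$2$" can be is the step I would want to present most carefully.
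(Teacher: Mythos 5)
Your argument for part (ii) breaks at the central claim that the points whose positions and values both lie strictly between $a$ and $n$ must form a decreasing sequence. Check the extraction: the point $(a,n)$ lies to the left of and above the box, so together with an ascending pair inside the box it gives the pattern $312$ (largest value first, then smallest, then middle), and using $(n,a)$ instead gives $231$; neither is $132$, so nothing forces the middle block to decrease. In fact the block $\beta$ in (ii) is an arbitrary member of $\cI_{n-2k}(132)$: for example $\iota=4231$ is a $132$-avoiding involution with $k=1$, $\alpha=\epsilon$ and $\beta=12$, which contains an ascent. (Your other positional claim is salvageable: a low value appearing inside the box before a middle value, together with the point $(n,a)$, does give a genuine $132$, which is why the low values must occupy the rightmost positions and pair with $\alpha$ by symmetry.) The paper avoids your false step entirely: it writes $\iota=231[\sigma_1,1,\sigma_2]$ via Proposition~\ref{prop:decomp}, uses the involution condition to get $\iota(n)=|\sigma_1|+1\leq|\sigma_2|$, and then uses diagonal symmetry to split $\sigma_2=312[\beta,r_1(\sigma_1),1]$ with $\beta$ a (not necessarily monotone) $132$-avoiding involution.

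The error propagates into your part (iii). Since $\beta$ need not be a decreasing staircase, fixed-point-freeness does not force $\beta=\epsilon$: the involution $4321=21[21,21]$ falls under (ii) with $k=1$, $\alpha=\epsilon$ and $\beta=21\neq\epsilon$. Moreover the two singleton blocks in (ii) are never fixed points --- they form the two-cycle $(k,n)$ --- so they cannot be ``absent'' in the fixed-point-free case; you have also misplaced the value $n$, which sits in the second singleton block at position $k$, while the last position carries the value $k\leq\lfloor n/2\rfloor<n$ (this is also the easy reason (i) and (ii) are disjoint). The paper's proof of (iii) instead proceeds by induction: a fixed-point-free $\iota$ must fall under (ii) with $\beta$ fixed-point-free, so $\beta=21[\gamma,r_1(\gamma)]$ by induction, and then $\iota=21[\tau,r_1(\tau)]$ with $\tau=231[\alpha,1,\gamma]\in\fS_m(132)$, the two-cycle $(k,n)$ being absorbed as the ``$1$'' inside $\tau$. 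As written, your proposal would not produce involutions such as $4321$, so the gap is genuine and not merely a bookkeeping issue.
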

\begin{proof}
First we will show that all  $\iota \in \cI_n(132)$ have decomposition as in (i) or (ii). It is known that since $\iota \in \fS_n(132)$ that $\iota = 231[\sigma_1,1,\sigma_2]$ for some permutations $\sigma_1$ and $\sigma_2$ that avoid $132$ as on the left in Figure~\ref{fig:132}. If $|\sigma_2|=0$ then $\iota$ is part of the set in (i). 
Otherwise, $|\sigma_2|\neq 0$. First we argue that $|\sigma_1|< |\sigma_2|$. We know that $\iota(|\sigma_1|+1)=n$ and $\sigma_2$ occurs in $\iota$ using the values in $[1,|\sigma_2|]$. Since $\iota$ is an involution $\iota(n)=|\sigma_1|+1\in [1,|\sigma_2|]$, which shows $|\sigma_1|<  |\sigma_2|$.
Since involutions are symmetric about the main diagonal and $|\sigma_1|< |\sigma_2|$  we have $\sigma_2=312[\beta,r_{1}(\sigma_1),1]$ for some $\beta \in \cI_{n-2k}(213)$ if $|\sigma_2|=k-1$. This assures that $r_1(\iota)=\iota$, which proves that $\iota$ is an element of the set in (ii). 
 
Next we will show that given an involution with decomposition as in (i) or (ii) that the involution avoids $132$. Consider we have an involution as stated in (i), $\iota =12[\alpha, 1]$ for $\alpha \in \cI_{n-1}(132)$, and a subsequence $abc$ that is a pattern 132. The subword $abc$ can not be part of $\alpha$ because $\alpha$ avoids $132$. We must have that $n$ is part of the pattern, but $n$ can only play the role of 3 in the pattern, which is not possible because $n$ occurs at the rightmost index. Hence, $\iota$ avoids $132$. Now consider an involution as described in (ii), $\iota =45312[1,\alpha,\beta,1,r_{1}(\alpha)]$ for some $ \alpha\in \fS_{k-1}(213)$ and $ \beta\in \cI_{n-2k}(213)$ with $1\leq k\leq\floor{n/2}$. Let $abc$ be a subsequence of $\iota$. We will show $abc$ is not the pattern 132 by considering how $abc$ occurs in the the five blocks. If all three letters occur in the same block then $abc$ is not the pattern $132$ since every block avoids $132$. If they occur in three different blocks then the pattern is still not $132$ since $45312$ avoids $132$. If $ab$  is in one block  that doesn't contain $c$ then due to the decomposition $c>\max\{a,b\}$ or $c<\min\{a,b\}$, which implies $abc$ is not the pattern 132. Say that $bc$ is in one block that doesn't contain $a$, then due to block sizes $bc$ is in either the second or third block, which implies $a>\max\{b,c\}$ and that $abc$ is not the pattern $132$. Hence all permutations described in (i) and (ii) avoid $132$.

Lastly, we will show (iii) the decomposition for fixed-point-free involutions. If $\iota\in F\cI_0(132)$, then $\iota=21[\epsilon,\epsilon]$ so assume $m>0$. In this case $\iota \in F\cI_{2m}(132)$, which implies that $\iota$ is not part of the set in (i) because $\iota$ is fixed-point-free. Since $\iota$ falls under case (ii) we have that $\iota = 45312[1,\alpha,\beta,1,r_1(\alpha)]$ as stated in this lemma. The involution $\beta$ must also avoid $132$ and be fixed-point-free, so by induction $\beta = 21[\gamma,r_1(\gamma)]$. 
Hence, $\iota=21[\tau,r_1(\tau)]$ for some $\tau=231[\alpha,1,\gamma] \in \fS_m(132)$.
\end{proof}

\begin{figure}
\begin{center}
\begin{tikzpicture} [scale = .4]
\draw (8,4.5) rectangle (3.5,0);
\draw (2.5,7.5) rectangle (0,5);
\filldraw [black] 
(3,8) circle (5pt);
\draw (0,0) --  (8,0) -- (8,8) -- (0,8) -- (0,0);
\draw (5.8,2) node {$\sigma_2$};
\draw (1.3,6.3) node {$\sigma_1$};
 \end{tikzpicture}
\hspace{30mm}
%%%
%%%
\begin{tikzpicture} [scale = .4]
\draw (0,0) rectangle (7,7);
\filldraw [black] 
(8,8) circle (5pt);
\draw (0,0) --  (8,0) -- (8,8) -- (0,8) -- (0,0);
\draw (3.5,3.5) node {$\alpha$};
 \end{tikzpicture} 
 %%%
 \hspace{5mm}
\begin{tikzpicture} [scale = .4]
\draw (7.5,2.5) rectangle (5,0);
\draw (2.5,7.5) rectangle (0,5);
\draw (4.5,4.5) rectangle (3.5,3.5);
\filldraw [black] 
(3,8) circle (5pt)
(8,3) circle (5pt);
\draw (0,0) --  (8,0) -- (8,8) -- (0,8) -- (0,0);
\draw (4,3.95) node {$\beta$};
\draw (6.3,1.3) node {$r_1(\alpha)$};
\draw (1.3,6.3) node {$\alpha$};
 \end{tikzpicture}
\end{center}
\caption{On the left a general $\sigma \in \fS_n(132)$. On the right the possible diagrams for $\iota \in \cI_n(132)$.}
\label{fig:132}
\end{figure}
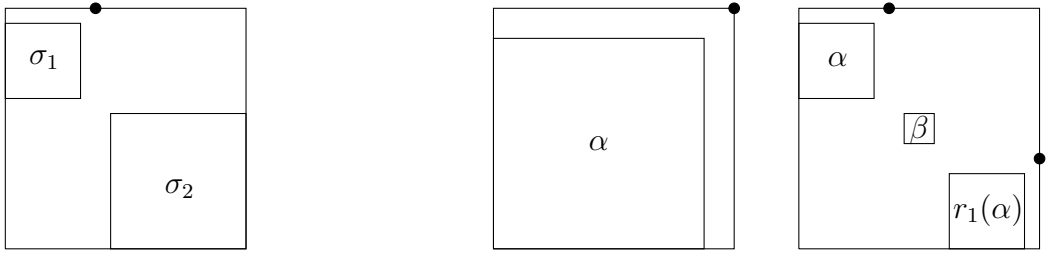

The structure for involutions in $\cI_n(213)$ can be similarly determined.
% and is illustrated in figure~\ref{fig:213}. 

\begin{lemma} The set ${\mathcal I}_n(213)$ is the disjoint union of 
\begin{enumerate}[(i)]
\item $\{  12[1,\alpha]: \alpha \in \cI_{n-1}(213)\}$ and
\item $\{45312[1,\alpha,\beta,1,r_{1}(\alpha)]: \alpha\in \fS_{k-1}(213), \beta\in \cI_{n-2k}(213),1\leq k\leq\floor{n/2}\}$.
\item Also,  $ F{\mathcal I}_{2m}(213)=\{21[\alpha,r_1(\alpha)]: \alpha \in \fS_{m}(213)\}$.
\end{enumerate}
\label{lemma:213decomposition}
\end{lemma}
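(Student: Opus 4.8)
The plan is to mirror the proof of Lemma~\ref{lemma:132decomp} almost verbatim, transporting the structure through the reflection $r_{-1}$, which by Lemma~\ref{lemma:operations and involutions} is a bijection $\cI_n\to\cI_n$ and which sends $213$ to $132$. First I would establish that every $\iota\in\cI_n(213)$ has one of the two stated shapes. Since $\iota$ avoids $213$, Proposition~\ref{prop:decomp}(ii) gives $\iota=312[\sigma_1,1,\sigma_2]$ for some $213$-avoiding $\sigma_1,\sigma_2$. If $|\sigma_1|=0$ then $\iota$ is $12[1,\alpha]$ with $\alpha=\sigma_2\in\cI_{n-1}(213)$, landing in case (i). Otherwise $|\sigma_1|\neq 0$, and the key numerical observation is the analogue of the one in Lemma~\ref{lemma:132decomp}: the value $1$ sits in position $|\sigma_1|+1$, so $\iota(|\sigma_1|+1)=1$ and hence $\iota(1)=|\sigma_1|+1$; since the block $\sigma_1$ occupies positions $[1,|\sigma_1|]$ and (by the shape of $312$) takes the top values, while $\iota(1)=|\sigma_1|+1$ must be one of those values, this forces $|\sigma_1|<|\sigma_2|$ — actually one argues $|\sigma_1|\le |\sigma_2|$ and rules out equality exactly as before. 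Symmetry of the diagram about the main diagonal together with $|\sigma_1|<|\sigma_2|$ then forces $\sigma_2$ to decompose as $\sigma_2=312[r_1(\sigma_1),\beta,1]$ — wait, I should be careful about the exact inflation pattern; the right statement is the one that makes $r_1(\iota)=\iota$ hold, namely that after peeling off the copy of $\sigma_1$ (reflected) and the forced singletons, the remainder is an involution $\beta\in\cI_{n-2k}(213)$, giving $\iota=45312[1,\alpha,\beta,1,r_1(\alpha)]$ with $\alpha=\sigma_1$ of length $k-1$ and $1\le k\le\floor{n/2}$.

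Next I would verify the converse, that any involution of the form (i) or (ii) avoids $213$, by the same case analysis on a putative occurrence $abc$ of $213$: if all three letters lie in one block we use that each block avoids $213$; if they lie in three distinct blocks we use that $45312$ (resp.\ $12$) avoids $213$; and if two of the three share a block, the interval structure of the blocks forces the third letter to be either larger than both or smaller than both in a way incompatible with the relative order $2,1,3$. This is the routine bookkeeping part. Finally, for (iii), one observes a fixed-point-free $\iota\in F\cI_{2m}(213)$ cannot be of type (i) (the leftmost point would be the fixed point $1$), so it is of type (ii), and then $\beta$ is itself fixed-point-free and $213$-avoiding, so by induction $\beta=21[\gamma,r_1(\gamma)]$; assembling the pieces gives $\iota=21[\tau,r_1(\tau)]$ with $\tau=312[\alpha,1,\gamma]\in\fS_m(213)$.

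Alternatively — and this is cleaner to write — I would not redo the argument at all but instead apply the map $r_{-1}$ directly to Lemma~\ref{lemma:132decomp}. One checks how $r_{-1}$ interacts with inflations: $r_{-1}$ reverses both the order of the blocks and the roles inside, so $r_{-1}(45312[1,\alpha,\beta,1,r_1(\alpha)])$ is again a $45312$-inflation but with the blocks permuted and each replaced by its $r_{-1}$-image, and since $r_{-1}$ sends $132$-avoiders to $213$-avoiders and commutes appropriately with $r_1$ (the two reflections being related via $R_{180}$ as in Lemma~\ref{lemma:operations and involutions}(iii)), the three disjoint pieces of $\cI_n(132)$ map bijectively onto the three claimed pieces of $\cI_n(213)$. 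The main obstacle in either route is purely notational: getting the exact inflation word ($45312$ versus some other arrangement) and the exact placement of the reflected sub-block $r_1(\alpha)$ versus $\alpha$ correct, so that the resulting permutation is genuinely symmetric about the diagonal and genuinely $213$-avoiding; this is the same subtlety that Lemma~\ref{lemma:132decomp} handles, and I expect the verification "$r_1(\iota)=\iota$ forces this precise block layout" to be the one step worth spelling out carefully rather than asserting.

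\begin{proof}
We argue exactly as in the proof of Lemma~\ref{lemma:132decomp}, using the reflection $r_{-1}$, which by Lemma~\ref{lemma:operations and involutions} is a bijection $\cI_n\to\cI_n$ and satisfies $r_{-1}(213)=132$; alternatively one may repeat the direct argument below.

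Suppose $\iota\in\cI_n(213)$. By Proposition~\ref{prop:decomp}(ii) we may write $\iota=312[\sigma_1,1,\sigma_2]$ with $\sigma_1,\sigma_2$ avoiding $213$. If $|\sigma_1|=0$ then $\iota=12[1,\sigma_2]$ with $\sigma_2\in\cI_{n-1}(213)$, which is the set in (i). Otherwise $|\sigma_1|\ge 1$. The singleton block sits in position $|\sigma_1|+1$ with value $1$, so $\iota(|\sigma_1|+1)=1$ and hence $\iota(1)=|\sigma_1|+1$; since in the pattern $312$ the first block carries the largest values, the first block of $\iota$ occupies positions $[1,|\sigma_1|]$ with values $[n-|\sigma_1|+1,n]$, and $\iota(1)=|\sigma_1|+1$ must lie in this value set, forcing $|\sigma_1|+1\ge n-|\sigma_1|+1$, i.e. $|\sigma_2|+1\le |\sigma_1|+1$; combined with the fact that an involution that is a nontrivial $312$-inflation cannot have $|\sigma_1|=|\sigma_2|$ (the middle point would be fixed, contradiction with the diagonal symmetry placing it off-diagonal — identical to the exclusion in Lemma~\ref{lemma:132decomp}), we get $|\sigma_1|>|\sigma_2|$. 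Because the diagram of $\iota$ is symmetric about the main diagonal and $|\sigma_1|>|\sigma_2|$, the block $\sigma_1$ must itself split so that $\sigma_1=312[\alpha,1,r_1(\beta\text{-part}))$ in the precise way that makes $r_1(\iota)=\iota$: writing $|\sigma_1|=k-1$ one obtains $\iota=45312[1,\alpha,\beta,1,r_1(\alpha)]$ with $\alpha\in\fS_{k-1}(213)$, $\beta\in\cI_{n-2k}(213)$ and $1\le k\le\floor{n/2}$, which is the set in (ii).

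Conversely, let $\iota$ have the form (i) or (ii) and suppose $abc$ is a subsequence order-isomorphic to $213$. If $a,b,c$ lie in a common block, this contradicts that block avoiding $213$; if they lie in three distinct blocks, it contradicts $12$ (resp.\ $45312$) avoiding $213$. If exactly two of them share a block: when the shared pair is the ``$2,1$'' pair, the remaining letter lies in a later or earlier block, and the interval structure forces it to be smaller than both or larger than both, never strictly largest-after-a-descent as $213$ requires; when the shared pair is the ``$1,3$'' pair, block sizes put this pair in the second or third block, forcing the ``$2$''-letter to exceed both, again impossible. Hence $\iota$ avoids $213$.

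Finally, for (iii), if $\iota\in F\cI_0(213)$ then $\iota=21[\epsilon,\epsilon]$, so let $m>0$ and $\iota\in F\cI_{2m}(213)$. Then $\iota$ is not of type (i) (its leftmost point would be a fixed point), so $\iota=45312[1,\alpha,\beta,1,r_1(\alpha)]$ as above, and $\beta$ is fixed-point-free and avoids $213$; by induction $\beta=21[\gamma,r_1(\gamma)]$. Reassembling, $\iota=21[\tau,r_1(\tau)]$ with $\tau=312[\alpha,1,\gamma]\in\fS_m(213)$, as claimed. The three families are visibly disjoint since type (i) has a fixed point at position $1$ while type (ii) does not at position $1$ (for $k\ge 1$), and the fixed-point-free description in (iii) is just the fixed-point-free part of (ii).
\end{proof}
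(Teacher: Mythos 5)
Your second, ``cleaner'' route is exactly the paper's proof: the paper disposes of this lemma in one line by applying $r_{-1}$ to Lemma~\ref{lemma:132decomp}, using that $r_{-1}$ is a bijection $\cI_n\rightarrow\cI_n$ with $r_{-1}(132)=213$ (Lemma~\ref{lemma:operations and involutions}). So in approach you agree with the paper, and that part of your write-up is sound.

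The direct argument you wrote out, however, gets the block bookkeeping wrong, and if it stood alone it would be a genuine gap. Writing $\iota=312[\sigma_1,1,\sigma_2]$, the correct conclusion is $|\sigma_1|>|\sigma_2|$; your plan paragraph asserts $|\sigma_1|<|\sigma_2|$, the opposite. Moreover the strict inequality falls out of the arithmetic with no extra step: $\iota(1)=|\sigma_1|+1$ must lie in the value interval of the first block, which is $[\,n-|\sigma_1|+1,\,n\,]=[\,|\sigma_2|+2,\,n\,]$, so $|\sigma_1|\geq|\sigma_2|+1$ directly; your parenthetical exclusion of equality (``the middle point would be fixed'') is not a correct justification, since the singleton carries the value $1$ at position $|\sigma_1|+1$ and is never a fixed point once $|\sigma_1|\geq 1$. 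Finally, the identification of the blocks is off: in the target form $45312[1,\alpha,\beta,1,r_1(\alpha)]$ the value $1$ sits at position $n-k+1$, so it is the short suffix that satisfies $|\sigma_2|=k-1$ with $\alpha$ order isomorphic to $r_1(\sigma_2)$, while $\sigma_1$ is the long prefix of length $n-k$, which itself decomposes as $231[1,\alpha,\beta]$ (its first entry is $n-k+1$, the partner of the value $1$, followed by the reflected copy of $\sigma_2$ and then the diagonal involution $\beta$). Your line ``writing $|\sigma_1|=k-1$ \dots\ with $\alpha=\sigma_1$'' assigns these roles to the wrong blocks. The converse verification and part (iii) of your argument are fine as written.
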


\begin{proof}
All these results come from Lemma~\ref{lemma:132decomp} and the map $r_{-1}$ since $r_{-1}(132)=213$. 
\end{proof}

We find that these generating functions are related to the $q$-Catalan numbers, $\tilde{C}_n(q)$, which Carlitz and Riordan~\cite{CR64} defined. We will particularly be seeing $C_n(q)=q^{\binom{n}{2}}\tilde{C}_n(q^{-1})$  in our calculations, which is  recursively defined by
$C_0(q)=1$ and
 $$C_n(q)=\sum_{k=0}^{n-1}q^kC_k(q)C_{n-k-1}(q).$$ 
We use the result by Dokos et.\ al.~\cite{DDJSS12} (Theorem 3.1) that the generating function for $\fS_n(132)$ and coinversions is $C_n(q)$.

\begin{thm} For $m\geq 0$ we have $\ol{IF\cI}_{2m}(132)=\ol{IF\cI}_{2m}(213)$ and
$$\displaystyle \ol{IF\cI}_{2m}(132)=C_m(q^2).$$
\label{thm:barFI_n(132)}
\end{thm}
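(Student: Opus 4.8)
The plan is to prove both assertions at once from the fixed-point-free decomposition in Lemma~\ref{lemma:132decomp}(iii), namely $F\cI_{2m}(132)=\{21[\alpha,r_1(\alpha)]:\alpha\in\fS_m(132)\}$, together with the analogous statement Lemma~\ref{lemma:213decomposition}(iii) for $213$. The equality $\ol{IF\cI}_{2m}(132)=\ol{IF\cI}_{2m}(213)$ is immediate from Proposition~\ref{prop:IIWilfequiv} (or directly from the map $r_{-1}$, which is a $\coinv$-preserving bijection $\cI_n(132)\to\cI_n(213)$), so I would dispense with it in one sentence and concentrate on the formula $\ol{IF\cI}_{2m}(132)=C_m(q^2)$.

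\medskip

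First I would set up the bijection $\alpha\mapsto \iota=21[\alpha,r_1(\alpha)]$ between $\fS_m(132)$ and $F\cI_{2m}(132)$ supplied by the lemma, and compute $\coinv(\iota)$ in terms of $\alpha$. Write $\iota$ as the inflation of $21$ by the two blocks $\alpha$ (on indices $[1,m]$, using the larger values $[m+1,2m]$) and $r_1(\alpha)$ (on indices $[m+1,2m]$, using the smaller values $[1,m]$). Coinversions of $\iota$ split into three types: those internal to the first block, those internal to the second block, and those between the two blocks. Because the first block sits entirely above and to the left of the second, every pair $(i,j)$ with $i\le m<j$ is a \emph{descent}-type pair (value of $i$ exceeds value of $j$), hence contributes nothing to $\coinv$. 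So $\coinv(\iota)=\coinv(\alpha)+\coinv(r_1(\alpha))$. Now $r_1$ reverses both the sequence of positions and the sequence of values, so it is an involution that fixes the \emph{co}inversion count: $\coinv(r_1(\alpha))=\coinv(\alpha)$. (If one prefers to cite the literature, $r_1$ is the classical reverse-complement; a one-line check that it preserves $\coinv$ is enough.) Therefore $\coinv(\iota)=2\coinv(\alpha)$.

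\medskip

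Given that, the generating function is
$$\ol{IF\cI}_{2m}(132)=\sum_{\iota\in F\cI_{2m}(132)}q^{\coinv(\iota)}=\sum_{\alpha\in\fS_m(132)}q^{2\coinv(\alpha)}=\Bigl(\sum_{\alpha\in\fS_m(132)}(q^2)^{\coinv(\alpha)}\Bigr),$$
which is exactly the coinversion generating function of $\fS_m(132)$ evaluated at $q^2$. By the cited result of Dokos et.\ al.~\cite{DDJSS12} (Theorem 3.1), recalled just before the statement, that generating function is the Carlitz--Riordan $q$-Catalan number $C_m$, so $\ol{IF\cI}_{2m}(132)=C_m(q^2)$. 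This completes the proof, and the same computation through the $r_{-1}$-image (or through Lemma~\ref{lemma:213decomposition}(iii)) gives the $213$ version.

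\medskip

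The only real content is the claim $\coinv\bigl(21[\alpha,r_1(\alpha)]\bigr)=2\coinv(\alpha)$, which is where I would be careful: one must verify both that the cross-block pairs contribute nothing (this uses that in the $21$ inflation the high-value block precedes the low-value block) and that $r_1$ preserves $\coinv$ of a block (this uses that $r_1$ reverses positions and values simultaneously, so a pair of positions that was increasing-in-value for $\alpha$ is again increasing-in-value for $r_1(\alpha)$). Both are short, and neither involves the $132$-avoidance hypothesis — avoidance is only used to restrict $\alpha$ to $\fS_m(132)$ so that the Dokos et.\ al.\ evaluation applies. I don't anticipate any genuine obstacle; the main risk is a bookkeeping slip about which block carries the large values, which is why I would draw on the diagram convention of Lemma~\ref{lemma:132decomp}(iii) to pin it down.
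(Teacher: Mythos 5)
Your proposal is correct and follows essentially the same route as the paper: decompose via Lemma~\ref{lemma:132decomp}(iii), show $\coinv(21[\alpha,r_1(\alpha)])=2\coinv(\alpha)$, invoke the Dokos et.\ al.\ evaluation $C_m(q)$ for $\coinv$ over $\fS_m(132)$, and transfer to $213$ by a symmetry of the square; the paper simply cites Lemma~\ref{lemma:inv preserved} for the statistic-preservation step where you argue it directly. One small correction to your justification: in this paper's notation $r_1$ is reflection across the main diagonal, i.e.\ the inverse map, not the reverse-complement (that is $R_{180}$); the needed fact $\coinv(r_1(\alpha))=\coinv(\alpha)$ still holds, since $(i,j)\mapsto(\alpha(i),\alpha(j))$ matches coinversions of $\alpha$ with those of $\alpha^{-1}$, or one can just cite Lemma~\ref{lemma:inv preserved}.
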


\begin{proof}
We can write $\iota\in \cI_{2m}(132)$ by Lemma~\ref{lemma:132decomp} as $\iota = 21[\alpha,r_{1}(\alpha)]$ for $\alpha\in \fS_m(132)$. By Proposition~\ref{lemma:inv preserved} $r_{1}$ preserves $\inv$ and so preserves $\coinv$ as well, which tells us $\coinv(\iota)=2\coinv(\alpha)$. 
As result $\ol{IF\cI}_{2m}(132)$ is equivalent to the generating function for $\fS_m(132)$ using $\coinv$ with the substitution of $q^2$ for $q$. Dokos et.\ al.~\cite{DDJSS12} found that this generating function for $\fS(132)$ using $\coinv$ is $C_m(q)$, which proves the result. 

Since the map $r_1$ is a bijection from $\cI_n(132)$ to $\cI_n(213)$ that preserves $\inv$, $\coinv$ and the number of fixed points we must have $\ol{IF\cI}_{2m}(132)=\ol{IF\cI}_{2m}(213)$.
\end{proof}

We now have what we need to describe $I\cI_n(132)$. Recall from Theorem~\ref{thm:SimionSchmidt} that the cardinality of $\cI_n(132)$ is the central binomial coefficient so the generating function for $\ol{I\cI}_n(132)$ will be a $q$-analogue for the central binomial coefficient. This is not the standard $q$-analogue, but one that will parallel the following identity.  A corollary of Gould and Kaucky's work in~\cite{GK66} is
$$a_n=a_{n-1}+\sum_{k=1}^{\floor{n/2}}C_{k-1}a_{n-2k}$$
where $a_n=\binom{n}{\ceil{n/2}}.$ This identity appears in Simion and Schmidt's paper~\cite{ss:rp} (equation 5) with a discussion about integer lattice paths.

\begin{prop} With $\ol{I\cI}_0(132)=1$ we have for $n\geq 1$ that
$$ \ol{I\cI}_n(132)=q^{n-1} \ol{I\cI}_{n-1}(132)+\sum_{k=1}^{\floor{n/2}} q^{2(k-1)}C_{k-1}(q^2)\ol{I\cI}_{n-2k}(132).$$
\label{II132}
\end{prop}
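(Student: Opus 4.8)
The plan is to compute $\coinv(\iota)$ directly from the disjoint-union decomposition of $\cI_n(132)$ given in Lemma~\ref{lemma:132decomp}, summing the $q^{\coinv}$-contributions over the two parts. The generating function $\ol{I\cI}_n(132)=\sum_{\iota\in\cI_n(132)}q^{\coinv(\iota)}$ then splits as a sum of two pieces corresponding to parts (i) and (ii) of that lemma.

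First I would handle part (i): each $\iota = 12[\alpha,1]$ with $\alpha\in\cI_{n-1}(132)$. Since the singleton block of value $n$ sits at the last position, it contributes $n-1$ coinversions (the pair of every earlier index with the last), and there are no other coinversions created between the blocks, so $\coinv(\iota)=\coinv(\alpha)+(n-1)$. Hence this part contributes $q^{n-1}\ol{I\cI}_{n-1}(132)$. Next I would handle part (ii): each $\iota = 45312[\alpha,1,\beta,r_1(\alpha),1]$ with $\alpha\in\fS_{k-1}(132)$, $\beta\in\cI_{n-2k}(132)$, $1\le k\le\floor{n/2}$. Here I need to count coinversions carefully by block pairs. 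The key observations: $r_1$ preserves $\inv$ hence $\coinv$ (Lemma~\ref{lemma:inv preserved}), so the two copies of $\alpha$-shaped blocks each contribute $\coinv(\alpha)$ internally, giving $2\coinv(\alpha)$; the block $\beta$ contributes $\coinv(\beta)$ internally; and the cross-block coinversions need to be tallied from the skeleton pattern $45312$ together with the relative value/position ranges. I expect the cross terms to collapse: writing $\tau = 231[\alpha,1,\gamma]$ (the $\fS_k(132)$ permutation from the proof of Lemma~\ref{lemma:132decomp}(iii) with $\gamma$ replaced by the relevant piece), one sees that $\coinv$ of the part-(ii) involution built from $\alpha$ and $\beta$ equals $2(k-1) + 2\coinv(\alpha) + \coinv(\beta)$ plus whatever coinversions lie between $\beta$ and the rest. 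Summing $q^{\coinv}$ over all choices of $\alpha\in\fS_{k-1}(132)$ gives a factor $C_{k-1}(q^2)$ by the identity $\sum_{\alpha\in\fS_m(132)}q^{\coinv(\alpha)}=C_m(q)$ of Dokos et.\ al.\ (used in Theorem~\ref{thm:barFI_n(132)}), with $q\mapsto q^2$ because each inversion/coinversion of $\alpha$ is doubled; summing over $\beta\in\cI_{n-2k}(132)$ gives $\ol{I\cI}_{n-2k}(132)$; and the constant term $q^{2(k-1)}$ accounts for the fixed cross-block coinversions that do not depend on $\alpha$ or $\beta$.

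The main obstacle will be the bookkeeping in part (ii): correctly identifying which ordered pairs of blocks in the inflation $45312[\alpha,1,\beta,r_1(\alpha),1]$ form coinversions and which form inversions. One must check, for instance, that the two blocks shaped like $\alpha$ and $r_1(\alpha)$ contribute no coinversions \emph{between} them (their pattern $4$ then $3$ is a descent in position but one must verify the value ranges), that the central block $\beta$ sits in the value interval strictly between the $\alpha$-values and the $r_1(\alpha)$-values so that every pair (earlier $\alpha$-or-first-singleton entry, $\beta$-entry) and every pair ($\beta$-entry, later $r_1(\alpha)$-or-last-singleton entry) is examined for its order type, and similarly that the two singleton points contribute a predictable number of coinversions with each block. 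I would organize this as a small table of the $\binom{5}{2}$ block pairs from the skeleton $45312$, record for each whether it is entirely a set of coinversions, entirely inversions, or depends on internal structure, and then verify that only the two $\alpha$-shaped blocks and $\beta$ carry internal dependence, with the leftover count being exactly $2(k-1)$. Once that constant is pinned down the result follows by combining the two parts and invoking the substitution $q\mapsto q^2$ in the Dokos et.\ al.\ formula, exactly as in Theorem~\ref{thm:barFI_n(132)}.

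A cleaner alternative I would keep in reserve: use the fixed-point-free result directly. Every $\iota$ in part (ii) with $\beta$ fixed-point-free is handled by Theorem~\ref{thm:barFI_n(132)}, and a general $\iota\in\cI_n(132)$ of part-(ii) type decomposes uniquely by peeling the maximal "$21$-type'' fixed-point-free outer shell, reducing to a fixed-point-free $\ol{IF\cI}_{2k}(132)=C_k(q^2)$ glued to an inner $\ol{I\cI}_{n-2k}(132)$, with the gluing contributing the factor $q^{2(k-1)}$ (shifting by $\coinv$ from the outer layer minus the internal $2\coinv(\alpha)$ already counted in $C_{k-1}(q^2)$). This route replaces the ad hoc block-pair count by an appeal to the already-proved Theorem~\ref{thm:barFI_n(132)}, and I would use whichever of the two makes the constant $q^{2(k-1)}$ most transparent in the final write-up.
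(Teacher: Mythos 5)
Your proposal is correct and follows essentially the same route as the paper: split according to Lemma~\ref{lemma:132decomp}, use that $r_1$ preserves $\coinv$, compute $\coinv(\iota)=2\coinv(\alpha)+\coinv(\beta)+2(k-1)$ (your block-pair tally does close: the only cross-block coinversions come from the $\alpha$-block against the singleton valued $n$ and from the $r_1(\alpha)$-block against the final singleton, each giving $k-1$), and then sum over $\alpha$ via the Dokos et.\ al.\ identity with $q\mapsto q^2$ and over $\beta$ to get $\ol{I\cI}_{n-2k}(132)$. This matches the paper's proof, which cites Theorem~\ref{thm:barFI_n(132)} for exactly that substitution step.
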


\begin{proof}
 By Lemma~\ref{lemma:132decomp} if $\iota(n)=n$ then $\iota = 12[\alpha,1]$ for some $\alpha\in \cI_{n-1}(132)$, which implies $\coinv(\iota)=\coinv(\alpha)+n-1$. In any other case $\iota(n)=k\neq n$ and 
$\iota = 45312[\alpha,1,\beta,r_1(\alpha),1]$ for some $\alpha\in \fS_{k-1}(132)$ and $\beta \in \cI_{n-2k}(132)$.
 It follows from the decomposition of $\iota$ and from Proposition~\ref{lemma:inv preserved} that $r_1$ preserves $\coinv$ and so $\coinv(\iota)=2\coinv(\alpha)+\coinv(\beta)+2(k-1)$. Putting this all together and using Theorem~\ref{thm:barFI_n(132)} we get the above equality.
\end{proof}

%%%%%%%%%%%%%%%%%%%%%%%%%%%%%
\subsection{The pattern $321$}
\label{sec:inv321}

In this section we describe  involutions  avoiding $321$ particularly focusing the structure of the two-cycles and  the associated standard Young Tableaux, a concept we give a brief introduction to below. When listing the two-cycles of an involution we  use in this section and future sections the convention of listing all two-cycles $(s_1,t_1), (s_2,t_2),..., (s_m,t_m)$ so that each cycle is written with its minimum element on the  left, $s_i<t_i$, and the cycles themselves are ordered so that their minimum elements increase, $s_i<s_{i+1}$.

To introduce standard Young Tableau we first define an {\it integer partition} $\lambda=(\lambda_1, \lambda_2, \dots, \lambda_k)$ of $n$, $\lambda\vdash n$, which is a weakly decreasing sequence of positive integers that sum to $n$. Given an integer partition we can construct its {\it Young diagram} that has $\lambda_i$ boxes in row $i$ left-justified and labeled from top to bottom. We label the columns from left to right and define the {\it size}, $|\lambda |$, of a Young diagram to be the number of boxes. A standard Young Tableau, SYT,  of size $n$ is a Young diagram of size $n$ filled with numbers $1,2,\dots, n$ so that each box contains a unique number, each row is strictly increasing left to right and each column is strictly increasing from top to bottom. We will call the numbers in the boxes {\it fillings} and the underlying integer partitions its {\it shape}. See Figure~\ref{fig:RSK_invo} for an example.  The descent set, $\Des(P)$, of a SYT $P$ is the collection of all fillings $i$ such that $i+1$ appears in a lower row. There is a well-known bijection from  permutations  to pairs of SYT  of the same shape called the Robinson-Schensted-Knuth, RSK, correspondence.  This correspondence has many beautiful properties and we state the ones relevant to this paper in the next proposition. For more information see~\cite{S01} or~\cite{S99}. 
\begin{figure}
\begin{center}
\begin{tikzpicture}[scale = 1]
\begin{scope}[shift={(0,0)}]
\draw (0,0) node { \young({}{}{}{}{}{},{}{}{}{}{}{},{}{}{}{}{},{}{}{}) };
\draw (3.7,0) node { \young(135,24,6) };
\draw (7,0) node { \young(134,256,78) };
 \end{scope}
 \end{tikzpicture}
 \end{center}
\caption{From left to right we have the Young diagram for $\lambda=(5,5,4,2)$, the SYT associated the involution $\hat\iota =216453$ and the SYT associated to $21785634 =\hat\iota + (4,8)$ . }
\label{fig:RSK_invo}
\end{figure}
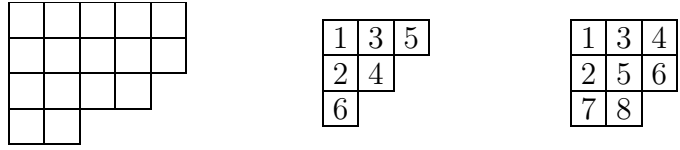

\begin{prop}  Let $\sigma\in \fS_n$ correspond by RSK to the pair of SYT tableau $(P,Q)$ of the same shape and size $n$. 
\begin{enumerate}[(i)]
\item The descent sets $\Des(\sigma)=\Des(Q)$ are equal. 
\item The pair of SYT for the inverse $\sigma^{-1}$ is $(Q,P)$. 
\item If $\sigma$ is an involution then $P=Q$. 
\item The length of the longest increasing sequence in $\sigma$ equals the length of the longest row in $P$.
\item The length of the longest decreasing sequence in $\sigma$ equals the length of the longest column in $P$.
\item The number of fixed points  in $\sigma$ equals the number of columns in $P$ with odd length. \hqed
\end{enumerate}
\label{SYTfacts}
\end{prop}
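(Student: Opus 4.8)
The six assertions are the classical structural properties of the RSK correspondence, so the plan is to derive them from two fundamental facts — the symmetry of RSK under inversion and Schensted's theorem — together with one bumping lemma. The order of attack I would use is: first (ii), then (iii) for free, then (i), then (iv) and (v) together, and finally the most delicate one, (vi).

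For (ii) I would invoke the geometric (shadow-line / matrix-ball) description of RSK. One plots the graph $\{(i,\sigma(i))\}$ in the plane; the shadow lines emanating from the southwest read off the first rows of $P$ and $Q$ simultaneously — the $x$-coordinates of the relevant corners giving the first row of $Q$ and the $y$-coordinates giving the first row of $P$ — and iterating on the derived set of points produces all the remaining rows. Reflecting the whole picture across the diagonal $y=x$ interchanges the two coordinates; on permutations this is exactly $\sigma\mapsto\sigma^{-1}$, and on the output it swaps $P$ and $Q$, which is (ii). Statement (iii) is then immediate: if $\sigma^2=\mathrm{id}$ then $\sigma=\sigma^{-1}$, so $(P,Q)=(Q,P)$ and hence $P=Q$. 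For (i), recall that $Q$ records the order in which boxes are created, so $i\in\Des(Q)$ precisely when the box created at step $i+1$ lies in a strictly lower row than the box created at step $i$. The key input is the row-bumping comparison lemma: if one row-inserts $x$ and then $y$ into the same tableau, then for $x<y$ the new box of $y$ is weakly above (and strictly right of) the new box of $x$, while for $x>y$ the new box of $y$ is strictly below (and weakly left of) it. Applying this with $x=\sigma(i)$, $y=\sigma(i+1)$ shows the step-$(i+1)$ box is strictly below the step-$i$ box exactly when $\sigma(i)>\sigma(i+1)$, i.e.\ $\Des(Q)=\Des(\sigma)$.

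For (iv) I would prove Schensted's theorem by the standard induction on the length: after the first $k$ insertions, the entry of $P$ in row $1$, column $j$ equals the smallest possible last term of an increasing subsequence of length $j$ in $\sigma(1)\cdots\sigma(k)$, so the length of the first row of $P$ is the largest $j$ for which such a subsequence exists — the length of the longest increasing subsequence of $\sigma$. For (v) I would reduce to (iv): the insertion tableau of the reversed word $\sigma(n)\sigma(n-1)\cdots\sigma(1)$ is the transpose of $P$, and decreasing subsequences of $\sigma$ are exactly the increasing subsequences of the reversal, so the longest column of $P$ has the asserted length; alternatively one runs the mirror induction for column $1$ with decreasing subsequences in place of increasing ones.

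The main obstacle is (vi). Here I would use the bijection $\cI_n\to\{\text{SYT of size }n\}$ furnished by (iii) and induct on $n$, splitting on the role of $n$ in $\sigma$. If $\sigma(n)=n$, then $\sigma$ restricts to an involution of $[n-1]$, and inserting the maximal letter $n$ last simply appends a box at the end of the first row — a fresh column of length $1$ — so both $\fix(\sigma)$ and the number of odd-length columns increase by exactly $1$. If instead $\sigma$ contains the two-cycle $(k,n)$ with $k<n$, deleting this pair and standardising yields an involution of $[n-2]$ with the same number of fixed points, and one must show that reinstating the two-cycle changes the number of odd-length columns of the shape by $0$; this is the crux. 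It rests on the fact that inserting the two values of a two-cycle into the (symmetric) picture adds the two new boxes in a vertically aligned "domino'' fashion after bumping, so some column grows by $2$ and all column parities are preserved. Equivalently — and this is the cleanest rigorous packaging — one runs RSK on the symmetric $0$–$1$ matrix of $\sigma$ and identifies the number of odd-length columns of the common shape with the trace of the matrix, which for the permutation matrix of an involution is precisely $\fix(\sigma)$. This two-cycle bookkeeping is the only non-formal ingredient; everything else follows from (ii) and Schensted's theorem as above. For completeness the whole proposition is also available in \cite{S01,S99}.
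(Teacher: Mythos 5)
The paper does not actually prove this proposition: it is stated as a list of classical facts about RSK with a pointer to the references \cite{S01,S99}, so your self-contained write-up is necessarily a different route. Your sketches of (i)--(v) are the standard correct arguments -- the shadow-line symmetry for (ii), (iii) as an immediate consequence, the row-bumping comparison lemma for (i), and Schensted's theorem together with the reversal/transpose fact for (iv) and (v) -- and nothing there needs repair.

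The genuine gap is in (vi). Your inductive step claims that when the two-cycle $(i,n)$ is reinstated, the two new boxes are added ``in a vertically aligned domino fashion,'' so that a single column grows by $2$ and all column parities are preserved. That is false in general. Take $\hat\iota=12$ and add the two-cycle $(3,4)$, giving $\iota=1243$: by Beissinger's algorithm (which the paper itself records in its treatment of $\cI_n(321)$) the insertion of $3$ terminates by appending the box $(1,3)$ at the end of row $1$, and $4$ is then placed at $(2,1)$; the shape goes from $(2)$ to $(3,1)$ and the two new boxes lie in different columns. The number of odd columns is still preserved ($2$ before and after), but not for the reason you give. The correct bookkeeping is: if the insertion of $i$ terminates by adding a box at the end of row $r$ of the old shape $\lambda$, that box sits in column $\lambda_r+1$, which (since the new corner is valid, so $\lambda_{r-1}\geq\lambda_r+1$ or $r=1$) previously had length exactly $r-1$ and now has length $r$; the box for $n$ at the end of row $r+1$ sits in column $\lambda_{r+1}+1$, which either coincides with the first column (when $\lambda_r=\lambda_{r+1}$, your domino case) or, when $\lambda_r>\lambda_{r+1}$, previously had length exactly $r$ and now has length $r+1$. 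So either one column gains two boxes, or two columns of lengths $r-1$ and $r$ each gain one box, in which case exactly one switches from even to odd and the other from odd to even; in every case the number of odd columns is unchanged, which is what your induction needs. Finally, your ``cleanest rigorous packaging'' -- identifying the number of odd columns of the shape with the trace of the symmetric permutation matrix under RSK -- is not an independent argument: that identification is precisely the statement to be proved (Sch\"utzenberger's theorem), so as written it is a citation rather than a proof, which is in effect what the paper does anyway.
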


By (ii) in the above proposition we know an involution $\iota$ corresponds by RSK to $(P,P)$. This means we can associated an involution by RSK to a single SYT $P$. The next lemma is our first  about the structure of  two-cycles in an involution avoiding 321 and fillings of the associated SYT. We note that part (ii) in Lemma~\ref{lemma:321structure} can be seen as a corollary of B\'{o}na and Smith's Proposition 3.1 in~\cite{BS16}. We provide an alternative proof using an algorithm by Beissinger~\cite{B87} that we introduce before the proof. The first part of the next lemma  appears in Manara and Perelli Cippo's paper~\cite{MP11} (Proposition 2.3).

\begin{lemma}
Let $\iota \in \cI_n(321)$  and suppose its two-cycles are
   $(s_1,t_1), (s_2,t_2),..., (s_m,t_m)$.
%listed so that $s_i < t_i $ for all $i$ and $s_1 < s_2 < ... < s_m.$
\begin{enumerate}[(i)]
\item We must have that $t_1 < t_2 < ... < t_m$. 
\item If $\iota$ is fixed-point-free then the associated SYT has two rows of equal length, $m$ columns, and the $i$th column is filled with $s_i$ and $t_i$. 
\end{enumerate}
\label{lemma:321structure}
\end{lemma}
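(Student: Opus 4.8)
The plan is to prove both parts by induction on $n$ (or on the number $m$ of two-cycles), using the structural decomposition of $\cI_n(213)$ from Lemma~\ref{lemma:213decomposition} together with Beissinger's insertion algorithm to track how two-cycles correspond to columns of the SYT. Note first that a more symmetric statement is natural: by applying the reflection $r_{-1}$ (which sends $321$ to $321$ and is a bijection $\cI_n\to\cI_n$ by Lemma~\ref{lemma:operations and involutions}), or by working directly, we may freely translate between the $321$-avoiding and the $123$-avoiding pictures; but here I will stay with $\cI_n(321)$ and use Proposition~\ref{SYTfacts}(v), which says that avoiding $321$ is equivalent to the associated SYT having at most two rows.

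For part (i): let $\iota\in\cI_n(321)$ with two-cycles $(s_1,t_1),\dots,(s_m,t_m)$ ordered so that $s_1<s_2<\cdots<s_m$. I would argue directly that $t_1<t_2<\cdots<t_m$ by contradiction. Suppose $t_i>t_{i+1}$ for some $i$; then since $s_i<s_{i+1}$ and $s_i<t_i$, consider the values at positions $s_i$, $s_{i+1}$, and $t_{i+1}$: we have $\iota(s_i)=t_i$, $\iota(s_{i+1})=t_{i+1}$, and $\iota(t_{i+1})=s_{i+1}$. Since $s_i<s_{i+1}$, and we need to produce a decreasing subsequence of length three. The key observation is that $t_i > t_{i+1} > s_{i+1} > s_i$, and one checks that the positions $s_i < s_{i+1}$ carry values $t_i > t_{i+1}$, giving a descent; then combining with a suitably chosen third position (either $t_{i+1}$ itself if $s_{i+1}<t_{i+1}$, or an index inside the "span" of the larger cycle) yields a $321$ pattern. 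I expect a short case analysis on the relative order of $s_{i+1}$ and $t_i$ settles this; alternatively, this part is already in Manara–Perelli Cippo~\cite{MP11} (Proposition 2.3) and can be cited, with the proof included for completeness.

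For part (ii): assume $\iota\in F\cI_n(321)$ with $n=2m$. By Proposition~\ref{SYTfacts}(vi) a fixed-point-free involution corresponds to a SYT with no odd columns; combined with Proposition~\ref{SYTfacts}(v) (at most two rows), the shape must be $(m,m)$ — exactly two rows of equal length $m$, hence $m$ columns. It remains to show the $i$th column is filled with $s_i$ on top and $t_i$ below. Here I would invoke Beissinger's algorithm, which builds the SYT of an involution by inserting the values of the two-cycles in a prescribed order and records each two-cycle as a bumping chain that deposits its two elements. The clean way: process the two-cycles in order of increasing $s_i$ (equivalently, by part (i), increasing $t_i$); show by induction that after processing $(s_1,t_1),\dots,(s_j,t_j)$ the tableau is the $2\times j$ rectangle whose $i$th column is $\binom{s_i}{t_i}$. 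The inductive step uses that $s_{j+1}>s_i$ and $t_{j+1}>t_i$ for all $i\le j$ (part (i) plus the ordering), so inserting $s_{j+1}$ into the first row appends it at the right end (it exceeds all current first-row entries $s_1,\dots,s_j$ — this needs the inequality $s_j < s_{j+1}$ together with the fact that the first row currently holds exactly $s_1<\cdots<s_j$), no bumping occurs, and then $t_{j+1}$ is placed below it in the second row (again appended at the right since $t_{j+1}$ exceeds $t_1,\dots,t_j$). The main obstacle is verifying carefully that the first row of the partial tableau is precisely $s_1<\cdots<s_j$ at every stage — i.e. that no $t_i$ ever ends up in the first row and no $s_i$ ever gets bumped — which is where $321$-avoidance (no three-term decreasing subsequence, so no bumping chain of length $\ge 2$ beyond depositing into row two) does the real work. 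Once the tableau is pinned down, part (ii) follows immediately.

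A cleaner alternative for part (ii), avoiding Beissinger's machinery, is to use Lemma~\ref{lemma:213decomposition}(iii): via $r_{-1}$, every fixed-point-free $\iota\in F\cI_{2m}(321)$ corresponds to one in $F\cI_{2m}(123)$ of the form $21[\alpha, r_1(\alpha)]$, and one can read off the two-cycles and the recursively-built SYT from this block structure by induction on $m$. I would likely present the Beissinger proof as the main argument (since the paper announces it explicitly) and remark on this structural alternative.
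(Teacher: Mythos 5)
Your main line is essentially the paper's: part (i) by exhibiting a $321$ pattern, part (ii) by induction via Beissinger's insertion, so the approach is sound. Two corrections, though. In (i), no case analysis is needed: the three positions $s_i<s_{i+1}<t_{i+1}$ you already name carry the values $t_i>t_{i+1}>s_{i+1}$ (the last inequality is just the convention $s_{i+1}<t_{i+1}$), so they form a $321$ unconditionally --- this is exactly the paper's one-line witness, and your proposed split on the relative order of $s_{i+1}$ and $t_i$ is vacuous. More seriously, the ``cleaner alternative'' in your final paragraph is false: $r_{-1}(321)=321$, so no symmetry of the square that preserves involutions carries $\cI_n(321)$ to $\cI_n(123)$ or $\cI_n(213)$; indeed the paper notes $|F\cI_{2m}(123)|=\binom{2m-1}{m}\neq C_m=|F\cI_{2m}(321)|$, and $214365\in F\cI_6(321)$ is not of the form $21[\alpha,r_1(\alpha)]$, so that remark should be dropped (it is only a side remark, so the main proof survives).

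For (ii), your bottom-up induction (adding cycles in increasing order of $t_i$) works in substance, but note the intermediate tableaux carry standardized fillings rather than the literal values $s_i,t_i$, so you must track the relabeling; the paper avoids this by going top-down, writing $\iota=\hat\iota+(s_m,t_m)$ with $t_m=2m$, applying the inductive hypothesis to the standardized $\hat\iota$ with cycles $(\hat s_i,\hat t_i)$, and applying Beissinger once, where $s_m>\hat s_{m-1}$ guarantees the insertion appends to row one with no bumping. Your use of Proposition~\ref{SYTfacts}(v) and (vi) to pin the shape $(m,m)$ before locating the fillings is a pleasant shortcut the paper does not need, since its induction constructs the tableau outright.
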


Between the hundreds of sets that are counted by the Catalan numbers, with recurrence $C_0=1$ and 
\begin{equation}C_n=\sum_{k=0}^{n-1}C_kC_{n-k-1},\label{Catalan}\end{equation}
 there are even more established bijections. See Stanley's book~\cite{S99} for more information. To prove the lemma above we show one of these bijections between $F\cI_n(321)$ and SYT with two rows of equal length using the RSK correspondence. To aid us in this proof and some later proofs we will present a short-cut algorithm  by Beissinger~\cite{B87} that  quickly determines SYT of involutions. Say that we have an involution $\hat\iota\in \cI_{n-2}$ and we want to add  another two-cycle $(i,j)$ with $1\leq i<j \leq n$. What we mean is, in one-line notation, increase all numbers $\geq i$ in $\hat\iota$ by one, further increase all numbers $\geq j$ by another one and have this word written in the indices $[n]\setminus \{i,j\}$. We get our new involution by placing $i$ at index $j$ and  $j$  at index $i$.  Beissinger notates this new involution $\iota=\hat\iota+(i,j)$. Her  algorithm describes the SYT of $\iota$ based off of the SYT of $\hat\iota$ where $\iota=\hat\iota+(i,n)$ in the  case of $j=n$. If $\hat{T}$ is the SYT for $\hat\iota$ we get the SYT for $\iota$ following three steps. 
\begin{enumerate}
\item Increase all fillings $\geq i$ in $\hat{T}$ by one. 
\item Insert $i$ as the Robinson-Schensted-Knuth bumping algorithm dictates. Say the bumping algorithm ends on row $r$. 
\item Insert $n$ at the end of row $r+1$. 
\end{enumerate}
With these three steps we arrive at the SYT for $\iota=\hat\iota+(i,n)$. See Figure~\ref{fig:RSK_invo} for an example. 
We now begin the proof for Lemma~\ref{lemma:321structure}.

\begin{proof}[Proof for Lemma~\ref{lemma:321structure}]
To show (i) let $\iota\in \cI_n(321)$ have two-cycles as stated but $t_j>t_{j+1}$ for some $j$. In this case we have the subword $t_jt_{j+1}s_{j+1}$ that is the pattern $321$. 

Now consider $\iota \in F\cI_{2m}(321)$ with two-cycles $(s_1,t_1), (s_2,t_2),..., (s_m,t_m)$. We will show (ii) by inducting on $m$. Part (ii) is true  by vacuum if $m=0$, so we assume that $m>0$ and  part  (ii) is true for all fixed-point-free involutions with length less than $2m$. By part (i) of this lemma the sequence of $t_i$'s increases so the Young diagram with two rows of length $m$ and  $s_i,t_i$ filling the $i$th column in increasing order is indeed a SYT. Call this SYT $T$.  We will show  that $T$ is also the SYT corresponding to $\iota$. Consider the involution $\hat\iota$ with two-cycles $(\hat s_1,\hat t_1), (\hat s_2,\hat t_2),..., (\hat s_{m-1},\hat t_{m-1})$ where $\hat s_i=s_i$ or $\hat s_i=s_i-1$ (similarly $\hat t_i=t_i$ or $\hat t_i=t_i-1$) depending on whether $s_i<s_m$ or $s_i>s_m$ respectively. This assures that $\hat\iota$ is a fixed-point-free involution avoiding $321$ and $\iota = \hat\iota+(s_m,t_m)$. By our inductive assumption the SYT $\hat T$ for $\hat\iota$ has two rows length $m-1$ with $\hat{s}_i$ and $\hat{t}_i$ in the $i$th column. 

Since $\iota= \hat\iota+(s_m,t_m)$ and $t_m=2m$ we can use Beissinger's algorithm. First increase all fillings  $\geq s_m$ in $\hat T$ by one, which means we have $s_i$ and $t_i$ are in the $i$th column for $i<m$. Step two has us insert $s_m$ via the bumping algorithm. Since the maximum of the first row is $s_{m-1}$, which is less than $s_m$, we place $s_m$ at the end of the first row, i.e. $m$th column. Since the algorithm ends on the first row we place $t_m=2m$ at the end of the second row, i.e. the $m$th column. Hence our final tableau is $T$, which proves part (ii). 
\end{proof}

We can actually calculate $\inv$ from $\iota$ easily from the two-cycles of an involution that avoids $321$.

\begin{lemma}
Let $\iota \in \cI_n(321)$  and suppose its two-cycles are
   $(s_1,t_1), (s_2,t_2),..., (s_m,t_m)$.
If  $(a,b)\in \Inv(\iota)$ then $a=s_i$ and $b=t_j$ for some $i$ and $j$ and additionally 
$$\displaystyle \inv(\iota)=\sum_{i=1}^{m}t_i-s_i.$$
\label{lemma:321inv}
\end{lemma}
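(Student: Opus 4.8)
\textbf{Proof proposal for Lemma~\ref{lemma:321inv}.}

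The plan is to show first that every inversion of $\iota$ must pair a left endpoint $s_i$ with a right endpoint $t_j$, and then count these inversions by grouping them according to which two-cycle supplies the value. For the first part, suppose $(a,b) \in \Inv(\iota)$, so $a < b$ (as indices) and $\iota(a) > \iota(b)$ (as values). I would rule out the three other possibilities for the pair of ``roles'' that $a$ and $b$ can play. If $a = t_i$ is a right endpoint, then $\iota(a) = s_i < a$; and if moreover $b = t_j$ is also a right endpoint with $b > a$, then $t_j > t_i$, so by Lemma~\ref{lemma:321structure}(i) we would have $s_j > s_i$ only in the wrong direction$\dots$ more carefully: if $a = t_i$ then $\iota(a) = s_i$, and for $(a,b)$ to be an inversion we need $\iota(b) < s_i < t_i = a < b$; but $\iota(b) < s_i$ forces $\iota(b)$ to be a value occurring strictly before index $s_i$ in one-line notation, which combined with $b > t_i$ gives the decreasing subsequence $\iota(a) = s_i$ at index$\dots$ Actually the cleanest route is: the pattern $321$ is avoided, so $\iota$ has no decreasing subsequence of length $3$. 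If $a$ were a right endpoint $t_i$, then $s_i$ appears at index $a$ and also the value $a = t_i$ appears at index $s_i < a$; together with $\iota(b) < \iota(a) = s_i$ at index $b > a$, the values $t_i, s_i, \iota(b)$ read at increasing indices $s_i < a < b$ form a decreasing sequence of length $3$, contradiction. Hence $a$ is a left endpoint, $a = s_i$. Symmetrically, applying the same reasoning to $\iota^{-1} = \iota$ (or using $r_{-1}$-type symmetry), $b$ must be a right endpoint $b = t_j$. This establishes the first assertion.

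For the count, I would use the characterization just proved. For a fixed two-cycle $(s_i, t_i)$, I count how many inversions have the value $t_i = \iota(s_i)$ as the larger value $\iota(a)$; that is, inversions of the form $(s_i, b)$ with $b > s_i$ and $\iota(b) < t_i$. By the first part, $b = t_j$ for some $j$, and $\iota(t_j) = s_j$. The condition $\iota(b) = s_j < t_i$ together with $b = t_j > s_i$ describes which $t_j$'s lie ``under'' the point $(s_i, t_i)$. The key structural input is Lemma~\ref{lemma:321structure}(i): the sequence $t_1 < t_2 < \dots < t_m$ is increasing, and correspondingly the $s_i$ are constrained. I expect the clean statement to be: the number of $b \in (s_i, n]$ with $\iota(b) < t_i$ equals exactly $t_i - s_i$ minus a correction, or alternatively that the number of indices $c$ strictly between $s_i$ and $t_i$ (there are $t_i - s_i - 1$ of them) plus the one crossing pair from the two-cycle itself $\dots$ Here I would instead count directly: the inversions involving the cycle $(s_i,t_i)$ are precisely the pairs $(s_i, b)$ with $s_i < b \le t_i$ such that $\iota(b) < t_i$ — and I claim \emph{every} such $b$ works, giving exactly $t_i - s_i$ of them (namely $b = s_i+1, \dots, t_i$). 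To see $\iota(b) < t_i$ for all $b$ in this range: if $\iota(b) \ge t_i$ for some $s_i < b < t_i$, then since $\iota$ avoids $321$ we cannot have a long decreasing subsequence, and one checks the triple $t_i$ (at index $s_i$), $\iota(b) \ge t_i$ forces$\dots$ this is the spot needing care.

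The main obstacle is precisely pinning down that for each two-cycle the inversions it contributes number exactly $t_i - s_i$ — equivalently, that no two distinct cycles ``share'' an inversion in a way that double-counts, and that the naive range $\{s_i+1, \dots, t_i\}$ of second coordinates is exactly right. I would resolve this by a careful case analysis using Lemma~\ref{lemma:321structure}(i) (monotonicity of the $t_i$) and the $321$-avoidance, showing: (a) for each $i$, the values $\iota(s_i+1), \iota(s_i+2), \dots, \iota(t_i)$ are all $< t_i$, so each of these $t_i - s_i$ pairs $(s_i, \cdot)$ is genuinely an inversion; (b) conversely any inversion $(s_i, t_j)$ has $s_i < t_j \le t_i$, because $t_j \le t_i$ follows from $\iota(t_j) = s_j < t_i = \iota(s_i)$ combined with the monotone structure of the cycles. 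Summing the $t_i - s_i$ contributions over $i = 1, \dots, m$, and noting the decompositions are disjoint (each inversion has a unique larger value, hence a unique defining cycle), yields $\inv(\iota) = \sum_{i=1}^m (t_i - s_i)$. A sanity check on the running example $\iota = 216453$ with two-cycles $(1,2), (3,6), (4,5)$ gives $1 + 3 + 1 = 5 = \inv(216453)$, which I would include to orient the reader.
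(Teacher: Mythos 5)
Your first assertion (that any inversion must have the form $(s_i,t_j)$) is argued in essentially the paper's way, by producing a forbidden $321$ pattern, though you only rule out $a$ being a right endpoint and $b$ a left endpoint; you still need to exclude fixed points (if, say, $a$ is a fixed point and $(a,b)$ an inversion, then $b\,a\,\iota(b)$ is a $321$ pattern), which the paper does explicitly. That omission is easily patched. The counting step, however, contains a genuine error. Both of your claims (a) and (b) are false: take $\iota=3412\in\cI_4(321)$ with two-cycles $(1,3),(2,4)$. For the cycle $(s_1,t_1)=(1,3)$ you claim the inversions with first coordinate $s_1$ are exactly $(1,b)$ for $b\in\{2,3\}$, with $\iota(b)<t_1$ throughout; but $\iota(2)=4\ge 3$, so $(1,2)$ is not an inversion, while $(1,4)=(s_1,t_2)$ \emph{is} an inversion even though $t_2=4>t_1=3$. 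So the second coordinates of the inversions attached to a cycle are not the indices in $(s_i,t_i]$, and the route you propose to close "the spot needing care" would fail. (Your sanity check would not have caught this: $216453$ contains the pattern $321$ -- e.g.\ the values $6,4,3$ -- so the lemma does not apply to it, and in any case $\inv(216453)=6$, not $5$.)

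The paper counts differently, and the difference is exactly where your argument breaks. Fix $i$ and consider the set of integers $[s_i,t_i-1]$; by $321$-avoidance this set contains no fixed point, so each of its $t_i-s_i$ elements is some $s_j$ or some $t_j$. If $s_j\in[s_i,t_i-1]$ then $t_j\ge t_i>s_i$ and $\iota(t_j)=s_j<t_i=\iota(s_i)$, so $(s_i,t_j)$ is an inversion; if $t_j\in[s_i,t_i-1]$ then again $(s_i,t_j)$ is an inversion. Conversely, any inversion $(s_i,t_j)$ has $s_j<t_i$, and either $s_j\ge s_i$ (so $s_j$ lies in the interval) or $s_j<s_i$, which forces $t_j\in[s_i,t_i-1]$. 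Since $s_j\in[s_i,t_i-1]$ forces $t_j\ge t_i$, no element of the interval is counted twice, giving exactly $t_i-s_i$ inversions with first coordinate $s_i$; summing over $i$ (legitimate because each inversion has a unique first coordinate) yields the formula. In short: index the inversions attached to the cycle $(s_i,t_i)$ by the elements of $[s_i,t_i-1]$, not by the positions $s_i+1,\dots,t_i$ of their second coordinates.
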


\begin{proof}
Consider $(a,b)\in \Inv(\iota)$ for $\iota\in \cI_n(321)$ with two-cycles written as in this lemma. There are three possibilities for both $a$ and $b$ as they can be equal to a $s_i$, a $t_i$, or a fixed point. By Lemma \ref{lemma:321structure} the $s_i$'s and $t_i$'s form increasing sequences in $\iota$ so we know $a$ and $b$ cannot  both be  $s_i$'s nor can they be both  $t_i$'s. If $a$ and $b$ are fixed points then $(a,b)$ is not an inversion. If instead $a$ is a fixed point and $b$ is not then 
$ba\iota(b)$  is the pattern $321$. Similarly $b$ can not be a fixed point, so neither $a$ nor $b$ are fixed points. Take the case where $a=t_i$ and $b=s_j$ for some $i$ and $j$, then $\iota(t_i)<t_i<s_j<\iota(s_j)$ and $(a,b)$ is not an inversion.  The only remaining case possible is $a=s_i$ and $b=t_j$ for some $i$ and $j$.

Next we will show for any $i$ that the number of inversions $(s_i,b)\in \Inv(\iota)$ is $t_i-s_i$, and by the first part this is enough to complete the proof. Note that $s_j\in [s_i,t_i-1]$ implies  that index $t_j$ is to the right of index $s_i$ so $(s_i,t_j)$ is an inversion. Similarly, $t_j\in [s_i,t_i-1]$  implies that $(s_i,t_j)$ is an inversion. Because $\iota$ avoids $321$ the interval $[s_i,t_i-1]$ is comprised of only $s_j$'s and $t_j$'s, which implies that the number of inversions $(s_i,b)$ is at least the size of  the interval $|[s_i,t_i-1]|=t_i-s_i$. 
This counts all possible inversions $(s_i,b)$ because of the following. 
From the first part of the proof we know if $(s_i,b)$ is an inversion that $b=t_j$ for some $j$ and since $(s_i,t_j)$ is an inversion we must have $\iota(s_i)=t_i>\iota(t_j)=s_j$. 
There are then two cases $s_j\in [s_i,t_i-1]$ or $s_j<s_i$ and this second case implies $t_j\in [s_i,t_i-1]$.
\end{proof}

Now we have all the tools we need to establish the generating functions for $IF\cI_{2m}(321)$ and $I\cI_n(321)$. The generating function for the fixed-point-free case analogues the Catalan recurrence in equation~\eqref{Catalan}. 

\begin{thm} We have  $IF\cI_{0}(321)=1$ and for $m>0$
$$ IF\cI_{2m}(321)=\sum_{k=1}^{m}q^{2k-1}IF\cI_{2k-2}(321)IF\cI_{2(m-k)}(321).$$
\label{thm:fpfinv321}
\end{thm}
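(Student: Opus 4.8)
The plan is to decompose a fixed-point-free involution $\iota \in F\cI_{2m}(321)$ according to the two-cycle containing the letter $1$, and then recurse on the two pieces created by this decomposition. Since $\iota$ is fixed-point-free, $1$ lies in some two-cycle $(1,t)$, so $\iota(1)=t$ and $\iota(t)=1$; write $t = 2k$ for the relevant index $k$, $1\leq k\leq m$ (the claim $t$ is even will need justification — see below). The points with first coordinate in $[2,t-1]$ and the points with first coordinate in $[t+1,2m]$ will form two smaller fixed-point-free $321$-avoiding involutions after suitable order-isomorphic relabeling, contributing $IF\cI_{2k-2}(321)$ and $IF\cI_{2(m-k)}(321)$, and the factor $q^{2k-1}$ will account for the inversions created by the cycle $(1,2k)$ together with inversions between blocks.

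First I would pin down the structure forced by $321$-avoidance near the letter $1$. Since $\iota(1)=t$, the entries $\iota(2),\dots,\iota(t-1)$ all lie in a decreasing-pattern danger zone: if any two of them formed a descent $\iota(a)>\iota(b)$ with $a<b<t$, together with $\iota(t)=1$ we would get a $321$ pattern $\iota(a)\iota(b)\cdot 1$. Hence $\iota(2)<\iota(3)<\cdots<\iota(t-1)$, i.e. the middle block is increasing. By the involution symmetry (the diagram is symmetric about the main diagonal), the values $\{2,\dots,t-1\}$ occur, as indices, exactly among positions $2,\dots,t-1$; combined with $\iota(1)=t$ and $\iota(t)=1$ this shows the middle segment (positions and values both equal to $\{2,\dots,t-1\}$) is itself an involution, and being increasing and an involution forces it to be the identity on that interval. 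But $\iota$ is fixed-point-free, so $t-1 = 1$, i.e. $t=2$ — wait, that is too strong. I need to be more careful: the middle increasing segment need not use exactly the values $\{2,\dots,t-1\}$. Let me instead argue via Lemma~\ref{lemma:321structure}: with two-cycles listed as $(s_1,t_1),\dots,(s_m,t_m)$ in the stated convention, $s_1=1$ and $t_1<t_2<\cdots<t_m$, and by Lemma~\ref{lemma:321inv} the interval $[s_1,t_1-1]=[1,t_1-1]$ consists only of $s_j$'s and $t_j$'s with the relevant indices/values to its right; the fixed-point-free condition plus the increasing structure of the $s_i$'s and $t_i$'s shows that positions $1,\dots,t_1$ and values $1,\dots,t_1$ coincide as a set, so $t_1$ is even, say $t_1=2k$, and $\iota$ restricted there is a fixed-point-free $321$-avoiding involution of length $2k$ whose only nested structure is determined recursively; meanwhile positions/values $\{2k+1,\dots,2m\}$ carry an independent fixed-point-free $321$-avoiding involution of length $2(m-k)$.

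Then I would count inversions using Lemma~\ref{lemma:321inv}. Write $\iota$ as an inflation: on the block of positions $\{1,\dots,2k\}$ we have an involution $\iota'$ with $\iota'(1)=2k$, and on $\{2k+1,\dots,2m\}$ we have $\iota''$; there are no inversions between the two blocks since every value in the first block is less than every value in the second. By Lemma~\ref{lemma:321inv}, $\inv(\iota)=\inv(\iota')+\inv(\iota'')$ and $\inv(\iota') = \sum (t_i - s_i)$ over the cycles inside the first block. The cycle $(1,2k)$ contributes $2k-1$. I claim the remaining cycles inside the first block contribute exactly $\inv$ of a fixed-point-free $321$-avoiding involution of length $2k-2$: deleting position/value $1$ and position/value $2k$ and relabeling $\{2,\dots,2k-1\}$ to $\{1,\dots,2k-2\}$ reduces every remaining $t_i-s_i$ by... nothing — deleting the outermost points $1$ and $2k$ and shifting down by $1$ decreases both $s_i$ and $t_i$ by $1$ for each surviving cycle, leaving $t_i - s_i$ unchanged. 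Hence $\inv(\iota') = (2k-1) + \inv(\hat\iota')$ where $\hat\iota'\in F\cI_{2k-2}(321)$, and the correspondence $\iota \leftrightarrow (k,\hat\iota',\iota'')$ is a bijection $F\cI_{2m}(321)\to \bigsqcup_{k=1}^m \{k\}\times F\cI_{2k-2}(321)\times F\cI_{2(m-k)}(321)$. Summing $q^{\inv}$ over this bijection gives the stated recurrence. The main obstacle I anticipate is the first structural step — rigorously showing that the letter $1$'s partner $t_1$ is even and that positions/values $\{1,\dots,t_1\}$ form a self-contained sub-involution — but Lemmas~\ref{lemma:321structure} and~\ref{lemma:321inv} are designed precisely to make this clean, so it should go through with a short argument about the interval $[1,t_1-1]$ containing only matched $s_j,t_j$ pairs.
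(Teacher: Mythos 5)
Your overall plan (split off an initial block, use Lemma~\ref{lemma:321inv} for additivity of $\inv$, extract a factor $q^{2k-1}$) has the right shape, but the structural claim it rests on is false, and this is a genuine gap rather than a detail to be patched. The partner of $1$ need not be even, and the interval $[1,\iota(1)]$ need not be closed under $\iota$: the involution $3412\in F\cI_4(321)$ has two-cycles $(1,3),(2,4)$, so $t_1=3$ is odd and positions $\{1,2,3\}$ carry values $\{3,4,1\}$; and $456123\in F\cI_6(321)$ has $t_1=4$ even yet $[1,4]$ carries values $\{4,5,6,1\}$. So your map $\iota\mapsto(k,\hat\iota',\iota'')$ is not even defined on all of $F\cI_{2m}(321)$. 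The reverse direction fails too: your reconstruction wraps the cycle $(1,2k)$ around $\hat\iota'\in F\cI_{2k-2}(321)$ placed on positions and values $2,\dots,2k-1$, but whenever $\hat\iota'\neq\epsilon$ the triple $2k,\,v,\,1$ (with $v$ any inner value) is a $321$ pattern; e.g.\ $k=2$, $\hat\iota'=21$ produces $4321\notin F\cI_4(321)$, while the genuine element $3412$ (with $\inv=4$) is never produced. Thus the claimed correspondence is neither well defined nor a bijection, and the recurrence cannot be extracted from it; the fact that the generating function still comes out right for small cases is exactly the nontrivial content that needs proof.

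The paper's proof repairs precisely this point. It takes $2k$ to be the size of the \emph{first self-contained block}, i.e.\ the smallest $k$ with $\iota([2k])=[2k]$ (phrased via the two-row SYT of Lemma~\ref{lemma:321structure}: the smallest $k$ whose first $k$ columns are filled by $[2k]$); the outer factor $\iota''\in F\cI_{2(m-k)}(321)$ and the additivity $\inv(\iota)=\inv(\tau_1)+\inv(\tau_2)$ are then as you intended. Inside the first block, however, the surgery is not ``delete the cycle $(1,2k)$'' --- that is usually not a cycle (see $3412$). Instead one deletes the entries $1$ and $2k$ from the two-row tableau, left-aligns, and subtracts $1$, which \emph{re-pairs} the cycles as $(s_{i+1}-1,\,t_i-1)$ for $i=1,\dots,k-1$; minimality of $k$ guarantees $s_{i+1}<t_i$, so the result is again a standard Young tableau and hence corresponds to an arbitrary element of $F\cI_{2k-2}(321)$. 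Lemma~\ref{lemma:321inv} then gives $\inv(\tau_1)=\inv(\tau_1')+(t_k-s_1)=\inv(\tau_1')+2k-1$, since under the re-pairing the telescoping sum $\sum_i(t_i-s_i)$ drops by exactly $t_k-s_1$. This re-pairing bijection between first blocks of size $2k$ and all of $F\cI_{2k-2}(321)$ is the missing idea; without it (or an equivalent first-return argument), your decomposition does not go through.
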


\begin{proof}
By Lemma~\ref{lemma:321structure} any involution $\iota \in F\cI_{2m}(321)$ with two-cycles    $(s_1,t_1), (s_2,t_2),..., (s_m,t_m)$ is associated to an SYT, $P$, with two rows of length $m$ and the $i$th column filled with $s_i$ and $t_i$. Let $k$ number the smallest column such that the collection of fillings in the first $k$ columns is the set $[2k]$. As result the first $k$ columns form a SYT, $P_1$, which is associated an involution $\tau_1$ with two-cycles $(s_1,t_1), (s_2,t_2),..., (s_k,t_k)$. The remaining columns, $(k+1)$st through $m$th, are filled with the numbers $[2k+1,2m]$ and if these fillings are decreased by $2k$ then we have a SYT, $P_2$, associated to an involution $\tau_2$ with two-cycles $(s_{k+1}-2k,t_{k+1}-2k),..., (s_m-2k,t_m-2k)$. Note that by Lemma~\ref{lemma:321inv} we have $\inv(\iota)=\inv(\tau_1)+\inv(\tau_2)$. 

Because rows and columns in SYT increase the maximum filling in any rectangle of squares is located in the lower-right corner and the minimum is in the upper-left corner. 
Recall our choice of $k$. The result is for any $i<k$ the first $i$ columns are not filled with all of $[2i]$, and since columns and rows increase the filling in column $i$ row $2$ must be larger than $2i$. Let $s\in[2i]$ be the smallest filling in columns $i+1$ through $k$. Since rows and columns increase $s$ must be in column $i+1$ and row $1$. As result we can see in $P_1$ that any filling in the second row is larger than the filling of its upper-right neighbor. 
Define $P_1'$  to be  $P_1$ but we remove the upper-left square filled with $1$, remove the lower-right square filed with $2k$, left align the squares and decrease all fillings by one. Because of what we have noted $P_1'$ has two rows length $k-1$ filled with $[2(k-1)]$, is  increasing along rows and columns and so must be a SYT associated to the involution $\tau_1'$ with two-cycles $(s_2-1,t_1-1),(s_3-1,t_2-1),\dots,(s_k-1,t_{k-1}-1)$. Using Lemma~\ref{lemma:321inv} again we have $\inv(\tau_1)=\inv(\tau_1')+2k-1$. Putting it all together $\inv(\iota)=\inv(\tau_1')+\inv(\tau_2)+2k-1$, which implies our recurrence.
\end{proof}

We now use the fixed-point-free case to describe the generating function $I\cI_n(321)$.

\begin{prop} We have $I\cI_1(321)=1$ and for $n>1$ 

$$I\cI_n(321)=IF\cI_{n}(321)+\sum_{k=0}^{\ceil{n/2}-1}  IF\cI_{2k}(321)  I\cI_{n-2k-1}(321).$$

\end{prop}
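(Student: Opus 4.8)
The plan is to mimic the fixed-point argument already used in the proofs of Propositions~\ref{II132} and~\ref{prop:S(213,312)=I(231)}: decompose an involution $\iota\in\cI_n(321)$ according to the cycle containing the index $n$, separate off the block to the right of $n$, and recognize that block as a smaller fixed-point-free avoider. First I would set $I\cI_1(321)=1$ and fix $n>1$ and $\iota\in\cI_n(321)$, and split into two cases: either $\iota(n)=n$, or $\iota(n)=s<n$ so that $(s,n)$ is a two-cycle. In the first case, $\iota$ restricted to $[n-1]$ is an involution in $\cI_{n-1}(321)$ and $\inv(\iota)$ is unchanged since $n$ at position $n$ contributes no inversions; this accounts for the term corresponding to the ``$k=0$ with a fixed point'' contribution, which I will arrange to fold into the sum as the $k=0$ summand $IF\cI_0(321)\,I\cI_{n-1}(321)$. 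In the second case, using Lemma~\ref{lemma:321structure}(i) the two-cycles have increasing right endpoints $t_1<\dots<t_m=n$, so I would argue that the values and indices lying strictly to the left of $s$ together with $s$ and $n$ themselves form a ``prefix block'': more precisely, I want to show that there is an index $2k$ (with $1\le k\le\ceil{n/2}$) such that positions $1,\dots,2k$ are occupied exactly by the values $1,\dots,2k$ and the induced subpermutation on those positions is a \emph{fixed-point-free} involution avoiding $321$ of length $2k$, while the remaining positions $2k+1,\dots,n$ carry an involution in $\cI_{n-2k}(321)$.

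The key structural claim to establish is this decomposition, and here is how I would get it. Since $(s,n)$ is a two-cycle and $\iota$ avoids $321$, any value at a position $>s$ must be at most $n$ but cannot create a $321$ with $\iota(s)=n$; combined with Lemma~\ref{lemma:321structure}(i), I would show that every index in $[1,s]$ pairs (under $\iota$) to an index in $[1,n]$ that is in fact $\le$ some threshold, forcing the first $2k$ positions to be a union of two-cycles closed under $\iota$ — i.e. a genuine block. In fact the cleanest route is to let $k$ be minimal such that the first $k$ two-cycles (ordered by increasing $t_i$, equivalently by increasing $s_i$ by part (i)) have $\{s_1,t_1,\dots,s_k,t_k\}=[2k]$ and $t_k=$ last index of this block; the argument that such a $k$ exists and that the block is an interval is essentially the SYT-prefix argument already run inside the proof of Theorem~\ref{thm:fpfinv321} (choose the smallest column of the associated SYT whose first-$k$ columns hold exactly $[2k]$), except that now $\iota$ may have fixed points, which all must lie in the second part since the prefix block is chosen to be fixed-point-free. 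By Lemma~\ref{lemma:321inv}, $\inv(\iota)=\sum_i(t_i-s_i)$ splits as a sum over the prefix two-cycles plus a sum over the suffix two-cycles, and after translating the suffix down by $2k$ the suffix sum is exactly $\inv$ of the resulting involution in $\cI_{n-2k}(321)$; the prefix sum is $\inv$ of the fixed-point-free involution in $F\cI_{2k}(321)$. Hence $q^{\inv(\iota)}=q^{\inv(\text{prefix})}q^{\inv(\text{suffix})}$, and summing over all $\iota$ factors the generating function, giving $\sum_{k}IF\cI_{2k}(321)\,I\cI_{n-2k}(321)$ over the relevant range of $k$.

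Finally I would reconcile the index ranges: the $\iota(n)=n$ case is the $k=0$ term $IF\cI_0(321)\,I\cI_{n-1}(321)=I\cI_{n-1}(321)$; the cases where $n$ lies in a two-cycle and the whole permutation is itself a single fixed-point-free block contribute the standalone term $IF\cI_n(321)$ (this is the ``$k=\ceil{n/2}$, empty suffix'' extreme, which I peel out because the sum as written stops at $k=\ceil{n/2}-1$); and the intermediate values $k=1,\dots,\ceil{n/2}-1$ give the displayed sum $\sum_{k=0}^{\ceil{n/2}-1}IF\cI_{2k}(321)\,I\cI_{n-2k-1}(321)$ after re-indexing $k\mapsto k$ so that the suffix has odd length $n-2k-1$ when $n$ is odd — so in fact the sum in the statement already absorbs the $\iota(n)=n$ case at its $k=0$ end, and only $IF\cI_n(321)$ needs to be separated. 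I would double-check the parity bookkeeping carefully here since $n$ may be even or odd; but the real content is the block-decomposition claim. I expect that structural claim — that the index $n$ being in a two-cycle forces a fixed-point-free interval block to its left, with all fixed points pushed to the right — to be the main obstacle, though Lemma~\ref{lemma:321structure} and the SYT-prefix trick from Theorem~\ref{thm:fpfinv321} do most of the work.
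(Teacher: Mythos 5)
Your proposal has a genuine gap, and it lies exactly where you predicted the main obstacle would be: the structural claim in your second case is false, and in fact backwards. If $\iota\in\cI_n(321)$ has $\iota(n)=s<n$, then every fixed point $f$ of $\iota$ must satisfy $f<s$: if $s<f<n$, the values at positions $s,f,n$ are $n,f,s$, a $321$ pattern. So when $n$ lies in a two-cycle, all fixed points are pushed to the \emph{left} of that two-cycle, not to the right, and there is in general no $k\geq 1$ for which positions $1,\dots,2k$ carry exactly the values $1,\dots,2k$ fixed-point-freely: $132$, $1243$ and $13254$ are already counterexamples (each has $\iota(n)\neq n$ but begins with a fixed point, so your minimal-$k$ prefix does not exist). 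The SYT-prefix argument from Theorem~\ref{thm:fpfinv321} cannot rescue this, since that argument is specific to fixed-point-free involutions, where there are no fixed points to place at all.

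The deferred ``parity bookkeeping'' is also not a detail that resolves itself: your split would produce a suffix of length $n-2k$, while every summand in the statement carries the factor $I\cI_{n-2k-1}(321)$; the missing index is precisely a fixed point that has to be peeled off, and your case split on $\iota(n)$ never produces it. Relatedly, the standalone term $IF\cI_n(321)$ counts \emph{all} fixed-point-free avoiders (e.g.\ $2143=12[21,21]$), not only those forming a single indecomposable block, so they cannot be redistributed among the other summands as your ``extreme $k$'' reading suggests. The paper instead conditions on whether $\iota$ has any fixed point: if not, it contributes to $IF\cI_n(321)$; if so, let $f$ be the smallest fixed point, observe that $f=2k+1$ is odd because no two-cycle $(s,t)$ can straddle $f$ (that would give the pattern $t,f,s$), and write $\iota=123[\tau_1,1,\tau_2]$ with $\tau_1\in F\cI_{2k}(321)$, $\tau_2\in\cI_{n-2k-1}(321)$ and $\inv(\iota)=\inv(\tau_1)+\inv(\tau_2)$, which gives the recursion at once. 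Your observation that $\iota(n)=n$ contributes $I\cI_{n-1}(321)$ is correct, but it corresponds to a different (merely equinumerous) subset than the $k=0$ summand, and it does not combine with your second case into the stated formula.
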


\begin{proof}
Consider $\iota \in \cI_n(321)$. If  $\iota$ is fixed-point-free then $n$ is even, which gives us the  term $IF\cI_{n}(321)$. Otherwise $\iota$ will a have fixed point. Let $f$ be the smallest index of any fixed point. Note that $f$ is precluded by a fixed-point-free involution of even length, which implies $f=2k+1$ is odd. Because $\iota$ avoids $321$ $\iota$ can be written as the inflation $123[\tau_1,1,\tau_2]$ where $\tau_1\in F\cI_{2k}(321)$ and $\tau_2\in \cI_{n-2k-1}(321)$. Since $\inv(\iota)=\inv(\tau_1)+\inv(\tau_2)$  we have our result. 
\end{proof}
%%%%%%%%%%%%%%%%%%%%%%%%%%%%%%%%%%%%%
%%%%%%%%%%%%%%%%%%%%%%%%%%%%%%%%%%%%%
%%%%%%%%%%%%%%%%%%%%%%%%%%%%%%%%%%%%%
%%%%%%%%%%%%%%%%%%%%%%%%%%%%%%%%%%%%%
\subsection{The pattern 123}
In this section we describe $\ol{I\cI}_n(123)$. We show  when $n$ is odd that $\ol{I\cI}_n(123)$ only has non-zero terms with even powers of $q$, which is also true for $\ol{F\cI}_{2m}(123)$.
Our approach in this section is to decompose $\iota$ by writing the involution as an addition of several two-cycles using Beissinger's~\cite{B87} notation defined in Section~\ref{sec:inv321}. We first determine which two-cycles we can add to an involution and preserve the avoidance of 123.

\begin{lemma} We have that
\begin{enumerate}[(i)]
\item if $\hat\iota\in\cI_{n-2}(123)$ and $a\leq\min\Asc(\hat\iota)+1$ where $\Asc(\hat\iota)\neq\emptyset$ then $\iota = \hat\iota + (a,n)$ avoids 123, and
\item if $\iota\in\cI_n(123)$ and $\iota = \hat\iota + (a,n)$ then either $\Asc(\hat\iota)=\emptyset$ or $a\leq\min\Asc(\hat\iota)+1$.
\end{enumerate}
\label{lem:123andminasc}
\end{lemma}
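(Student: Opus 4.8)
The plan is to analyze how the operation $\hat\iota\mapsto\hat\iota+(a,n)$ interacts with the occurrence of an increasing subsequence of length three, using the explicit description of $\hat\iota+(a,n)$ in one-line notation given before Lemma~\ref{lemma:321structure}. Recall that to form $\iota=\hat\iota+(a,n)$ we increase every value $\geq a$ in $\hat\iota$ by one, write the resulting word on the index set $[n]\setminus\{a,n\}$, and then place $a$ at index $n$ and $n$ at index $a$. So $\iota(a)=n$, $\iota(n)=a$, and on the remaining indices $\iota$ is order-isomorphic to $\hat\iota$ with the value $a$ "reserved." The key structural observations are: (1) the value $n$ sits at index $a$ and the value $a$ sits at the last index $n$; (2) every entry strictly to the left of index $a$ has value at most $n-1$ and, more importantly, every such entry comes from $\hat\iota$ restricted to its first $a-1$ indices; (3) symmetrically, $\iota(n)=a$ is smaller than every other entry except possibly entries that were $<a$ in $\hat\iota$.

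For part~(i): assume $\Asc(\hat\iota)\neq\emptyset$ and $a\leq\min\Asc(\hat\iota)+1$; I want to show $\iota$ avoids $123$. Suppose for contradiction $\iota$ has an increasing subsequence $\iota(p)<\iota(q)<\iota(r)$ with $p<q<r$. First rule out the new entries: the value $n$ (at index $a$) can only play the role of the largest element, but it lies at index $a$, and for it to be the top of a $123$ we would need two smaller values before index $a$ in increasing order --- i.e. indices $p<q<a$ with $\iota(p)<\iota(q)$. But $\iota$ restricted to indices $<a$ is order-isomorphic to $\hat\iota$ restricted to its first $a-1 \le \min\Asc(\hat\iota)$ indices, which has no ascent, hence is strictly decreasing; so no such $p<q$ exist. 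The value $a=\iota(n)$ lies at the last index, so it could only be the largest element of a $123$, which is impossible since it is the last entry (it needs something after it) --- wait, it can be the largest only if there are two increasing smaller entries before it; $a$ is smaller than every other value of $\iota$ except those that equal original values $<a$ of $\hat\iota$, and there are at most $a-1$ such positions, all among the first $a-1$ indices, where $\iota$ is decreasing; so again no valid pair precedes it. Finally, if the supposed $123$ uses none of indices $a,n$, then it is an increasing triple entirely inside the copy of $\hat\iota$, contradicting $\hat\iota\in\cI_{n-2}(123)$. This exhausts all cases, so $\iota$ avoids $123$.

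For part~(ii): assume $\iota\in\cI_n(123)$ and $\iota=\hat\iota+(a,n)$, and suppose $\Asc(\hat\iota)\neq\emptyset$; I must show $a\leq\min\Asc(\hat\iota)+1$. Let $i=\min\Asc(\hat\iota)$, so in $\hat\iota$ we have $\hat\iota(i)<\hat\iota(i+1)$ with indices $i<i+1$. If $a\geq i+2$, then these two positions of $\hat\iota$ survive in $\iota$ at indices $<a$ (possibly with values shifted up by one, but the inequality is preserved), giving an ascent $\iota(p)<\iota(p+1)$ at some $p\le i+1<a$; then appending the value $n=\iota(a)$ at index $a>p+1$ produces an increasing triple $\iota(p)<\iota(p+1)<n$, contradicting $123$-avoidance of $\iota$. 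Hence $a\leq i+1=\min\Asc(\hat\iota)+1$, as claimed.

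\textbf{Main obstacle.} The only delicate point is bookkeeping the value shift: when passing from $\hat\iota$ to $\iota$, values $\ge a$ go up by one and the new values $a,n$ are inserted, so I must be careful that "order-isomorphic on indices $\ne a,n$" is stated and used correctly (in particular that an ascent or a $123$-pattern of $\hat\iota$ in its first few indices genuinely persists in $\iota$, and conversely). Everything else is a short case analysis on whether the indices $a$ and $n$ participate in a putative pattern, and the cases are forced by the positions ($a$ carries the global max $n$, and index $n$ carries the near-minimum $a$), so I expect no serious difficulty once the shift is pinned down.
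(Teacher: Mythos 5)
Your part (ii) is correct and is essentially the paper's argument, and your overall strategy for part (i) (a putative $123$ must either lie inside the copy of $\hat\iota$, use the value $n$ at index $a$, or use the value $a$ at index $n$) is also the paper's. The gap is in the last subcase. To rule out the value $a=\iota(n)$ serving as the ``3'' you claim that the entries of $\iota$ with values smaller than $a$ occupy ``at most $a-1$ such positions, all among the first $a-1$ indices.'' That positional claim is false. Take $\hat\iota=4231\in\cI_4(123)$, so $\Asc(\hat\iota)=\{2\}$, and take $a=3=\min\Asc(\hat\iota)+1$, $n=6$. Then $\iota=\hat\iota+(3,6)=526413$, and the value $1<a$ sits at index $5$, far to the right of the first $a-1=2$ indices. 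So ``no valid pair precedes it'' does not follow from what you wrote: you have only shown that $\iota$ is strictly decreasing on the indices $[a-1]$, which says nothing directly about where the small \emph{values} are located.

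What saves this subcase is the involution symmetry, which is exactly what the paper's terse phrase ``there exists a coinversion $(i,j)$ with $i,j<a$'' is invoking: if there were indices $i<j<n$ with $\iota(i)<\iota(j)<a$, then applying $\iota$ to this pair gives the positions $\iota(i)<\iota(j)\leq a-1$, which carry the increasing values $i<j$; this is a coinversion inside the first $a-1$ positions, contradicting the strict decrease there that you already established. Some use of the involution structure is genuinely needed here, since for plain permutations a decreasing prefix plus $123$-avoidance does not prevent an increasing pair of small values further right (e.g. $4312$ with $a=3$). With this diagonal-reflection step inserted, your case analysis closes and the proof coincides with the paper's.
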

\begin{proof}Assume $\hat\iota\in\cI_{n-2}(123)$, $a\leq\min\Asc(\hat\iota)+1$, $\Asc(\hat\iota)\neq\emptyset$ and $\iota = \hat\iota + (a,n)$. We will show $\iota$ avoids 123 by considering instead if $\iota$ contains the pattern 123. Because $\hat\iota$ avoids 123 the pattern of 123 in $\iota$ must involve $n$ or $a$. The pattern can not use $n$ because $a\leq\min\Asc(\hat\iota)+1$ implies $\iota$ decreases before $n$. If the  pattern involves $a$ then $a$ plays the role of 3 and there exists a  coinversion $(i,j)$ with $i,j<a$ so $\iota(i)<\iota(j)$, which contradicts $\iota$ decreasing before $a$.  Hence $\iota$ avoids 123. 

Now assume that $\iota\in\cI_n(123)$ and $\iota = \hat\iota + (a,n)$. If we have $\Asc(\hat\iota)=\emptyset$ then we are done. In any other case $\Asc(\hat\iota)\neq \emptyset$. We know that $\iota$ avoids 123 so $\iota$ is  decreasing before $n$ on the indices in $[a-1]$. This means that $\hat\iota$ must also be decreasing on the indices $[a-1]$, which implies that $a\leq\min\Asc(\hat\iota)+1$.
\end{proof}

Given an involution $\iota$, we say we {\it pull off a two-cycle} when we  write $\iota=\hat\iota +(a,|\iota|)$ for $a<|\iota|$. Consider $\iota$ where we pull off many two-cycles,
$$\iota = \tau + (a_1,m+2) + (a_2,m+4) + \dots + (a_{\ell},m+2\ell).$$
We can continue to do this until $\tau$ has  a fixed point at the end or $\tau$ is the empty permutation.
However, to better describe the sequence $(a_1,a_2,\dots,a_{\ell})$ we will pull off two-cycles until $\tau$ ends in a fixed point or until $\tau$ for the first time has an empty ascent set. 

First, consider the case where $\tau$ ends with a fixed point. Note that since $\tau$ avoids 123 we must have $\tau$ decreasing before this fixed point, which implies that $\tau = 12[\dd_{m-1},1]$ and $\min\Asc(\tau) =m-1$. Further this means by Lemma~\ref{lem:123andminasc}  that $a_1\in [m]$. 
This also tells us the minimum ascent of $\tau + (a_1,m+2)$ is $a_1-1$ if $a_1\neq 1$. By Lemma~\ref{lem:123andminasc}  the values $a_2$ can take are $1\leq a_2\leq a_1$. This seems to imply that the sequence $(a_1,a_2,\dots,a_{j-1})$ is weakly decreasing, which is not fully the case. If we had $a_2 = 1$ then the $\min\Asc( \tau + (a_1,m+2)+(a_2,m+4))$ is $a_1$ so instead of having $a_3\leq a_2$ we actually have $a_3\leq a_1+1$, which brings us to the following definition.
Let $A_{m,\ell}$ define  a set of sequences $(a_1,a_2,\dots,a_{\ell})$ of positive integers such that 
\begin{enumerate}
\item $a_1\leq m$,
\item if $a_1,a_2,\dots, a_{i}$ are all $1$ then $a_{i+1}\leq m+i$ and
\item if $a_{i}\neq 1$ and $a_{i+1},a_{i+2},\dots,a_{i+r}$ all equal to 1 then $a_{i+r+1}\leq a_{i}+r$. 
\end{enumerate}

Next consider the case where $\tau$ has an empty ascent set so $\tau = \dd_{m}$. Recall the earlier assumption that we had stopped pulling off two-cycles because  $\tau + (a_1,m+2)$ had an ascent but $\tau$ did not. This does not change the requirements for the sequence $(a_1,a_2,\dots a_{\ell})$ except now $a_1\neq 1$ and $a_1\leq m+1$. For this   we  define
$$B_{m,\ell}=\{(a_1,a_2,\dots,a_{\ell})\in A_{m,\ell}:a_1\neq 1\}.$$

\begin{lemma} for all $\iota \in \cI_n(123)$ we can write $\iota$ uniquely as $$\iota = \tau + (a_1,m+2) + (a_2,m+4) + \dots + (a_{\ell},m+2\ell)$$ 
\begin{enumerate}[(i)]
\item where $\tau =12[\dd_{m-1},1]$, $m>1$  and $(a_1,a_2,\dots,a_{\ell})\in A_{m,\ell}$  or 
\item  $\tau =\dd_m$  and $(a_1,a_2,\dots,a_{\ell})\in B_{m+1,\ell}$.
\end{enumerate}
Conversely, any $\iota$ from case (i) or (ii) will avoid 123. 
\label{lem:123decomp}
\end{lemma}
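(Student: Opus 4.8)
The statement is really two claims packaged together: a uniqueness-of-decomposition claim, and a converse (avoidance) claim. I would prove them in that order, using Lemma~\ref{lem:123andminasc} as the engine in both directions and the preceding informal discussion to organize the bookkeeping.

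\textbf{Step 1: the decomposition exists and terminates correctly.} Given $\iota\in\cI_n(123)$ with $n>1$ (the small cases $n\le 1$ are immediate, with the empty or trivial sequence), repeatedly pull off two-cycles: write $\iota=\hat\iota+(a_\ell,n)$ for some $a_\ell<n$ whenever $\iota$ does not end in a fixed point, which is legitimate because an involution of even residual length ending in a descent has a genuine two-cycle at position $n$. By part (ii) of Lemma~\ref{lem:123andminasc}, at each stage the $a$ we peel off satisfies $a\le\min\Asc(\hat\iota)+1$ or $\Asc(\hat\iota)=\emptyset$. I continue peeling until the residual permutation $\tau$ either ends in a fixed point or first attains an empty ascent set; since lengths strictly decrease this halts. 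If $\tau$ ends in a fixed point then, because $\tau$ avoids $123$, everything before that last entry must be decreasing, forcing $\tau=12[\dd_{m-1},1]$ with $m>1$ and $\min\Asc(\tau)=m-1$ — this is case (i). If instead the process stopped because the ascent set just became empty, then $\tau=\dd_m$ and the immediately-previous step had introduced an ascent, which (by the analysis in the paragraph preceding the lemma) is exactly the constraint $a_1\ne 1$, i.e. the sequence lives in $B_{m+1,\ell}$ — this is case (ii). Tracking $\min\Asc$ through the successive additions, which only ever changes in the controlled ways described before the lemma ($\min\Asc$ of $\hat\iota+(a,\cdot)$ equals $a-1$ when $a\ne 1$, and increases by one when $a=1$), shows the constraints defining $A_{m,\ell}$ (conditions (1)--(3)) are exactly the constraints imposed by iterating Lemma~\ref{lem:123andminasc}(ii). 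Uniqueness is then clear: each peeling step is forced (the value $a_i$ is read off as $\iota$ restricted to the appropriate index), and the stopping rule is deterministic.

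\textbf{Step 2: the converse.} Start from $\tau$ as in (i) or (ii) — which manifestly avoids $123$ — and add the two-cycles back on one at a time. At each stage I must check the hypotheses of Lemma~\ref{lem:123andminasc}(i): the current $\hat\iota$ avoids $123$ (true by induction), has nonempty ascent set, and the next value $a_i$ satisfies $a_i\le\min\Asc(\hat\iota)+1$. The nonempty-ascent-set condition holds after the first addition in case (i) (since $\tau$ already has the ascent at $m-1$) and is guaranteed in case (ii) precisely by $a_1\ne 1$, which creates an ascent in $\tau+(a_1,m+2)=\dd_m+(a_1,m+2)$. The inequality $a_i\le\min\Asc+1$ is exactly what conditions (1)--(3) of $A_{m,\ell}$ encode, once one verifies that $\min\Asc$ of the partially-rebuilt involution evolves as claimed: it equals $a_i-1$ after adding $(a_i,\cdot)$ if $a_i\ne 1$, and climbs by one with each consecutive $a=1$. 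So by induction every such $\iota$ avoids $123$.

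\textbf{Main obstacle.} The routine part is invoking Lemma~\ref{lem:123andminasc}; the delicate part is the exact evolution of $\min\Asc$ under the operation $\hat\iota\mapsto\hat\iota+(a,|\hat\iota|+2)$, and in particular showing that when $a=1$ the minimum ascent shifts right by exactly one while leaving the rest of the descent/ascent structure essentially intact — this is what forces the slightly awkward condition (3) in the definition of $A_{m,\ell}$ (a run of $1$'s lets the next allowed value ``catch back up'' to $a_i+r$ rather than staying bounded by $a_i$). I would isolate this as a small sublemma: if $\hat\iota$ avoids $123$ with $\min\Asc(\hat\iota)=p$ and $a\le p+1$, then $\hat\iota+(a,n)$ avoids $123$ and $\min\Asc(\hat\iota+(a,n))=a-1$ if $a>1$, while if $a=1$ it is $p+1$. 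Granting that sublemma, matching the set-membership conditions to the chain of inequalities is pure bookkeeping, and both directions of Lemma~\ref{lem:123decomp} fall out.
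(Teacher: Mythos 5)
Your plan is correct and follows essentially the same route as the paper: peel off two-cycles until the residual either ends in a fixed point or first has empty ascent set, track $\min\Asc$ through each addition via Lemma~\ref{lem:123andminasc} (your sublemma on how $\min\Asc$ evolves — $a-1$ when $a>1$, shifting up by one when $a=1$ — is exactly the bookkeeping the paper carries out), and rebuild by induction on $\ell$ for the converse. The only details you gloss over, which the paper also treats briefly, are the direct check that $\dd_m+(a_1,m+2)$ avoids $123$ (Lemma~\ref{lem:123andminasc}(i) does not apply there since $\Asc(\dd_m)=\emptyset$) and the convention assigning the overlap residual $\tau=1$ to case (ii) so that the decomposition is unique.
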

\begin{proof}
Certainly if $\iota$ avoids 123 we can write $\iota = \tau + (a_1,m+2) + (a_2,m+4) + \dots + (a_{\ell},m+2\ell)$ where $\tau$ has a fixed point at $m$, $\tau=12[\dd_{m-1},1]$, or $\ell$ was the smallest integer where $\tau$ has an empty ascent set, $\tau=\dd_m$. These two cases intersect when $\tau= 1$, so to make these cases distinct the case when $\tau=1$ will fall under $\tau=\dd_1$ and we will only let $\tau$ fall under the case $\tau=12[\dd_{m-1},1]$ when $m>1$. If we do not have one case or the other then we could pull off another two-cycle from $\tau$. This means to  show the first part of the lemma we only have to show that the sequence $(a_1,a_2,\dots,a_{\ell})$ is in $A_{m,\ell}$ or $B_{m+1,\ell}$ respectively. For ease, define $\tau_i = \tau + (a_1,m+2) + (a_2,m+4) + \dots + (a_{i},m+2i)$ so $\iota=\tau_{\ell}$. If $\tau=12[\dd_{m-1},1]$ then $\min\Asc(\tau)=m-1$ so by Lemma~\ref{lem:123andminasc} $a_1\in [m]$. If instead
$\tau=\dd_m$ where $\tau_1$ has an ascent then $2\leq a_1\leq m+1$. This proves the condition on $a_1$ in both cases. We show the second and third conditions by inducting on $\ell$ and assume that $(a_1,\dots,a_{\ell-1})$ satisfies the second and third conditions. Say that there exists an $a_{i}\neq 1$ but $a_{i+1},a_{i+2},\dots, a_{\ell-1}$ all equal 1.
This means that $\min\Asc(\tau_i)=a_{i}-1$ and $\min\Asc(\tau_{\ell-1})=a_{i}+\ell-i-2$ where $\ell-i-1$ is the number of terms in $[i+1,\ell-1]$ so $a_{\ell}\leq a_{i}+\ell-i-1$. This proves the third condition so all we have left is to consider the case where $a_i= 1$ for all $i<\ell$. In this case $a_1,\dots,a_{\ell-1}$ all are 1, which can only happen in the case where $\tau=12[\dd_{m-1},1]$. So  $\min\Asc(\tau_{\ell-1})=m+\ell-2$, which implies that $a_{\ell}\leq m-1+\ell-1$ and shows that we satisfy the second condition. 

Conversely, assume that $\iota$ is as in case (i) or (ii) from this lemma. We will show that $\iota$ avoids 123 by induction on $\ell$. Certainly by Lemma~\ref{lem:123andminasc} we know $12[\dd_{m-1},1]+(a_1,m+2)$ avoids 123 since $a_1\in [m]$ and $\min\Asc(12[\dd_{m-1},1])=m-1$. Since $\dd_m+(a_1,m+2)$ avoids 123 and gains an ascent  for any choice of $a_1\in[2,m+1]$ any $\iota$ from case (i) or (ii) avoids 123 and has an ascent when $\ell=1$. Assume $\ell>1$ and $\tau_{\ell-1}$ avoids 123 and has an ascent. If $a_{\ell-1}\neq 1$ then $\min\Asc(\tau_{\ell-1})=a_{\ell-1}-1$. Whether $(a_1,a_2,\dots,a_{\ell})$ is in $A_{m,\ell}$ or $B_{m+1,\ell}$ we still have $a_{\ell-1}\geq a_{\ell}$ so by Lemma~\ref{lem:123andminasc} we must have that $\tau_{\ell}=\iota$ avoids 123. Next consider the case when $a_{i}\neq 1$ but $a_{i+1},a_{i+2},\dots, a_{\ell-1}$ all equal 1. Whether $(a_1,a_2,\dots,a_{\ell})$ is in $A_{m,\ell}$ or $B_{m+1,\ell}$ we still have $a_{\ell}\leq a_{i}+\ell-i-1$, $\min\Asc(\tau_{i})=a_i-1$ and $\min\Asc(\tau_{\ell-1})=m+\ell-2$, which by Lemma~\ref{lem:123andminasc} implies $\tau_{\ell}=\iota$ avoids 123. Our last case is when $a_i= 1$ for all $i<\ell$ so $a_{\ell}\leq m+\ell-1$, which only happens in the case when $\tau=12[\dd_{m-1},1]$. Hence, $\min\Asc(\tau_{\ell-1})=m+\ell-2$, which implies by Lemma~\ref{lem:123andminasc} that $\tau_{\ell}=\iota$ avoids 123. 
\end{proof}

This lemma describes how we can decompose $\iota$  avoiding 123 uniquely as an addition of two-cycles. We are particularly interested in this because we can calculate $\inv(\iota)$ from an addition of two-cycles. However, our calculation turns out nicer when considering $\coinv$ instead. 

\begin{lemma}
If $|\tau|=m$ and $\iota = \tau + (a_1,m+2) + (a_2,m+4) + \dots + (a_{\ell},m+2\ell)$
then $$\coinv(\iota)=\coinv(\tau)+2(a_1+a_2+\dots + a_{\ell}-\ell).$$
\label{lem:123inv}
\end{lemma}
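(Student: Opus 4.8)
The plan is to reduce the multi-step addition to a single step by induction on $\ell$, and to understand the base case $\ell = 1$ directly from the definition of $\hat\iota + (a,|\iota|)$. For the base case, write $\iota = \tau + (a, m+2)$ with $|\tau| = m$. By Beissinger's notation, $\iota$ is obtained from $\tau$ by incrementing all values $\geq a$ by one, then placing the pair so that $\iota(a) = m+2$ and $\iota(m+2) = a$; in one-line notation the entry $m+2$ is inserted in position $a$, and the entry $a$ is appended at the end (position $m+2$). I would count the change in coinversions coming from the two new entries. The entry $m+2$ sits in position $a$: every entry strictly to its right is smaller (since $m+2$ is the maximum), so it creates no coinversion with anything to its right; every entry strictly to its left has a smaller value, so each of the $a-1$ entries to its left forms a coinversion with $m+2$, contributing $a-1$ new coinversions. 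The entry $a$ is at the last position $m+2$: an entry to its left forms a coinversion with $a$ iff that entry has value $< a$; but the values $< a$ in $\iota$ are exactly the $a-1$ values that were \emph{not} incremented, all of which lie in $\tau$'s old positions (hence to the left of position $m+2$), giving another $a-1$ new coinversions. Finally, incrementing the values $\geq a$ in $\tau$ does not change the relative order of $\tau$'s original entries among themselves, so the coinversions internal to the rest of the word are preserved, and the pair $(m+2, a)$ itself is an inversion, not a coinversion. Altogether $\coinv(\iota) = \coinv(\tau) + 2(a-1)$, which is exactly the formula for $\ell = 1$.

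For the inductive step, suppose the claim holds for $\ell - 1$. Given $\iota = \tau + (a_1, m+2) + \dots + (a_\ell, m+2\ell)$, set $\hat\iota = \tau + (a_1, m+2) + \dots + (a_{\ell-1}, m+2(\ell-1))$, so $|\hat\iota| = m + 2(\ell-1)$ and $\iota = \hat\iota + (a_\ell, |\iota|)$. The base-case computation applies verbatim with $\hat\iota$ in place of $\tau$: the new maximum entry $m + 2\ell$ goes into position $a_\ell$ contributing $a_\ell - 1$ coinversions, and the new entry $a_\ell$ appended at the end contributes another $a_\ell - 1$, so $\coinv(\iota) = \coinv(\hat\iota) + 2(a_\ell - 1)$. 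By the inductive hypothesis $\coinv(\hat\iota) = \coinv(\tau) + 2(a_1 + \dots + a_{\ell-1} - (\ell-1))$, and adding gives $\coinv(\iota) = \coinv(\tau) + 2(a_1 + \dots + a_\ell - \ell)$, completing the induction.

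The only genuinely delicate point is the bookkeeping in the base case: one must be careful that exactly the $a-1$ values below $a$ stay put while everything $\geq a$ shifts up, so that the entry $a$ at the far right sees precisely those $a-1$ smaller values to its left (no more, no fewer), and symmetrically that the entry $m+2$ in position $a$ has exactly $a-1$ entries to its left. Since the newly inserted maximum is larger than everything and the newly appended value $a$ is smaller than everything it sits after except the $a-1$ unshifted values, this counting is clean; I expect this to be the main (and only) obstacle, and it is routine once the positions of the two new entries are pinned down. Note also that this lemma makes no avoidance hypothesis on $\iota$ — it is a purely combinatorial identity about the operation $\hat\iota \mapsto \hat\iota + (a, n)$ — so no appeal to Lemma~\ref{lem:123decomp} or Lemma~\ref{lem:123andminasc} is needed here.
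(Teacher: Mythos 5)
Your proposal is correct and follows essentially the same route as the paper: the paper also computes the single-step change $\coinv(\tau+(a,n))=\coinv(\tau)+2(a-1)$ by counting the $a-1$ coinversions from the new maximum at position $a$ and the $a-1$ from the appended value $a$, then iterates over the $\ell$ added two-cycles (you simply make that iteration an explicit induction). Your extra care that the shift of values $\geq a$ preserves the internal coinversions of $\tau$ is a point the paper leaves implicit, but the arguments are the same.
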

\begin{proof}
We will first consider $\iota = \tau + (a,n)$ for $|\iota|=n$. The coinversions of $\iota$ come from the coinversions of $\tau$, the coinversions from index $n$ and the coinversions from index $a$. The number of coinversions from $\tau$ is $\coinv(\tau)$. The number of coinversions from index $a$ is $a-1$ because all indices $i$  to the left of $\iota(a)=n$ in $\iota$ form a coinversion $(i,a)$ because $\iota(i)<n$. The number of coinversions from index $n$  is $a-1$ because $\iota(n)=a$ is at the end of $\iota$ and all numbers smaller than $a$ are to its left. There is  not a coinversion between $n$ and $a$ so $\coinv(\iota)=\coinv(\tau)+2a-2$. 
Applying this to the full sum of two-cycles, $\iota= \tau + (a_1,m+2) + (a_2,m+4) + \dots + (a_{\ell},m+2\ell)$, gives us $\coinv(\iota)=\coinv(\tau) + 2(a_1+a_2+\dots + a_{\ell}-\ell).$
\end{proof}

Using this lemma we can calculate the generating function $\ol{I\cI}_n(123)$ from the set of sequences  $(a_1,a_2,\dots,a_{\ell})$ in $A_{m,\ell}$ or $B_{m,\ell}$ so we define the function
$$A_{m,\ell}(q)=\sum_{(a_1,a_2,\dots,a_{\ell})\in A_{m,\ell}}q^{a_1+a_2+\dots +a_{\ell}-\ell}$$
for the set $A_{m,\ell}$ and  define $B_{m,\ell}(q)$ similarly for the set $B_{m,\ell}$. We give a recurrence on $\ell$ for these functions and then describe $\ol{I\cI}_n(123)$. 

\begin{lemma}Given $A_{m,0}(q)=1$ for all $m\geq 1$ we have for $m,\ell\geq 1$ that
$$A_{m,\ell}(q)=A_{m+1,\ell-1}(q)+\sum_{i = 2}^{m}q^{i-1}A_{i,\ell-1}(q),$$
and with $B_{1,\ell}(q)=0$ for $\ell\geq 0$,  $B_{m,0}(q)=1$ for all $m>1$ we have for $m>1$ and $\ell\geq 1$
$$B_{m,\ell}(q)=\sum_{i = 2}^{m}q^{i-1}A_{i,\ell-1}(q).$$
\end{lemma}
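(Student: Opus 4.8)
The plan is to establish the recurrence directly from the combinatorial definition of the sets $A_{m,\ell}$ and $B_{m,\ell}$, by classifying sequences according to the value of their first entry $a_1$. Fix $m\geq 1$ and $\ell\geq 1$ and take $(a_1,a_2,\dots,a_\ell)\in A_{m,\ell}$. There are two cases. If $a_1=1$, then the ``first entry'' constraint on $a_2$ changes: since we have started with a run of $1$'s of length one, condition (2)/(3) tells us that the tail $(a_2,\dots,a_\ell)$ is governed exactly by the rules defining $A_{m+1,\ell-1}$ (a run of $1$'s of length one at the front pushes the bound from $m$ to $m+1$, and all the ``run of ones'' bookkeeping for the tail is inherited verbatim). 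This contributes $q^{0}A_{m+1,\ell-1}(q)=A_{m+1,\ell-1}(q)$ to the weighted sum, since the $a_1=1$ term contributes $a_1-1=0$ to the exponent. If instead $a_1=i$ for some $2\leq i\leq m$, then $a_1\neq 1$, so condition (3) with $i=1$ applies to the tail; after a nonempty initial entry the subsequent ``run of ones'' constraints reset relative to $a_1=i$, which means $(a_2,\dots,a_\ell)$ ranges precisely over $A_{i,\ell-1}$. Such a sequence contributes $q^{a_1-1}$ times the weight of the tail, i.e.\ $q^{i-1}A_{i,\ell-1}(q)$. Summing over the two cases gives
$$A_{m,\ell}(q)=A_{m+1,\ell-1}(q)+\sum_{i=2}^{m}q^{i-1}A_{i,\ell-1}(q),$$
as claimed. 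The base case $A_{m,0}(q)=1$ is immediate, since the only length-zero sequence is empty with weight $q^0=1$.

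For $B_{m,\ell}$, recall $B_{m,\ell}=\{(a_1,\dots,a_\ell)\in A_{m,\ell}:a_1\neq 1\}$; so the same case analysis applies, but the case $a_1=1$ is simply deleted. That immediately yields $B_{m,\ell}(q)=\sum_{i=2}^{m}q^{i-1}A_{i,\ell-1}(q)$ for $m>1$, $\ell\geq 1$. The base case $B_{m,0}(q)=1$ for $m>1$ is the empty sequence again (an empty sequence vacuously has no first entry equal to $1$), and $B_{1,\ell}(q)=0$ for all $\ell\geq 0$ because the constraint $a_1\neq 1$ together with $a_1\leq 1$ (from condition (1) with $m=1$) is unsatisfiable when $\ell\geq 1$, while for $\ell=0$ one checks the convention is set to $0$ as well. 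Note also the pleasant cross-check that $A_{m,\ell}(q)=A_{m+1,\ell-1}(q)+B_{m,\ell}(q)$, reflecting the partition of $A_{m,\ell}$ by whether $a_1=1$.

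\textbf{Main obstacle.} The only genuinely delicate point is verifying the claim that the tail of a sequence in $A_{m,\ell}$ beginning with a fixed value $a_1$ is governed exactly by $A_{m+1,\ell-1}$ (when $a_1=1$) or $A_{i,\ell-1}$ (when $a_1=i\geq 2$). This requires carefully unwinding the nested ``run of ones'' conditions (2) and (3): one must check that conditions (2) and (3) applied to $(a_1,\dots,a_\ell)$, restricted to indices $\geq 2$ and with $a_1$ now playing the role of a known prefix, translate without loss into conditions (1), (2), (3) for the shorter sequence $(a_2,\dots,a_\ell)$ with the appropriately shifted parameter. The cleanest way to do this is to track the quantity $\min\Asc(\tau_i)$ from Lemma~\ref{lem:123decomp}, since by construction the admissible values of $a_{i+1}$ are exactly those bounded by $\min\Asc(\tau_i)+1$; restating the $A_{m,\ell}$ conditions in terms of this running minimum makes the bijective correspondence between tails and smaller $A$-sets transparent and removes any ambiguity in the informal ``if $a_i\neq 1$ then the run resets'' language. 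Everything else is bookkeeping with the weight $q^{a_1+\dots+a_\ell-\ell}=q^{a_1-1}\cdot q^{a_2+\dots+a_\ell-(\ell-1)}$.
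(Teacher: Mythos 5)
Your proposal is correct and follows essentially the same route as the paper: condition on the first entry $a_1$, observe that the tail lies in $A_{m+1,\ell-1}$ when $a_1=1$ and in $A_{a_1,\ell-1}$ when $a_1\in[2,m]$, factor the weight as $q^{a_1-1}$ times the tail's weight, and drop the $a_1=1$ case for $B_{m,\ell}$. Your careful unwinding of conditions (2) and (3) for the tail is simply a more explicit justification of the step the paper asserts directly, so there is no substantive difference.
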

\begin{proof} First, we will prove the recurrence for $A_{m,\ell}(q)$. Consider the sequence $(a_1,a_2,\dots, a_{\ell})\in A_{m,\ell}$. Because the associated term is $q^{a_1+a_2+\dots +a_{\ell}-\ell}$ we can say that each $a_i$ contributes $q^{a_i-1}$ to the product. For any $a_1\in[2,m]$ we know $(a_2,\dots, a_{\ell})\in A_{a_1,\ell-1}$, which gives us the terms in the  summation. If instead $a_1 = 1$ then $a_2$ can be at most $m+1$ so $(a_2,\dots, a_{\ell})\in A_{m+1,\ell-1}$, which gives us the term $A_{m+1,\ell-1}(q)$ and completes the proof for the first recurrence. 

For the second recurrence consider $(a_1,a_2,\dots, a_{\ell})\in B_{m,\ell}$ so we always have $a_1\in[2,m]$. It follows that $(a_2,\dots, a_{\ell})\in A_{a_1,\ell-1}$ since $a_2$ can actually be 1. Since $a_1\neq 1$ this finishes the proof of the second recurrence. 
\end{proof}
We now have everything we need to determine $\ol{I\cI}_n(123)$, but we will do so in two cases. The first will be when $n$ is odd and the second when $n$ is even. This distinction will be important since we consider the number of fixed points in $\iota$, which is tied to the parity of $n$. If an involution has $k$ two-cycles then these two-cycles form a perfect matching and together use $2k$ indices. The remaining $n-2k$ indices are fixed points so the number of fixed points in $\iota$ always shares  parity with $n$. Any involution avoiding 123 will have at most two fixed points else we form the pattern 123. If $n=2k+1$ is odd then certainly there is exactly one fixed point. 

\begin{thm}For $n=2k+1$ and $k\geq 0$  we have
$$\ol{I\cI}_{2k+1}(123)=\sum_{j=1}^kq^{2j}A_{2j+1,k-j}(q^{2})+\sum_{j = 0}^kB_{2j+2,k-j}(q^{2}).$$
\label{thm:123invodd}
\end{thm}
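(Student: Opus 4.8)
The plan is to combine the structural decomposition of Lemma~\ref{lem:123decomp} with the coinversion formula of Lemma~\ref{lem:123inv}, and then repackage the resulting sums in terms of the generating functions $A_{m,\ell}(q)$ and $B_{m,\ell}(q)$. Concretely: take $n=2k+1$. Every $\iota\in\cI_{2k+1}(123)$ has exactly one fixed point (since $n$ is odd and two fixed points would create a $123$ pattern), so it has $k$ two-cycles, and Lemma~\ref{lem:123decomp} writes $\iota$ uniquely as $\iota=\tau+(a_1,m+2)+\dots+(a_\ell,m+2\ell)$ with either $\tau=12[\dd_{m-1},1]$ ($m>1$) and $(a_1,\dots,a_\ell)\in A_{m,\ell}$, or $\tau=\dd_m$ and $(a_1,\dots,a_\ell)\in B_{m+1,\ell}$.

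Next I would track the bookkeeping between the parameters $m,\ell$ and the length. Adding $\ell$ two-cycles to $\tau$ contributes $2\ell$ to the length, so $|\iota|=|\tau|+2\ell=2k+1$. In case (i) we have $|\tau|=m$, so $m=2k+1-2\ell$ must be odd; write $m=2j+1$, forcing $\ell=k-j$, and the constraint $m>1$ becomes $j\geq 1$. In case (ii) we have $|\tau|=|\dd_m|=m$, again odd, so write $m=2j+1$ with $j\geq 0$ (here $m=1$, i.e.\ $j=0$, is allowed since $\dd_1=1$), giving $\ell=k-j$ and sequences in $B_{m+1,\ell}=B_{2j+2,k-j}$. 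This matches exactly the index ranges $\sum_{j=1}^k$ and $\sum_{j=0}^k$ in the statement.

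For the $q$-weight, I use Lemma~\ref{lem:123inv}: $\coinv(\iota)=\coinv(\tau)+2(a_1+\dots+a_\ell-\ell)$. In case (i), $\tau=12[\dd_{m-1},1]$ is just a decreasing run $\dd_{m-1}$ followed by its largest element at the end, so it has no coinversions: $\coinv(\tau)=0$. In case (ii), $\tau=\dd_m$ is fully decreasing, so $\coinv(\tau)=0$ as well. Hence $\coinv(\iota)=2(a_1+\dots+a_\ell-\ell)$ in both cases. Summing $q^{\coinv(\iota)}$ over all $\iota$ in case (i) with fixed $j$ gives $\sum_{(a_1,\dots,a_\ell)\in A_{2j+1,k-j}}q^{2(a_1+\dots+a_\ell-\ell)}=A_{2j+1,k-j}(q^2)$ by the definition of $A_{m,\ell}(q)$; the extra factor $q^{2j}$ in the stated theorem I would obtain by being careful — I suspect the correct reading is that $\coinv(\dd_{m-1})$ inside $12[\dd_{m-1},1]$ is $0$ but one must recheck whether $\tau=12[\dd_{m-1},1]$ genuinely has $\coinv(\tau)=0$; in fact $\dd_{m-1}$ as a decreasing word has no coinversions and the final $1$ (the value $m$) forms no coinversion with a decreasing prefix either, so indeed $\coinv(\tau)=0$, and the $q^{2j}$ must instead come from a shift I am overlooking in how $A_{m,\ell}$ weights its sequences versus the actual values $a_i$. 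This reconciliation — matching the offset between the abstract $A_{m,\ell}(q)$ exponent $a_1+\dots+a_\ell-\ell$ and the true contribution to $\coinv$, and correctly locating the $q^{2j}$ against $B_{2j+2,k-j}$ carrying no such factor — is the one genuinely delicate point; the rest is assembling the two cases into the displayed sum. Similarly, case (ii) with fixed $j$ contributes $B_{2j+2,k-j}(q^2)$, and adding the two families over their respective ranges of $j$ yields
$$\ol{I\cI}_{2k+1}(123)=\sum_{j=1}^kq^{2j}A_{2j+1,k-j}(q^{2})+\sum_{j=0}^kB_{2j+2,k-j}(q^{2}),$$
as claimed. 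The main obstacle, as noted, is pinning down the $q^{2j}$ shift in the first sum — i.e.\ verifying exactly which indices of $\tau=12[\dd_{m-1},1]$ or of the added two-cycles account for it, most likely because the smallest legal values of $a_1,\dots,a_\ell$ in $A_{2j+1,k-j}$ are not all $1$ but rather force a base contribution, or because the fixed point of $\tau$ sits at position $m=2j+1$ and interleaving the two-cycles past it adds $2j$ to $\coinv$; everything else is routine.
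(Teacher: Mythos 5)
Your overall route is exactly the paper's: decompose via Lemma~\ref{lem:123decomp}, weight with Lemma~\ref{lem:123inv}, and do the parity bookkeeping $m=2j+1$, $\ell=k-j$ (that part of your write-up is correct, including the index ranges $j\geq 1$ versus $j\geq 0$). But there is a genuine gap, and it sits precisely at the point you flag as unresolved: your claim that $\coinv(\tau)=0$ for $\tau=12[\dd_{m-1},1]$ is false. By the paper's definition, a coinversion is a pair of indices $i<i'$ with $\tau(i)<\tau(i')$. In $\tau=12[\dd_{2j},1]=(2j)(2j-1)\cdots 1\,(2j+1)$ the last entry is the maximum value $2j+1$ sitting at the rightmost position, so it forms a coinversion with every one of the $2j$ entries to its left; hence $\coinv(\tau)=2j$, not $0$. (You appear to have reasoned as if the largest value at the end creates no increasing pairs, which would be true for inversions, not coinversions.) Plugging this into Lemma~\ref{lem:123inv} gives $\coinv(\iota)=2j+2(a_1+\cdots+a_{k-j}-(k-j))$, so $q^{\coinv(\iota)}=q^{2j}\,(q^2)^{a_1+\cdots+a_{k-j}-(k-j)}$, and summing over $(a_1,\dots,a_{k-j})\in A_{2j+1,k-j}$ produces exactly the factor $q^{2j}A_{2j+1,k-j}(q^2)$. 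In case (ii), $\tau=\dd_{2j+1}$ really does have $\coinv(\tau)=0$, which is why $B_{2j+2,k-j}(q^2)$ carries no such factor.

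Your proposed reconciliations for the missing $q^{2j}$ are all incorrect and would not close the gap: the definition of $A_{m,\ell}(q)$ weights a sequence by exactly $q^{a_1+\cdots+a_\ell-\ell}$ with no hidden offset; sequences in $A_{m,\ell}$ may have all $a_i=1$, so there is no forced base contribution; and Lemma~\ref{lem:123inv} already accounts completely for the coinversions created by the added two-cycles, independent of where the fixed point of $\tau$ sits, so no extra ``interleaving'' term appears. The only correction needed is the computation of $\coinv(\tau)$ itself; once that is fixed, the rest of your argument assembles into the paper's proof.
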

\begin{proof}
If $\iota$ avoids 123 and has  length $n=2k+1$ then $\iota$ must have exactly one fixed point. Looking at Lemma~\ref{lem:123decomp} we have two cases. If $\iota$ falls under case (i) then $\iota = 12[\dd_{2j},1] + (a_1,2j+3) + \dots + (a_{k-j},2k+1)$ for some $1\leq j\leq k$ where $(a_1,\dots, a_{k-j})\in A_{2j+1,k-j}$. Also this tells us by Lemma~\ref{lem:123inv} that $\coinv(\iota)=2j+2(a_1+\dots+a_{k-j}-(k-j))$, which gives us the first summation. 
If  $\iota$ instead falls under (ii) of Lemma~\ref{lem:123decomp} then $\iota = \dd_{2j+1} + (a_1,2j+3) + \dots + (a_{k-j},2k+1)$ for some $0\leq j\leq k$ where $(a_1,\dots, a_{k-j})\in B_{2j+2,k-j}$. By Lemma~\ref{lem:123inv} we have that $\coinv(\iota)=2(a_1+\dots+a_{k-j}-(k-j))$, which gives us the second summation. 
\end{proof}

If $\iota$ avoids 123 and $n=2k$ is even then there can be either no fixed point or two fixed points. 

\begin{thm}For $n=2k$ and $k\geq 1$  we have
$$\ol{I\cI}_{2k}(123)=\sum_{j=1}^kB_{2j+1,k-j}(q^{2})+\sum_{j = 1}^kq^{2j-1}A_{2j,k-j}(q^{2}).$$
\label{thm:123inveven}
\end{thm}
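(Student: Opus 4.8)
The plan is to mirror the structure of the proof of Theorem~\ref{thm:123invodd}, now handling the two cases from Lemma~\ref{lem:123decomp} when $n = 2k$ is even. So I would start from an arbitrary $\iota \in \cI_{2k}(123)$ and apply Lemma~\ref{lem:123decomp} to write $\iota$ uniquely as an addition of two-cycles on top of a base $\tau$, where either (i) $\tau = 12[\dd_{m-1},1]$ with $m > 1$ and the exponent sequence lies in $A_{m,\ell}$, or (ii) $\tau = \dd_m$ with the sequence in $B_{m+1,\ell}$. Since adding each two-cycle increases the length by $2$, the parity of $|\iota|$ matches the parity of $|\tau|$; the key bookkeeping is to track which base lengths $m$ are compatible with $|\iota| = 2k$ in each case.

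First I would treat case (ii), $\tau = \dd_m$: here $|\iota| = m + 2\ell = 2k$ forces $m$ even, say $m = 2j$ with $j \geq 1$ (we need $m \geq 1$, and $m=2j$ with the sequence in $B_{2j+1,\ell}$, and $B_{1,\ell}(q)=0$ handles the degenerate $j=0$ case automatically, but for $n$ even $m$ must be positive and even so $j\ge 1$), and then $\ell = k - j$ with the exponent sequence ranging over $B_{2j+1,k-j}$. Since $\coinv(\dd_m) = 0$, Lemma~\ref{lem:123inv} gives $\coinv(\iota) = 2(a_1 + \dots + a_{k-j} - (k-j))$, so after the substitution $q \mapsto q^2$ this block of involutions contributes $\sum_{j=1}^{k} B_{2j+1,k-j}(q^2)$, which is the first summation. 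Next, case (i), $\tau = 12[\dd_{m-1},1]$: here $|\iota| = m + 2\ell = 2k$ again forces $m$ even, $m = 2j$ with $j \geq 1$ (since $m > 1$), $\ell = k - j$, and the exponent sequence lies in $A_{2j,k-j}$. Now $\coinv(12[\dd_{m-1},1]) = m - 1 = 2j - 1$ (the single fixed point at the end contributes $m-1$ coinversions against the descending block, and $\dd_{m-1}$ itself contributes none), so Lemma~\ref{lem:123inv} yields $\coinv(\iota) = (2j - 1) + 2(a_1 + \dots + a_{k-j} - (k-j))$, and after $q \mapsto q^2$ this block contributes $\sum_{j=1}^{k} q^{2j-1} A_{2j,k-j}(q^2)$, the second summation.

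Adding the two disjoint contributions (disjoint by the uniqueness clause of Lemma~\ref{lem:123decomp}) gives exactly the claimed formula, so I would conclude the proof there. The main obstacle is purely combinatorial care rather than any deep idea: getting the parity constraints on $m$ right in each case, confirming that when $n$ is even the base $\tau = 1$ degenerate case (which straddles both cases and was resolved in Lemma~\ref{lem:123decomp} by filing $\tau = 1$ under $\tau = \dd_1$) cannot arise for $n$ even since $\dd_1$ has odd length, and correctly computing $\coinv(\tau)$ for the two base shapes — in particular being careful that $\coinv(12[\dd_{m-1},1])$ really is $m-1$ and not, say, related to $\binom{m-1}{2}$. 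Once those are pinned down, the rest is a direct translation through Lemmas~\ref{lem:123decomp} and~\ref{lem:123inv}, exactly paralleling the odd case.
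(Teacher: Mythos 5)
Your proposal is correct and follows essentially the same route as the paper: apply Lemma~\ref{lem:123decomp} to split into the two base cases, compute $\coinv$ via Lemma~\ref{lem:123inv} (with $\coinv(\dd_{2j})=0$ and $\coinv(12[\dd_{2j-1},1])=2j-1$), and sum over the sequences in $B_{2j+1,k-j}$ and $A_{2j,k-j}$ after substituting $q^2$. The only cosmetic difference is that the paper organizes the two cases by the number of fixed points (zero versus two) while you organize them by the parity of the base length $m$, which is an equivalent bookkeeping.
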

\begin{proof}
If $\iota$ avoids 123 and has length $n=2k$ then $\iota$ must have zero or two fixed points. Considering the case where $\iota$ has zero fixed points $\iota$ must fall under case (ii) in Lemma~\ref{lem:123decomp}  and $\iota = \dd_{2j}+ (a_1,2j+2) + \dots + (a_{k-j},2k)$ for some $1\leq j\leq k$ where $(a_1,\dots, a_{k-j})\in B_{2j+1,k-j}$. By Lemma ~\ref{lem:123inv} we have $\coinv(\iota)=2(a_1+\dots+a_{k-j}-(k-j))$, which gives us the first summation. 
If $\iota$ instead has two fixed points then $\iota$ fall under case (i) in Lemma~\ref{lem:123decomp} meaning $\iota = 12[\dd_{2j-1},1]+ (a_1,2j+2) + \dots + (a_{k-j},2k)$ for some $1\leq j\leq k$ where $(a_1,\dots, a_{k-j})\in A_{2j,k-j}$. By Lemma~\ref{lem:123inv} we have  $\coinv(\iota)=2j-1+2(a_1+\dots+a_{k-j}-(k-j))$, which gives us the second summation. 
\end{proof}

We can use the proof of Theorem~\ref{thm:123inveven} to determine the generating function for $\coinv$ and involutions avoiding 123 in the fixed-point-free case. 

\begin{cor}For $n=2k$ and $k\geq 1$  we have
$$\ol{IF\cI}_{2k}(123)=\sum_{j=1}^k B_{2j+1,k-j}(q^{2}).$$ 
\vspace{-1cm}

\hqed
\vspace{.2cm}
\end{cor}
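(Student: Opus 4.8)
The plan is to extract the corollary directly from the analysis already carried out in the proof of Theorem~\ref{thm:123inveven}. There an involution $\iota\in\cI_{2k}(123)$ was split according to whether it has zero or two fixed points, and the zero-fixed-point case contributed exactly the first summation $\sum_{j=1}^k B_{2j+1,k-j}(q^{2})$. Since being fixed-point-free means precisely having zero fixed points, it suffices to isolate the zero-fixed-point involutions inside the decomposition of Lemma~\ref{lem:123decomp} and then re-run the coinversion count.

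First I would confirm that, for $n=2k$, the fixed-point-free involutions in $\cI_n(123)$ are exactly those of the form $\iota=\dd_{2j}+(a_1,2j+2)+\cdots+(a_{k-j},2k)$ with $1\leq j\leq k$ and $(a_1,\dots,a_{k-j})\in B_{2j+1,k-j}$, i.e.\ case (ii) of Lemma~\ref{lem:123decomp}. The key observation is that pulling off a two-cycle, $\hat\iota\mapsto\hat\iota+(a,|\iota|)$ with $a<|\iota|$, does not change the number of fixed points: the new two-cycle contributes none, and after deleting the two new indices and the two new values the remaining array is order-isomorphic to $\hat\iota$ in a way that matches fixed points to fixed points. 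Hence an $\iota$ built from a seed $\tau$ by pulling off two-cycles is fixed-point-free if and only if $\tau$ is. In case (i) of Lemma~\ref{lem:123decomp} the seed $\tau=12[\dd_{m-1},1]$ always has a fixed point (at its last position), so those $\iota$ are never fixed-point-free; in case (ii) the seed $\tau=\dd_m$ is fixed-point-free exactly when $m$ is even, and for $n=2k$ parity forces $m=2j$ to be even, so every $\iota$ in case (ii) is fixed-point-free. This pins down case (ii) as the fixed-point-free locus.

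Then I would apply Lemma~\ref{lem:123inv} to such an $\iota$. Since $\dd_{2j}$ is strictly decreasing, $\coinv(\dd_{2j})=0$, so $\coinv(\iota)=2\bigl(a_1+\cdots+a_{k-j}-(k-j)\bigr)$. Summing $q^{\coinv(\iota)}$ over $1\leq j\leq k$ and over all $(a_1,\dots,a_{k-j})\in B_{2j+1,k-j}$, and comparing with the definition $B_{m,\ell}(q)=\sum_{(a_1,\dots,a_\ell)\in B_{m,\ell}}q^{a_1+\cdots+a_\ell-\ell}$, yields
$$\ol{IF\cI}_{2k}(123)=\sum_{j=1}^k\ \sum_{(a_1,\dots,a_{k-j})\in B_{2j+1,k-j}}q^{2(a_1+\cdots+a_{k-j}-(k-j))}=\sum_{j=1}^k B_{2j+1,k-j}(q^{2}),$$
which is the claimed formula.

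There is essentially no serious obstacle here; the only point requiring care is the bookkeeping that pulling off a two-cycle preserves the number of fixed points, which is what guarantees that the fixed-point-free involutions are exactly the case (ii) family and not some mixture involving case (i). Everything else is a substitution into identities already established (Lemmas~\ref{lem:123decomp} and~\ref{lem:123inv}), so the corollary really is just the fixed-point-free shadow of Theorem~\ref{thm:123inveven}.
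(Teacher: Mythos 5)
Your proposal is correct and follows exactly the route the paper intends: the corollary is read off from the zero-fixed-point case (case (ii) of Lemma~\ref{lem:123decomp}) in the proof of Theorem~\ref{thm:123inveven}, with Lemma~\ref{lem:123inv} giving $\coinv(\iota)=2(a_1+\cdots+a_{k-j}-(k-j))$. Your extra check that adding a two-cycle preserves the number of fixed points is a reasonable way to make the identification of the fixed-point-free locus explicit, but it is the same argument the paper relies on.
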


One interesting observation about $\ol{I\cI}_n(123)$ happens for odd $n=2k+1$. From the formula in Theorem~\ref{thm:123invodd} we can see that $\ol{I\cI}_{2k+1}(123)$ can only have  even powers of $q$. As result $\coinv(\iota)$ is even for all $\iota\in \cI_{2k+1}(123)$. This is similarly true for $\iota\in F\cI_{2k}(123)$. 
\begin{cor}
For $\iota$ avoiding $123$,  $\coinv(\iota)$ is odd if and only if $|\iota|$ is even and $\iota$ has a fixed point. \hqed
\label{cor:123inv}
\end{cor}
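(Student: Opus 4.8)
The plan is to read off the parity of $\coinv(\iota)$ directly from the three decomposition results that have just been established, namely Theorems~\ref{thm:123invodd} and~\ref{thm:123inveven} together with the preceding lemmas on $A_{m,\ell}(q)$ and $B_{m,\ell}(q)$. The statement to prove is an "if and only if": $\coinv(\iota)$ is odd exactly when $|\iota|$ is even \emph{and} $\iota$ has a fixed point. So I would split into the three mutually exclusive cases matching the three theorems: (a) $|\iota|=2k+1$ odd (hence exactly one fixed point), (b) $|\iota|=2k$ even and fixed-point-free, (c) $|\iota|=2k$ even with two fixed points. I want to show $\coinv$ is even in cases (a) and (b), and odd in case (c).

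For case (a) I would invoke Theorem~\ref{thm:123invodd}: every term of $\ol{I\cI}_{2k+1}(123)$ is either $q^{2j}A_{2j+1,k-j}(q^2)$ or $B_{2j+2,k-j}(q^2)$, and in both cases the exponent of $q$ is even (a power of $q^2$, possibly times $q^{2j}$). Hence $\coinv(\iota)$ is even for every $\iota\in\cI_{2k+1}(123)$. For case (c), the two-fixed-point summand in Theorem~\ref{thm:123inveven} is $q^{2j-1}A_{2j,k-j}(q^2)$ with $1\leq j\leq k$; since $A_{2j,k-j}(q^2)$ is a polynomial in $q^2$ and $2j-1$ is odd, every exponent appearing is odd, so $\coinv(\iota)$ is odd. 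For case (b), the fixed-point-free involutions of even length contribute exactly $\sum_{j=1}^k B_{2j+1,k-j}(q^2)$ (this is stated in the corollary preceding the one we are proving, but it also follows from the zero-fixed-point part of the proof of Theorem~\ref{thm:123inveven}), and again this is a polynomial in $q^2$, so $\coinv(\iota)$ is even. Tracking which summand of Theorem~\ref{thm:123inveven} corresponds to which fixed-point count is exactly the bookkeeping already done in that proof, so no new work is needed there.

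One subtlety I should be careful about: I am using that $A_{m,\ell}(q)$ and $B_{m,\ell}(q)$ are honest polynomials with nonnegative integer coefficients — equivalently, that the relevant sets $A_{m,\ell}$, $B_{m,\ell}$ are nonempty for the index ranges that appear — so that "every exponent is even/odd" really does pin down the parity of $\coinv(\iota)$ for \emph{every} individual $\iota$, not just generically. This is immediate from the definitions of $A_{m,\ell}$ and $B_{m,\ell}$ and from Lemma~\ref{lem:123decomp}, which gives a bijection between $\cI_n(123)$ (with prescribed $\tau$ and $\ell$) and the sequence sets, so I would state it in one sentence rather than belabour it. Since the three cases (a), (b), (c) exhaust $\iota$ avoiding $123$ and the conclusions line up precisely with "$|\iota|$ even and $\iota$ has a fixed point," the equivalence follows.

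\textbf{Main obstacle.} There is essentially no obstacle here — the content has all been done in Theorems~\ref{thm:123invodd} and~\ref{thm:123inveven}. The only thing requiring a moment's care is matching up, in Theorem~\ref{thm:123inveven}, which of the two summations is the zero-fixed-point case and which is the two-fixed-point case, and confirming that the "$q^{2j-1}$" prefactor (rather than an even power) is really forced — this comes from $\tau=12[\dd_{2j-1},1]$ having $\coinv(\tau)=\binom{2j-1}{2}=\binom{2j-1}{2}$, whose parity one checks is odd, consistent with the $2j-1$ in the exponent. So the proof is a short case analysis reading off parities from formulas already in hand.
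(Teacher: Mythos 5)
Your proposal is correct and takes essentially the same route as the paper, which also deduces the corollary by reading off parities from Theorems~\ref{thm:123invodd} and~\ref{thm:123inveven} (whose proofs already record which summand corresponds to which fixed-point count) together with the fixed-point-free corollary. One small slip in your final sanity check: $\coinv(12[\dd_{2j-1},1])=2j-1$ (the coinversions created by the terminal maximum), not $\binom{2j-1}{2}$; since the exponent $2j-1$ in Theorem~\ref{thm:123inveven} is already justified by Lemma~\ref{lem:123inv}, this does not affect your main argument.
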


%%%%%%%%%%%While $\ol{I\cI}_{2k}(123)$ is not unimodal when we look at   $\ol{I\cI}_{2k+1}(123)$ or $\ol{IF\cI}_{2k}(123)$ restricting to only even powers of  $q$ it seems that we have an unimodal function. This has been checked for $n\leq 12$. 
%%%%%%%%%%%
%%%%%%%%%%%\begin{conj}
%%%%%%%%%%%The functions $\ol{I\cI}_{2k+1}(123;\sqrt{q})$ and $\ol{IF\cI}_{2k}(123;\sqrt{q})$ are unimodal. 
%%%%%%%%%%%\end{conj}

%%%%%%%%%%%%%%%%%%%%%%%%%%%%%%%%%%%%%%%%%%%%%%%%%%%%%%%%%%%%%%%%%%%%%%%%%%
%%%%%%%%%%%%%%%%%%%%%%%%%%%%%%%%%%%%%
%%%%%%%%%%%%%%%%%%%%%%%%%%%%%%%%%%%%%

\section{Length three patterns and maj}
\label{maj}

Just like for inversions we find that the  $M\cI$-Wilf equivalence classes for length three patterns are determined trivially, so this section's focus will be on describing the generating functions. Some of these functions have been studied by others like  Dokos et.\ al.~~\cite{DDJSS12} who determined $M\cI_n(231)$ and Barnabei et.\ al.~\cite{BBES14} who independently  found that $M\cI_n(321)$ is the standard central $q$-binomial coefficient whose proof gives a connection to hook decompositions.  The bijection we present later in Section~\ref{subsec321} is shorter and gives a connection to core, an unrelated concept  that is used for proving symmetric chain decompositions in poset theory. 

In order to be complete we  present a description for every generating function for all length three patterns. 
The functions $M\cI_n(123)$ and $M\cI_n(213)$ will be described using $M\cI_n(321)$ and $M\cI_n(132)$ respectively because we will additionally be proving the symmetry $M\cI_n(\pi_1;q)=q^{\binom{n}{2}}M\cI_n(\pi_2;q^{-1})$ between the pairs of respective patterns in Theorem~\ref{thm:132symm213} and Proposition~\ref{thm:123&321 symmetry}. The symmetry between  the patterns 123 and 321 can be proven using  the Robinson-Schensted-Knuth correspondence and transposing tableaux, a map that has been studied and used in many papers including Simion and Schmidt~\cite{ss:rp}, Barnabei et.\ al.~\cite{BBES14} and Deutsch et.\ al.~\cite{DRS07}. This map has also been described in more explicit detail by B\'{o}na and Smith in~\cite{BS16} (Section 3) whose description subverts the RSK algorithm and transposition. Despite it being a similar symmetry, proving this symmetry between the patterns 132 and 213 will require a different map.

Though mostly for the pattern 132, it will be easier for us to describe the generating function in some cases using the different but related statistics $\comaj$ and $\asc$, defined early in Section~\ref{inv},  and  the generating function in the fixed-point-free case.
% {The {\it ascent set} of $\sigma \in \fS_n$ is $\Asc(\sigma)=\{i:\sigma(i)<\sigma(i+1)\}$ with  the {\it number of ascents} equal to $\asc(\sigma)=|\Asc(\sigma)|$.} The {\it co-major index}, $\comaj(\sigma)=\sum_{i\in \Asc(\sigma)}i$,   is the sum of all indices which are not descents. 
 The associated generating functions will be notated
$$\ol{  M\cI}_n(\pi;q,t)=\sum_{\iota \in \cI_n(\pi)} q^{\comaj(\iota)}t^{\asc(\iota)}$$
and 
$$\ol{  MF\cI}_n(\pi;q,t)=\sum_{\iota \in F\cI_n(\pi)} q^{\comaj(\iota)}t^{\asc(\iota)}.$$
Determining these functions is equivalent to determining the ones for the major index due to the following identities. 

\begin{lemma}
We have the following equalities involving the statistics $\maj$,   $\des$,  $\comaj$ and $\asc$. 
\begin{enumerate}
\item For $\sigma \in \fS_n$ we have $\maj(\sigma)=\binom{n}{2}-\comaj(\sigma)$ and $\des(\sigma)=n-1-\asc(\sigma)$.
\item $\displaystyle q^{\binom{n}{2}}t^{n-1}\ol{M\cI}_n(\pi;q^{-1},t^{-1})=\sum_{\iota \in \cI_n(\pi)} q^{\maj(\iota)}t^{\des(\iota)}.$
\item $\displaystyle q^{\binom{n}{2}}t^{n-1}\ol{MF\cI}_n(\pi;q^{-1},t^{-1})=\sum_{\iota \in F\cI_n(\pi)} q^{\maj(\iota)}t^{\des(\iota)}.$
\end{enumerate}
\label{lemma:majproperties}
\end{lemma}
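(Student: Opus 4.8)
The plan is to establish part (1) directly from the definitions of the four statistics and then derive parts (2) and (3) as formal substitutions, exactly mirroring the proof of Lemma~\ref{lemma:invproperties}. For part (1), fix $\sigma\in\fS_n$. Every index $i\in[n-1]$ is either a descent (that is, $i\in\Des(\sigma)$) or an ascent (that is, $i\in\Asc(\sigma)$), and these alternatives are mutually exclusive since $\sigma$ is a permutation and so $\sigma(i)\neq\sigma(i+1)$. Hence $\Des(\sigma)$ and $\Asc(\sigma)$ partition $[n-1]$, giving $\des(\sigma)+\asc(\sigma)=n-1$, i.e. $\des(\sigma)=n-1-\asc(\sigma)$. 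Summing the positions, $\maj(\sigma)+\comaj(\sigma)=\sum_{i\in\Des(\sigma)}i+\sum_{i\in\Asc(\sigma)}i=\sum_{i=1}^{n-1}i=\binom{n}{2}$, so $\maj(\sigma)=\binom{n}{2}-\comaj(\sigma)$.

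For part (2), I would start from the right-hand side and apply part (1) termwise. For $\iota\in\cI_n(\pi)$ we have $\maj(\iota)=\binom{n}{2}-\comaj(\iota)$ and $\des(\iota)=n-1-\asc(\iota)$, so
$$\sum_{\iota\in\cI_n(\pi)}q^{\maj(\iota)}t^{\des(\iota)}=\sum_{\iota\in\cI_n(\pi)}q^{\binom{n}{2}-\comaj(\iota)}t^{n-1-\asc(\iota)}=q^{\binom{n}{2}}t^{n-1}\sum_{\iota\in\cI_n(\pi)}(q^{-1})^{\comaj(\iota)}(t^{-1})^{\asc(\iota)},$$
and the last sum is exactly $\ol{M\cI}_n(\pi;q^{-1},t^{-1})$ by definition, which is the claimed identity. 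Part (3) is identical with $\cI_n(\pi)$ replaced by $F\cI_n(\pi)$ throughout, since part (1) holds for every permutation and in particular for every fixed-point-free involution avoiding $\pi$.

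There is essentially no obstacle here: the only thing to be careful about is the bookkeeping of which statistic pairs with which (that $\maj$ pairs with $\des$ and $\comaj$ with $\asc$, not crossed), and the fact that the complementation $i\mapsto$ "$i$ is an ascent" requires $\sigma(i)\neq\sigma(i+1)$, which is automatic for permutations. So the write-up should be short, along the lines of the proof of Lemma~\ref{lemma:invproperties}: prove the partition of $[n-1]$ into ascents and descents, deduce the two scalar identities in part (1), and note that parts (2) and (3) follow by substituting these into the defining sums.
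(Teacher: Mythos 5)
Your proposal is correct and follows exactly the paper's approach: the paper's own proof rests on the single observation that each $i\in[n-1]$ is either an ascent or a descent, so $\Des(\sigma)$ and $\Asc(\sigma)$ partition $[n-1]$, with parts (2) and (3) following by termwise substitution. Your write-up simply spells out the same argument in more detail.
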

\begin{proof}
All these equalities are true  by the fact that $i\in[n-1]$ is an ascent or a descent for any permutation $\sigma$ of length $n$. So the disjoint union $\Des(\sigma)\cup\Asc(\sigma)$ is $[n-1]$. 
\label{lemma:ascdessymm}
\end{proof}

These bivariate functions determine the generating function for $\maj$ since $q^{\binom{n}{2}}\ol{M\cI}_n(\pi;q^{-1},1)=M\cI_n(\pi)$.

%%%%%%%%%%%%%%%%%%%%%%%%%%%%%%
\subsection{$M\cI$-Wilf equivalence classes for length three patterns}

The equivalence classes for the major index are trivially determined.

\begin{prop} The only non-singleton $M{\mathcal I}$-Wilf equivalence class for length three patterns is $[231]_{M\cI}=\{231,312\}$. 
%\begin{enumerate}[(i)]
%\item $[123]_{M\cI}=\{123\}$
%\item $[132]_{M\cI}=\{132\}$ 
%\item $[213]_{M\cI}=\{213\}$ 
%\item $[231]_{M\cI}=\{231,312\}$
%\item $[321]_{M\cI}=\{321\}$
%\end{enumerate}
 \label{prop:MIWilfequiv}
\end{prop}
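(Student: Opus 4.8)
The plan is to show that the only non-trivial $M\cI$-Wilf equivalence for length three patterns is $[231]_{M\cI}=\{231,312\}$, by combining the few symmetries of the square that fix involutions with direct computation for small $n$. First I would establish that $231$ and $312$ are $M\cI$-Wilf equivalent: by Proposition~\ref{prop:S(213,312)=I(231)} we already have $\cI_n(231)=\cI_n(312)$ as \emph{sets}, so trivially $M\cI_n(231)=M\cI_n(312)$. (Alternatively, since $r_1(231)=312$, and $r_1$ is the identity on involutions by Lemma~\ref{lemma:operations and involutions}(ii), the two avoidance classes coincide; either way the generating functions agree.) This gives one non-singleton class.

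Next I would argue that no other coincidences occur. By Lemma~\ref{lemma:operations and involutions} the only operations on the square that map $\cI_n$ to itself are $r_1$, $r_{-1}$, $R_0$, $R_{180}$, of which $r_1$ and $R_0$ act as the identity on involutions, and $r_{-1}$ and $R_{180}$ coincide. So the only symmetry that could possibly identify two \emph{distinct} patterns of length three is $r_{-1}=R_{180}$. Computing the action of $r_{-1}$ on the six length-three patterns: $r_{-1}$ is reflection across the anti-diagonal, which sends a pattern $\pi$ to the reverse-complement $\pi^{rc}$; one checks $r_{-1}(123)=321$, $r_{-1}(132)=213$, $r_{-1}(231)=231$, $r_{-1}(312)=312$, $r_{-1}(321)=123$, $r_{-1}(213)=132$. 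So this symmetry pairs up $\{123,321\}$ and $\{132,213\}$, and fixes $231$ and $312$. However, Lemma~\ref{lemma:inv preserved} shows $r_{-1}$ preserves $\inv$ but — as remarked after that lemma — no operation except the identity preserves $\maj$. Hence $r_{-1}$ does \emph{not} give an $M\cI$-Wilf equivalence between $123$ and $321$ or between $132$ and $213$; these pairs are related only by the twisted symmetry $M\cI_n(\pi_1;q)=q^{\binom{n}{2}}M\cI_n(\pi_2;q^{-1})$ of Theorem~\ref{thm:132symm213} and Proposition~\ref{thm:123&321 symmetry}, which is not equality of generating functions.

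It then remains to rule out any ``accidental'' equalities among the patterns $123$, $132$, $213$, $321$ (and to confirm that none of these coincides with the class $\{231,312\}$). For this I would simply exhibit a small value of $n$ at which the generating functions $M\cI_n(\pi)$ for these representatives are pairwise distinct; $n=4$ or $n=5$ should suffice, since by that point the avoidance classes $\cI_n(123)$, $\cI_n(132)$, $\cI_n(213)$, $\cI_n(321)$ already have size $\binom{n}{\ceil{n/2}}$ (Theorem~\ref{thm:SimionSchmidt}) while $|\cI_n(231)|=2^{n-1}$, and the major-index polynomials differ — e.g.\ for $n=4$ one can list the ten involutions in each class and compute $\sum q^{\maj}$ directly. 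The main obstacle here is not conceptual but bookkeeping: one must carefully enumerate the relevant involutions and their descent sets for the chosen $n$, and verify that the four polynomials for $123,132,213,321$ are all different from each other and from the polynomial for $231$. Once those small computations are in hand, the trivial symmetries plus the non-preservation of $\maj$ close out the claim that $\{231,312\}$ is the unique non-singleton class.
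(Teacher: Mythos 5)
Your proposal is correct and follows essentially the same route as the paper: the equivalence of $231$ and $312$ comes for free from the set equality $\cI_n(231)=\cI_n(312)$ of Proposition~\ref{prop:S(213,312)=I(231)}, and the remaining patterns are separated by a direct small-$n$ computation. The only differences are cosmetic: the paper checks $n=3$ (which already suffices, since the five polynomials $M\cI_3(\pi)$ are pairwise distinct there), so your proposed $n=4$ or $n=5$ check and the intermediate discussion of which symmetries fail to preserve $\maj$ are more than is needed.
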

\begin{proof} From Proposition~\ref{prop:S(213,312)=I(231)} we know that $\cI_n(231)=\cI_n(312)$ so these patterns are in the same $M{\mathcal I}$-Wilf equivalence class. From the case of $n=3$ we can see that this is the only non-singleton class for length three patters. 
\end{proof}

Using the map $r_1$ we can easily show that the patterns $\pi$ and $r_1(\pi)=\pi^{-1}$ are always in the same $M\cI$-Wilf equivalence class since $r_1$ is the identity map on involutions. From computational data it does seem that the $M\cI$-Wilf equivalence classes are precisely these formed by a pattern $\pi$ and its inverse.

\begin{conj}
The only non-singleton $M\cI$-Wilf Equivalence classes are $[\pi]_{M\cI}=\{\pi, r_1(\pi)\}$ when $\pi$ is not an involution.
\end{conj}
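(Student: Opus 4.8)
The plan is to split the statement into an easy inclusion --- that each pair $\{\pi,r_1(\pi)\}$ with $\pi$ not an involution genuinely consists of $M\cI$-Wilf equivalent patterns --- and a hard exhaustion step ruling out every other coincidence.

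For the first part no new idea is needed beyond what is already in the excerpt. By Lemma~\ref{lemma:operations and involutions} the reflection $r_1$ is a bijection $\cI_n\to\cI_n$ and is in fact the identity on involutions; since a diagram avoids $\pi$ exactly when its $r_1$-image avoids $r_1(\pi)$, the map $r_1$ restricts to a bijection $\cI_n(\pi)\to\cI_n(r_1(\pi))$, and being the identity it yields the literal set equality $\cI_n(\pi)=\cI_n(r_1(\pi))$ for all $n$ (the instance $\pi=231$, $r_1(\pi)=312$ is Proposition~\ref{prop:S(213,312)=I(231)}). Hence $M\cI_n(\pi;q)=M\cI_n(r_1(\pi);q)$ identically, and since $r_1(\pi)=\pi^{-1}$ the pair $\{\pi,r_1(\pi)\}$ is a genuine two-element set exactly when $\pi\ne\pi^{-1}$, i.e.\ when $\pi$ is not an involution. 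For length three this recovers $[231]_{M\cI}=\{231,312\}$ of Proposition~\ref{prop:MIWilfequiv}.

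For the second part one must show that $M\cI_n(\pi_1)=M\cI_n(\pi_2)$ for all $n$ forces $\pi_2\in\{\pi_1,\pi_1^{-1}\}$. The route I would take parallels the proof of Proposition~\ref{prop:MIWilfequiv}. First, use the square symmetries preserving involutions --- $r_1,r_{-1},R_0,R_{180}$ of Lemma~\ref{lemma:operations and involutions}, where $r_1=R_0=\mathrm{id}$ and $r_{-1}=R_{180}$ --- to reduce the patterns of a fixed length to a short list of representatives; crucially $r_{-1}$ need \emph{not} preserve $\maj$, so unlike the $I\cI$-case it does not fuse $M\cI$-Wilf classes, which is exactly why the conjectured classes are only $\{\pi,\pi^{-1}\}$ rather than the full $I\cI$-class $\{\pi,r_1(\pi),r_{-1}(\pi),R_{180}(\pi)\}$. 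Then, for each surviving pair of representatives, one exhibits a finite $n$ and an exponent of $q$ whose coefficients in the two polynomials differ. I would first try cheap invariants --- the degree of $M\cI_n(\pi;q)$, its value at $q=1$ (which is $|\cI_n(\pi)|$), and the lowest few coefficients --- and only fall back to full expansion when those agree; note $|\cI_n(\pi)|$ alone cannot decide matters, since the $\cI$-Wilf classes already collapse many patterns ($123,132,213,321$ all give $\binom{n}{\lceil n/2\rceil}$), so the refined $\maj$-data is essential.

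The main obstacle is that this second part is, as stated, open. A complete proof would in effect require the polynomials $M\cI_n(\pi)$ for all $\pi$, hence a classification of $M\cI$-Wilf equivalence, which is strictly harder than the already-incomplete enumeration of $|\fS_n(\pi)|$ (even $1324$ is open) and is further burdened by tracking $\maj$. So I would not expect a full proof with current tools; what is realistically achievable, and what the symmetry-reduction-plus-coefficient-comparison scheme above delivers, is verification of the conjecture up to a moderate pattern length --- exactly the sort of confirmation already reported for the parallel $I\cI$-Wilf conjecture through length five.
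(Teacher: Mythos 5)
Your treatment is essentially the paper's: the only provable part, that $\pi$ and $r_1(\pi)=\pi^{-1}$ are $M\cI$-Wilf equivalent because $r_1$ is the identity on involutions (so $\cI_n(\pi)=\cI_n(\pi^{-1})$ literally), is exactly the observation the paper makes, and the exhaustive converse is left open there too, supported only by computer verification for patterns up to length $5$. Your symmetry-reduction-plus-coefficient-comparison scheme is just a reasonable way to organize that finite check, so there is nothing to add or fault.
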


The above conjecture is checked to be true by computer for patterns up to length 5. For permutations the $M$-Wilf equivalence classes can be much larger for example Dokos el. al.~\cite{DDJSS12} conjectured $[1423]_M=\{1423,2314,2413\}$ and $[3142]_M =\{3142,3241,4132\}$, which was proven by Bloom~\cite{B14} (Theorem 2.1 and Corollary 2.1). Dokos et.\ al.\ also conjectured $132[\ii_m,1,\dd_k]$ and $231[\ii_m,1,\dd_k]$ are $M$-Wilf equivalent and  $213[\dd_m,1,\ii_k]$ and $312[\dd_m,1,\ii_k]$ are as well. Yan, Ge and Zhang~\cite{YGZ15} (Theorem 1.3) proved this conjecture to be true in the case of $k = 1$. 

%%%%%%%%%%%%%%%%%%%%%%%%%%%%
%%%231 and 312%%%%%%%%%%%%%%%%%%%%%%%%%%
\subsection{The patterns 231 and 312}

By Proposition~\ref{prop:S(213,312)=I(231)}  we know $\fS_n(312,231)=\cI_n(231)$, so the generating function has already been determined by Dokos et.\ al.~\cite{DDJSS12} to be the following.

\begin{prop}[Dokos et.\ al.\ Proposition 5.2] We have for $n\geq 1$
 $$M\cI_n(231)=\prod_{k=0}^{n-1}(1+q^k).$$ 
 \end{prop}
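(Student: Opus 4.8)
The plan is to exploit the inflation decomposition of involutions avoiding $231$ established in Proposition~\ref{prop:SSdecompI(231)}, together with the set equality $\fS_n(231,312)=\cI_n(231)$ from Proposition~\ref{prop:S(213,312)=I(231)}, to peel off the leftmost block and obtain a simple recurrence for $M\cI_n(231)$, which then telescopes to the product formula. By Proposition~\ref{prop:SSdecompI(231)} every $\iota\in\cI_n(231)$ has the form $\ii_k[\dd_{j_1},\dd_{j_2},\dots,\dd_{j_k}]$ with $j_i\geq 1$; equivalently, as in the proof of Proposition~\ref{prop:S(213,312)=I(231)}, we may write $\iota=12[\dd_{j_1},\tau]$ for a unique $\tau\in\cI_{n-j_1}(231)$ and a unique $j_1\in[n]$ (where $j_1=n$ forces $\tau=\epsilon$).

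First I would compute how $\maj$ behaves under this splitting. Using the descent-set formula for inflations recalled after Proposition~\ref{prop:decomp}: the block $\dd_{j_1}$ occupies indices $[1,j_1]$ and contributes descents $1,2,\dots,j_1-1$; the junction index $j_1$ is an \emph{ascent} (the top of $\dd_{j_1}$ is smaller than the bottom of $\tau$ since consecutive blocks of an $\ii_k$-inflation are increasing), so it contributes nothing; and the descents inside $\tau$ land at positions $\Des(\tau)+j_1$. Hence $\maj(\iota)=\binom{j_1}{2}+\maj(\tau)+j_1\cdot\des(\tau)$. The appearance of $\des(\tau)$ is the obstacle to a clean one-variable recurrence, so the cleaner route is to pass to coinversions or, better, to mimic the structure Dokos et.\ al.\ already used: since this is cited as their Proposition 5.2, I would instead invoke the bijective content directly.

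The cleanest self-contained argument: split off the \emph{last} block instead. Write $\iota=12[\rho,\dd_{j_k}]$ with $\rho\in\cI_{n-j_k}(231)$ (again an inflation of $\ii_{k-1}$ by decreasing blocks). Now $\dd_{j_k}$ sits at the right end on indices $[n-j_k+1,n]$, contributing descents at $n-j_k+1,\dots,n-1$, i.e.\ $\maj$-contribution $\sum_{i=n-j_k+1}^{n-1} i=\binom{n}{2}-\binom{n-j_k}{2}$; the junction at $n-j_k$ is an ascent; and $\maj(\rho)$ is unchanged since $\rho$ occupies an initial segment. Thus $\maj(\iota)=\maj(\rho)+\binom{n}{2}-\binom{n-j_k}{2}$, and summing over $\rho$ and over $j_k\in[n]$ (the case $\rho=\epsilon$ corresponds to $j_k=n$) gives
\begin{equation}
M\cI_n(231)=\sum_{j=1}^{n} q^{\binom{n}{2}-\binom{n-j}{2}}\, M\cI_{n-j}(231),
\label{eq:majrec231}
\end{equation}
where $M\cI_0(231)=1$. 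Setting $N_n=q^{-\binom{n}{2}}M\cI_n(231)$ turns \eqref{eq:majrec231} into $N_n=\sum_{j=1}^{n} q^{-\binom{n-j}{2}}N_{n-j}=\sum_{m=0}^{n-1}N_m q^{-\binom{m}{2}}\cdot q^{\,?}$; more simply, subtracting the recurrence for $M\cI_{n-1}(231)$ from that for $M\cI_n(231)$ (after factoring) yields $M\cI_n(231)=(1+q^{n-1})M\cI_{n-1}(231)$, and induction on $n$ from the base $M\cI_1(231)=1$ gives $M\cI_n(231)=\prod_{k=0}^{n-1}(1+q^k)$.

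The main obstacle is bookkeeping the descent contributed at the block junction and verifying that the index $n-j_k$ is always an ascent (so that no cross-block descent is created); this follows because in an $\ii_k$-inflation the maximum of an earlier block is strictly below the minimum of a later block, so $\iota(n-j_k)<\iota(n-j_k+1)$. Once this is pinned down, the telescoping step $M\cI_n(231)=(1+q^{n-1})M\cI_{n-1}(231)$ is a short manipulation of \eqref{eq:majrec231}, and the product formula drops out immediately. Alternatively, since the statement is attributed to Dokos et.\ al.\ via the identity $\fS_n(231,312)=\cI_n(231)$, one may simply cite their Proposition 5.2 directly; but the recurrence argument above is short enough to include for completeness.
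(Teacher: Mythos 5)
Your overall strategy---peel off the \emph{last} decreasing block of the layered decomposition $\ii_k[\dd_{j_1},\dots,\dd_{j_k}]$, derive a recurrence, and telescope---is sound and genuinely different from the paper's argument, but the key exponent in your recurrence is off by one, and as written the recurrence is false. The block $\dd_{j}$ occupying indices $[\,n-j+1,n\,]$ contributes descents at $n-j+1,\dots,n-1$, whose sum is $\binom{n}{2}-\binom{n-j+1}{2}$, not $\binom{n}{2}-\binom{n-j}{2}$. With your exponent the right-hand side of your displayed recurrence already fails at $n=2$ (it gives $2q$ instead of $1+q$), and the subtraction step would yield $M\cI_n(231)=2q^{n-1}M\cI_{n-1}(231)$ rather than $(1+q^{n-1})M\cI_{n-1}(231)$. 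With the corrected exponent, $M\cI_n(231)=\sum_{j=1}^{n} q^{\binom{n}{2}-\binom{n-j+1}{2}}M\cI_{n-j}(231)$, your telescoping does go through: the $j=1$ term is exactly $M\cI_{n-1}(231)$ and the remaining terms factor as $q^{n-1}$ times the recurrence for $M\cI_{n-1}(231)$, so $M\cI_n(231)=(1+q^{n-1})M\cI_{n-1}(231)$ and induction from $M\cI_1(231)=1$ gives $\prod_{k=1}^{n-1}(1+q^k)$. Note the product must start at $k=1$, since at $q=1$ it has to equal $|\cI_n(231)|=2^{n-1}$; your final line (like the statement as printed) starts it at $k=0$, which is off by a factor of $2$. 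Your other structural points---the junction $n-j$ is always an ascent, $\maj(\rho)$ is unaffected because $\rho$ sits on an initial segment, and $j$ is recoverable as the length of the maximal decreasing suffix so the splitting is bijective---are correct, as is your diagnosis that peeling the first block is awkward because of the $\des(\tau)$ term.

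For comparison, the paper's proof avoids recursion altogether: from the same Simion--Schmidt decomposition it reads off $\Des(\iota)=[n-1]\setminus\{j_1,\,j_1+j_2,\dots,\,j_1+\cdots+j_{k-1}\}$ and observes that every subset $D\subseteq[n-1]$ is the descent set of exactly one involution in $\cI_n(231)$, whence $M\cI_n(231)=\sum_{D\subseteq[n-1]}q^{\sum_{i\in D}i}=\prod_{k=1}^{n-1}(1+q^k)$ in one stroke. Your route buys a recurrence and the explicit factorization $(1+q^{n-1})$ at each step, at the cost of the block-junction bookkeeping where the slip occurred; both rest on the same decomposition, so once the exponent is repaired your argument is a valid alternative.
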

 \begin{proof}The decomposition of an involution avoiding 231 by Proposition~\ref{prop:SSdecompI(231)} is $\ii_k[\dd_{j_1},\dd_{j_2},\dots ,\dd_{j_k}]$  where $j_i\geq1$ for all $i$. This determines the unique descent set $\Des(\iota)=[n-1]\setminus\{j_1,j_1+j_2,\dots, j_1+\dots+ j_{k-1}\}$. Conversely, given a set $D\subseteq [n-1]$ we can construct $\iota$ with $\Des(\iota)=D$. This tells us $$M\cI_n(231)=\sum_{D\subseteq [n-1]} q^{\sum_{i\in D}i},$$ which is known to be $\prod_{k=0}^{n-1}(1+q^k).$
\end{proof}
 The argument presented above is an extension of the argument used by Simion and Schmidt~\cite{ss:rp} (Proposition 6) to count $\cI_n(231)$.

%%%%%%%%%%%%%%%%%%%%%%%%%%%%%%%%%
%%%132%%%%%%%%%%%%%%%%%%%%%%%%%%%%%%%
\subsection{The pattern 132}

The generating function  $M\cI_n(132)$ is different from most in that it has  internal zeros. A polynomial has an {\it internal zero} if there is a term $q^k$ with a zero coefficient but there exists two other terms $q^i$ and $q^j$ with $i<k<j$ that have non-zero coefficients. The internal zeros of $M\cI_n(132)$ occur on a single interval of powers just before the  term $q^{\binom{n}{2}}$, which can even be seen when $n=3$. After proving this fact we describe $M\cI_n(132)$ recursively in several steps using nothing more than the decomposition of involutions avoiding 132.

\begin{prop}If $\iota \in {\cI}_n(132)$ then
\begin{enumerate}[(i)]
\item $\maj(\iota)=\binom{n}{2}$ or $\maj(\iota) \leq \binom{n}{2}-{\lceil n/2 \rceil}$,
\item this bound is sharp and 
\item for every non-negative $k\leq \binom{n}{2}-{\lceil n/2 \rceil}$ there exists some $\iota \in {\cI}_n(132)$ with $\maj(\iota)=k$. 
\end{enumerate}
\end{prop}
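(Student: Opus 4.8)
The plan is to induct on $n$ using the decomposition of $\cI_n(132)$ from Lemma~\ref{lemma:132decomp}, tracking the major index in each of the two cases of that decomposition. Recall that an involution $\iota\in\cI_n(132)$ is either of the form $12[\alpha,1]$ with $\alpha\in\cI_{n-1}(132)$ (case (i)), or of the form $45312[\alpha,1,\beta,r_1(\alpha),1]$ with $\alpha\in\fS_{k-1}(132)$, $\beta\in\cI_{n-2k}(132)$, $1\le k\le\lfloor n/2\rfloor$ (case (ii)). In case (i), appending a fixed point $n$ at the end creates no new descent, so $\maj(\iota)=\maj(\alpha)$, and the inductive hypothesis for $\alpha\in\cI_{n-1}(132)$ transfers almost verbatim: either $\maj(\alpha)=\binom{n-1}{2}$ or $\maj(\alpha)\le\binom{n-1}{2}-\lceil(n-1)/2\rceil$, and since $\binom{n-1}{2}\le\binom{n}{2}-\lceil n/2\rceil$ for all $n\ge 2$ while $\binom{n-1}{2}-\lceil(n-1)/2\rceil$ is even smaller, every such $\iota$ lands in the low range. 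So the value $\binom{n}{2}$ can only be attained in case (ii); one should check that it is in fact attained there by the unique decreasing-looking involution and identify exactly which $\iota$'s realize $\binom{n}{2}$ and which realize the top of the low range.

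For the main analysis I would compute $\maj$ in case (ii) explicitly from the block structure. Writing the five blocks with their sizes ($k-1$, $1$, $n-2k$, $k-1$, $1$) and using the recipe for $\maj$ of an inflation discussed after Proposition~\ref{prop:decomp}, one gets $\maj(\iota)$ as a sum of: the contribution of the inter-block descents of the pattern $45312$ scaled by the partial sums of block sizes, plus $\maj(\alpha)$ (twice, shifted) plus $\maj(\beta)$ (shifted), plus correction terms involving $\des(\alpha)$, $\des(\beta)$. Using the known bound $\maj(\alpha)\le\binom{k-1}{2}$ for any permutation of length $k-1$ (with equality iff $\alpha=\dd_{k-1}$) and the inductive hypothesis on $\beta$, one bounds $\maj(\iota)$ from above; the key arithmetic claim to verify is that the maximum over case (ii), subject to $\iota\ne$ (the one achieving $\binom n2$), is exactly $\binom{n}{2}-\lceil n/2\rceil$, and that it is achieved. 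For part (ii) (sharpness) I would exhibit the explicit involution attaining $\binom n2 - \lceil n/2\rceil$ — likely something like $21[\dd_{\lfloor n/2\rfloor},r_1(\dd_{\lfloor n/2\rfloor})]$ possibly with a fixed point adjustment depending on parity — and compute its major index directly.

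For part (iii), the surjectivity onto $\{0,1,\dots,\binom n2-\lceil n/2\rceil\}$, the cleanest route is again induction: from case (i) we get, for free, every value in $\Des$-image of $\cI_{n-1}(132)$, i.e. $\{0,\dots,\binom{n-1}{2}-\lceil(n-1)/2\rceil\}$ together with $\binom{n-1}{2}$; then I would use small instances of case (ii) — for example $k=1$, which gives $\iota=213[\,\cdot\,,1,\beta]$-type inflations, or rather $\beta$ sitting in a middle block flanked by two fixed points contributing a controllable shift — to fill in the remaining values up to $\binom n2-\lceil n/2\rceil$ and to realize $\binom n2$ itself. Concretely, taking $\alpha=\dd_{k-1}$ and letting $\beta$ range over $\cI_{n-2k}(132)$ while $k$ ranges, the resulting $\maj$ values should form overlapping intervals whose union is all of $\{0,\dots,\binom n2-\lceil n/2\rceil\}\cup\{\binom n2\}$; verifying the overlap is the bookkeeping heart of this part.

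The main obstacle I anticipate is the case-(ii) arithmetic: getting the inflation-$\maj$ formula exactly right (the inter-block descents of $45312$ are at positions $1,2,4$ within the five-block skeleton, so their weighted contribution depends delicately on the block sizes $k-1$ and $n-2k$, and the $\des(\alpha),\des(\beta)$ correction terms must be handled carefully), and then proving the resulting upper bound is \emph{exactly} $\binom n2-\lceil n/2\rceil$ rather than something off by a bounded amount — in particular pinning down why there is a \emph{gap} of width $\lceil n/2\rceil - 1$ just below $\binom n2$ and nothing attains values strictly between $\binom n2-\lceil n/2\rceil$ and $\binom n2$. That gap is forced because reaching the top forces $\alpha,\beta$ and the block sizes into a rigid configuration, and any deviation drops $\maj$ by at least $\lceil n/2\rceil$; making this precise is where the real work lies.
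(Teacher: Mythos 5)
There is a genuine gap: your write-up is a plan whose hardest steps are deferred rather than carried out. For part (i) you propose to bound $\maj$ in case (ii) of Lemma~\ref{lemma:132decomp} by expanding the inflation formula and then to show that the maximum below $\binom{n}{2}$ is \emph{exactly} $\binom{n}{2}-\lceil n/2\rceil$, but you explicitly leave this -- the entire content of the gap statement -- as ``where the real work lies.'' The paper avoids that arithmetic altogether with a cleaner structural observation: by induction on the same decomposition, every $\iota\in\cI_n(132)$ either has $\Asc(\iota)=\emptyset$ (so $\maj(\iota)=\binom{n}{2}$) or has some ascent at a position $i\geq\lceil n/2\rceil$ (the only delicate case being $\iota=321[1,\beta,1]$, where an ascent of $\beta$ at position $\geq\lceil (n-2)/2\rceil$ shifts to one at position $\geq\lceil n/2\rceil$). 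Since an ascent at position $i$ forces $\maj(\iota)=\binom{n}{2}-\comaj(\iota)\leq\binom{n}{2}-i$, part (i) follows immediately; no inflation-$\maj$ bookkeeping or ``rigid extremal configuration'' analysis is needed.

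Two further concrete problems. Your suggested witness for sharpness, $21[\dd_{\lfloor n/2\rfloor},r_1(\dd_{\lfloor n/2\rfloor})]$, is simply $\dd_n$ (since $r_1(\dd_m)=\dd_m$), whose major index is $\binom{n}{2}$, so it does not attain $\binom{n}{2}-\lceil n/2\rceil$; the hedge ``with a fixed point adjustment'' does not supply a correct example. And part (iii) is asserted to follow from ``overlapping intervals'' of $\maj$-values over case-(ii) instances without any verification that the intervals actually cover $\{0,\dots,\binom{n}{2}-\lceil n/2\rceil\}$. The paper handles (ii) and (iii) simultaneously and constructively: writing any admissible $k$ as $k=\binom{a+1}{2}-b$ with $b\leq a$, it exhibits explicit $132$-avoiding involutions (of the forms $453126[\dd_b,1,\dd_{a-2b},\dd_b,1,\ii_{n-a-2}]$ and $42315[\dd_{a-b},\dd_{2b-a},1,\dd_{a-b},\ii_{n-a-1}]$, according to whether $b\leq\lfloor n/2\rfloor$) with descent set $[a]\setminus\{b\}$ and hence major index exactly $k$. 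To complete your proof you would need either to supply the missing case-(ii) maximization argument plus correct extremal and surjectivity constructions, or to switch to an ascent-position argument of the paper's type.
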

\label{thm:132internalzeros}
\begin{proof}
To show (i) we will show that any $\iota\in \cI_n(132)$ has either $\Asc(\iota)=\emptyset$ or an $i \geq {\lceil n/2 \rceil}$ with  $i\in \Asc(\iota)$ by induction on $n$. This is easy to see for $n =1,2$.

Let $n>2$. According to Lemma~\ref{lemma:132decomp} we have two cases for $\iota\in \cI_n(132)$. The first is if $\iota(n)=n$ then $\iota$ has an ascent at $n-1$ that is at least ${\lceil n/2 \rceil}$ so we are done. The second case is that $\iota(n)=k\neq n$ then $\iota$ has the form $\iota =45312[\alpha,1, \beta, r_1(\alpha),1]$ where $\alpha \in \fS_{k-1}(132)$,  $\beta \in \cI_{n-2k}(132)$ and  $1\leq k\leq \floor{\frac{n}{2}}$, which can been seen in Figure~\ref{fig:132}. If $\alpha$ is not the empty permutation then we again have an ascent at $n-1$ and we are done just like in the first case. Consider the case where $\alpha$ is empty so $\iota =321[1, \beta, 1]$ with $\beta\in \cI_{n-2}(132)$. By our inductive assumption we could have $\Asc(\beta)=\emptyset$, which implies $\Asc(\iota)=\emptyset$ so we are done. Otherwise by induction there is some $i \geq {\lceil (n-2)/2 \rceil}$ that is in $\Asc(\beta)$. This implies that $i+1\geq {\lceil n/2 \rceil}$ is in $\Asc(\iota)$ and we have finished the proof of (i). 

Note that  (iii) implies (ii). Say $k \in [0,\binom{n}{2}-{\lceil n/2 \rceil}]$ then there exists a choice of $a$ and $b$ with $b\leq a$ where $k$ is rewritten as  $k = \binom{a+1}{2}-b$. If $b\leq {\lfloor n/2 \rfloor}$  then consider 
$$\iota =453126[\dd_b,1,\dd_{a-2b},\dd_b,1,\ii_{n-a-2}]$$ 
that has $\Des(\iota)=[a]-\{b\}$ and $\maj(\iota)=\binom{a+1}{2}-b$. If $b> {\lfloor n/2 \rfloor}$  then consider 
$$\iota =42315[\dd_{a-b},\dd_{2b-a},1,\dd_{a-b},\ii_{n-a-1}]$$ 
that has $\Des(\iota)=[a]-\{b\}$ and $\maj(\iota)=\binom{a+1}{2}-b$. See Figure~\ref{fig:132allmaj} for an example. This proves (ii) and (iii).
\end{proof}

\begin{figure}
\begin{center}
\begin{tikzpicture} [scale = .4]
\draw (0,5) rectangle (1,6);
\draw (5,0) rectangle (6,1);
\draw (1.5,1.5) rectangle (4.5,4.5);
\draw (7,7) rectangle (8.5,8.5);
\filldraw [black] 
(1.5,6.5) circle (5pt)
(6.5,1.5) circle (5pt)
(5.5,.5) circle (5pt)
(.5,5.5) circle (5pt)
(2,4) circle (5pt)
(4,2) circle (5pt)
(2.6,3.3) circle (5pt)
(3.3,2.6) circle (5pt)
(7.5,7.5) circle (5pt)
(8,8) circle (5pt);
\draw (0,0) --  (8.5,0) -- (8.5,8.5) -- (0,8.5) -- (0,0);
%\draw (4,4) node {$\sigma$};
%\draw (1.7,6.7) node {{$\pi$}};
%\draw (6.8,1.7) node {$r_1(\pi)$};
 \end{tikzpicture}
 \caption{The  diagram for $\iota =453126[\dd_1,1,\dd_4,\dd_1,1,\ii_2]$ with $\maj(\iota)=20=\binom{7}{2}-1$.}
 \end{center}
 \label{fig:132allmaj}
 \end{figure}

We describe the generating function for the pattern 132 in several steps just as we did for $\inv$ in Section~\ref{subsec132inv} by first determining the generated function in the fixed-point-free case. First, we present a useful lemma that describes a fact about $\maj$ and $\des$ for involutions avoiding 132. 

\begin{lemma} For $\iota \in F\cI_{2m}(132)$ with $\iota=21[\alpha,r_{1}(\alpha)]$ and $\alpha \in \fS_m(132)$ we have 
 \begin{enumerate}[(i)]
 \item $\asc(\iota)=2\asc(\alpha)$ and 
 \item $\comaj(\iota)=\comaj(\alpha)+\comaj(r_{1}(\alpha))+m\asc(\alpha)$. 
 \end{enumerate}
\label{lemma:132majasc}
\end{lemma}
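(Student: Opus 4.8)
The plan is to read $\Asc(\iota)$ straight off the decomposition $\iota=21[\alpha,r_1(\alpha)]$ of Lemma~\ref{lemma:132decomp}(iii), and then to reduce both identities to the single fact that the number of ascents of a $132$-avoiding permutation is unchanged under inversion.

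First I would record the relevant sets. In $\iota=21[\alpha,r_1(\alpha)]$ the two blocks have length $m$, the first carrying the values $\{m+1,\dots,2m\}$ and the second the values $\{1,\dots,m\}$, so position $m$ is a descent of $\iota$ while positions $1,\dots,m-1$ and $m+1,\dots,2m-1$ reproduce the descents and ascents of $\alpha$ and of $r_1(\alpha)$ respectively; thus $\Des(\iota)=\Des(\alpha)\cup\{m\}\cup(\Des(r_1(\alpha))+m)$ and, complementing inside $[2m-1]$, $\Asc(\iota)=\Asc(\alpha)\sqcup(\Asc(r_1(\alpha))+m)$ (a disjoint union, since $m$ lies in neither part). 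Consequently $\asc(\iota)=\asc(\alpha)+\asc(r_1(\alpha))$ and
\[\comaj(\iota)=\sum_{i\in\Asc(\alpha)}i+\sum_{j\in\Asc(r_1(\alpha))}(j+m)=\comaj(\alpha)+\comaj(r_1(\alpha))+m\,\asc(r_1(\alpha)).\]
Hence both parts of the lemma follow the moment we know $\asc(r_1(\alpha))=\asc(\alpha)$.

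Second, since $r_1$ reflects the diagram of $\alpha$ across its main diagonal we have $r_1(\alpha)=\alpha^{-1}$, and as $\asc(\beta)=|\beta|-1-\des(\beta)$ it suffices to prove $\des(\alpha)=\des(\alpha^{-1})$ for every $\alpha\in\fS_m(132)$ (note $\alpha^{-1}$ again avoids $132$, that pattern being an involution). I would argue by induction on $m$ using Proposition~\ref{prop:decomp}(i): write $\alpha=231[\alpha_1,1,\alpha_2]$ with $\alpha_1,\alpha_2\in\fS(132)$. Tracking the positions of the values shows $\alpha^{-1}=312[\alpha_2^{-1},\alpha_1^{-1},1]$. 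Now compare the block junctions. In $\alpha$, the step out of $\alpha_1$ up to the single maximum is an ascent and the step from the maximum down into $\alpha_2$ is a descent, so $\des(\alpha)=\des(\alpha_1)+\des(\alpha_2)+\mathbf{1}[\alpha_2\neq\epsilon]$. In $\alpha^{-1}=312[\alpha_2^{-1},\alpha_1^{-1},1]$, the step from the high block $\alpha_2^{-1}$ into the low block $\alpha_1^{-1}$ is a descent and the step from $\alpha_1^{-1}$ up to the single middle value is an ascent, so $\des(\alpha^{-1})=\des(\alpha_2^{-1})+\des(\alpha_1^{-1})+\mathbf{1}[\alpha_2\neq\epsilon]$ (the indicators agree because $\alpha_2=\epsilon$ exactly when $\alpha_2^{-1}=\epsilon$, and the degenerate cases $\alpha_1=\epsilon$ and $\alpha=1$ give the same formulas). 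By the inductive hypothesis $\des(\alpha_i^{-1})=\des(\alpha_i)$, whence $\des(\alpha^{-1})=\des(\alpha)$.

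Putting the pieces together, $\asc(r_1(\alpha))=\asc(\alpha)$ turns the two displayed formulas into $\asc(\iota)=2\asc(\alpha)$ and $\comaj(\iota)=\comaj(\alpha)+\comaj(r_1(\alpha))+m\,\asc(\alpha)$, as claimed. The only genuinely delicate step is the induction for $\des(\alpha)=\des(\alpha^{-1})$: one must correctly recognize the inflation shape of $\alpha^{-1}$ and carefully classify each interblock junction of $\alpha$ and of $\alpha^{-1}$ as an ascent or a descent, including the empty-block cases; everything else is routine bookkeeping with descent sets of inflations.
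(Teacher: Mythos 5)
Your proof is correct and follows essentially the same route as the paper: read $\Asc(\iota)$ off the block structure of $21[\alpha,r_1(\alpha)]$, then reduce both parts to the invariance of the ascent (equivalently descent) number under $r_1$ for $132$-avoiders, proved by induction via the decomposition $\alpha=231[\alpha_1,1,\alpha_2]$. The only cosmetic difference is that you phrase the inductive step with descents and an indicator $\mathbf{1}[\alpha_2\neq\epsilon]$ while the paper argues directly with ascents and a case split on whether $\alpha_1$ is empty.
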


\begin{proof}Let $\iota \in F\cI_{2m}(132)$ with $\iota=21[\alpha,r_{1}(\alpha)]$ as stated. Part (ii) quickly follows from the fact that $\Asc(\iota)=\Asc(\alpha)\cup (\Asc(r_{1}(\alpha))+m)$ and part (i).

For  (i) it will be sufficient to show that $\asc(\alpha)=\asc(r_{1}(\alpha))$ for any $\alpha \in \fS_m(132)$ using induction on $m$. It is easy to see this is true for $m=1$ so let $m>1$. We can decompose  $\alpha = 231[\alpha_1,1,\alpha_2]$ for some
$\alpha_1$ and $\alpha_2$ that avoid 132. % $\alpha \in \fS_k(132)$, $\beta \in \fS_{m-k-1}(132)$, and $0\leq k\leq m-1$. 
By induction we know $\asc(\alpha_1)=\asc(r_{1}(\alpha_1))$ and $\asc(\alpha_2)=\asc(r_{1}(\alpha_2))$. First consider the case where $|\alpha_1|\neq 0$. Since  in this case $\asc(\alpha)=\asc(\alpha_1)+\asc(\alpha_2)+1$ and $\asc(r_{1}(\alpha))=\asc(r_{1}(\alpha_2))+\asc(r_{1}(\alpha_1))+1$ we quickly can conclude that $\asc(\alpha)=\asc(r_{1}(\alpha))$. If instead $|\alpha_1|=0$ then $\asc(\alpha)=\asc(\alpha_2)$ and $\asc(r_1(\alpha))=\asc(r_1(\alpha_2))$ so  $\asc(\alpha)=\asc(r_1(\alpha))$.
\end{proof}

Using the lemma above we describe the fixed-point-free generating function for $\maj$ and involutions avoiding 132.

\begin{prop}  Define $F_{2m}(q,t)= \ol{MF\cI}_{2m}(132;q,t)$. We have $F_0(q,t)=1$ and for $m\geq 1$
$$F_{2m}(q,t)=F_{2(m-1)}(q,qt)+\sum_{k=1}^{m-1}q^{2m+k-1}t^2F_{2k}(q,q^{\frac{2m-2k-1}{2}}t)F_{2(m-k-1)}(q,q^{k+1}t).$$
\end{prop}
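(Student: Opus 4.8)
The plan is to compute $F_{2m}(q,t)=\ol{MF\cI}_{2m}(132;q,t)$ by combining the fixed-point-free decomposition in Lemma~\ref{lemma:132decomp}(iii) with a second application of the nested structure of $\fS_m(132)$. By Lemma~\ref{lemma:132decomp}(iii) every $\iota\in F\cI_{2m}(132)$ has the form $\iota=21[\alpha,r_1(\alpha)]$ for a unique $\alpha\in\fS_m(132)$, and by Lemma~\ref{lemma:132majasc} we have $\asc(\iota)=2\asc(\alpha)$ and $\comaj(\iota)=\comaj(\alpha)+\comaj(r_1(\alpha))+m\asc(\alpha)$. Hence $F_{2m}(q,t)=\sum_{\alpha\in\fS_m(132)}q^{\comaj(\alpha)+\comaj(r_1(\alpha))+m\asc(\alpha)}t^{2\asc(\alpha)}$, so the whole problem reduces to understanding the joint distribution of the pair $(\comaj(\alpha)+\comaj(r_1(\alpha)),\asc(\alpha))$ over $\fS_m(132)$.

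Next I would decompose $\alpha\in\fS_m(132)$ itself. By Proposition~\ref{prop:decomp}(i), $\alpha=231[\alpha_1,1,\alpha_2]$ with $\alpha_1\in\fS_k(132)$ (say $k=|\alpha_1|$) and $\alpha_2\in\fS_{m-k-1}(132)$. One has to track how $\Asc$ and $\comaj$ behave under this inflation, and how $r_1$ interacts with it. Using the descent-set computation recorded in Section~2 (the analogue of $\Des(\sigma)=\Des(\sigma_1)\cup\{|\sigma_1|+1\}\cup(\Des(\sigma_2)+|\sigma_1|+1)$ for the $231$-inflation, translated to ascents via $\Asc=[m-1]\setminus\Des$), I would express $\asc(\alpha)$, $\comaj(\alpha)$ and the corresponding quantities for $r_1(\alpha)$ — noting that $r_1(231[\alpha_1,1,\alpha_2])=231[r_1(\alpha_2),1,r_1(\alpha_1)]$ so the two halves swap — in terms of the data of $\alpha_1,\alpha_2$. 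The term $q^{2m+k-1}t^2$ and the $q$-shifts $q^{\frac{2m-2k-1}{2}}$ and $q^{k+1}$ inside the arguments of $F_{2k}$ and $F_{2(m-k-1)}$ should fall out as the bookkeeping for: the forced new ascent contributed by the large middle point, the doubling $t^2$ from $\asc(\iota)=2\asc(\alpha)$, the cross-terms $m\asc(\alpha)$ and $\comaj(\alpha)+\comaj(r_1(\alpha))$ distributed over the two halves, and the index shifts introduced by placing $\alpha_2$ (and its reversal) above $\alpha_1$. The base case $\alpha=1$ (i.e.\ $|\alpha_1|=0$ with $\alpha_2$ also empty) gives $F_0=1$, and the case $|\alpha_1|=0$ but $|\alpha_2|\ne 0$ is exactly the $F_{2(m-1)}(q,qt)$ term, since then $\alpha=12[1,\alpha_2]$ contributes no extra ascent but shifts every ascent of $\alpha_2$ by $1$, which on the involution side shifts $\comaj$ by $\asc$, i.e.\ replaces $t$ by $qt$.

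I expect the main obstacle to be the precise reconciliation of the half-integer shift $q^{\frac{2m-2k-1}{2}}$ with the purely integer statistics $\comaj$ and $\asc$: this forces the argument to be run at the level of $\iota$ (where $\comaj(\iota)$ lives in $\ol{MF\cI}$) rather than naively at the level of $\alpha$, so that when the argument of $F_{2k}$ is evaluated at $q^{\frac{2m-2k-1}{2}}t$ the odd-power contributions pair up correctly across the two reversed copies $\alpha_1$ and $r_1(\alpha_1)$. Concretely, I would set $\gamma=231[\alpha_1,1,\gamma']$-type intermediate pieces, track the arguments symbolically, and verify that $F_{2k}(q,q^{\frac{2m-2k-1}{2}}t)$ is really $\sum q^{\comaj+(\text{shift})\asc}t^{2\asc}$ with integer total exponent of $q$ after the $t^{2\asc}$ absorbs the factor of two; the remaining steps — summing over the split point $k$ and over the two cases $|\alpha_1|=0$ versus $|\alpha_1|\ne 0$ — are then routine. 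A sanity check at $m=1,2,3$ against the explicit polynomials $F_0=1$, $F_2=q t$, $F_4$ computed directly from the four elements of $F\cI_4(132)$ would confirm the shift constants before writing the general computation.
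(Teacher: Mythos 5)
Your overall architecture is the paper's: decompose $\iota=21[\alpha,r_1(\alpha)]$ via Lemma~\ref{lemma:132decomp}(iii), reduce to the $(\comaj,\asc)$ data of $\alpha$ via Lemma~\ref{lemma:132majasc}, split $\alpha=231[\alpha_1,1,\alpha_2]$ with $k=|\alpha_1|$, treat $k=0$ separately to obtain $F_{2(m-1)}(q,qt)$, and for $k\geq 1$ re-pair the two copies of each half into the smaller fixed-point-free involutions $x=21[\alpha_1,r_1(\alpha_1)]\in F\cI_{2k}(132)$ and $y=21[\alpha_2,r_1(\alpha_2)]\in F\cI_{2(m-k-1)}(132)$ --- this re-pairing, which you only gesture at (``odd powers pair up''), is exactly the paper's device, and it is what makes $q^{\frac{2m-2k-1}{2}}t$ meaningful since $\asc(x)=2\asc(\alpha_1)$ and $\asc(r_1(\alpha_1))=\asc(\alpha_1)$ (proved inside Lemma~\ref{lemma:132majasc}). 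However, the one structural identity you state is false, and it is precisely the step that produces the constants: $r_1(231[\alpha_1,1,\alpha_2])$ is \emph{not} $231[r_1(\alpha_2),1,r_1(\alpha_1)]$. Taking inverses permutes the blocks \emph{and} inverts the skeleton, so $r_1(231[\alpha_1,1,\alpha_2])=312[r_1(\alpha_2),r_1(\alpha_1),1]$; already for $\alpha=231$ your formula predicts $231$ while $r_1(231)=312$. Consequently, for $k\geq 1$ the ascent set of $r_1(\alpha)$ is $\Asc(r_1(\alpha_2))\cup\bigl(\Asc(r_1(\alpha_1))+(m-k-1)\bigr)\cup\{m-1\}$ --- the forced new ascent sits at $m-1$, and the $r_1(\alpha_1)$-ascents shift by $m-k-1$ --- whereas your identity places the forced ascent at $m-k-1$ and shifts by $m-k$, so $\comaj(r_1(\alpha))$ comes out wrong by $\asc(\alpha_1)-k$ and the bookkeeping will not return the exponent $2m+k-1$ nor the argument shifts $q^{\frac{2m-2k-1}{2}}$ and $q^{k+1}$ (only your proposed numerical check would flag this). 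With the corrected identity the computation collapses to the paper's: for $k\geq 1$, $\comaj(\iota)=\comaj(x)+\comaj(y)+\frac{2m-2k-1}{2}\asc(x)+(k+1)\asc(y)+2m+k-1$ and $\asc(\iota)=\asc(x)+\asc(y)+2$, while $k=0$ gives $\comaj(\iota)=\comaj(y)+\asc(y)$, $\asc(\iota)=\asc(y)$, i.e.\ the term $F_{2(m-1)}(q,qt)$.

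Two smaller slips, neither structural but worth fixing: in the $k=0$ case $\alpha=21[1,\alpha_2]$ (the maximum comes first), not $12[1,\alpha_2]$, although your conclusion that the net effect is $t\mapsto qt$ is correct; and your sanity-check values are calibrated to $(\maj,\des)$ rather than the coascent statistics that $\ol{MF\cI}$ records: since $21$ has no ascents, $F_2(q,t)=1$, not $qt$, and $F\cI_4(132)=\{3412,4321\}$ has two elements, not four, with $F_4(q,t)=1+q^4t^2$. As written, then, the $r_1$-identity is a step that would fail; once it is replaced by $r_1(\alpha)=312[r_1(\alpha_2),r_1(\alpha_1),1]$, your plan carries out essentially the paper's proof.
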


\begin{proof}
Let $\iota \in F\cI_{2m}(132)$. 
By Lemma~\ref{lemma:132decomp} we know that $\iota = 21[\alpha,r_{1}(\alpha)]$ for some $\alpha \in \fS_m(132)$. Since $\alpha$ avoids 132 we can write  $\alpha = 231[\alpha_1,1,\alpha_2]$ for some $\alpha_1 \in \fS_k(132)$, $\alpha_2 \in \fS_{m-k-1}(132)$ and $0\leq k\leq m-1$. Also, we will define  $x = 21[\alpha_1,r_{1}(\alpha_1)]$ and  $y= 21[\alpha_2,r_{1}(\alpha_2)]$. %We illustrate this in Figure~\ref{fig:132fpf}.

First consider the case where $k=0$ we then have that $\comaj(\iota)=\comaj(y)+\asc(y)$ and $\asc(\iota)=\asc(y)$. Summing over all possible $y\in F\cI_{2(m-1)}(132)$ will give us the term $F_{2(m-1)}(q,qt)$ in our sum. 

Next consider the case where $1\leq k\leq m-1$. We have $\comaj(\alpha)=\comaj(\alpha_1)+k+\comaj(\alpha_2)+(k+1)\asc(\alpha_2)$. Similarly, $\comaj(r_{1}(\alpha_1))=\comaj(r_{1}(\alpha_2))+\comaj(r_{1}(\alpha))+(m-k-1)\asc(\alpha_1)+m-1$. Also, $\asc(\alpha)=\asc(\alpha_1)+\asc(\alpha_2)+1$. Using the result of Lemma~\ref{lemma:132majasc} we get 
%%Detailed calculation
%\begin{align*}
%\comaj(\iota)&=\comaj(\pi)+\comaj(r_{-1}\pi)+m\asc(\pi)\\
%&=(\comaj(\alpha)+k+\comaj(\beta)+(k+1)\asc(\beta))+(\comaj(r_{-1}\beta)+\comaj(r_{-1}\alpha)+(m-k-1)\asc(\alpha)+m-1)+m(\asc(\alpha)+\asc(\beta)+1)\\
%&=(\comaj(\alpha)+\comaj(r_{-1}\alpha)+k\asc(\alpha))+(\comaj(\beta)+(\comaj(r_{-1}\beta)+(m-k-1)\asc(\beta))
%+(2k+2)\asc(\beta)
%+(2m-2k-1)\asc(\alpha)
%+2m+k-1\\
%&=\comaj(x)+\comaj(y)
%+(k+1)\asc(y)
%+\frac{2m-2k-1}{2}\asc(x)
%+2m+k-1\\
%\end{align*}
$$\comaj(\iota)=\comaj(x)+\comaj(y)+\frac{2m-2k-1}{2}\asc(x)+(k+1)\asc(y)+2m+k-1$$
and 
$$\asc(\iota)
=\asc(x)+\asc(y)+2.$$
Summing over all possible $x\in F\cI_{2k}(132)$ and $y\in F\cI_{2(m-k-1)}(132)$ gives us the term 
in the summation and we are done. \end{proof}

Now that we have described the fixed-point-free generating function for the pattern 132 we can describe the  generating function $M\cI_n(132;q,t)$ recursively. 

\begin{prop} Defining $M_n(q,t)=\ol{M\cI}_n(132;q,t)$ and $F_{n}(q,t)=\ol{MF\cI}_n(132;q,t)$ we have $M_0(q,t)=M_1(q,t)=1$ and for  $n\geq 2$,
$$ M_n(q,t)=q^{n-1}tM_{n-1}(q,t)+M_{n-2}(q,qt)+\sum_{k=2}^{\floor{n/2}}q^{n+k-2}t^2F_{2(k-1)}(q,q^{\frac{n-2k+1}{2}}t){M}_{n-2k}(q,q^kt).$$
\end{prop}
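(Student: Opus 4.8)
The plan is to use the decomposition of involutions avoiding $132$ from Lemma~\ref{lemma:132decomp}, which writes $\cI_n(132)$ as a disjoint union of three types of involutions, and to track how $\comaj$ and $\asc$ behave under each type. Recall that $\cI_n(132)$ is the disjoint union of (i) $\{12[\alpha,1]:\alpha\in\cI_{n-1}(132)\}$ and (ii) $\{45312[\alpha,1,\beta,r_1(\alpha),1]:\alpha\in\fS_{k-1}(132),\ \beta\in\cI_{n-2k}(132),\ 1\leq k\leq\floor{n/2}\}$. The right-hand side of the claimed recurrence has three pieces, so the strategy is to match: the term $q^{n-1}tM_{n-1}(q,t)$ to case (i); the term $M_{n-2}(q,qt)$ to the sub-case $k=1$ of case (ii) (where $\alpha$ is empty, so $\iota=321[1,\beta,1]$); and the sum over $k=2,\dots,\floor{n/2}$ to case (ii) with $\alpha$ nonempty.

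The key computations are the $\comaj$ and $\asc$ bookkeeping in each case. For case (i), $\iota=12[\alpha,1]$ with $\alpha\in\cI_{n-1}(132)$: the final value $n$ at index $n$ creates an ascent at $n-1$, so $\Asc(\iota)=\Asc(\alpha)\cup\{n-1\}$, giving $\asc(\iota)=\asc(\alpha)+1$ and $\comaj(\iota)=\comaj(\alpha)+(n-1)$; summing $q^{\comaj(\iota)}t^{\asc(\iota)}$ over $\alpha\in\cI_{n-1}(132)$ yields $q^{n-1}t\,M_{n-1}(q,t)$. For the $k=1$ sub-case of (ii), $\iota=321[1,\beta,1]$ with $\beta\in\cI_{n-2}(132)$: the single point $n$ at index $1$ and the single point $1$ at index $n$ are both descents relative to their neighbors, and $\beta$ occupies indices $2,\dots,n-1$ with values $2,\dots,n-1$; one checks $\Asc(\iota)=\Asc(\beta)+1$, so $\asc(\iota)=\asc(\beta)$ and $\comaj(\iota)=\comaj(\beta)+\asc(\beta)$, which produces $M_{n-2}(q,qt)$. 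For case (ii) with $\alpha\in\fS_{k-1}(132)$ nonempty, $k\geq 2$, set $x=21[\alpha,r_1(\alpha)]\in F\cI_{2(k-1)}(132)$. Using Lemma~\ref{lemma:132decomp}(iii) together with the diagram of $45312[\alpha,1,\beta,r_1(\alpha),1]$, one reads off how the ascents of $\alpha$, $r_1(\alpha)$ and $\beta$, plus the ascents/descents at block boundaries, assemble into $\Asc(\iota)$; invoking Lemma~\ref{lemma:132majasc} (which says $\asc(x)=2\asc(\alpha)$ and $\comaj(x)=\comaj(\alpha)+\comaj(r_1(\alpha))+(k-1)\asc(\alpha)$) lets us express everything in terms of $x$ and $\beta$. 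The arithmetic should collapse to $\asc(\iota)=\asc(x)+\asc(\beta)+2$ and $\comaj(\iota)=\comaj(x)+\comaj(\beta)+\tfrac{n-2k+1}{2}\asc(x)+k\,\asc(\beta)+(n+k-2)$, exactly mirroring the structure already established in the preceding fixed-point-free proposition; summing over $x\in F\cI_{2(k-1)}(132)$ and $\beta\in\cI_{n-2k}(132)$ then gives the summand $q^{n+k-2}t^2 F_{2(k-1)}(q,q^{\frac{n-2k+1}{2}}t)M_{n-2k}(q,q^kt)$.

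The main obstacle I anticipate is the careful accounting of the boundary ascents/descents in case (ii) — namely keeping straight which indices between the five blocks of $45312[\alpha,1,\beta,r_1(\alpha),1]$ are ascents versus descents, and how the index-shift of each block scales the contribution of its internal ascents to $\comaj$. This is exactly the kind of calculation where it is easy to be off by one in the exponent of $q$; the safeguard is to cross-check against the fixed-point-free proposition just proved (whose summand $q^{2m+k-1}t^2 F_{2k}(q,q^{\frac{2m-2k-1}{2}}t)F_{2(m-k-1)}(q,q^{k+1}t)$ has the same shape after the substitutions reflecting that here $\alpha$ need not be an involution and $\beta$ need not be fixed-point-free), and to verify the final formula by hand for $n=3$ and $n=4$, where $M_3(q,t)$ and $M_4(q,t)$ can be computed directly from the (short) list of $132$-avoiding involutions. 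Once the three cases are shown to partition $\cI_n(132)$ and their generating-function contributions are identified, combining them gives the stated recurrence, and multiplying through by the appropriate power of $q$ and setting $t=1$ recovers $M\cI_n(132)$ via $q^{\binom{n}{2}}\ol{M\cI}_n(132;q^{-1},1)=M\cI_n(132)$.
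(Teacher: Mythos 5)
Your proposal follows essentially the same route as the paper's proof: decompose via Lemma~\ref{lemma:132decomp} into the cases $\iota=12[\alpha,1]$, the $k=1$ sub-case $321[1,\beta,1]$, and the $k\geq 2$ sub-case with $x=21[\alpha,r_1(\alpha)]$, then apply Lemma~\ref{lemma:132majasc} to get $\asc(\iota)=\asc(x)+\asc(\beta)+2$ and $\comaj(\iota)=\comaj(x)+\tfrac{n-2k+1}{2}\asc(x)+\comaj(\beta)+k\asc(\beta)+n+k-2$. The "arithmetic that should collapse" does collapse exactly to the formulas you state (the boundary ascents are at $k-1$ and $n-1$, and $\asc(\alpha)=\asc(r_1(\alpha))$ handles the index-shift bookkeeping), so the proposal is correct and matches the paper.
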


\begin{proof}Given an involution $\iota$ that avoids 132  by Lemma~\ref{lemma:132decomp} we can write $\iota = 12[\tau,1]$ for some $\tau\in \cI_{n-1}(132)$. In this case $\comaj(\iota)=\comaj(\tau)+n-1$ and $\asc(\tau)=\asc(\iota)+1$, which gives us the term $q^{n-1}tM\cI_{n-1}(132;q,t)$. 

In any other case we can write $\iota = 45312[\alpha, 1, \beta, r_1(\alpha), 1]$ for some $\alpha\in \fS_{k-1}(132)$,  $\beta \in \cI_{n-2k}(132)$ and $1\leq k\leq \floor{\frac{n}{2}}$, which can be seen in Figure~\ref{fig:132}. Let $21[\alpha,r_1(\alpha)]=x$. If $k=1$ then the only ascents in $\iota$ come from $\alpha$, $r_1(\alpha)$ and $\beta$. Particularly, $\comaj(\iota)=\comaj(\beta)+\asc(\beta)$ and $\asc(\iota)=\asc(\beta)$, which gives us the term $M\cI_{n-2}(132;q,qt)$. 
If instead $k>1$ then we have additional ascents at $k-1$ and $n-1$. Using Lemma~\ref{lemma:132majasc} in this case we have 
$$\comaj(\iota)=\comaj(x)+\frac{n-2k+1}{2}\asc(x)+\comaj(\beta)+k\asc(\beta)+n+k-2$$
and
$$\asc(\iota)=\asc(x)+\asc(\beta)+2.$$
This gives us the term in the summation and we are done. 
\end{proof}

%%%%%%%%%%%%%%%%%%%%%%%%%%%%%%%%%
%%%213%%%%%%%%%%%%%%%%%%%%%%%%%%%%%%%
\subsection{The pattern 213}
\label{maj213}

We find that $M\cI_n(132)$ and $M\cI_n(213)$ display the  symmetry \\$M\cI_n(132)=q^{\binom{n}{2}}M\cI_n(213;q^{-1})$. However, the reason for this does not come from the map $r_{-1}$  and is also different from the reason for the similar symmetry between the patterns $123$ and $321$, which we discuss in Section~\ref{maj123}. 
The map used to prove this  will be defined in stages, but  as an overview will take $\iota\in \cI_n(213)$ and will map it to $\Des(\iota)$, which is an unique element in 
$$G_n=\{ \{a_1,a_2,\dots,a_{\ell}\}\subseteq[n-1]: a_i<a_{i+1} \text{ and }a_i+a_{\ell -i+1}\geq n\}.$$
This descent set then is mapped to its complement $[n-1]\setminus \Des(\iota)$ that lies in 
$$L_n=\{ \{a_1,a_2,\dots,a_{\ell}\}\subseteq[n-1]: a_i<a_{i+1} \text{ and } a_i+a_{\ell -i+1}\leq n\},$$ 
which is associated to an unique involution in $\cI_n(132)$ with this as its descent set. See Figure~\ref{fig:213<->132} for an example. Though we do prove the bijection from the involutions to their descent sets using induction the full map can not be defined directly using this induction. 

\begin{figure}
$$798456132\in \cI_9(213)\overset{\varphi}{\longleftrightarrow} \{2,3,6,8\}\in G_9 \overset{f}{\longleftrightarrow} \{1,4,5,7\}\in L_9\overset{\psi}{\longleftrightarrow} 867952314\in \cI_9(132)$$
\caption{An example of an involution $\iota\in \cI_n(213)$ that is mapped to $\tau\in \cI_n(132)$  with $\Des(\iota)=[n-1]\setminus \Des(\tau)$. }
\label{fig:213<->132}
\end{figure}

In 
Section~\ref{sec:inv321} 
we introduced the notation Beissinger defined in her paper~\cite{B87} to insert another two-cycle into an involution. We will use this notation to deconstruct an involution $\iota$ as $\iota = \hat{\iota} +(i,n)$ for $i< n$. This will be one key in describing our inductive map with the next lemma giving us the conditions for preserving the avoidance of the pattern 213 as well as describing the resulting descent set.

\begin{lemma} Assume that $\iota = \hat{\iota} +(i,n)$, $i< n$, $\hat{\iota}\in \cI_{n-2}$ and $\iota\in \cI_n$. Let $d=n$ if $ \Des(\hat{\iota})=\emptyset$ and otherwise $d=\min\Des(\hat{\iota})$.  Then $\iota$ avoids $213$ if and only if $i\leq d+1$ and $\hat{\iota}$ avoids 213. Also, 
$$
\Des(\iota)=   \left\{
    \begin{array}{ll}
      \Des(\hat{\iota})+1 & i=d+1  \\
      (\Des(\hat{\iota})+1)\cup\{i,n-1\}& i<d+1.
    \end{array}\right.
$$
\label{lemma:213Des}
\end{lemma}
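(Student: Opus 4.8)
The plan is to analyze the effect of the operation $\hat\iota\mapsto\hat\iota+(i,n)$ on one-line notation and on the descent set directly, since Beissinger's recipe tells us exactly where the values $i$ and $n$ land. Recall that $\iota=\hat\iota+(i,n)$ is obtained from $\hat\iota$ by increasing all letters $\geq i$ by one and then placing $n$ at index $i$ and $i$ at index $n$. So in one-line notation $\iota = \hat\iota'(1)\,\hat\iota'(2)\cdots\hat\iota'(i-1)\; n\; \hat\iota'(i)\cdots\hat\iota'(n-2)\; i$, where $\hat\iota'$ denotes $\hat\iota$ with every letter $\geq i$ bumped up by one; note that the letters in positions $1,\dots,i-1$ of $\hat\iota'$ all lie in $[i-1]$ (since $\hat\iota$ is an involution that fixes $[i-1]$ setwise when $i\leq\min\Des(\hat\iota)+1$... this needs to be argued), and the letters in positions $i,\dots,n-2$ of $\hat\iota'$ all lie in $\{i+1,\dots,n-1\}$.

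First I would prove the avoidance criterion. For the ``only if'' direction, suppose $i>d+1$ where $d=\min\Des(\hat\iota)$, so $d<i-1$, meaning $\hat\iota$ has a descent at position $d\leq i-2$: $\hat\iota(d)>\hat\iota(d+1)$ with both indices $<i$, hence $\hat\iota'(d)=\hat\iota(d)$, $\hat\iota'(d+1)=\hat\iota(d+1)$, and in $\iota$ these sit in positions $d,d+1<i$ followed by the letter $n$ at position $i$; then $\hat\iota(d+1),\hat\iota(d),n$ is an occurrence of $213$ (since $\hat\iota(d+1)<\hat\iota(d)<n$). Also, if $\hat\iota$ itself contains $213$, so does $\iota$ since the corresponding letters retain their relative order after bumping and the insertion of $n,i$ at the ends cannot destroy a pattern. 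Conversely, assume $\hat\iota$ avoids $213$ and $i\leq d+1$; I must show $\iota$ avoids $213$. The key observation is that $i\leq d+1$ forces $\hat\iota$ to be \emph{decreasing} on positions $1,\dots,i-1$ (no descent before position $d$ means... actually $\min\Des=d\geq i-1$ means $\hat\iota(1)>\hat\iota(2)>\cdots>\hat\iota(i-1)$, so also $\hat\iota(j)\in\{?\}$ — more precisely, being an involution that is decreasing on $[i-1]$, one checks $\{\hat\iota(1),\dots,\hat\iota(i-1)\}=[i-1]$). Then in $\iota$: positions $1,\dots,i-1$ carry the strictly decreasing sequence $i-1,i-2,\dots,1$, position $i$ carries $n$, positions $i+1,\dots,n-1$ carry a permutation of $\{i+1,\dots,n-1\}$ (namely $\hat\iota'$ restricted there), and position $n$ carries $i$. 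Any would-be $213$ pattern: its ``1'' cannot lie in the first block (those are the smallest values but appear in decreasing order, so no ``2'' precedes... careful: ``2'' is the middle-sized, ``1'' comes after ``2'' in a $213$). A $213$ needs positions $p_1<p_2<p_3$ with $\iota(p_2)<\iota(p_1)<\iota(p_3)$. If $p_1<i$ then $\iota(p_1)\leq i-1$, so $\iota(p_2)<i-1$ forces $p_2<i$ too, but then $\iota(p_1)>\iota(p_2)$ contradicts... wait that's consistent; we need $\iota(p_3)>\iota(p_1)$, fine, but $p_2<i$ with $\iota$ decreasing there means $\iota(p_1)>\iota(p_2)$ automatically — the obstruction is that then $p_3\geq i$ and we'd need the pattern among $\hat\iota'$-values which would have come from $\hat\iota$; one traces it back to a $213$ in $\hat\iota$ using that the first block is an interval at the bottom. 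If $p_1\geq i$: then $\iota(p_1)\geq i+1$ unless $p_1=i$ (value $n$) which can't be the ``2''; if $p_1=i$ then $\iota(p_1)=n$ is maximal, impossible as the ``2''; so $p_1>i$, and then all three indices are $\geq i$, i.e.\ in the last two blocks, and since $n$ at position $i$ is skipped, the pattern lives among $\hat\iota'(i),\dots,\hat\iota'(n-2),i$, which is order-isomorphic to $\hat\iota$ with one extra smallest letter at the end — and $i$ at the end can only play the ``1'' role, i.e.\ be $p_3$, impossible since ``1'' is $p_2$ not $p_3$; hence the pattern is inside $\hat\iota$, contradiction.

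Second, I would compute $\Des(\iota)$ from the block picture. The descents of $\iota$ fall into: (a) descents strictly inside positions $1,\dots,i-1$ — there are none, since that block is $i-1,\dots,1$... wait, $i-1>i-2>\cdots$ \emph{is} all descents, positions $1,\dots,i-2$; hmm, so actually if $i\geq 3$ the first block contributes descents. Let me recompute: when $i\leq d+1$, $\hat\iota$ restricted to $[i-1]$ equals $\dd_{i-1}$ only when... no. This is the delicate point and the main obstacle: pinning down $\hat\iota$ on the prefix $[i-1]$ and translating block-internal and block-boundary descents into the stated formula, splitting on $i=d+1$ versus $i<d+1$. The cleanest route is induction on $n$ (as the paper suggests it proves ``the bijection from involutions to their descent sets using induction''): assume the descent-set formula and the structural description of $\cI_{n-2}(213)$ are known, and push through one more insertion, checking that the position $i-1$ becomes (or doesn't become) a descent and that position $n-1$ becomes a descent precisely when $i<d+1$ (because when $i<d+1$, position $i$ holds $n$ and position $i+1$ holds something $<n$, forcing a descent at $i$; and the last letter $i$ is smaller than everything in positions $i+1,\dots,n-1$, forcing a descent at $n-1$; whereas when $i=d+1$, the letter at position $i$ is $n$ but... ) — and that all other descents are exactly $\Des(\hat\iota)+1$ because inserting $n$ right before the old position-$i$ letter and bumping preserves every comparison among the surviving letters. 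I expect the bookkeeping at the two boundaries $i-1\mid i$ and $n-1\mid n$, together with correctly characterizing the prefix of $\hat\iota$, to be the only real work; the rest is routine.
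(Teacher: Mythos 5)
There is a genuine gap. The structural claim on which your converse direction and your descent bookkeeping rest is false: $i\le d+1$ means $\hat\iota$ has no descent at positions $1,\dots,i-2$, so $\hat\iota$ is \emph{increasing} there (you wrote decreasing), and its prefix values need not be $[i-1]$, nor do the letters of $\iota$ at positions $i+1,\dots,n-1$ all exceed $i$. For example $\hat\iota=45312\in\cI_5(213)$ has $d=2$; taking $i=3=d+1$ gives $\iota=\hat\iota+(3,7)=5674123$, whose prefix is $5,6$ and whose positions $4,5,6$ carry $4,1,2$. Consequently steps like ``if $p_1<i$ then $\iota(p_1)\le i-1$'' and ``$i$ at the end can only play the $1$ role'' fail; in fact the final letter $i$, being last in any occurrence, could a priori only be the ``3'' of a $213$, and excluding that needs an argument (for instance: an inversion between two values $<i\le d+1$ in $\hat\iota$ would, because $\hat\iota$ is an involution, transport to an inversion between two positions $\le d$, contradicting that the prefix up to position $d$ is increasing). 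Your forward direction (a descent at $d\le i-2$ followed by the letter $n$ gives a $213$, and a $213$ in $\hat\iota$ survives in $\iota$) is essentially correct.

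The descent-set formula is then never actually proved: you notice the prefix problem yourself and defer to an unspecified induction. The paper's proof is a short direct computation: $\Des(\iota)$ equals $(\Des(\hat\iota)\cap[1,i-2])\cup\{i\}\cup((\Des(\hat\iota)\cap[i,n-3])+1)$, together with $n-1$ precisely when $\hat\iota(n-2)\ge i$; the first piece is empty because $i\le d+1$, and the one fact you are missing is that a $213$-avoiding involution with $\Des(\hat\iota)\ne\emptyset$ satisfies $\hat\iota(d)=n-2$ and hence $\hat\iota(n-2)=d$ (the maximal letter admits no descent before it, so it sits at position $d$; then apply the involution). That single fact decides the boundary at $n-1$ (a descent iff $i\le d$) and collapses $\{i\}\cup((\Des(\hat\iota)\setminus\{d\})+1)$ into $\Des(\hat\iota)+1$ when $i=d+1$. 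Your parenthetical justification for the descent at $n-1$ (``the last letter $i$ is smaller than everything in positions $i+1,\dots,n-1$'') again relies on the false block picture, and your $i=d+1$ case is left unfinished, so as written the second half of the lemma is not established.
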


\begin{proof} 
Let $\iota = \hat{\iota} +(i,n)$, $i< n$, $\hat{\iota} \in \cI_{n-2}$, and $\iota\in \cI_n$. First consider the case where $\Des(\hat{\iota})=\emptyset$ or equivalently $\hat{\iota} = 12\dots (n-2)$. In this case we let $d=n$ and for all $i<n$ it is easy to see that $\hat{\iota}+(i,n)$ avoids $213$ and has descent set $\{i,n-1\}$. The second part of the lemma holds as well in this case since $i<d+1$. 

In any other case $\Des(\hat{\iota})\neq \emptyset$ and we let $d=\min \Des(\hat{\iota})$. First we will assume that $\iota$ avoids $213$. It follows that  $\hat{\iota}$, a subword of $\iota$, must also avoid $213$. Since $\iota$ avoids $213$ we know  $\iota$  increases before $n$ at index $i$, which implies $i\leq d+1$. Instead assume that $\hat{\iota}$ avoids $213$ and $i\leq d+1$. If $\iota$ were to have the pattern $213$ then either $n$ or $i$ must be part of the pattern since $\hat{\iota}$ avoids $213$. The only possibility is that $n$ plays the roll of $3$ and $\iota$ has a descent before index $i$, which is impossible because this descent would come from a descent in $\hat{\iota}$ and we assumed that $i\leq d+1$. 

Finally, we will finish by showing the second part of the lemma in the case where $\Des(\hat{\iota})\neq \emptyset$   by determining the descent set of $\iota=\hat{\iota}+(i,n)$ from the descent set of $\hat{\iota}$. The general  descent set of $\iota$ is the union of $\Des(\hat{\iota})\cap [1,i-2]$, $\{i\}$ and $(\Des(\hat{\iota})\cap [i,n-3])+1$ with additionally $\{n-1\}$ if $\hat{\iota}(n-2)\geq i$. Because $i\leq d+1$ we must have $\Des(\hat{\iota})\cap [1,i-2]=\emptyset$. Consider the case where $i=d+1$. Because $\hat{\iota}$ avoids $213$ we must have no descent before the occurrence of $n-2$ in $\hat{\iota}$ so $\hat{\iota}(n-2)=d$. Since $i=d+1$ we have $\hat{\iota}(n-2)<i$ so $n-1\notin\Des(\iota)$ and $\Des(\iota)=\{i\}\cup((\Des(\hat{\iota})\cap [i,n-3])+1)=\Des(\hat{\iota})+1$. 
If instead $i<d+1$ we still have $\hat\iota(n-2)=d$ but now $\hat{\iota}(n-2)\geq i$ so $\Des(\iota)=\{i,n-1\}\cup ((\Des(\hat{\iota})\cap [i,n-3])+1)=(\Des(\hat{\iota})+1)\cup \{i,n-1\}$. With this we are done. 
\end{proof}

We next proceed through some technical lemmas that will step by step prove our bijection $\cI_n(213)\rightarrow\cI_n(132)$.

\begin{lemma} We have a bijection $\varphi:\cI_n(213)\rightarrow G_n$ such that $\varphi(\iota)=\Des(\iota)$. 
\label{lemma:varphi}
\end{lemma}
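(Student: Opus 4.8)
The statement asks for a bijection $\varphi : \cI_n(213) \to G_n$ given by $\varphi(\iota) = \Des(\iota)$; so really two things must be proved: (a) the map $\iota \mapsto \Des(\iota)$ lands in $G_n$ (i.e. every descent set of a $213$-avoiding involution satisfies the symmetry condition $a_i + a_{\ell-i+1} \ge n$), and (b) this map is a bijection onto $G_n$, which amounts to showing that for every $D \in G_n$ there is a \emph{unique} $\iota \in \cI_n(213)$ with $\Des(\iota) = D$. I would prove both by induction on $n$, peeling off the two-cycle containing $n$ via the decomposition $\iota = \hat\iota + (i,n)$ and invoking Lemma~\ref{lemma:213Des} to track how the descent set changes. (If $\iota(n) = n$, then $\iota = 12[\hat\iota, 1]$ with $\hat\iota \in \cI_{n-1}(213)$ and $\Des(\iota) = \Des(\hat\iota)$; I'd fold this in as the base-ish case or handle it alongside.)

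\textbf{Step 1: the image lies in $G_n$.} Take $\iota \in \cI_n(213)$, write $\iota = \hat\iota + (i,n)$, and let $d$ be as in Lemma~\ref{lemma:213Des}. By that lemma, $i \le d+1$ and $\hat\iota \in \cI_{n-2}(213)$, and $\Des(\iota)$ is either $\Des(\hat\iota)+1$ (when $i = d+1$) or $(\Des(\hat\iota)+1) \cup \{i, n-1\}$ (when $i < d+1$). By induction $\Des(\hat\iota) = \{b_1 < \cdots < b_k\}$ satisfies $b_j + b_{k-j+1} \ge n-2$. In the first case, $\Des(\iota) = \{b_1+1 < \cdots < b_k+1\}$ and $(b_j+1) + (b_{k-j+1}+1) = b_j + b_{k-j+1} + 2 \ge n$ — done. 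In the second case, the new set is $\{i, b_1+1, \ldots, b_k+1, n-1\}$ of length $\ell = k+2$; the outer pair is $i + (n-1)$, and since $i \ge 1$ this is $\ge n$; the inner pairs are $(b_j+1)+(b_{k-j+1}+1) \ge n$ as before. So the condition holds. The fixed-point case $\iota = 12[\hat\iota,1]$ with $\Des(\iota) = \Des(\hat\iota)$ needs a small extra observation: passing from $\cI_{n-1}$ to $\cI_n$ the condition $a_i + a_{\ell-i+1} \ge n-1$ must be upgraded to $\ge n$. This is where I expect to need care — I would argue that a $213$-avoiding involution with $\iota(n)=n$ and a descent at position $a_1$ must be increasing on $[1,a_1-1]$ and, because $\iota$ is an involution fixing $n$, its structure forces $a_1 + a_\ell \ge n$; in fact the cleanest route is to note $\iota$ restricted to $[1,n-1]$ is a $213$-avoiding involution of $\cI_{n-1}$ \emph{plus} the fixed point $n$, and a descent set argument (or: the largest descent $a_\ell$ satisfies $a_\ell \ge$ the position of $n-1$, combined with $a_1 \le$ first non-fixed index) gives the bound. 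I'd make this precise using Lemma~\ref{lemma:213Des} applied one more level down.

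\textbf{Step 2: surjectivity and injectivity together.} Given $D = \{a_1 < \cdots < a_\ell\} \in G_n$, I reconstruct $\iota$ (and show it's forced). If $n-1 \notin D$: then any preimage must have $\iota(n) = n$ (a $213$-avoiding involution not fixing $n$ has $\iota$ increasing up to the index of $n$ and then a descent at that index... actually the converse — I should check that $\iota(n)\ne n$ forces $n-1 \in \Des(\iota)$ unless $i = d+1$; here Lemma~\ref{lemma:213Des} shows $n-1 \in \Des(\iota)$ precisely when $i < d+1$, so $n-1\notin D$ allows either $\iota(n)=n$ or $i = d+1$). I'd need to disentangle these two subcases by examining whether $D$, shifted down by $1$, lies in $G_{n-1}$ or $G_{n-2}$ — the symmetry conditions $a_j + a_{\ell-j+1} \ge n$ versus $\ge n-1$ versus $\ge n-2$ are exactly what decides which decomposition is available, and I expect they're mutually exclusive in the right way, giving uniqueness. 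If $n-1 \in D$, write $D = \{i\} \cup (D' + 1) \cup \{n-1\}$ where $i = a_1$; then $D'$ must lie in $G_{n-2}$ and $i < d'+1$ where $d' = \min D'$ (i.e. $a_1 \le a_2$, automatic, plus a compatibility that $a_1 - 1 < \min(D')$, which I must extract from $a_1 + a_\ell \ge n$ and the inner inequalities) — hmm, here is a subtlety: I must verify $i \le$ (the new min descent $+1$), i.e. that removing $a_1$ and $n-1$ and shifting leaves a valid $G_{n-2}$-set \emph{and} that $a_1$ is small enough. The outer condition $a_1 + a_\ell = a_1 + (n-1) \ge n$ is free; what I actually need is $a_1 \le a_2$ (free) to reuse Lemma~\ref{lemma:213Des}'s ``$i < d+1$'' clause with $d = a_2 - 1$. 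By induction $D'$ has a unique preimage $\hat\iota \in \cI_{n-2}(213)$, and then $\iota = \hat\iota + (a_1, n)$ is the unique preimage of $D$. Injectivity is built into this reconstruction being forced at each step.

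\textbf{Main obstacle.} I anticipate the crux is \emph{Step 2's case analysis}: correctly matching the three regimes of the symmetry inequality ($\ge n$, $\ge n-1$, $\ge n-2$) to the three structural options ($\iota(n) = n$ giving $\Des$ unchanged; $i = d+1$ giving $\Des + 1$; $i < d+1$ giving $(\Des+1)\cup\{i,n-1\}$), and verifying these are exhaustive and mutually exclusive so that the preimage is unique. In particular I'll need to check that when $n-1\notin D$ the decision between ``$\iota(n)=n$'' and ``$i = d+1$'' is pinned down — e.g. $i = d+1$ forces $\min D = $ (something) $+1$, i.e. $1 \notin D$, whereas... actually $\iota(n) = n$ can also have $1\notin D$, so the real discriminator must be the symmetry bound: $D \in G_n$ with $\iota(n)=n$ needs $D \in G_{n-1}$ already (as a subset of $[n-2]$), while the $i=d+1$ reconstruction needs $D - 1 \in G_{n-2}$. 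Showing exactly one of these holds for each $D \in G_n$ with $n-1 \notin D$ is the technical heart; I'd handle it by a direct inequality argument on $a_1 + a_\ell$ versus $n-1$.
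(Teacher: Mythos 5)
There is a genuine gap, and it sits exactly at the place you flagged as the ``technical heart.'' The missing idea is structural, not arithmetic: if $\iota\in\cI_n(213)$ has $\iota(n)=n$, then $\iota$ must be the identity, because any descent $\iota(j)>\iota(j+1)$ together with the terminal value $n$ gives the subsequence $\iota(j)\,\iota(j+1)\,n$, which is a $213$ pattern. This single observation (which is how the paper handles the fixed-point case) trivializes the part of your Step 1 where you ``expect to need care'' --- there is nothing to upgrade from $\geq n-1$ to $\geq n$, since $\Des(\iota)=\emptyset$ --- and it also dissolves the Step 2 dilemma: a nonempty $D$ forces $\iota(n)\neq n$, so the only decomposition available is $\iota=\hat\iota+(i,n)$, and in both clauses of Lemma~\ref{lemma:213Des} one has $i=\min D$, so the reconstruction $\hat A=D-1$ (if $n-1\notin D$) or $\hat A=(D\setminus\{\min D,\,n-1\})-1$ (if $n-1\in D$), followed by $\iota=\varphi^{-1}(\hat A)+(\min D,n)$, is forced.

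By contrast, the discriminator you propose for the crux --- that for $D\in G_n$ with $n-1\notin D$ ``exactly one'' of $D\in G_{n-1}$ or $D-1\in G_{n-2}$ holds --- is false, so the ``direct inequality argument on $a_1+a_\ell$ versus $n-1$'' you plan there cannot succeed. For instance $D=\{2,3\}\in G_5$ has $4\notin D$, yet $D\in G_4$ and $D-1=\{1,2\}\in G_3$ both hold; uniqueness of the preimage in this example is rescued only by the structural fact above (no nonidentity $213$-avoiding involution fixes $n$, so the ``$\iota(n)=n$'' branch never produces a nonempty descent set). Your treatment of the genuine two-cycle case (shifted descent sets, the outer pair $i+(n-1)\geq n$, the inner pairs inherited from $G_{n-2}$) matches the paper and is fine; but as written the proposal does not prove well-definedness in the fixed-point case and its plan for injectivity/surjectivity rests on a false dichotomy, so it does not close without the identity observation.
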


\begin{proof} Let $\iota \in \cI_n(213)$. We first show that $\varphi$ is well defined, that is $\Des(\iota)\in G_n$, which we will show using induction. It is not hard to see this in the case of $n=1$ or $2$. We now assume that $n>2$ and all $\hat{\iota}\in \cI_k(213)$ have $\Des(\hat{\iota})\in G_k$ for any $k<n$.

 If $n$ is a fixed point then we must have that $\iota = 12\dots n$ because $\iota$ avoids $213$ so then $\Des(\iota)=\emptyset \in G_n$. 
Otherwise we have  by Lemma~\ref{lemma:213Des} that $\iota = \hat{\iota}+(i,n)$ for some $i\leq d+1$ where $d=\min \Des(\hat{\iota})$ if $\Des(\hat{\iota})\neq \emptyset$ and otherwise $d = n$. By our inductive assumption $\Des(\hat{\iota})=\{a_1,\dots, a_k\}\in G_{n-2}$. By Lemma~\ref{lemma:213Des} if $i=d+1$ then $\Des(\iota)=\Des(\hat{\iota})+1$ and it follows $\Des(\iota)\in G_n$. If instead $i<d+1$ by Lemma~\ref{lemma:213Des} we have $\Des(\iota)=\{i,a_1+1,\dots a_k+1,n-1\}$ that again implies $\Des(\iota)\in G_n$. 

Next we define the inverse map $\varphi^{-1}:G_n\rightarrow \cI_n(213)$  inductively. Let $A\in G_n$. We define $\varphi^{-1}(\emptyset)=12\dots n$ and otherwise for $A\neq \emptyset$
\begin{equation}
   \hat{A} = \left\{
     \begin{array}{ll}
       (A\setminus \{\min A,n-1\})-1 & n-1\in A,\\
       A-1 & n-1\notin A.
     \end{array}
   \right.
   \label{eq:setB}
\end{equation} 
so 
$$\varphi^{-1}(A)=\varphi^{-1}(\hat{A})+(\min A,n).$$
This is well defined because $\hat{A}\in G_{n-2}$ and    $\varphi^{-1}(A)$ avoids $213$  since $\min A\leq \min \hat{A}+1$ by Lemma~\ref{lemma:213Des}. 

We lastly need to show that these two maps are indeed inverses. The cases of $n=1$ or $2$ are easy, so we can assume that $n>2$ and that $\varphi$ is a bijection $\cI_k(213)\rightarrow G_k$ for $k<n$. Let $A\in G_n$. If $A=\emptyset$ then $\varphi \circ \varphi^{-1}(A)=\varphi(12\dots n)=A$. In any other case we define $\hat{A}$ as in equation~\eqref{eq:setB} and we have $\varphi \circ \varphi^{-1}(A)=\Des(\varphi^{-1}(\hat{A})+(\min A,n))$. By induction $\Des(\varphi^{-1}(\hat{A}))=\hat{A}$. Consider the case where $n-1\notin A$ then we defined $\hat{A}=A-1$ so $\min A=\min \hat{A}+1$ and by Lemma~\ref{lemma:213Des} this implies $\Des(\hat{A}+(\min A,n))=\hat{A}+1=A$ and we are done. Otherwise $n-1\in A$ and $\hat{A}=(A\setminus\{\min A,n-1\})-1$, which implies $\min A <\min \hat{A}+1$ so by Lemma~\ref{lemma:213Des} we have $\Des(\varphi^{-1}(\hat{A})+(\min A,n))=(\hat{A}+1)\cup \{i,n-1\}=A$ and we are done. 

For the other direction we need to show  for $\iota \in \cI_n(213)$ that $\varphi^{-1}\circ\varphi(\iota)=\iota$. If $n$ is a fixed point of $\iota$ then $\iota = 12\dots n$ because $\iota$ avoids $213$. Then $\varphi^{-1}(\varphi(12\dots n))=\varphi^{-1}(\emptyset)=12\dots n$. We will now assume that $n$ is not a fixed  point and $\iota = \hat{\iota}+ (i,n)$ for $i<n$ and $\hat{\iota}\in \cI_{n-2}(213)$. We next consider the set $\varphi(\iota)=\Des(\iota)=A$ and its associated $\hat{A}$ set determined by equation~\eqref{eq:setB}. Note that $i=\min A$ because $\iota$ avoids $213$ and if we have $\hat{A}=\Des(\hat{\iota})$ then we  have $\varphi^{-1}(\hat{A})=\hat{\iota}$ by induction so $\varphi^{-1}(A)=\hat{\iota}+(\min A,n)=\iota$. So all we have to show is that $\Des(\hat{\iota})=\hat{A}$. Consider the case where  $i= \min \Des(\hat{\iota})+1$ so then 
by Lemma~\ref{lemma:213Des} we know $\Des(\iota)=\Des(\hat{\iota})+1$, which implies that $n-1\notin \Des(\iota)$ so by equation~\eqref{eq:setB} $\hat{A}=\Des(\hat{\iota})$. In the other case  $i<\min \Des(\hat{\iota})+1$ so we have $\Des(\iota)=(\Des(\hat{\iota})+1)\cup \{i,n-1\}$ and $n-1\in \Des(\iota)$. Also in this case  $\hat{A}=\Des(\hat{\iota})$, hence, $\varphi$ is a bijection. 
\end{proof}

We have a similar lemma describing the conditions for when an involution $\hat{\iota} +(1,i)$ avoids 132 and its resulting descent set.

\begin{lemma} Assume that $\iota = \hat{\iota} +(1,i)$, $i>1$, $\hat{\iota} \in \cI_{n-2}$ and $\iota\in \cI_n$. Let $d=-1$ if $ \Des(\hat{\iota})=\emptyset$ and otherwise $d=\max\Des(\hat{\iota})$.  Then $\iota$ avoids $132$ if and only if $i\geq d+2$ and $\hat{\iota}$ avoids 132. Also, 
$$
\Des(\iota)=   \left\{
    \begin{array}{ll}
      \Des(\hat{\iota})+1 & i=d+2  \\
      (\Des(\hat{\iota})+1)\cup\{1,i-1\}& i>d+2.\hfill
    \end{array}\right.
$$
\vspace{-1cm}

\hqed
\end{lemma}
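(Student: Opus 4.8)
The plan is to derive this lemma from the preceding Lemma~\ref{lemma:213Des} by conjugating with the reflection $r_{-1}$. By Lemma~\ref{lemma:operations and involutions} the map $r_{-1}$ is an involutive bijection $\cI_n\to\cI_n$, and since $r_{-1}(132)=213$ it carries $\cI_n(132)$ onto $\cI_n(213)$ and, likewise, $\cI_{n-2}(132)$ onto $\cI_{n-2}(213)$. Two further facts about $r_{-1}$ will be needed. First, on diagrams $r_{-1}$ is the antidiagonal reflection $(a,b)\mapsto(n+1-b,n+1-a)$, from which a direct check gives $\Des(r_{-1}(\sigma))=\{n-j:j\in\Des(\sigma)\}$ for all $\sigma\in\fS_n$; in particular $\Des(r_{-1}(\sigma))=\emptyset$ exactly when $\Des(\sigma)=\emptyset$, and $\min\Des(r_{-1}(\sigma))=(n-2)-\max\Des(\sigma)$ for $\sigma\in\fS_{n-2}$ with a descent. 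Second, $r_{-1}$ interchanges the two Beissinger-type operations: for $\hat\iota\in\cI_{n-2}$ and $1<i\le n$,
\begin{equation}
r_{-1}\!\left(\hat\iota+(1,i)\right)=r_{-1}(\hat\iota)+(n+1-i,\,n).
\label{eq:r1conjugation}
\end{equation}

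First I would prove \eqref{eq:r1conjugation} by comparing the diagrams of the two sides cell by cell. The cells $(1,i)$ and $(i,1)$ inserted by $+(1,i)$ are sent by $r_{-1}$ to $(n+1-i,n)$ and $(n,n+1-i)$, which are exactly the cells inserted by $+(n+1-i,n)$; and the order-preserving embedding $[n-2]\hookrightarrow[n]\setminus\{1,i\}$ that places $\hat\iota$ is conjugated by $j\mapsto n+1-j$ into the order-preserving embedding $[n-2]\hookrightarrow[n]\setminus\{n+1-i,n\}$ that places $r_{-1}(\hat\iota)$, so the remaining cells match as well. Carrying out the verification on diagrams rather than through Beissinger's one-line recipe is what keeps it clean, since the recipe splits the insertion into a shift by $1$ below $i$ and by $2$ at or above $i$, whose interaction with $r_{-1}$ would otherwise have to be tracked by hand.

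Granting \eqref{eq:r1conjugation}, the rest is bookkeeping. Set $\hat\iota'=r_{-1}(\hat\iota)$, $\iota'=r_{-1}(\iota)=\hat\iota'+(n+1-i,n)$ and $i'=n+1-i$, and let $d'$ be the quantity attached to $\hat\iota'$ as in Lemma~\ref{lemma:213Des} ($d'=n$ if $\Des(\hat\iota')=\emptyset$, otherwise $d'=\min\Des(\hat\iota')$). Then $\iota$ avoids $132$ iff $\iota'$ avoids $213$, and $\hat\iota$ avoids $132$ iff $\hat\iota'$ avoids $213$. Using $d'=(n-2)-d$ when $\Des(\hat\iota)\neq\emptyset$, the hypothesis $i'\le d'+1$ of Lemma~\ref{lemma:213Des} rearranges to $i\ge d+2$; when $\Des(\hat\iota)=\emptyset$ one has $d=-1$, $d'=n$, and both $i\ge d+2$ and $i'\le d'+1$ hold automatically since $i>1$. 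This yields the ``if and only if'' statement. For the descent set, feed the two cases of Lemma~\ref{lemma:213Des} into $\Des(\iota)=\{n-c:c\in\Des(\iota')\}$: when $i=d+2$ (equivalently $i'=d'+1$) this gives $\Des(\iota)=\{n-1-a:a\in\Des(\hat\iota')\}=\Des(\hat\iota)+1$, and when $i>d+2$ (equivalently $i'<d'+1$) it gives $\Des(\iota)=(\Des(\hat\iota)+1)\cup\{n-i',\,1\}=(\Des(\hat\iota)+1)\cup\{1,i-1\}$, matching the claimed formula; the empty-descent case lands in the second branch and comes out of the same computation.

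I expect the only real obstacle to be the clean verification of \eqref{eq:r1conjugation}; after that the argument is routine, needing just a little care in the $\Des(\hat\iota)=\emptyset$ case, where the $213$-side convention $d'=n$ does not literally equal $(n-2)-d$ with $d=-1$ but still places the computation in the $i>d+2$ branch. As a remark, one could instead prove the lemma directly, imitating the proof of Lemma~\ref{lemma:213Des} (dispatch $\hat\iota=12\cdots(n-2)$ by hand, then argue that any $132$ of $\iota$ not already present in $\hat\iota$ forces an ``early'' descent incompatible with $i\ge d+2$), but the reflection route is shorter and recycles work already done.
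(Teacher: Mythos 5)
Your proposal is correct, but it takes a genuinely different route from the paper: the paper omits the proof, intending a direct argument that imitates the proof of Lemma~\ref{lemma:213Des} case by case (the route you mention only in your closing remark), whereas you deduce the statement from Lemma~\ref{lemma:213Des} by conjugating with $r_{-1}$. The heart of your argument, the identity $r_{-1}\bigl(\hat\iota+(1,i)\bigr)=r_{-1}(\hat\iota)+(n+1-i,n)$, is true and your diagram-level verification is the right way to see it: the two inserted cells match, and the order-preserving embedding $[n-2]\hookrightarrow[n]\setminus\{1,i\}$ is carried by $j\mapsto n+1-j$ to the embedding into $[n]\setminus\{n+1-i,n\}$. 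After that the translation of the hypothesis ($i'\le d'+1$ becomes $i\ge d+2$), of the two descent-set branches, and of the degenerate case $\Des(\hat\iota)=\emptyset$ (where $d=-1$ on one side and $d'=n$ on the other both trivialize the inequality) all check out. What your approach buys is economy: the case analysis is done once, in Lemma~\ref{lemma:213Des}, and the new lemma is pure bookkeeping; what the paper's intended direct proof buys is independence from the symmetry machinery and a self-contained argument of the same length as the 213 case.

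One caveat you should fix in the write-up: the auxiliary fact $\Des(r_{-1}(\sigma))=\{n-j: j\in\Des(\sigma)\}$ is \emph{false} for general $\sigma\in\fS_n$ (for instance $r_{-1}(231)=231$ has descent set $\{2\}$, not $\{1\}$), because $r_{-1}=R_{180}\circ r_1$ and taking inverses does not preserve descents. It does hold on involutions, where $r_{-1}$ coincides with $R_{180}$ by Lemma~\ref{lemma:operations and involutions}(iii), and every application you make ($\hat\iota$, $\iota$, and their images) is to an involution, so the argument stands once you restrict the claim accordingly.
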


We exclude the proof because it is similar to the proof of Lemma~\ref{lemma:213Des}. There is also a  map $\cI_n(132)\rightarrow L_n$ similar to $\varphi:\cI_n(213)\rightarrow G_n$. 

\begin{lemma}
There is a bijection $\psi:\cI_n(132)\rightarrow L_n$ where $\iota$ is sent to $\Des(\iota)$. \hqed
\label{lemma:psi}
\end{lemma}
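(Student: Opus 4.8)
The plan is to mirror the proof of Lemma~\ref{lemma:varphi} almost verbatim, substituting the inflation decomposition $\iota = \hat{\iota}+(1,i)$ (peeling off the two-cycle containing $1$, which for a $132$-avoider must pair $1$ with some index $i$) for the decomposition $\iota=\hat{\iota}+(i,n)$ used there, and using the $132$-analogue lemma displayed just above in place of Lemma~\ref{lemma:213Des}. First I would check that $\psi$ is well defined, i.e.\ that $\Des(\iota)\in L_n$ for every $\iota\in\cI_n(132)$, by induction on $n$: the base cases $n=1,2$ are immediate; for $n>2$, if $n$ is a fixed point then $\iota$ avoids $132$ forces $\iota=12\dots n$ (since any descent together with a later ascent to $n$ would create a $132$), giving $\Des(\iota)=\emptyset\in L_n$; otherwise $1$ lies in a two-cycle $(1,i)$ and $\iota=\hat{\iota}+(1,i)$ with $\hat{\iota}\in\cI_{n-2}(132)$, and the displayed lemma gives $\Des(\iota)$ as either $\Des(\hat{\iota})+1$ (when $i=d+2$) or $(\Des(\hat{\iota})+1)\cup\{1,i-1\}$ (when $i>d+2$), and in either case the inductive hypothesis $\Des(\hat{\iota})\in L_{n-2}$ plus the shift/padding computation shows the symmetric-sum condition $a_j+a_{\ell-j+1}\leq n$ is preserved — exactly the bookkeeping done for $G_n$ in Lemma~\ref{lemma:varphi}, with the inequality reversed.

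Next I would define the inverse $\psi^{-1}:L_n\to\cI_n(132)$ inductively, paralleling equation~\eqref{eq:setB}: set $\psi^{-1}(\emptyset)=12\dots n$, and for $A=\{a_1,\dots,a_\ell\}\neq\emptyset$ put
$$
\hat{A}=\begin{cases}(A\setminus\{1,\max A\})-1 & 1\in A,\\ A-1 & 1\notin A,\end{cases}
$$
and $\psi^{-1}(A)=\psi^{-1}(\hat{A})+(1,\,\max A+1)$ (the index $i$ is recovered as $i-1=\max A$ exactly when $1\in A$, and as $i=d+2$ giving $\max A = \max\Des(\hat\iota)+1$ otherwise). One checks $\hat{A}\in L_{n-2}$ and that the condition $1\le \min\hat{A}+\text{(shift)}$ from the $132$-lemma guarantees $\psi^{-1}(A)$ avoids $132$. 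Then the two maps are shown to be mutually inverse by the same two-direction induction as in Lemma~\ref{lemma:varphi}: for $A\in L_n$ compute $\psi\circ\psi^{-1}(A)=\Des(\psi^{-1}(\hat{A})+(1,\max A+1))$, use the inductive identity $\Des(\psi^{-1}(\hat{A}))=\hat{A}$, and split on whether $1\in A$ (equivalently whether the padding $\{1,i-1\}$ appears) to recover $A$; conversely, for $\iota=\hat{\iota}+(1,i)\in\cI_n(132)$, observe that $i-1=\max\Des(\iota)$ when $i>d+2$ so the set $\hat{A}$ built from $A=\Des(\iota)$ equals $\Des(\hat{\iota})$, whence $\psi^{-1}\circ\psi(\iota)=\hat{\iota}+(1,i)=\iota$.

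Since the excerpt explicitly says ``We exclude the proof because it is similar to the proof of Lemma~\ref{lemma:213Des}'' for the intermediate $132$-lemma, and since Lemma~\ref{lemma:varphi} is proved in full, the natural write-up is short: state that the proof is identical to that of Lemma~\ref{lemma:varphi} with $(i,n)$ replaced by $(1,i)$, $\min$ by $\max$, $d=n$ by $d=-1$, $G$ by $L$, and the inequality in the symmetric-sum condition reversed, invoking the displayed $132$-analogue of Lemma~\ref{lemma:213Des} at the step where Lemma~\ref{lemma:213Des} was used. The one point worth spelling out is the verification that the symmetric-sum inequality flips correctly under the two possible descent-set updates, since that is the whole content distinguishing $L_n$ from $G_n$; that computation is routine (add $1$ to every element, then for the $i>d+2$ case prepend $1$ and append $i-1$ and check $1+(i-1)=i\le n$ and $(a_j+1)+(a_{\ell-j+1}+1)\le n$ follows from $a_j+a_{\ell-j+1}\le n-2$). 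The main obstacle, such as it is, is purely notational care in tracking which index plays the role of $i$ when reconstructing from a descent set — getting the ``$+1$'' shifts and the $\max A+1$ versus $\max A$ distinction exactly right — rather than any genuine mathematical difficulty.
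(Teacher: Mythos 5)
Your overall plan is exactly what the paper intends: the paper omits this proof because it is the mirror image of Lemma~\ref{lemma:varphi}, using the displayed $132$-analogue of Lemma~\ref{lemma:213Des} with the decomposition $\iota=\hat{\iota}+(1,i)$, and your inverse map (with your analogue of equation~\eqref{eq:setB} and $i=\max A+1$ in both branches, which is correct since $\max\Des(\iota)=i-1$ in either case) together with the bookkeeping showing the symmetric-sum inequality for $L_n$ is preserved are all sound.

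However, one step is false as written and shows the symmetry was not flipped all the way through. You claim that if $n$ is a fixed point of $\iota\in\cI_n(132)$ then $\iota=12\cdots n$, ``since any descent together with a later ascent to $n$ would create a $132$.'' That configuration creates a $213$, not a $132$, and the claim itself fails: $2134\in\cI_4(132)$ has $4$ as a fixed point, is not the identity, and has $\Des(2134)=\{1\}$. The correct special case, matching your choice to peel off the two-cycle containing $1$, is ``$1$ is a fixed point'': if $\iota(1)=1$ and $\iota$ had a descent at some $j\geq 2$, then $1\,\iota(j)\,\iota(j+1)$ would form a $132$, so $\iota=12\cdots n$. With the case split ``either $\iota(1)=1$ (forcing the identity) or $1$ lies in a two-cycle $(1,i)$'' the induction goes through; as you wrote it, involutions such as $2134$ (with $n$ fixed but $1$ not) land in the wrong branch and are asserted to have empty descent set, and the same repair is needed in the base case of your mutual-inverse check. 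A smaller instance of the same un-flipped detail: the avoidance condition you must verify for $\psi^{-1}(A)=\psi^{-1}(\hat{A})+(1,\max A+1)$ is $\max A+1\geq\max\hat{A}+2$ (immediate in both branches of your definition of $\hat{A}$), not a condition involving $\min\hat{A}$ as your sketch suggests. With those corrections the argument is the paper's intended one and is complete.
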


The proof is similar to the proof of Lemma~\ref{lemma:varphi}, so it is left out. The last piece of the bijection we need is the one between the sets $G_n$ and $L_n$. 
 In this next lemma for sets $A=\{a_1,a_2,\dots ,a_k\}$ and $B=\{b_1,b_2,\dots ,b_k\}$, where we write the elements in increasing order,  define  $A\leq B$ if $a_i\leq b_i$ for all $i$. This relation is only for sets with equal cardinality.  

\begin{lemma}The map $f:G_n\rightarrow L_n$ defined by $f(A)=[n-1]\setminus A$ is a bijection. 
\label{lemma:f}
\end{lemma}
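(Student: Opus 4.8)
The plan is to exploit that $f$ is an involution on the power set of $[n-1]$, so it is automatically injective; it therefore suffices to show $f(G_n)\subseteq L_n$ and $f(L_n)\subseteq G_n$, after which the restrictions $f\colon G_n\to L_n$ and $f\colon L_n\to G_n$ are mutually inverse and hence bijections.

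To prove the two containments I would first restate membership in $G_n$ and $L_n$ via the reflection $\rho\colon[n-1]\to[n-1]$, $\rho(x)=n-x$, which is a bijection with $\rho\circ\rho=\mathrm{id}$ and $\rho([n-1])=[n-1]$, and which commutes with complementation in $[n-1]$ in the sense that $[n-1]\setminus\rho(A)=\rho([n-1]\setminus A)$. If $A=\{a_1<\dots<a_\ell\}$ then $\rho(A)$ has $i$-th smallest element $n-a_{\ell-i+1}$, so, with the element-wise order $\leq$ on equal-cardinality subsets defined just before the statement, the defining condition $a_i+a_{\ell-i+1}\geq n$ is precisely $\rho(A)\leq A$; symmetrically, $B\in L_n$ if and only if $B\leq\rho(B)$. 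Thus everything reduces to the elementary order-theoretic fact that complementation in $[N]$ reverses the element-wise order on each fixed cardinality: for $S,T\subseteq[N]$ with $|S|=|T|$, one has $S\leq T$ iff $[N]\setminus T\leq[N]\setminus S$. I would prove this using the counting function $g_X(y)=|X\cap\{1,\dots,y\}|$ for $y\in\{0,1,\dots,N\}$: because the elements of $X$ are listed increasingly, $x_i\leq y$ iff $g_X(y)\geq i$, so $S\leq T$ iff $g_S(y)\geq g_T(y)$ for all $y$; and since $X\cap\{1,\dots,y\}$ and $([N]\setminus X)\cap\{1,\dots,y\}$ partition $\{1,\dots,y\}$ we have $g_{[N]\setminus X}(y)=y-g_X(y)$, whence $g_S\geq g_T$ everywhere iff $g_{[N]\setminus S}\leq g_{[N]\setminus T}$ everywhere, i.e.\ $[N]\setminus T\leq[N]\setminus S$. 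Applying this with $N=n-1$: if $A\in G_n$ then $\rho(A)\leq A$, hence $[n-1]\setminus A\leq[n-1]\setminus\rho(A)=\rho([n-1]\setminus A)$, that is $f(A)=[n-1]\setminus A\in L_n$; the mirror computation, swapping $\leq$ and $\geq$, gives $f(L_n)\subseteq G_n$, which finishes the proof.

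I expect the only genuine obstacle to be that first conceptual move: seeing that the conditions $a_i+a_{\ell-i+1}\geq n$ and $a_i+a_{\ell-i+1}\leq n$ are the relations $\rho(A)\leq A$ and $B\leq\rho(B)$ for the reflection $\rho$, after which the problem collapses to the standard fact that complementation reverses the element-wise order. Once that is in place the remaining work is routine bookkeeping with the functions $g_X$, the one point to watch being that the element-wise order is only defined between sets of equal cardinality --- which causes no trouble here since both $\rho$ and complementation in $[n-1]$ preserve cardinality.
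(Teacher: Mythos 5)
Your proof is correct and follows essentially the same route as the paper: you recast membership in $G_n$ and $L_n$ as the element-wise comparisons $\rho(A)\leq A$ and $B\leq\rho(B)$ with the reflected set (the paper's sets $S$ and $T$) and then invoke the fact that complementation in $[n-1]$ reverses the element-wise order on subsets of equal cardinality. The only difference is how that auxiliary fact is proved — you use the prefix-counting functions $g_X(y)=|X\cap\{1,\dots,y\}|$, while the paper argues by induction on the cardinality by stripping off minimal elements — which is a minor variation, not a different approach.
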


\begin{proof}
This map has an inverse, which is itself, so our work in this proof will be to show that this map is well defined. For this proof we will let $A=\{a_1,a_2,\dots, a_k\}\subseteq [n-1]$ where the elements are written in increasing order and $S=\{s_1,s_2,\dots, s_k\}\subseteq[n-1]$ where $s_i=n-a_{k-i+1}$ so that the elements of $S$ are also in increasing order. Similarly, we will let $B=[n-1]\setminus A=\{b_1,b_2,\dots, b_{n-k-1}\}$ with $b_i<b_{i+1}$ and $T=\{t_1,t_2,\dots, t_{n-k-1}\}\subseteq[n-1]$ where $t_i=n-b_{n-k-i}$ so that the elements of $T$ increase. Our goal is to show $A\in G_n$ if and only if $B\in L_n$. 

First we will note that $S=[n-1]\setminus T$. Secondly we will note that $a_{k-i+1}+s_i=n$ so $a_i+a_{k-i+1}\geq n$ is equivalent to $a_i\geq s_i$. Thus, $A\in G_n$ if and only if $A\geq S$. Similarly $B\in L_n$ if and only if $B\leq T$. 

We will next argue that if $S\leq A$ then $[n-1]\setminus S\geq [n-1]\setminus A$. We will argue this  by inducting on the cardinality $|S|=|A|=k$. The base case is $k=0$ where $S=A=\emptyset$ where by vacuum we have $S\leq A$ and certainly $[n-1]\setminus \emptyset \geq [n-1]\setminus \emptyset$. Otherwise $S$ and $A$ have minimum elements $s_1\leq a_1$ respectfully. It is not hard to see that $S\setminus \{s_1\}\leq A\setminus \{a_1\}$ so by induction $[n-1]\setminus(S\setminus \{s_1\})\geq [n-1]\setminus(A\setminus \{a_1\})$ or equivalently $([n-1]\setminus S)\cup \{s_1\}\geq ([n-1]\setminus A)\cup \{a_1\}$. Generally it is not hard to see for sets $|U|=|V|$ that if  $U\leq V$ with $u\in U$, $v\in V$ and $v\leq u$ that $U\setminus\{u\}\leq V\setminus \{v\}$. From this since $s_1\leq a_1$ we have $[n-1]\setminus S \geq [n-1]\setminus A$. 

Putting everything together $A\in G_n$ if and only if $S\leq A$ if and only if $[n-1]\setminus S \geq [n-1]\setminus A$. This is equivalently $T\geq B$, which is true if and only if $B\in L_n$. 
\end{proof}

Now putting all our maps  together we get a bijection from $\cI_n(213)$ to $\cI_n(132)$ such that if $\iota$ is mapped to $\tau$ we have that $\Des(\iota)=[n-1]\setminus \Des(\tau)=\Asc(\tau)$. 

\begin{thm} For $n\geq 0$, 
$$M{\mathcal I}_n(213) = q^{\binom{n}{2}}M{\mathcal I}_n(132;q^{-1}).$$
\label{thm:132symm213}
\end{thm}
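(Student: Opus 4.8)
The plan is to assemble the bijection that the preceding lemmas have been building toward and then simply read off the major index. Concretely, I would set $\Phi=\psi^{-1}\circ f\circ\varphi$, where $\varphi:\cI_n(213)\to G_n$ is the descent-set bijection of Lemma~\ref{lemma:varphi}, $f:G_n\to L_n$ is the complementation bijection $A\mapsto[n-1]\setminus A$ of Lemma~\ref{lemma:f}, and $\psi:\cI_n(132)\to L_n$ is the descent-set bijection of Lemma~\ref{lemma:psi}. Each of the three is a bijection, hence so is $\Phi:\cI_n(213)\to\cI_n(132)$. The point I would emphasize is that $\varphi$ and $\psi$ both record the descent set verbatim, so that if $\Phi(\iota)=\tau$ then $\Des(\tau)=\psi(\tau)=f(\varphi(\iota))=[n-1]\setminus\Des(\iota)$; that is, $\Phi$ exchanges the descent set with the ascent set.

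Next I would convert this into a statement about $\maj$. Since every $i\in[n-1]$ is either a descent or an ascent of a length-$n$ permutation (the same observation underlying Lemma~\ref{lemma:majproperties}), the equality $\Des(\tau)=[n-1]\setminus\Des(\iota)$ forces $\maj(\tau)+\maj(\iota)=\sum_{i=1}^{n-1}i=\binom{n}{2}$, so $\maj(\iota)=\binom{n}{2}-\maj(\Phi(\iota))$. Summing over $\iota\in\cI_n(213)$ and reindexing the sum by $\tau=\Phi(\iota)\in\cI_n(132)$ gives
\[
M\cI_n(213;q)=\sum_{\iota\in\cI_n(213)}q^{\maj(\iota)}=\sum_{\tau\in\cI_n(132)}q^{\binom{n}{2}-\maj(\tau)}=q^{\binom{n}{2}}M\cI_n(132;q^{-1}),
\]
which is exactly the asserted identity; the cases $n=0$ and $n=1$ are immediate since both sides equal $1$.

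At this stage there is no real obstacle remaining, because all of the difficulty has been front-loaded into Lemmas~\ref{lemma:varphi}, \ref{lemma:f}, and \ref{lemma:psi}. Were I proving the theorem from scratch, the genuinely delicate ingredient would be Lemma~\ref{lemma:f} — verifying that set complementation carries the ``$a_i+a_{\ell-i+1}\ge n$'' constraint defining $G_n$ to the ``$a_i+a_{\ell-i+1}\le n$'' constraint defining $L_n$, which in turn rests on the monotonicity fact that $S\le A$ implies $[n-1]\setminus S\ge[n-1]\setminus A$ — together with checking that the inductive constructions of $\varphi$ and $\psi$ (driven by Beissinger's two-cycle insertion and the descent-set formulas of Lemma~\ref{lemma:213Des} and its $132$ analogue) really do land in $G_n$, respectively $L_n$, and are invertible. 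Once those are in place the theorem is pure bookkeeping, and I would close by remarking that, in contrast to the $123$/$321$ symmetry, this bijection does \emph{not} come from a symmetry of the square: the reflection $r_{-1}$ sends $213$ to $132$ but does not preserve $\maj$, which is precisely why the detour through $G_n$ and $L_n$ is needed.
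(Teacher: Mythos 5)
Your proposal is correct and matches the paper's proof: both compose the bijections $\psi^{-1}\circ f\circ\varphi$ from Lemmas~\ref{lemma:varphi}, \ref{lemma:f} and \ref{lemma:psi} to send $\iota\in\cI_n(213)$ to $\tau\in\cI_n(132)$ with $\Des(\tau)=[n-1]\setminus\Des(\iota)$, and then conclude $\maj(\iota)+\maj(\tau)=\binom{n}{2}$ to obtain the identity.
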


\begin{proof}
This equality is true because there is a bijection $\psi^{-1}\circ f\circ\varphi:\cI_n(213)\rightarrow \cI_n(132)$ using Lemmas~\ref{lemma:varphi},~\ref{lemma:psi} and~\ref{lemma:f}. Further, if $\iota \in \cI_n(213)$ then $\varphi(\iota)=\Des(\iota)$, $f(\Des(\iota))=[n-1]\setminus\Des(\iota)$ and $\psi^{-1}([n-1]\setminus \Des(\iota))=\tau$ so $\Des(\tau)=[n-1]\setminus \Des(\iota)$. 
\end{proof}

As a corollary we can prove that $M\cI_n(213)$ has similar but symmetric internal zeros as were found in $M\cI_n(132)$, which was proven in Proposition~\ref{thm:132internalzeros}. 

\begin{cor} If $\iota \in {\cI}_n(213)$ then 
\begin{enumerate}[(i)]
\item $\maj(\iota)=0$ or $\maj(\iota) \geq {\lceil n/2 \rceil}$ 
\item this bound is sharp and
\item for every $k\geq {\lceil n/2 \rceil}$, $k\leq \binom{n}{2}$ there exists some $\iota \in {\cI}_n(213)$ with $\maj(\iota)=k$. 
\end{enumerate}
\end{cor}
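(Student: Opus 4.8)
The plan is to derive all three parts directly from the reversal symmetry $M\cI_n(213;q)=q^{\binom{n}{2}}M\cI_n(132;q^{-1})$ established in Theorem~\ref{thm:132symm213}, combined with the internal-zeros result for the pattern $132$ in Proposition~\ref{thm:132internalzeros}. First I would write $M\cI_n(132;q)=\sum_k c_kq^k$ and $M\cI_n(213;q)=\sum_k d_kq^k$, both polynomials of degree $\binom{n}{2}$ with non-negative integer coefficients; the symmetry says exactly that $d_k=c_{\binom{n}{2}-k}$ for every $k$. So the opening step is merely this observation: the coefficient sequence of $M\cI_n(213)$ is the reversal of that of $M\cI_n(132)$, hence the support of $(d_k)$ is the image of the support of $(c_k)$ under the involution $k\mapsto\binom{n}{2}-k$, and $d_k\neq 0$ if and only if there is some $\iota\in\cI_n(213)$ with $\maj(\iota)=k$.

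Next I would push each clause of Proposition~\ref{thm:132internalzeros} through this reversal. Part (i) there says $c_k=0$ unless $k=\binom{n}{2}$ or $k\leq\binom{n}{2}-\lceil n/2\rceil$; applying $k\mapsto\binom{n}{2}-k$ turns this into $d_k=0$ unless $k=0$ or $k\geq\lceil n/2\rceil$, which is exactly part (i) of the corollary. Part (iii) there says $c_{k'}>0$ for every $0\leq k'\leq\binom{n}{2}-\lceil n/2\rceil$; the same substitution turns this into $d_k>0$ for every $\lceil n/2\rceil\leq k\leq\binom{n}{2}$, which is part (iii). Finally, part (ii) — that the bound $\lceil n/2\rceil$ in (i) is actually attained — is the instance $k=\lceil n/2\rceil$ of (iii), equivalently the image under the reversal of the sharpness assertion in Proposition~\ref{thm:132internalzeros}(ii); this is meaningful precisely when $\lceil n/2\rceil\leq\binom{n}{2}$, i.e. for $n\geq 2$, with the cases $n\leq 1$ being degenerate. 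That completes the argument.

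I do not expect a real obstacle here: once Theorem~\ref{thm:132symm213} and the $132$ internal-zeros proposition are in hand, the corollary is a purely formal bookkeeping argument about supports of polynomials under $k\mapsto\binom{n}{2}-k$, and the only care needed is in flipping the inequalities correctly and noting the trivial small-$n$ degeneracies. If anything, the "main step" is just recognizing that the reversal symmetry is precisely the tool that converts Proposition~\ref{thm:132internalzeros} into this statement, so that no new combinatorics on $\cI_n(213)$ itself is required.
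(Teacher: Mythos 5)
Your proposal is correct and is essentially the paper's own argument: the paper likewise deduces the corollary immediately from Proposition~\ref{thm:132internalzeros} together with Theorem~\ref{thm:132symm213}, whose descent-complementing bijection is exactly what yields the coefficient reversal $k\mapsto\binom{n}{2}-k$ you use. Nothing further is needed.
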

\begin{proof}Using Proposition~\ref{thm:132internalzeros} and  the map in Theorem~\ref{thm:132symm213}   we quickly get this result. 
\end{proof}

%%%%%%%%%%%%%%%%%%%%%%%%%%%%%%%%%
%%% 321%%%%%%%%%%%%%%%%%%%%%%%%%%%%%%%
\subsection{The pattern 321}
\label{subsec321}

 In this section we will show another interpretation for the standard $q$-analogue of the binomial coefficient defined in equation~\eqref{eq:qbinom}.
It turns out that $M\cI_n(321)$ is equal
to the standard $q$-analogue for the central binomial coefficient. This result was proven independently by Barnabei et.\ al.~\cite{BBES14} (Theorem 3.3) whose proof gives a connection to hook decompositions. Our proof has the advantage of being shorter and gives a connection to the concept of a core. The  core is a concept due to Greene and Kleitman~\cite{GK76} (page 82) that originated in the study of posets. It has  traditionally been used to  prove that a poset has a symmetric-chain decomposition, but our use of it is new and quite different from the original. Also, our proof can be easily generalized to give another interpretation for the general $q$-analogue for the binomial coefficient, not just the central one, which we present at the end of this section in Corollary~\ref{cor:321t<=k}. This result  also appears in~\cite{BBES16} (Corollary 14) by  Barnabei et.\ al. Parts of the bijection we present  can be seen in~\cite{BBES16} and~\cite{EFPT15} in their  association between involutions avoiding 321 and Dyke paths.

Given a length $n$ word composed of left parentheses and right parentheses the core is a subsequence of the word and is defined inductively. To find the core we begin by matching a left parenthesis with right parenthesis if they are adjacent and the left parenthesis is on the left. Excluding all previously matched parentheses we continue to match more in a similar matter until there are no more possible matchings. The subsequence that contains all of the matched parentheses is called the {\it core}. We say that a specific parenthesis is in the core if that parenthesis is part of a matching. Similarly, we will say an index $i$ is part of the core if the parenthesis at index $i$ is part of the core. For example the word $(()()))((()($ has the core $(()())()$ and the indices $\{1, 2, 3, 4, 5, 6, 10, 11\}$ are in the core.

Given a binary word of $0$s and $1$s we can similarly define its core. Consider all $1$s to be left parentheses and all $0$s to be right parentheses. With this we equate the word $(()()))((()($  to $11010011101$ and its core is $11010010$, which still occurs on the indices $\{1,2,3,4,5,6,10,11\}$. Note that the core itself is a perfect matching whose index set inside the word can be broken down uniquely into disjoint intervals with the following properties. The first property is that the subsequence of the core associated to any one of the intervals is itself a perfect matching. The second is that no interval can be broken into two intervals that both satisfy the first  property. We will call each interval, or the subsequence of the core associated to that interval, a {\it block}. In our example we have two blocks that are $110100$ and $10$.

The next two lemmas will establish some basic facts the core and about descents in involutions avoiding 321. 

\begin{lemma}
Let $\iota \in \cI_n(321)$.
 If $d\in \Des(\iota)$ then $\iota$ has the two-cycle $(d,d+1)$ or two distinct two-cycles $(d,t)$ and $(s,d+1)$ such that $d<t$ and $s<d+1$.

 \label{lemma:321Des}
\end{lemma}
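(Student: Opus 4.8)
The plan is to reduce the statement to two pointwise facts about $\iota$ and then reconcile the cases. First I would show that $\iota(d)>d$, so that $d$ is the smaller element of a two-cycle $(d,t)$ with $t=\iota(d)$; then, symmetrically, that $\iota(d+1)<d+1$, so that $d+1$ is the larger element of a two-cycle $(s,d+1)$ with $s=\iota(d+1)$. Granting both, we have produced two-cycles $(d,t)$ with $d<t$ and $(s,d+1)$ with $s<d+1$, and a short check closes the argument: if $t=d+1$ then, since $\iota$ is an involution, $s=\iota(d+1)=\iota(t)=d$, so both descriptions collapse to the single two-cycle $(d,d+1)$; if $t\neq d+1$ then $\iota(d)\neq d+1$ forces $\iota(d+1)\neq d$, i.e.\ $s\neq d$, and since also $t\geq d+2$ and $s\leq d-1$ the sets $\{d,t\}$ and $\{s,d+1\}$ are disjoint, so the two two-cycles are genuinely distinct. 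Hence the stated alternatives are exhaustive and mutually exclusive.

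For the first fact I would argue by contradiction, using that $d\in\Des(\iota)$ means $\iota(d)>\iota(d+1)$ and that $\iota$ avoids $321$. Suppose $\iota(d)\leq d$. If $\iota(d)=d$, put $s=\iota(d+1)$; then $s<d$, and since $\iota$ is an involution $\iota(s)=d+1$, so at the increasing positions $s<d<d+1$ the values are $d+1,\,d,\,s$, a decreasing triple, i.e.\ the pattern $321$ --- a contradiction. If instead $\iota(d)<d$, put $s=\iota(d)$, so $\iota(s)=d$ and $\iota(d+1)<\iota(d)=s$; then at positions $s<d<d+1$ the values are $d,\,s,\,\iota(d+1)$ with $d>s>\iota(d+1)$, again the pattern $321$. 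Either way we reach a contradiction, so $\iota(d)>d$.

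The second fact is proved symmetrically. Suppose $\iota(d+1)\geq d+1$. If $\iota(d+1)=d+1$, then $t=\iota(d)>d+1$ because $d$ is a descent, and $\iota(t)=d$, so at the increasing positions $d<d+1<t$ the values are $t,\,d+1,\,d$, the pattern $321$. If $\iota(d+1)>d+1$, set $t'=\iota(d+1)$; then $t=\iota(d)>t'>d+1>d$ and $\iota(t)=d$, so at positions $d<d+1<t$ the values are $t,\,t',\,d$ with $t>t'>d$, once more the pattern $321$. This contradiction gives $\iota(d+1)<d+1$, and the proof is complete.

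I do not anticipate a genuine obstacle: the whole argument is a finite case analysis exhibiting explicit $321$ patterns, in the same spirit as the proof of Lemma~\ref{lemma:321inv} and of part (i) of Lemma~\ref{lemma:321structure}. The only points requiring care are clerical --- keeping the three chosen indices in strictly increasing positional order while their $\iota$-values strictly decrease --- together with the small reconciliation step noted in the first paragraph, which is what makes the two alternatives in the statement non-overlapping rather than merely covering all cases.
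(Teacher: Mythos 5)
Your proof is correct, and every exhibited $321$ pattern checks out, but it takes a more self-contained route than the paper. The paper's proof is two lines: since $d\in\Des(\iota)$ means $(d,d+1)\in\Inv(\iota)$, it invokes the already-established structural fact (Lemma~\ref{lemma:321inv}, proved via Lemma~\ref{lemma:321structure}) that every inversion $(a,b)$ of a $321$-avoiding involution has $a$ equal to the minimum of some two-cycle and $b$ equal to the maximum of some two-cycle; applying this to $(d,d+1)$ immediately gives the two-cycles $(d,t)$ and $(s,d+1)$, distinct or coinciding as $(d,d+1)$. What you do instead is re-derive exactly this special case from scratch: your two pointwise claims $\iota(d)>d$ and $\iota(d+1)<d+1$ are the $(a,b)=(d,d+1)$ instance of that lemma, proved by exhibiting explicit $321$ patterns in each of the four subcases (including the fixed-point subcases, which is where the paper's Lemma~\ref{lemma:321inv} also has to work). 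Your version buys independence from the inversion-structure lemma and additionally shows the two alternatives are mutually exclusive (disjointness of $\{d,t\}$ and $\{s,d+1\}$ when $t\neq d+1$), which the statement does not require; the paper's version buys brevity by reusing prior work. Either way the lemma stands.
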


\begin{proof}
Let $d\in \Des(\iota)$, which implies that $(d,d+1)$ is an inversion. By Lemma~\ref{lemma:321structure} we then have two-cycles $(d,t)$ with $d<t$ and $(s,d+1)$ with $s<d+1$ that may not be distinct. If the two-cycles are distinct we are done, and if they are not then we have the two-cycle $(d,d+1)$.
\end{proof}

\begin{lemma}
Let $w$ be a binary word. 
\begin{enumerate}[(i)]
\item The subword of $w$ composed of all elements not in the core is weakly increasing. 
\item If the $i$th $1$ inside the core occurs at index $s$ in $w$ and the $i$th $0$ occurs at $t$ in $w$ then $s<t$ and all indices in the interval $[s,t]$ are inside the core. 
\end{enumerate}
\label{lemma:binarystructure}
\end{lemma}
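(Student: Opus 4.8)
The plan is to first pin down the combinatorial structure of the core and then read both parts off of it. Recall that the ``adjacent matching'' procedure defining the core is the usual parenthesis matching, which is also computed by a left-to-right stack scan: push on each $1$, and on each $0$ pop the stack if it is non-empty (recording the matched pair) or leave that $0$ unmatched if the stack is empty; the $1$'s still on the stack at the end are the unmatched ones. I would first record the standard facts (provable by a short confluence argument on the adjacent-matching rule) that this stack scan produces the same matching as the given definition, that every matched pair has its $1$ on the left and its $0$ on the right, and hence that the subword of $w$ formed by the core indices, read left to right, is a \emph{legal Dyck word}: it is balanced because the core is a perfect matching, and every prefix has at least as many $1$'s as $0$'s because each matched $0$ has its partner $1$ to its left. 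With this setup, part (i) is immediate: the procedure halts exactly when the subword of unmatched symbols has no factor $10$ (a $1$ immediately followed by a $0$ within that subword), and a binary word with no factor $10$ has the form $0^a1^b$, which is weakly increasing.

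For part (ii) I would fix notation: let the core indices be $c_1 < c_2 < \dots < c_{2M}$, let $v = v_1 v_2 \cdots v_{2M}$ with $v_\ell = w_{c_\ell}$ be the core word, and let $H(\ell)$ be its height, the number of $1$'s minus the number of $0$'s among $v_1,\dots,v_\ell$, so that $H(0) = H(2M) = 0$ and $H(\ell) \ge 0$. The key structural claim I would prove is
\begin{equation*}
\text{there is a non-core index strictly between } c_\ell \text{ and } c_{\ell+1} \ \Longrightarrow\ H(\ell) = 0,
\end{equation*}
so that gaps between consecutive core indices occur only at returns to zero of the Dyck path $H$, that is, only between blocks. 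To prove it, suppose $p$ is non-core with $c_\ell < p < c_{\ell+1}$. If $w_p = 0$ then $p$ is unmatched, so the stack is empty when the scan reaches $p$; but if $H(\ell) > 0$ then among $c_1,\dots,c_\ell$ there are exactly $H(\ell)$ core $1$'s whose partners have core-index $> \ell$, hence $w$-index $\ge c_{\ell+1} > p$, so these $1$'s are still on the stack at time $p$, a contradiction; thus $H(\ell) = 0$. The case $w_p = 1$ follows by applying the $w_p = 0$ case to the reverse-complement of $w$, whose core is the reverse-complement of the core of $w$.

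To finish, fix $i$ with $1 \le i \le M$, let $v_j$ be the $i$-th $1$ of $v$ (so $s = c_j$) and $v_{j'}$ the $i$-th $0$ of $v$ (so $t = c_{j'}$). Legality of $v$ forces $j < j'$: when the $i$-th $0$ is read the height is $\ge 0$, so at least $i$ ones have already been read, whence the $i$-th one is at a position $< j'$; thus $s = c_j < c_{j'} = t$. Now for every $\ell$ with $j \le \ell < j'$ the prefix $v_1 \cdots v_\ell$ has at least $i$ ones (the $i$-th one is at position $j \le \ell$) and at most $i-1$ zeros (the $i$-th zero is at position $j' > \ell$), so $H(\ell) \ge i - (i-1) = 1 > 0$. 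By the key claim there is no non-core index between $c_\ell$ and $c_{\ell+1}$, i.e. $c_{\ell+1} = c_\ell + 1$; chaining over $\ell = j, j+1, \dots, j'-1$ gives that $c_j, c_j+1, \dots, c_{j'}$ are all core indices, which is exactly the assertion that $[s,t]$ lies inside the core. I expect the main obstacle to be the preliminary step --- passing from the bare inductive (adjacent-matching) definition of the core to the stack description and to the legal-Dyck-word/return-to-zero picture, a routine but slightly fussy confluence and bookkeeping argument; once that is in place, part (i) is a one-line observation and part (ii) is the short height computation above.
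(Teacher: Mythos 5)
Your proof is correct, and part (i) is essentially the paper's own argument (a strict decrease in the non-core subword would be a $1$ immediately followed by a $0$ among the unmatched letters, which the iterative matching would have matched). For part (ii), however, you take a genuinely different route. The paper leans on the block decomposition it sets up just before the lemma: each block is a balanced perfect matching occupying consecutive indices of $w$, so the $i$th $1$ and $i$th $0$ of the core land in the same block, and everything between them is automatically in the core. You instead develop the lattice-path picture: you recast the matching as the standard stack scan, observe the core word is a legal Dyck word, prove a ``gaps between consecutive core indices occur only at returns to zero of the height function'' lemma (with a clean reverse-complement trick for the $w_p=1$ case), and then conclude by noting the height stays strictly positive between the $i$th $1$ and the $i$th $0$. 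Your version is more self-contained and quantitative --- in particular it proves $s<t$ explicitly, which the paper glosses over, and it does not presuppose the block decomposition --- at the cost of the stack-equals-adjacent-matching (confluence) preliminaries, which you only sketch; the paper's version is shorter precisely because it assumes the block structure (also asserted rather than proved there), so the two arguments rest on comparable unproven but standard structural facts. If you flesh out the confluence step, your write-up would stand on its own and could even replace the block machinery for this lemma.
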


\begin{proof}
Say that the subword of $w$ composed of all elements outside the core is not weakly increasing, which means there is some strict decrease. For a binary word to have a strict decrease it would need a $1$ directly followed by a $0$. By the inductive construction of the core, these two letters would be matched and be inside the core, which is a contradiction.

Each block of the core has an equal number of $1$s and $0$s since it is a perfect matching. As result, the first block will have the $1$st through $m_1$th $1$ and $0$, The second block will have the $(m_1+1)$st through $m_2$th $1$ and $0$, and so generally the $i$th block will have the $(m_{i-1}+1)$st through $m_i$th $1$ and $0$. This means that the $j$th $1$ and the $j$th $0$ will always be in the same block. Since blocks occur on consecutive indices all letters between the $j$th $1$ and the $j$th $0$ are in the core. \end{proof}

Now we are ready to delve into the main topic of this section, that $M{\mathcal I}_n(321)$ is a standard $q$-analogue for the central binomial coefficient. Our method of proof is to show that there is a bijection from $M{\mathcal I}_n(321)$ to another combinatorial object that has been well established to have a generating function equal to standard $q$-analogues for binomial coefficient, see~\cite{S97} (exercise 1.56).

\begin{prop}If $W_{n, k}$ is the set of binary words of length $n$ with 
$n-k$ zeros and $k$ ones then 
$${n \brack k}_q = \sum_{w\in W_{n,k}} q^{\maj(w)}.$$
\label{prop:qbinom}
\vspace{-1cm}

\hqed
\end{prop}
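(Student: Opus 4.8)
The statement is the classical $q$-binomial theorem for words (it is Exercise~1.56 of~\cite{S97}), so the plan is to give the standard two-step derivation rather than anything new: first replace $\maj$ by the number of inversions, and then evaluate the resulting sum by the $q$-Pascal recurrence. For a binary word $w$ write $\inv(w)$ for the number of pairs $i<j$ with $w_i>w_j$, i.e.\ with $w_i=1$ and $w_j=0$. By the multiset form of MacMahon's theorem --- the generalization to arbitrary rearrangements of the identity recalled in the introduction for $\fS_n$ --- the statistics $\maj$ and $\inv$ are equidistributed over $W_{n,k}$, so it suffices to prove
$$\sum_{w\in W_{n,k}}q^{\inv(w)}={n \brack k}_q.$$

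For this I would set $c_{n,k}(q)=\sum_{w\in W_{n,k}}q^{\inv(w)}$, record the boundary values $c_{n,0}(q)=c_{n,n}(q)=1$ (the constant words have no inversions), and for $0<k<n$ split $W_{n,k}$ according to the last letter. If $w_n=1$ then no pair $(i,n)$ is an inversion, so erasing the terminal $1$ is an $\inv$-preserving bijection $W_{n,k}\to W_{n-1,k-1}$; if $w_n=0$ then the inversions of the form $(i,n)$ are exactly the $k$ indices $i$ with $w_i=1$, so erasing the terminal $0$ is a bijection $W_{n,k}\to W_{n-1,k}$ that lowers $\inv$ by $k$. This yields
$$c_{n,k}(q)=c_{n-1,k-1}(q)+q^{k}\,c_{n-1,k}(q),$$
which is precisely the recurrence and initial conditions that characterize ${n \brack k}_q$ (one checks $[k]_q+q^k[n-k]_q=[n]_q$). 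Hence $c_{n,k}(q)={n \brack k}_q$ by induction on $n$, and by the previous paragraph so is the $\maj$ generating function.

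The only ingredient that is not a routine verification is the $\maj$-to-$\inv$ reduction, i.e.\ MacMahon's equidistribution over multiset rearrangements; since this is classical and the $\fS_n$ case is already used in the introduction, the cleanest write-up simply cites~\cite{S97}. If one wished to avoid even that, the standardization map --- relabel the $k$ ones of $w$ by $n-k+1,\dots,n$ and the $n-k$ zeros by $1,\dots,n-k$, each group increasing from left to right --- realizes $\maj$ and $\inv$ on $W_{n,k}$ as literal restrictions of the corresponding permutation statistics, so the word statement really is a genuine instance of the classical one and there is nothing new to prove. I expect the real obstacle, for anyone wanting a self-contained proof phrased entirely through $\maj$, to be that the naive last-letter recursion does \emph{not} close up: erasing a terminal $0$ changes $\maj$ by $0$ or by $n-1$ according to whether $w_{n-1}$ equals $1$ or $0$. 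That is exactly why one passes to $\inv$, where the last-letter recursion is clean, and it is also why MacMahon's theorem (or the Foata bijection underlying it) is doing the essential work here.
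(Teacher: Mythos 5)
Your argument is correct, but note that the paper does not actually prove this proposition: it is quoted as a classical fact, stated with a citation to~\cite{S97} (Exercise 1.56) and immediately marked as established, since all the paper needs is the known interpretation of ${n\brack k}_q$ as the $\maj$ generating function on binary words. What you supply is the standard self-contained derivation: reduce $\maj$ to $\inv$ by MacMahon's equidistribution over rearrangements of a multiset, then verify that $c_{n,k}(q)=\sum_{w\in W_{n,k}}q^{\inv(w)}$ satisfies $c_{n,k}=c_{n-1,k-1}+q^{k}c_{n-1,k}$ by deleting the last letter, matching the $q$-Pascal recurrence via $[k]_q+q^{k}[n-k]_q=[n]_q$; all of this is sound, and it buys the reader a proof where the paper only offers a pointer. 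Two small remarks on your closing aside. First, the standardization comment is looser than the main argument: standardizing embeds $W_{n,k}$ into $\fS_n$ as the shuffles of two increasing runs, and equidistribution of $\maj$ and $\inv$ over that subclass does not follow from the $\fS_n$-level equidistribution quoted in the paper's introduction -- one still needs the multiset form of MacMahon (or Foata's bijection, which respects the relevant class), which is exactly what you cite in the main line, so the aside is dispensable rather than a gap. Second, in that same aside the two cases for erasing a terminal $0$ are swapped: $\maj$ drops by $n-1$ when $w_{n-1}=1$ (the descent at position $n-1$ disappears) and is unchanged when $w_{n-1}=0$; the point you are making about the $\maj$ recursion not closing up cleanly is unaffected.
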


In proving the next theorem we establish a bijection from involutions avoiding 321 to binary words that preserves the descent set using the concept of core. 

\begin{thm} 
For $n\geq 0$ we have the following equality of $q$-analogues,
$$M{\mathcal I}_n(321)={n \brack {\ceil{n/2}}}_q.$$
 \label{theorem:321qanalougeequiv}
\end{thm}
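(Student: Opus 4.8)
The plan is to deduce the identity from an explicit bijection
$\Phi\colon\cI_n(321)\to W_{n,\ceil{n/2}}$, where (as in Proposition~\ref{prop:qbinom}) $W_{n,k}$ is the set of binary words of length $n$ with $k$ ones, such that $\Des(\Phi(\iota))=\Des(\iota)$ for every $\iota$. Equal descent sets force equal major index, so $M\cI_n(321)=\sum_{w\in W_{n,\ceil{n/2}}}q^{\maj(w)}$, which is ${n\brack{\ceil{n/2}}}_q$ by Proposition~\ref{prop:qbinom}. Hence the entire proof reduces to constructing $\Phi$ together with its inverse.

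To define $\Phi$, I would take $\iota\in\cI_n(321)$ with two-cycles $(s_1,t_1),\dots,(s_m,t_m)$ written so that $s_j<t_j$, and with $t_1<\dots<t_m$ by Lemma~\ref{lemma:321structure}(i); one also has $s_1<\dots<s_m$, since $s_i>s_{i+1}$ together with $t_i<t_{i+1}$ would make $t_{i+1}t_is_{i+1}$ a $321$. Letting $f_1<\dots<f_{n-2m}$ be the fixed points, set $\Phi(\iota)=w$ with $w_{s_j}=1$, $w_{t_j}=0$, and $w_{f_\ell}=0$ for $\ell\le\floor{n/2}-m$ while $w_{f_\ell}=1$ for $\ell>\floor{n/2}-m$; this word has exactly $m+(\ceil{n/2}-m)=\ceil{n/2}$ ones. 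The one structural fact that makes everything work is that no two-cycle of $\iota$ straddles a fixed point: if $s_j<f<t_j$ with $f$ fixed, then positions $s_j,f,t_j$ carry the values $t_j,f,s_j$, a $321$. Using this together with Lemma~\ref{lemma:321Des} I would verify $\Des(w)=\Des(\iota)$: by Lemma~\ref{lemma:321Des}, $d\in\Des(\iota)$ exactly when $d$ is the minimum of some two-cycle and $d+1$ the maximum of some two-cycle, which forces $w_d=1$, $w_{d+1}=0$; conversely $w_d=1$, $w_{d+1}=0$ can only arise that way, because each remaining possibility — a value-$1$ fixed point immediately before the maximum of a two-cycle, the minimum of a two-cycle immediately before a value-$0$ fixed point, or a value-$1$ fixed point immediately before a value-$0$ fixed point — is excluded either by the no-straddle fact or by the chosen ordering of the fixed-point labels.

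The remaining work, and where the notion of core enters, is bijectivity. Grouping the positions at the fixed points, the no-straddle fact shows that every maximal block of positions strictly between consecutive fixed points (and the block before the first fixed point, and the block after the last) is filled by complete two-cycles, hence is a Dyck word in which every letter is matched; meanwhile each $0$-fixed point is unmatched because up to its position the $\pm1$ walk of $w$ sits at its running minimum, and each $1$-fixed point is unmatched because from its position onward that walk never returns below its current level. Therefore the core of $w$ is exactly $\{s_1,\dots,s_m\}\cup\{t_1,\dots,t_m\}$, and since the core's ones are $s_1<\dots<s_m$ and its zeros are $t_1<\dots<t_m$ in that order, the pairing in Lemma~\ref{lemma:binarystructure}(ii) returns precisely the transpositions $(s_i,t_i)$. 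This describes the inverse $\Psi$: send $w\in W_{n,\ceil{n/2}}$ to the involution whose two-cycles are the core pairs of $w$ and whose fixed points are the non-core positions. That $\Phi\circ\Psi=\mathrm{id}$ then follows because, by Lemma~\ref{lemma:binarystructure}(i), the non-core letters of $w$ form a weakly increasing word $0^{\floor{n/2}-m'}1^{\ceil{n/2}-m'}$ (with $m'$ core pairs), so $\Phi$ rebuilds exactly $w$. The genuinely new step — and the main obstacle — is checking that $\Psi(w)$ really avoids $321$. I would argue this from Lemma~\ref{lemma:binarystructure}(ii) alone: under $\Psi(w)$ the positions of the core ones, the positions of the core zeros, and the fixed points each carry increasing values, so a hypothetical decreasing subsequence of length three must use one position of each kind, and a short case check shows every such configuration forces either a fixed (hence non-core) position strictly between the two positions of some core pair — impossible, since the interval spanned by a core pair lies entirely in the core by Lemma~\ref{lemma:binarystructure}(ii) — or an outright contradictory inequality among the core-one positions, the core-zero positions, and that fixed point. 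Once this is established, $\Phi$ is a descent-preserving bijection and the theorem follows.
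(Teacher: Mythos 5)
Your proposal is correct and follows essentially the same route as the paper: the same descent-preserving bijection between $\cI_n(321)$ and the binary words $W_{n,\ceil{n/2}}$, encoding two-cycles as core pairs and fixed points as the non-core letters $0^{a_0}1^{a_1}$, combined with Proposition~\ref{prop:qbinom} and Lemmas~\ref{lemma:321structure}, \ref{lemma:321Des} and~\ref{lemma:binarystructure}. The only cosmetic differences are that you verify both compositions directly via the no-straddle/core analysis where the paper checks one composition and invokes $|\cI_n(321)|=|W_{n,\ceil{n/2}}|$, and you replace the paper's ``union of two increasing subsequences'' argument for $321$-avoidance of the inverse image with an equivalent three-class case check.
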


\begin{proof}
To prove the equality we will use a well-known interpretation of the $q$-analogue for binomial coefficients  stated in Proposition~\ref{prop:qbinom}. We will construct a bijection $\phi:\cI_n(321)\rightarrow W_{n,\ceil{n/2}}$ that preserves the decent set. Preserving the decent set will preserve the major index, which will give us the equalities 
 
 $$M{\mathcal I}_n(321)=  \sum_{\iota \in {\mathcal I}_n(321)} q^{\maj (\iota)}= \sum_{w\in W_{n,\ceil{ n/2}}} q^{\maj(w)}={n \brack {\ceil{ n/2}}}_q.$$

Let $\iota \in \cI_n(321)$ have two-cycles  $(s_1,t_1), (s_2,t_2),..., (s_m,t_m)$ such that $s_i<t_i$ and $s_i<s_{i+1}$. Also, let the fixed points be $f_1,f_2,\dots f_{n-2m}$ such that $f_i<f_{i+1}$ for all $i$. We want to define a binary word $\phi(\iota)=w=w_1\dots w_n$  that has ${\lceil n/2 \rceil}$ ones and ${\lfloor n/2 \rfloor}$ zeros. Note that $2m\leq n$ so $m\leq \floor{n/2}$, which means $a_1=\ceil{n/2}-m \geq 0$  and $a_0=\floor{n/2}-m\geq 0$. We define $w$ to be the binary word with $w_{s_i}=1$, $w_{t_i}=0$ and we replace the remaining letters that form the subword $f_1f_2\dots f_{n-2m}$ with $0^{a_0}1^{a_1}$ where $i^j$ is the word of $j$ consecutive $i$'s. We can easily see that $w$ has ${\ceil{ n/2 }}$ ones and ${\floor{n/2}}$ zeros, so $\phi$ is well defined. For example $\phi(132458967)=010111100$.

Say that we have a binary word $w\in W_{n,\ceil{n/2}}$. 
Let $s_i$ be the index in $w$ at which the $i$th $1$ in the core appears and $t_i$ be the index in $w$ at which  the $i$th $0$ in the core appears. Note that $s_i<s_{i+1}$ since the $i$th 1 is before the $(i+1)$st $1$. Similarly $t_i<t_{i+1}$. We define the involution $\phi^{-1}(w)=\iota$ to be 
  $$ \iota(j) = \left\{
     \begin{array}{lr}
       t_i & j=s_i,\\
       s_i & j=t_i,\\
     j&\text{else}.
     \end{array}
   \right.$$
We can easily see since all $s_i$'s and $t_i$'s are distinct so $\iota$ has the two-cycles $(s_i,t_i)$ and everything else is a fixed point. This means that $\iota$ is an involution. Say $f_1,f_2,\dots, f_{n-2m}$ are the fixed points listed so that $f_i<f_{i+1}$. 

We will now show that this involution avoids $321$. 
Consider the subword $x$ of $\iota$ that occurs at the indices $\{s_1,\dots, s_m,f_1,\dots f_{n-2m}\}$. We will show that this subword is increasing by showing that it doesn't have any inversions. Since $\iota(s_i)<\iota(s_{i+1})$ and $f_i<f_{i+1}$ any inversion will have to occur between a pair of indices $s_i$ and $f_j$. By Lemma~\ref{lemma:binarystructure} since $s_i<t_i$ we know that all indices in the interval $[s_i,t_i]$ are in the core. Say $s_i<f_j$. Since index $f_j$ is not in the core and all indices in $[s_i,t_i]$ are in the core we must have that $\iota(s_i)=t_i<f_j$. Similarly if $f_j<s_i$ then $f_j<t_i$. This shows that $x$ doesn't have any inversions and is increasing. 
The subword $\iota(t_1)\iota(t_2)\dots \iota(t_m)$ is also  increasing. This means that $\iota$ is composed of two disjoint increasing subsequences, so the longest decreasing subsequence has length at most two. From this we can conclude that $\iota$ avoids $321$ and $\phi^{-1}$ is well defined.

Next, we will show that the two maps are inverses. It suffices to show that $\phi(\phi^{-1}(w))=w$ since $|\cI_n(321)|=|W_{n,\ceil{n/2}}|$. Let $w\in W_{n,\ceil{n/2}}$, $\phi^{-1}(w)=\iota$ and $\phi(\iota)=v$. If $w_i$ is the $r$th $1$ in the core of $w$ then $\iota$ has a two-cycle $(i,j)$ where $w_j$ is the $r$th $0$ in the core of $w$.
 Since $w_{j}$ is the index of the $r$th $0$ in the core and according to Lemma~\ref{lemma:binarystructure} the $r$th $1$ occurs before the $r$th $0$, we have that $i<j$. This means that $v_i=1$ and $v_j=0$, so for all indices $i$ inside the core of $w$ we have that $w_i=v_i$. 
Say the core of $w$ has $2m$ elements then $\iota$ has $m$ two-cycles. Let $a_1=\ceil{n/2}-m$ and $a_0=\floor{n/2}-m$. By definition of $\phi$, the  subword of $v$ corresponding to fixed points of $\iota$, or equivalently the indices outside the core of $w$, is $0^{a_0}1^{a_1}$. 
Note that the subword of $w$ composed of letters outside the core is made of $a_1$ ones and $a_0$ zeros. By Lemma~\ref{lemma:binarystructure} this subword is weakly increasing and so must equal $0^{a_0}1^{a_1}$. Hence $v_i=w_i$ for all indices $i$ outside the core of $w$ so $w=v$.  

Lastly, we will show that if $\phi(\iota)=w$ then $\Des(\iota)=\Des(w)$. First we will make a quick note about $\phi^{-1}$. If $w_i=1$ is the $r$th one in the core of $w$, then the $r$th zero occurs at $w_j$ for some $i<j$. This means that the corresponding involution $\iota$ has the two-cycle $(i,j)$ with $i<j=\iota(i)$. Similarly, if $w_j$ is the $r$th $0$ in the core then $\iota$ has the two-cycle $(i,j)$  with $i =\iota(j)<j$. Say $d\in \Des(w)$ then $w_d=1$ and $w_{d+1}=0$, which implies that both these indices are in the core. 
From the map $\phi^{-1}$ since $w_d=1$ is in the core $\iota$ must  have a two-cycle $(d,\iota(d))$ with $d<\iota(d)$. 
Similarly, $\iota$ must have a two-cycle $(\iota(d+1),d+1)$ with $\iota(d+1)<d+1$. It is possible for these two-cycles to be the same, but in either case this implies that $\iota(d)>\iota(d+1)$ and $d\in \Des(\iota)$. 
Conversely consider $d\in \Des(\iota)$. According to Lemma~\ref{lemma:321structure} we must have the two-cycle $(d,d+1)$ or a pair of  two-cycles $(d,\iota(d))$ and $(\iota(d+1),d+1)$ with $\iota(d+1)<d+1$ and $d<\iota(d)$. In either case this implies that $w_d=1$, $w_{d+1}=0$ and $d\in \Des(w)$. Hence $\Des(w)=\Des(\iota)$. 
\end{proof}

By slightly modifying the proof from Theorem~\ref{theorem:321qanalougeequiv} we derive another interpretation for the standard $q$-analogue for the binomial coefficients. 
This result  also appears in~\cite{BBES16}  by  Barnabei et.\ al.

\begin{cor}[Barnabei et.\ al.~\cite{BBES16} {Corollary 14}]
Let $t(\iota)$ be the number of two-cycles in $\iota$ and $k\leq n/2$. Then we have the following equality of $q$-analogues,
$$ \underset{t(\iota)\leq k}{\sum_{\iota \in \cI_n(321)}} q^{\maj(\iota)}= {n\brack k}_q.$$
\label{cor:321t<=k}
\end{cor}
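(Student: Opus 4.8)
The plan is to adapt the bijection $\phi:\cI_n(321)\to W_{n,\ceil{n/2}}$ from the proof of Theorem~\ref{theorem:321qanalougeequiv} so that it restricts to a bijection between the subset $\{\iota\in\cI_n(321):t(\iota)\le k\}$ and the set $W_{n,k}$ of binary words of length $n$ with $k$ ones and $n-k$ zeros. Once such a descent-set-preserving bijection is in hand, Proposition~\ref{prop:qbinom} gives ${n\brack k}_q=\sum_{w\in W_{n,k}}q^{\maj(w)}=\sum_{t(\iota)\le k}q^{\maj(\iota)}$ immediately, exactly as in the theorem.

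The modification I would make is simply to change how many ``free'' ones and zeros are appended in place of the fixed points. Given $\iota\in\cI_n(321)$ with $m=t(\iota)\le k$ two-cycles $(s_1,t_1),\dots,(s_m,t_m)$ (ordered as in the theorem) and fixed points $f_1<\cdots<f_{n-2m}$, I define $\phi_k(\iota)=w$ by $w_{s_i}=1$, $w_{t_i}=0$, and replacing the subword on the fixed-point positions $f_1\cdots f_{n-2m}$ by $0^{a_0}1^{a_1}$ where now $a_1=k-m\ge 0$ and $a_0=(n-k)-m\ge 0$. Note $a_1+a_0=n-2m$ is exactly the number of fixed points, and the resulting word has $k$ ones and $n-k$ zeros, so $\phi_k$ lands in $W_{n,k}$; the hypothesis $m\le k\le n/2$ (hence $m\le n-k$ as well) is precisely what makes $a_0,a_1\ge 0$. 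For the inverse, given $w\in W_{n,k}$ I take the core of $w$, set $(s_i,t_i)$ to be the positions of the $i$th $1$ and $i$th $0$ in the core, and declare the remaining positions fixed points; Lemma~\ref{lemma:binarystructure} guarantees $s_i<t_i$ with $[s_i,t_i]$ inside the core, and the number of two-cycles equals half the core size, which is at most $k$ since the core has at most $k$ ones. The proofs that $\phi_k^{-1}(w)$ is a $321$-avoiding involution, that $\phi_k$ and $\phi_k^{-1}$ are mutually inverse, and that $\Des(\iota)=\Des(w)$ go through verbatim from the theorem, since none of those arguments used the specific value $a_1=\ceil{n/2}-m$ — they only used that the fixed-point block is weakly increasing of the form $0^{a_0}1^{a_1}$ and that the core encodes the two-cycles.

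Concretely I would organize the corollary's proof as: (1) recall $\phi$ and state the single change $a_1=k-m$, $a_0=n-k-m$, noting well-definedness from $m\le k\le n-k$; (2) observe $\phi_k$ maps into $W_{n,k}$ and the core-based inverse maps $W_{n,k}$ into $\{t(\iota)\le k\}$ (core size $\le 2k$); (3) cite that the mutual-inverse and descent-preserving verifications are identical to those in Theorem~\ref{theorem:321qanalougeequiv}; (4) conclude via Proposition~\ref{prop:qbinom}. The main (and really only) obstacle is making sure the nonnegativity constraints $a_0,a_1\ge 0$ are correctly tied to the hypotheses: we need $m\le k$ for $a_1\ge0$ and $m\le n-k$ for $a_0\ge0$, and the latter follows from $k\le n/2$ together with $m\le k$. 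Everything else is a routine re-run of the theorem's argument, so I would keep the writeup short and explicitly point the reader back to that proof rather than repeating it.
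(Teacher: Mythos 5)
Your proposal matches the paper's own proof: the paper likewise keeps the map of Theorem~\ref{theorem:321qanalougeequiv} and simply alters the number of ones and zeros placed on the fixed-point positions so that the image lies in $W_{n,k}$, with the bound $k\leq n/2$ ensuring well-definedness and the core bounding the number of two-cycles by $k$. Your writeup is, if anything, more explicit about the nonnegativity of $a_0=n-k-m$ and $a_1=k-m$ than the paper's brief sketch, but the argument is essentially identical.
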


\begin{proof} This proof will be similar to the proof of Theorem~\ref{theorem:321qanalougeequiv}. The bijection will instead be defined from length $n$ binary words with  $k\leq n/2$ ones and $n-k$ zeros to involutions in $\cI_n(321)$ that have at most $k$ two-cycles. The map and its inverse will be defined exactly the same except for a small modification in $\phi$ where we alter the number of ones and zeros we want in our binary word. The bijection will be well defined since the changed number of ones and zeros will bound the maximum number of possible two-cycles. 
\end{proof}

Barnabei et.\ al.~\cite{BBES16} used Corollary~\ref{cor:321t<=k} to describe the generating function for $\maj$ and involutions avoiding 321 where the number of two-cycles is fixed. 

\begin{cor}[Barnabei et.\ al.~\cite{BBES16} {Corollary 14}] For $n\geq 1$ and $k\leq n/2$ we have
$$\underset{t(\iota)= k}{\sum_{\iota\in\cI_{n}(321)}}q^{\maj(\iota)}={n\brack k}_q - {n\brack k-1}_q.$$
\label{cor:321t=k}
\end{cor}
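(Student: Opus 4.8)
The plan is to obtain this as an immediate consequence of Corollary~\ref{cor:321t<=k} via a telescoping (difference) argument. For a fixed $n$ the statistic $t$ takes non-negative integer values on $\cI_n(321)$, so the set $\{\iota\in\cI_n(321):t(\iota)\leq k\}$ is the disjoint union of $\{\iota\in\cI_n(321):t(\iota)\leq k-1\}$ and $\{\iota\in\cI_n(321):t(\iota)=k\}$. Summing $q^{\maj(\iota)}$ over each piece therefore gives
$$\underset{t(\iota)= k}{\sum_{\iota\in\cI_{n}(321)}}q^{\maj(\iota)}=\underset{t(\iota)\leq k}{\sum_{\iota\in\cI_{n}(321)}}q^{\maj(\iota)}-\underset{t(\iota)\leq k-1}{\sum_{\iota\in\cI_{n}(321)}}q^{\maj(\iota)}.$$
First I would observe that the hypothesis $k\leq n/2$ also forces $k-1\leq n/2$, so Corollary~\ref{cor:321t<=k} applies verbatim to each term on the right-hand side, yielding ${n\brack k}_q$ and ${n\brack k-1}_q$ respectively; subtracting finishes the proof.

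The one place meriting a word of comment is the boundary case $k=0$ (which is allowed since $n\geq 1$): here the left side counts only the identity permutation, which has $\maj=0$ and so contributes $1$, while the right side reads ${n\brack 0}_q-{n\brack -1}_q$, which equals $1$ under the usual convention ${n\brack j}_q=0$ for $j<0$. Thus the identity holds in this degenerate case as well. I do not anticipate any genuine obstacle: all of the combinatorial substance — the core-based bijection of Theorem~\ref{theorem:321qanalougeequiv} and its adaptation in Corollary~\ref{cor:321t<=k} — has already been established, and what remains is purely formal bookkeeping.

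For completeness I note that one could instead argue directly, by restricting the bijection $\phi$ of Theorem~\ref{theorem:321qanalougeequiv} (in the form used for Corollary~\ref{cor:321t<=k}, with $k$ ones and $n-k$ zeros) to the preimage of $\{\iota:t(\iota)=k\}$. Under $\phi^{-1}$ a binary word produces an involution with exactly $k$ two-cycles precisely when all $k$ of its ones lie in the core, equivalently when the weakly increasing subword sitting outside the core consists only of zeros; enumerating such words by $\maj$ and identifying the generating function with ${n\brack k}_q-{n\brack k-1}_q$ is possible but noticeably more laborious than the telescoping argument, so I would present the latter.
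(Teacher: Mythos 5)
Your proposal is correct and matches the paper's own proof: the paper likewise writes the sum over $t(\iota)=k$ as the difference of the sums over $t(\iota)\leq k$ and $t(\iota)\leq k-1$ and applies Corollary~\ref{cor:321t<=k} to each term. Your extra remark on the $k=0$ boundary case (with the convention ${n\brack -1}_q=0$) is a harmless refinement the paper leaves implicit.
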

\begin{proof}Using Corollary~\ref{cor:321t<=k} we have the series of equalities
$$\underset{t(\iota)= k}{\sum_{\iota\in\cI_{n}(321)}}q^{\maj(\iota)}=\underset{t(\iota)\leq k}{\sum_{\iota\in\cI_{n}(321)}}q^{\maj(\iota)}-\underset{t(\iota)\leq k-1}{\sum_{\iota\in\cI_{n}(321)}}q^{\maj(\iota)}={n\brack k}_q - {n\brack k-1}_q,$$
which finishes the proof.
\end{proof}

Though fixed-point-free involutions were used to determine $I\cI_n(321)$ in Section~\ref{sec:inv321} we see in this section that they are not required to determine $M\cI_n(321)$. However, using the previous corollary we can ascertain the generating function in the fixed-point-free case. 

\begin{cor} For $n=2m\geq 2$  we have
$$MF\cI_{2m}(321)={2m\brack m}_q - {2m\brack m-1}_q.$$
\vspace{-1cm}

\hqed
\vspace{.5cm}
\label{cor:321fpf}
\end{cor}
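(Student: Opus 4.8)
The plan is to recognize $MF\cI_{2m}(321)$ as the $k=m$ instance of the generating function already computed in Corollary~\ref{cor:321t=k}. First I would observe that for any involution $\iota\in\cI_{2m}$ the two-cycles of $\iota$ form a partial matching of the index set $[2m]$, and every index not covered by a two-cycle is a fixed point; hence $\iota$ is fixed-point-free exactly when its two-cycles cover all $2m$ indices, that is, exactly when $t(\iota)=m$. Consequently
$$F\cI_{2m}(321)=\{\iota\in\cI_{2m}(321):t(\iota)=m\},$$
and therefore
$$MF\cI_{2m}(321)=\sum_{\iota\in F\cI_{2m}(321)}q^{\maj(\iota)}=\underset{t(\iota)=m}{\sum_{\iota\in\cI_{2m}(321)}}q^{\maj(\iota)}.$$

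Next I would apply Corollary~\ref{cor:321t=k} with $n=2m$ and $k=m$. The hypothesis $k\leq n/2$ is satisfied (with equality), so the corollary yields
$$\underset{t(\iota)=m}{\sum_{\iota\in\cI_{2m}(321)}}q^{\maj(\iota)}={2m\brack m}_q-{2m\brack m-1}_q,$$
which combined with the previous display is exactly the claimed identity. The only point that requires any care is the first step, namely the equivalence between being fixed-point-free and attaining the maximal possible number $m$ of two-cycles, and this is immediate from counting indices; I do not anticipate any genuine obstacle here.

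Alternatively, one could argue directly from the bijection $\phi$ of Theorem~\ref{theorem:321qanalougeequiv}: a length-$2m$ binary word with $m$ ones corresponds to a fixed-point-free involution precisely when its core is the entire word (so that the auxiliary block $0^{a_0}1^{a_1}$ is empty, i.e. $a_0=a_1=0$), and $\phi$ preserves $\Des$ hence $\maj$; summing $q^{\maj}$ over such words again gives ${2m\brack m}_q-{2m\brack m-1}_q$ by the same reasoning as in Corollaries~\ref{cor:321t<=k} and~\ref{cor:321t=k}. Either route is routine once the structural observation about $t(\iota)=m$ is in place, so I would present the short deduction from Corollary~\ref{cor:321t=k}.
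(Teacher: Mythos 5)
Your main argument is correct and is exactly the route the paper intends: the corollary is stated as an immediate consequence of Corollary~\ref{cor:321t=k}, since a fixed-point-free involution of length $2m$ is precisely one with $t(\iota)=m$ two-cycles, so the case $n=2m$, $k=m$ gives the claimed formula. No gap here; the alternative sketch via the bijection $\phi$ is unnecessary and slightly hand-wavy, but you rightly relegate it to a remark.
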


\subsection{The pattern 123}
\label{maj123}

There is  a similar symmetry regarding $M\cI_n(123)$ and $M\cI_n(321)$ as we found for the patterns 132 and 213. This symmetry is not present when restricting to fixed-point-free involutions because  the enumerations for $F\cI_{2m}(321)$ and $F\cI_{2m}(123)$ have been shown to be different by Deutsch, Robertson, and Saracino in~\cite{DRS07} who enumerated the avoidance classes by number of fixed points. They found $|F\cI_{2m}(123)|=\binom{2m-1}{m}$ but $|F\cI_{2m}(321)|=C_m$ in~\cite{DRS07} (Theorem 2.1).

It has been shown by many including Simion and Schmidt~\cite{ss:rp}, Barnabei et.\ al.~\cite{BBES14} and Deutsch et.\ al.~\cite{DRS07} that there is a symmetry between $M\cI_n(123)$ and $M\cI_n(321)$, specifically $M\cI_n(123)=q^{\binom{n}{2}}M\cI_n(321;q^{-1})$, which is shown  again here using the RSK correspondence and tableau transposition. Another detailing of this map by B\'{o}na and Smith can be found in~\cite{BS16} (Section 3) whose description subverts the RSK algorithm and transposition. This symmetry is essential in determining the form of $M\cI_n(123)$ since we have established that $M\cI_n(321)$ is the standard $q$-analogue for the central binomial coefficient in Theorem~\ref{theorem:321qanalougeequiv}.

\begin{prop}
For $n\geq 0$,  
$$M{\mathcal I}_n(123) =q^{\binom{n}{2}} M{\mathcal I}_n(321;q^{-1}).$$ 
\label{thm:123&321 symmetry}
\end{prop}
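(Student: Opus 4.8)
The plan is to exploit the Robinson--Schensted--Knuth correspondence together with transposition of Young diagrams, exactly as hinted in the surrounding text. Recall from Proposition~\ref{SYTfacts} that an involution $\iota\in\cI_n$ corresponds under RSK to a single standard Young tableau $P=P(\iota)$ of size $n$, that $\Des(\iota)=\Des(P)$, and that the length of the longest increasing subsequence of $\iota$ equals the number of columns of $P$ while the length of the longest decreasing subsequence equals the number of rows of $P$. Consequently $\iota$ avoids $123$ precisely when $P$ has at most two columns, and $\iota$ avoids $321$ precisely when $P$ has at most two rows. Transposition $P\mapsto P^t$ therefore induces an involutive bijection $\cI_n(123)\to\cI_n(321)$; I will call the resulting map on involutions $\Theta$, so that $P(\Theta(\iota))=P(\iota)^t$.

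The heart of the argument is to understand how transposition affects the descent set of a standard Young tableau. I would prove the clean statement that for a standard Young tableau $P$ of size $n$ one has
\[
\Des(P^t)=[n-1]\setminus\Des(P).
\]
This is a standard fact: $i\in\Des(P)$ means $i+1$ lies in a strictly lower row than $i$ in $P$; after transposing, $i+1$ lies in a strictly lower row of $P^t$ exactly when it lay in a strictly larger \emph{column} of $P$, i.e.\ exactly when $i$ and $i+1$ are \emph{not} in the same column of $P$ nor with $i+1$ higher, which --- since in any SYT the entries $i,i+1$ are always comparable in the sense that either $i+1$ is weakly below-and-weakly-right or the reverse --- is precisely the negation of $i\in\Des(P)$. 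I would spell this dichotomy out carefully: for consecutive values $i,i+1$ in a SYT, either $i+1$ is in a row strictly below that of $i$ (a descent of $P$, and then $i+1$ is in a column weakly to the left, giving a non-descent of $P^t$), or $i+1$ is in a row weakly above, forcing it to be in a column strictly to the right (a non-descent of $P$, and a descent of $P^t$). Either way $i\in\Des(P)\iff i\notin\Des(P^t)$.

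Granting the above, the proof of the proposition is immediate. For $\iota\in\cI_n(123)$ with image $\Theta(\iota)\in\cI_n(321)$ we get
\[
\Des(\Theta(\iota))=\Des\bigl(P(\iota)^t\bigr)=[n-1]\setminus\Des\bigl(P(\iota)\bigr)=[n-1]\setminus\Des(\iota),
\]
so $\maj(\Theta(\iota))=\sum_{i\in[n-1]\setminus\Des(\iota)}i=\binom{n}{2}-\maj(\iota)$. Since $\Theta$ is a bijection $\cI_n(123)\to\cI_n(321)$,
\[
M\cI_n(321;q^{-1})=\sum_{\iota\in\cI_n(123)}q^{-\maj(\Theta(\iota))}=\sum_{\iota\in\cI_n(123)}q^{\maj(\iota)-\binom{n}{2}}=q^{-\binom{n}{2}}M\cI_n(123;q),
\]
which rearranges to the claimed identity; the boundary cases $n=0,1$ are trivial.

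The main obstacle is not any single deep step but the bookkeeping in the descent-set lemma: one must argue cleanly that in a standard Young tableau the cells containing $i$ and $i+1$ are always ``comparable'' in the poset sense (one is weakly below and weakly to the right of the other), so that transposition exactly swaps descents and non-descents; the degenerate possibility that $i$ and $i+1$ share a row or a column must be handled (sharing a row is a non-descent of $P$ and becomes a descent of $P^t$; sharing a column is a descent of $P$ and becomes a non-descent of $P^t$). I would also remark that one should double-check that $\Theta$ genuinely lands in involutions --- this is guaranteed because transposition of the recording-and-inserting tableau corresponds to inverting the permutation composed with reversal of reading words, and for involutions the relevant symmetry forces $P\mapsto P^t$ to again be the tableau of an involution, or more simply because $\iota\mapsto\Theta(\iota)$ can be identified with the map $\iota\mapsto (r_0\circ{}\text{(RSK-transpose)})$; citing Proposition~\ref{SYTfacts}(iii) that involutions are exactly the permutations with $P=Q$, and that transposition preserves this, suffices. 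Everything else is a one-line computation.
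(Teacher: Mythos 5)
Your proposal is correct and follows essentially the same route as the paper: both use the RSK correspondence of an involution with a single SYT (at most two columns for $123$-avoidance, at most two rows for $321$-avoidance), transpose the tableau, and verify that transposition complements the descent set so that $\maj$ is sent to $\binom{n}{2}-\maj$. The only cosmetic difference is the direction of the map ($\cI_n(123)\to\cI_n(321)$ rather than the reverse), which is immaterial since the construction is invertible.
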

The well-used proof is a bijection between involutions using SYT, which were defined in Section~\ref{sec:inv321}. There are two facts that we will need to recall from Proposition~\ref{SYTfacts}. The first is that the length of the longest decreasing sequence in a permutation is equal to the length of the first column in its SYT, and the length of the longest increasing sequence in a permutation is equal to the length of the first row in its SYT. The second is that $d$ is a descent of an involution $\iota$ if and only if $d+1$ appears in a lower row than $d$ in the associated SYT.

\begin{proof}[Proof of Theorem~\ref{thm:123&321 symmetry}]
It suffices to define a map from  $\cI_n(321)$ to $\cI_n(123)$ such that $\iota$ is mapped to an involution with descent set $[n-1]\setminus \Des(\iota)$. 
This is sufficient because then $\iota$ will be mapped to an involution with $\maj$ equal to $\binom{n}{2}-\maj(\iota)$.

The set $\cI_n(321)$ contains involutions with longest decreasing sequences of length one or two. Similarly, the set $\cI_n(123)$ contains involutions with longest increasing sequences of length one or two. 
So the collection of SYT associated to  $\cI_n(321)$ is all SYT of size $n$ with at most two rows, and the collection of SYT associated to  $\cI_n(123)$ is all SYT of size $n$ with at most two columns. Note that the transpose of a SYT with at most two columns is a SYT with at most two rows. So if we use RSK correspondence on $\iota \in \cI_n(321)$ to get a SYT $P$,  transpose the SYT to get $P^T$ and then using the  RSK correspondence  on $P^T$ to get another involution $\tau\in \cI_n(123)$ we have defined a well-defined bijection  from $\cI_n(321)$ to $\cI_n(123)$. This map  is illustrated in Figure~\ref{fig:321to123}.

Let $\iota \in \cI_n(321)$, $P$ be its SYT and $\iota\mapsto\tau$ by the map described in the previous paragraph. We will show that $i\in \Des(\iota)$ if and only if $i\notin \Des(\tau)$, which will imply that $\Des(\tau)=[n-1]\setminus \Des(\iota)$.

 It is known that if $i\in \Des(\iota)$ then $i+1$ is in a row below $i$ in $P$. Because rows and columns strictly increase this further implies that  $i+1$ is in is the same column as $i$ or to the right of $i$ in $P$. Hence in $P^T$ we have $i+1$ in the same row as $i$ or in a row above $i$ and thus $i\notin\Des(\tau)$. For a very similar reason if $i\notin \Des(\iota)$ then $i\in \Des(\tau)$, which completes the proof. 
\end{proof}

\begin{figure}
\begin{align*}
\begin{tikzpicture} 
\draw (0,0) node {$\cI_n(321)$};
\draw (0,1) node {3516247};
\draw (2,.5) node {$\longrightarrow$};
 \end{tikzpicture}
 &&
 \begin{tikzpicture} 
 \draw (0,2) --  (2,2);
 \draw (0,1.5) --  (2,1.5);
  \draw (0,1) --  (1.5,1);
  \draw (0,1) --  (0,2);
\draw (.5,1) --  (.5,2);
\draw (1,1) --  (1,2);
\draw (1.5,1) --  (1.5,2);
\draw (2,1.5) --  (2,2);
  \draw (1,0) node {$P$};
\draw (.25,1.75) node {$1$};
\draw (.75,1.75) node {$2$};
\draw (1.25,1.75) node {$4$};
\draw (1.75,1.75) node {$7$};
\draw (.25,1.25) node {$3$};
\draw (.75,1.25) node {$5$};
\draw (1.25,1.25) node {$6$};
\draw (3,.5) node {$\longrightarrow$};
 \end{tikzpicture} 
  &&
 \begin{tikzpicture} 
 \draw (0,3) --  (1,3);
  \draw (0,2.5) --  (1,2.5);
 \draw (0,2) --  (1,2);
 \draw (0,1.5) --  (1,1.5);
  \draw (0,1) --  (.5,1);
  \draw (0,1) --  (0,3);
\draw (.5,1) --  (.5,3);
\draw (1,1.5) --  (1,3);
  \draw (1,.5) node {$P^{T}$};
\draw (.25,2.75) node {$1$};
\draw (.25,2.25) node {$2$};
\draw (.25,1.75) node {$4$};
\draw (.25,1.25) node {$7$};
\draw (.75,2.75) node {$3$};
\draw (.75,2.25) node {$5$};
\draw (.75,1.75) node {$6$};
 \end{tikzpicture} 
 &&\begin{tikzpicture} 
\draw (0,0) node {$\cI_n(123)$};
\draw (0,1) node {4271653};
\draw (-2,.5) node {$\longrightarrow$};
 \end{tikzpicture}
\end{align*}
  \caption{Illustration of $\phi:\cI_n(321)\rightarrow \cI_n(123)$ with $\Des(\phi(\iota))=[n-1]\setminus \Des(\iota)$.}
 \label{fig:321to123}
\end{figure}
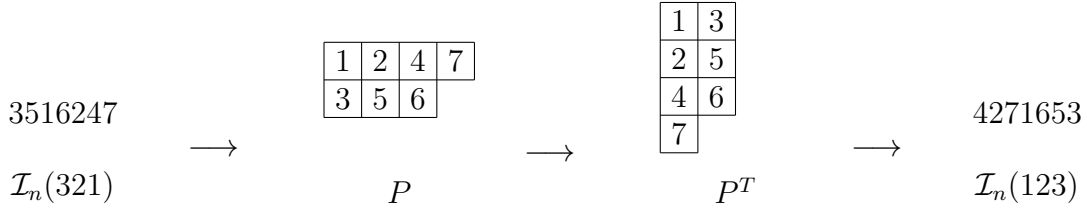

Automatically by Theorem~\ref{theorem:321qanalougeequiv} and Theorem~\ref{thm:123&321 symmetry} we know $M\cI_n(123)$. 

\begin{cor}[Barnabei et.\ al.~\cite{BBES14} Corollary 4.3]
We have for $n\geq 0$ 
$$M\cI_n(123)=q^{\binom{n}{2}} {{n}\brack{\ceil{n/2}}}_{q^{-1}}.$$
\vspace{-1cm}

\hqed
\vspace{.2cm}
\end{cor}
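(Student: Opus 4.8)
The plan is to compose the two results already established. By Proposition~\ref{thm:123&321 symmetry} we have, for all $n\geq 0$, the symmetry $M\cI_n(123;q)=q^{\binom{n}{2}}M\cI_n(321;q^{-1})$, so it suffices to know $M\cI_n(321)$ as an explicit polynomial — and that is precisely Theorem~\ref{theorem:321qanalougeequiv}, which gives $M\cI_n(321;q)={n\brack{\ceil{n/2}}}_q$.

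First I would substitute $q^{-1}$ for $q$ in the formula of Theorem~\ref{theorem:321qanalougeequiv}, obtaining $M\cI_n(321;q^{-1})={n\brack{\ceil{n/2}}}_{q^{-1}}$ as a Laurent polynomial in $q$. Multiplying by $q^{\binom{n}{2}}$ and invoking Proposition~\ref{thm:123&321 symmetry} then yields $M\cI_n(123;q)=q^{\binom{n}{2}}{n\brack{\ceil{n/2}}}_{q^{-1}}$, which is the claimed identity. There is no genuine obstacle: the corollary is a purely formal consequence of the two preceding results.

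The one point worth remarking on is that the right-hand side really is a polynomial in $q$ and not merely a Laurent polynomial, since ${n\brack k}_q$ has degree $k(n-k)$ and here $k(n-k)=\ceil{n/2}\floor{n/2}\leq\binom{n}{2}$, so the factor $q^{\binom{n}{2}}$ clears every negative power. As a sanity check one may set $q=1$: the right-hand side becomes $\binom{n}{\ceil{n/2}}$, matching $|\cI_n(123)|$ from Theorem~\ref{thm:SimionSchmidt}. This also makes transparent that $M\cI_n(123)$ is the coefficient-reverse of $M\cI_n(321)$ up to the shift by $\binom{n}{2}$, in parallel with the reverse pair $M\cI_n(132)$ and $M\cI_n(213)$ from Theorem~\ref{thm:132symm213}.
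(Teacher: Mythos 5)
Your proposal is correct and is exactly how the paper obtains this corollary: it follows immediately by combining Proposition~\ref{thm:123&321 symmetry} with Theorem~\ref{theorem:321qanalougeequiv}, which is why the paper states it with no further proof. Your added remarks on polynomiality and the $q=1$ check are fine but not needed.
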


For every pattern excluding 231 we have considered the generating function in the fixed-point-free case. We do so here for the pattern 123. We rely on the result in Corollary~\ref{cor:321t=k}. 
\begin{cor}We have for $m\geq 1$, 
$$\ol{MF\cI}_{2m}(123)=\sum_{k=0}^{2\floor{m/2}}(-1)^{k}{2m\brack k}_{q}.$$
\end{cor}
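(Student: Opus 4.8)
The plan is to transport the identity back along the bijection $\phi:\cI_n(321)\to\cI_n(123)$ constructed in the proof of Proposition~\ref{thm:123&321 symmetry} (apply RSK, transpose the at-most-two-row tableau, apply RSK in reverse), which satisfies $\Des(\phi(\iota))=[n-1]\setminus\Des(\iota)$. First I would observe that this descent-complementation forces $\comaj(\phi(\iota))=\maj(\iota)$: since $\Des(\iota)$ and $\Des(\phi(\iota))$ partition $[n-1]$, we have $\maj(\phi(\iota))=\binom{n}{2}-\maj(\iota)$, and then $\comaj(\phi(\iota))=\binom{n}{2}-\maj(\phi(\iota))=\maj(\iota)$ by Lemma~\ref{lemma:majproperties}(1).

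The key step is to pin down which involutions land in the fixed-point-free class, i.e.\ to compute $\phi^{-1}(F\cI_{2m}(123))$. Take $\iota\in\cI_{2m}(321)$ and let $P$ be its SYT, so $\phi(\iota)$ has SYT $P^{T}$. Since $\iota$ avoids $321$, Proposition~\ref{SYTfacts}(v) shows $P$ has at most two rows, hence $P$ has some shape $(a,b)$ with $a\ge b$ and $a+b=2m$; the one-box columns of $P$ number $a-b$, so by Proposition~\ref{SYTfacts}(vi) the involution $\iota$ has $a-b$ fixed points and $t(\iota)=b$ two-cycles. The transpose $P^{T}$ has columns of lengths $a$ and $b$, so $\phi(\iota)$ has $[a\text{ odd}]+[b\text{ odd}]$ fixed points, again by Proposition~\ref{SYTfacts}(vi). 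Because $a+b=2m$ is even, $a$ and $b$ share parity, so $\phi(\iota)$ is fixed-point-free exactly when $b=t(\iota)$ is even. As $\phi$ is a bijection onto $\cI_{2m}(123)$, it therefore restricts to a bijection $\{\iota\in\cI_{2m}(321):t(\iota)\text{ even}\}\to F\cI_{2m}(123)$.

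Combining the two steps, $\ol{MF\cI}_{2m}(123)=\sum_{\tau\in F\cI_{2m}(123)}q^{\comaj(\tau)}=\sum_{j=0}^{\lfloor m/2\rfloor}\ \sum_{\substack{\iota\in\cI_{2m}(321)\\ t(\iota)=2j}}q^{\maj(\iota)}$, where the range of $j$ uses $t(\iota)\le m$. For $j=0$ the inner sum equals $\sum_{t(\iota)=0}q^{\maj(\iota)}={2m\brack 0}_q=1$ (the identity permutation is the only two-cycle-free involution, with $\maj=0$), and for $j\ge 1$ Corollary~\ref{cor:321t=k} applies with $n=2m$ — legitimately, since $2j\le 2\lfloor m/2\rfloor\le m$ — giving $\sum_{t(\iota)=2j}q^{\maj(\iota)}={2m\brack 2j}_q-{2m\brack 2j-1}_q$. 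Summing over $j$ then telescopes to $\sum_{k=0}^{2\lfloor m/2\rfloor}(-1)^{k}{2m\brack k}_q$, which is the claimed identity.

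The one point that genuinely needs care — the main obstacle — is the fixed-point bookkeeping under transposition: one must track the parities of the two row lengths $a,b$ of $P$ and use that $a+b=2m$ is even to conclude that $\phi(\iota)$ loses its fixed points precisely when $t(\iota)$ is even. Everything else is formal, and the statement can be sanity-checked directly at $m=1,2$.
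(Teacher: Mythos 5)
Your proposal is correct and follows essentially the same route as the paper: the RSK-plus-transpose bijection between $\cI_{2m}(321)$ and $\cI_{2m}(123)$ with complementary descent sets (so $\comaj$ corresponds to $\maj$), the observation via Proposition~\ref{SYTfacts}(vi) that fixed-point-freeness on the $123$ side corresponds to an even number of two-cycles on the $321$ side, and then Corollary~\ref{cor:321t=k} summed over even $2j\leq m$ to produce the telescoping alternating sum. The only difference is cosmetic: you run the bijection from the $321$ side and track the shape $(a,b)$ explicitly, while the paper starts from a fixed-point-free $123$-avoider and notes its two columns have even length.
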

\begin{proof}
 Proposition~\ref{SYTfacts} says if $\iota$ avoids 123  then the SYT $P$ of $\iota$ has one or two columns. Also, if $\iota$ is fixed-point-free then both of these columns have even length and $|\iota|=2m$ for some $m$. The $\tau\in\cI_{2m}(321)$ associated to $P^T$ must then have at most two rows of even length. This is equivalent to $\tau$ avoiding 321 and having an even number of two-cycles. Since $\comaj(\iota)=\maj(\tau)$ we have the equality 
$$\sum_{\iota\in F\cI_{2m}(123)}q^{\comaj(\iota)}=\underset{t(\iota) \text{ is even}}{\sum_{\iota\in\cI_{2m}(321)}}q^{\maj(\iota)}.$$
If we have an even $2j$ two-cycles with $2\leq 2j\leq m$ by Corollary~\ref{cor:321t=k} we have
$$\underset{t(\iota)=2j}{\sum_{\iota\in\cI_{2m}(321)}}q^{\maj(\iota)}={2m\brack 2j}_q - {2m\brack 2j-1}_q.$$
summing over all  $1\leq j\leq \floor{m/2}$ and including the identity at $j=0$ gives us the result. 
\end{proof}

%%%%%%%%%%%%%%%%%%%%%%%%%%%%%%%%
%%%%%%%%%%%%%%%%%%%%%%%%%%%%%%%%
%%%%%%%%%%%%%%%%%%%%%%%%%%%%%%%%

\section{Multiple patterns}
\label{multi}
In this section we consider   $\cI_n(\pi_1,\pi_2,\dots, \pi_j)=\cI_n(S)$ the set of all involutions $\iota \in \cI_n$ that avoid all the patterns in  $S=\{\pi_1,\pi_2,\dots, \pi_j\}\subseteq \fS_3$ where $S$ contains more than one pattern. Two sets $S$ and  $T$ of patterns are $\cI$-Wilf equivalent if $|\cI_n(S)|=|\cI_n(T)|$ and we write $[S]_{\cI}=[\pi_1,\pi_2,\dots ,\pi_j]_{\cI}$ for the collection of sets of patterns. The cardinalities for multiple pattern avoidance in involutions has been classified and enumerated by Guibert and  Mansour in~\cite{GM02a} (Examples 2.6, 2.8, 2.12, 2.18, 2.20)
 who classify all pattern sets containing 132, Egge and Mansour~\cite{EM04} who enumerate the sets containing the pattern 231 and Wulcan who enumerates all pairs of length three patterns in~\cite{W02}. 
 Also in this section we further describe the generating functions for $\inv$ and $\maj$ for multiple patterns. Since the avoidance classes and associated generating functions for multiple patterns are fairly simple we instead consider the statistics $\inv$, $\maj$ and $\des$ altogether as a single generating function. 
For a set of patterns $S$ define
$$F_n(S;p,q,t)=F_n(S)=\sum_{\iota\in\cI_n(S)}p^{\inv(\iota)}q^{\maj(\iota)}t^{\des(\iota)}$$
and similarly define
$$\bar{F}_n(S;p,q,t)=\bar{F}_n(S)=\sum_{\iota\in\cI_n(S)}p^{\coinv(\iota)}q^{\comaj(\iota)}t^{\asc(\iota)}.$$
The functions $F_n(S)$ and $\bar{F}_n(S)$ determine each other since
$$F_n(S)=(pq)^{\binom{n}{2}}t^{n-1}\bar{F}_n(S;p^{-1},q^{-1},t^{-1}).$$
Two sets $S$ and $T$ of patterns are $I\cI$-Wilf equivalent if $F_n(S;q,1,1)=F_n(T;q,1,1) $ and $M\cI$-Wilf equivalent if $F_n(S;1,q,1)=F_n(T;1,q,1) $. Let $[S]_{I\cI}=[\pi_1,\dots ,\pi_j]_{I\cI}$ be the set of sets of patterns that are $I\cI$-Wilf equivalent to $S$ and we similarly define 
$[S]_{M\cI}$ for the major index. We include the description for all equivalence classes and generating functions for all sets of multiple patterns for completeness of this study, but do not include the proofs.  
We find, as before, any avoidance class that contains the pattern $231$ can be described as the avoidance class of permutations, which were studied by  Dokos et.\ al.~\cite{DDJSS12}.  In~\cite{BBES16} (Section 4.3) Barnabei et.\ al.\ find $M\cI_n(213,321)$. Since the avoidance class for any set that contains 123 and 321 becomes empty for $n>5$ we exclude all sets with these two patterns. 
 
 \begin{prop} The decompositions for involutions that avoid two patterns are as follows. 
\begin{enumerate}[(i)]
\item If $\iota \in \cI_n(123, 132)$ then either $\iota = 12[ \dd_{n-1}, 1]$ or $\iota = 45312[ \dd_k,1,  \tau, \dd_k, 1]$ where $\tau \in \cI_{n-2k-2}(123, 132)$ and $0\leq k<\floor{n/2}$.
\item If $\iota \in \cI_n(123, 213)$ then either $\iota = 12[1, \dd_{n-1}]$ or $\iota = 45312[1, \dd_k, \tau, 1, \dd_k]$ where $\tau \in \cI_{n-2k-2}(123, 213)$ and $0\leq k<\floor{n/2}$.
\item If $\iota \in \cI_n(123, 231)=\cI_n(123, 312)$ then $\iota = 12[\dd_{k},\dd_{n-k}]$ for $k \in [n]$ . 
\item If $\iota \in \cI_n(132, 231)=\cI_n(132, 312)$ then $\iota = 12[\dd_k, \ii_{n-k}]$ for $1\leq k \leq n$.
\item If $\iota \in \cI_n(132,321)$ then $\iota = 213[\ii_k, \ii_k, \ii_{n-2k}]$ for some $0\leq k\leq \floor{n/2}$.
\item If $\iota \in \cI_n(132, 213)$ then $\iota=\ii_n$ or $\iota = 321[\ii_k,\tau,  \ii_k]$ with $\tau \in  \cI_{n-2k}(213, 132)$ and $1\leq k\leq \floor{n/2}$.
\item If $\iota \in \cI_n(213, 231)=\cI_n(213, 312)$ then $\iota = 12[\ii_k, \dd_{n-k}]$ for $0\leq k \leq n-1$.
\item If $\iota \in \cI_n(213,321)$ then $\iota = 132[\ii_{n-2k}, \ii_k, \ii_k]$ for some $0\leq k\leq \floor{n/2}$.
\item If $\iota \in \cI_n(312,321)$ then $\iota = \ii_k[\tau_1,\ldots , \tau_l]$ where $\tau_j =1$ or $\tau_j=21$ for all $j$. \hqed
\end{enumerate}
\label{prop:twoavoid}
\end{prop}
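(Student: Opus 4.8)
The plan is to prove each of the nine items by taking the relevant single-pattern decomposition for involutions, imposing avoidance of the second pattern, and using the square symmetries of Lemma~\ref{lemma:operations and involutions} to avoid redundant work. I would begin with the pairs containing $231$ or $312$, namely (iii), (iv), (vii) and (ix). Here the three claimed set equalities are immediate from $\cI_n(231)=\cI_n(312)$ (Proposition~\ref{prop:S(213,312)=I(231)}): intersecting that equality with any fixed class preserves it. For the decompositions one argues uniformly: by Proposition~\ref{prop:SSdecompI(231)} an involution avoiding $231$ is an increasing staircase of decreasing blocks $\ii_{\ell}[\dd_{j_1},\dots,\dd_{j_\ell}]$, so any length-three occurrence with an ascent between its outer two letters must meet two distinct blocks, while any occurrence of $321$ must lie inside a single block. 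Imposing the second pattern therefore forces only size conditions on the $j_i$: avoiding $123$ forces $\ell\leq 2$, giving $12[\dd_k,\dd_{n-k}]$ (iii); avoiding $132$ forces $j_2=\dots=j_\ell=1$, giving $12[\dd_k,\ii_{n-k}]$ (iv); avoiding $213$ forces $j_1=\dots=j_{\ell-1}=1$, giving $12[\ii_k,\dd_{n-k}]$ (vii); avoiding $321$ forces every $j_i\leq 2$, giving $\ii_\ell[\tau_1,\dots,\tau_\ell]$ with each $\tau_i\in\{1,21\}$ (ix). Each resulting permutation is block-diagonal with increasing blocks, hence an involution, and avoids both patterns by a short inspection of where a length-three subword can fall among its blocks.

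For the pairs built from $132$ and $213$, namely (i), (ii), (v) and (vi), the plan is to use the involution decompositions of Lemma~\ref{lemma:132decomp} and Lemma~\ref{lemma:213decomposition} (and induction on $n$ for (v)). The auxiliary facts I would isolate first are: (a) if $\iota$ avoids $132$ and $\iota(1)=1$, then $\iota=\ii_n$, since any inversion of $\iota$ together with the value $1$ at position $1$ forms a $132$; (b) the inflation $45312[\alpha,1,\beta,r_1(\alpha),1]$ contains $321$ unless $\beta=\epsilon$, because a point of $\beta$ lies strictly between the value $n$ and the trailing singleton in both position and value; and (c) once $\beta=\epsilon$, that inflation contains $321$ unless $r_1(\alpha)$ is increasing, and since $r_1(\alpha)$ is the inverse permutation of $\alpha$ this forces $\alpha=\ii_{|\alpha|}$. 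For (i), in Lemma~\ref{lemma:132decomp} the branch $12[\alpha,1]$ forces $\alpha$ decreasing (else an ascent of $\alpha$ with the appended value $n$ is a $123$), giving $12[\dd_{n-1},1]$, while the branch $45312[\alpha,1,\beta,r_1(\alpha),1]$ forces $\alpha=\dd_{k-1}$ for the same reason and leaves $\beta\in\cI_{n-2k}(123,132)$, which after reindexing is the second family of (i). For (v) the branch $12[\alpha,1]$ closes by induction and the branch $45312[\dots]$ collapses, via (b) and (c), to $45312[\ii_{k-1},1,\epsilon,\ii_{k-1},1]$, the $n=2k$ instance of $213[\ii_k,\ii_k,\ii_{n-2k}]$. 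For (vi), Lemma~\ref{lemma:213decomposition} gives either $12[1,\alpha]$, which by (a) forces $\iota=\ii_n$, or $45312[1,\alpha,\beta,1,r_1(\alpha)]$, where $132$-avoidance forces $\alpha=\ii_{k-1}$ and a direct computation identifies the result as $321[\ii_k,\beta,\ii_k]$ with $\beta\in\cI_{n-2k}(213,132)$.

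The three transposed statements (ii), (vii), (viii) I would then deduce from (i), (iv), (v) by symmetry. The reflection $r_{-1}$ is a bijection $\cI_n\to\cI_n$ (Lemma~\ref{lemma:operations and involutions}) that coincides with $R_{180}$, i.e.\ acts as reverse-complement, on involutions; as a map of length-three patterns it interchanges $132$ and $213$ and fixes each of $123$, $231$, $312$, $321$. Hence it maps $\cI_n(123,132)\to\cI_n(123,213)$, $\cI_n(132,231)=\cI_n(132,312)\to\cI_n(213,231)$ and $\cI_n(132,321)\to\cI_n(213,321)$, and applying reverse-complement to the block words of the decompositions in (i), (iv), (v) produces exactly those in (ii), (vii), (viii) (for instance $12[\dd_{n-1},1]\mapsto 12[1,\dd_{n-1}]$, $12[\dd_k,\ii_{n-k}]\mapsto 12[\ii_{n-k},\dd_k]$ and $213[\ii_k,\ii_k,\ii_{n-2k}]\mapsto 132[\ii_{n-2k},\ii_k,\ii_k]$).

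I expect essentially all the real labor to sit in the converse directions: checking that each displayed family genuinely avoids both of its patterns, which reduces to a finite but tedious bookkeeping of how a length-three subword can distribute over the blocks of each inflation, together with keeping the parameter ranges and the degenerate empty-block cases straight (for example $0\leq k<\floor{n/2}$ versus $1\leq k\leq\floor{n/2}$, and recognizing $\ii_n$, $\dd_n$, or $213[\ii_k,\ii_k,\ii_0]$ as limiting instances). No individual step is difficult, which is presumably why the detailed proofs are not reproduced in the text.
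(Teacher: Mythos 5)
Your proposal is correct, and it follows exactly the route the paper's machinery is set up for: the paper states Proposition~\ref{prop:twoavoid} without proof (Section~\ref{multi} explicitly omits proofs for the multiple-pattern results), and the intended argument is precisely to intersect the single-pattern decompositions (Proposition~\ref{prop:SSdecompI(231)}, Lemmas~\ref{lemma:132decomp} and~\ref{lemma:213decomposition}, together with $\cI_n(231)=\cI_n(312)$ from Proposition~\ref{prop:S(213,312)=I(231)}) with avoidance of the second pattern and to transfer (i), (iv), (v) to (ii), (vii), (viii) via $r_{-1}$, as you do. The only nit is the phrase ``block-diagonal with increasing blocks'' in your converse check for (iii), (iv), (vii), (ix): the blocks are the decreasing factors $\dd_{j_i}$ arranged as a direct sum, each an involution, which is what actually makes the inflation an involution.
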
  

From these decomposition we can quickly determine the cardinalities. 

\begin{prop} The $\cI$-Wilf equivalence classes for pairs of permutations in $\fS_3$. 
\begin{enumerate}[(i)]
\item $[123,132]_{\cI}=\{\{123, 132\},\{123, 213\},  \{132,213\}\}$ with  $|\cI_n(123,132)|=2^{\lfloor n/2 \rfloor}$. 
\item $[123, 231]_{\cI}=\{\{123, 231\},  \{213, 231\}, \{132,231\}\}$ with $|\cI_n(123,312)|=n$
\item $[132,321]_{\cI}=\{\{132,321\}, \{213,321\}\}$ with $|\cI_n(321,132)|=\lfloor n/2 \rfloor +1$.
\item $[213,321]_{\cI}=\{ \{231,321\} \}$ with $|\cI_n(231,321)|=F_n$ the Fibonacci numbers. \hqed
\end{enumerate}
\end{prop}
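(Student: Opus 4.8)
**Proof proposal for the $\mathcal{I}$-Wilf equivalence classes of pairs (Proposition following Proposition~\ref{prop:twoavoid}).**

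The plan is to read off all four cardinalities directly from the nine explicit decompositions in Proposition~\ref{prop:twoavoid}, and then to verify by inspecting small cases that no further merging of classes occurs. For part (i), the decompositions (i), (ii), and (vi) all have the same recursive shape: an involution avoiding the pair is either $\ii_n$ (or $12[\dd_{n-1},1]$, etc., a single unique ``base'' case) or is obtained by wrapping a shorter such involution of length $n-2k-2$ (resp. $n-2k$) in a fixed pattern. In each case the only free choice at each recursive step is the integer $k$, and a short count gives the recursion $a_n = a_{n-2} + a_{n-2}\cdot(\text{something that collapses})$; more cleanly, one checks $a_n = 2a_{n-2}$ with $a_0=a_1=1$, yielding $a_n = 2^{\floor{n/2}}$. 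I would write out the recursion once carefully for $\cI_n(132,213)$ using decomposition (vi) — here $\iota=\ii_n$ contributes $1$ and $\iota=321[\ii_k,\tau,\ii_k]$ ranges over $1\le k\le\floor{n/2}$ with $\tau\in\cI_{n-2k}(213,132)$ — and note that summing $\sum_{k\ge 1}|\cI_{n-2k}(213,132)|$ telescopes against the recursion to give $2^{\floor{n/2}}$; then remark that (i) and (ii) give the identical recursion.

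For the remaining parts the counts are immediate. In part (ii), decompositions (iii), (iv), (vii) each express $\iota$ as $12[\cdot,\cdot]$ with a single free index $k$ running over a range of size $n$ (for (iii): $k\in[n]$; for (iv): $1\le k\le n$; for (vii): $0\le k\le n-1$), so $|\cI_n| = n$ in each case. In part (iii), decompositions (v) and (viii) each express $\iota$ via a single parameter $k$ with $0\le k\le\floor{n/2}$, giving $\floor{n/2}+1$ involutions. In part (iv), decomposition (ix) says $\iota=\ii_k[\tau_1,\dots,\tau_k]$ with each $\tau_j\in\{1,21\}$; if there are $j$ blocks equal to $21$ and the rest equal to $1$, the length is determined, so the count satisfies the Fibonacci recursion $F_n = F_{n-1}+F_{n-2}$ (remove the last block, which is either $1$ or $21$), with the appropriate initial conditions, hence $|\cI_n(231,321)| = F_n$.

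Finally I would confirm that these are exactly the $\cI$-Wilf classes claimed, i.e. that the four listed families are genuinely distinct and that every pair of length-three patterns falls into one of them. Distinctness is settled by evaluating the four sequences at a small value of $n$ (say $n=4$ or $n=5$), where $2^{\floor{n/2}}$, $n$, $\floor{n/2}+1$, and $F_n$ are pairwise different. Completeness follows by noting that $\{123,321\}$ (and supersets) are excluded as stated, that $231$ and $312$ always play symmetric roles via $r_1$ on involutions by Lemma~\ref{lemma:operations and involutions}, which collapses e.g. $\{123,231\}$ with $\{123,312\}$, and by matching the remaining pairs one by one against decompositions (i)--(ix). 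I expect the only mildly delicate point to be the telescoping argument showing $a_n=2^{\floor{n/2}}$ for part (i) — one must be careful that the base term and the sum over $k$ really do recombine into the clean doubling recursion rather than something messier — but since the statement is given without proof in the excerpt, a brief indication of this computation suffices.
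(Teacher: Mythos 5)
Your proposal is correct and takes essentially the same route as the paper, which gives no written proof beyond reading the cardinalities directly off the decompositions in Proposition~\ref{prop:twoavoid} (including the telescoping $a_n=2a_{n-2}$ for part (i) and the parts-of-size-$1$-or-$2$ Fibonacci count for part (iv)). One small note: check distinctness at $n=5$ rather than $n=4$, since at $n=4$ the sequences $2^{\floor{n/2}}$ and $n$ both equal $4$.
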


We can also quickly determine the generating function $F_n(S)$ for any pair of patterns, which we present in Table~\ref{double}. 

\begin{table}
\begin{center}
\begin{tabular}{|c|c|}
\hline
$S=\{\pi_1,\pi_2\}$ & $F_n(S)$ or $\bar{F}_n(S)$\\
\hline
$\{123, 132\}$ & $\displaystyle \bar{F}_n(S;p,q,t)=(pq)^{n-1}t+\bar{F}_{n-2}(S;p,q,qt)+\sum_{k=1}^{\floor{n/2}-1} p^{2k}q^{n+k-1}t^2\bar{F}_{n-2k-2}(S;p,q,q^{k+1}t)$\\
\hline
$\{123, 213\}$ & $\displaystyle\bar{F}_n(S;p,q,t)=p^{n-1}qt+\bar{F}_{n-2}(S;p,q,qt)+\sum_{k=1}^{\floor{n/2}-1} p^{2k}q^{n-k+1}t^2\bar{F}_{n-2k-2}(S;p,q,q^{k+1}t)$\\
\hline
$\{123, 213\}$ &  $\displaystyle \bar{F}(S;p,q,t)=1+\sum_{k = 1}^{n-1}p^{k(n-k)}q^{k}t$\\
\hline
$\{132, 231\}$ & $\displaystyle F(S;p,q,t)=\sum_{k = 1}^n(pq)^{\binom{n}{2}}t^{k-1}$\\
\hline
$\{132,321\}$ & $\displaystyle F(S;p,q,t)=1+\sum_{k = 1}^{\floor{n/2}}p^{k^2}q^kt$\\
\hline
$\{132, 213\}$ & $\displaystyle \bar{F}_n(S;p,q,t)=(pq)^{\binom{n}{2}}t^{n-1}+\sum_{k = 1}^{ \floor{n/2}}p^{k(k-1)}q^{n(k-1)}t^{2(k-1)}\bar{F}_{n-2k}(S;p,q,q^{k}t)$\\
\hline
$\{213, 312\}$ & $\displaystyle F_n(S;p,q,t)=\sum_{k = 0}^{n-1}p^{\binom{n-k}{2}}q^{\binom{n-k}{2}+k(k-1)}t^{n-k-1}$\\
\hline
$ \{213,321\}$ & $ \displaystyle F_n(S;p,q,t)=1+\sum_{k = 1}^{\floor{n/2}}p^{k^2}q^{n-k}t$\\
\hline
$ \{312,321\}$ & $F_n(S)=F_{n-1}(S)+pq^{n-1}tF_{n-2}(S)$\\
\hline
\end{tabular}
\end{center}
\caption{The generating functions for doubletons. }
\label{double}
\end{table}

\begin{table}
\begin{center}
\begin{tabular}{|c|c|}
\hline
$S$ & $F_n(S)$ or $\bar{F}_n(S)$\\
\hline
$\{123,132,213\}$ & $\displaystyle \bar{F}_n(S;p,q,t)=\bar{F}_{n-2}(S;p,q,qt)+p^{2}q^{n}t^2\bar{F}_{n-4}(S;p,q,q^2t)$\\
\hline
$\{123,132,231\}$ & $\displaystyle \bar{F}_n(S)=1+(pq)^{n-1}t$\\
\hline
$\{123,213,231\}$ & $\displaystyle \bar{F}_n(S)=1+p^{n-1}qt$\\
\hline
$\{132,213,321\}$&$\displaystyle  F_{2k+1}(S)=1$ or $\displaystyle F_{2k}(S)=1+p^{k^2}q^kt$\\
\hline
$\{132,231,321\}$&$\displaystyle F = 1+pqt$\\
\hline
$\{132,213,231\}$&$\displaystyle F = 1+(pq)^{\binom{n}{2}}t^{n-1}$\\
\hline
$\{213,231,321\}$&$\displaystyle 1+pq^{n-1}t$\\
\hline
\end{tabular}
\end{center}
\caption{The generating functions for tripletons. }
\label{triple}
\end{table}

\begin{table}
\begin{center}
\begin{tabular}{|c|c|}
\hline
$S$ & $F_n(S)$ or $\bar{F}_n(S)$\\
\hline
$\{123,213,132,312\}$ & $F_n(S)=(pq)^{\binom{n}{2}}$\\
\hline 
$\{321,213,132,312\}$ & $F_n(S)=1$ \\
\hline
\end{tabular}
\end{center}
\caption{The generating functions for four patterns. }
\label{four}
\end{table}

We next describe the same for triples of patterns. Again when describing the sets and functions we exclude triples that both contain 231 and 312 or 123 and 321. 

%%%%\tvi{\begin{thm}
%%%%The $I\cI$-Wilf equivalence classes for pairs of patterns in $\fS_3$ are
%%%%\begin{enumerate}[(i)]
%%%%\item $[123,132]_{I\cI}=\{\{123,132\}, \{123,213\}\}$,
%%%%\item $[123,231]_{I\cI}=\{\{123,231\}\}$,
%%%%\item $[132,213]_{I\cI}=\{\{132,213\}\}$,
%%%%\item $[132,231]_{I\cI}=\{\{132,231\},\{213,231\}\}$,
%%%%\item $[132,321]_{I\cI}=\{\{132,321\},\{213,321\}\}$ and
%%%%\item $[231,321]_{I\cI}=\{\{231, 321\}\}$.
%%%%\end{enumerate}
%%%%\end{thm}}

\begin{prop} The decompositions for involutions that avoid three patterns. 
\begin{enumerate}[(i)]
\item If $S = \{123,132,213\}$ then for $\iota \in \cI_n(S)$ we have $\iota = 321[1, \tau, 1]$ for $\tau \in \cI_{n-2}(S)$ or $\iota = 321[12, \tau, 12]$ for $\tau \in \cI_{n-4}(S)$.
\item If $\iota \in \cI_n(123,132,231)$ then $\iota$ is $12[\dd_{n-1},1]$ or $\dd_n$. 
\item If $\iota \in \cI_n(123,213,231)$ then $\iota$ is $12[1,\dd_{n-1}]$ or $\dd_n$. 
\item If $\iota \in \cI_n(321,132,213)$ then $\iota$ is $\ii_n$ and if $n=2k$ then $\iota$ could be $12[\ii_k,\ii_k]$. 
\item If $\iota \in \cI_n(321,132,231)$ then $\iota$ is $\ii_n$ or $12[21,\ii_{n-2}]$. 
\item If $\iota \in \cI_n(321,213,231)$ then $\iota$ is $\ii_n$ or $12[\ii_{n-2},21]$. 
\item If $\iota \in \cI_n(213,132,231)$ then $\iota$ is $\ii_n$ or $\dd_n$. \hqed
\end{enumerate}
\end{prop}

From the decomposition we can quickly determine the cardinalities. 

\begin{prop} The $\cI$-Wilf equivalence classes for triples of permutations in $\fS_3$. 
\begin{enumerate}[(i)]
\item If $S = \{123,132,213\}$ then $|\cI_n(S)|=F_{\floor{n/2}}$.
\item If $S\in \{\{123,132,213\},\{123,213,231\},\{321,132,231\},\{321,213,231\},\{213,132,312\}\}$ then $|\cI_n(S)|=2$.
\item If $S=\{321,132,213\}$ then $|\cI_n(S)|=2$ when $n$ is even and $|\cI_n(S)|=1$ when $n$ is odd. \hqed
\end{enumerate}
\end{prop}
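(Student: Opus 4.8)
The plan is to deduce each triple-avoidance decomposition from one of the \emph{pair}-avoidance decompositions already recorded in Proposition~\ref{prop:twoavoid}, by writing $\iota\in\cI_n(S)$ in the shape dictated by a two-element subset $S'\subseteq S$ and then imposing the one remaining forbidden pattern $\pi\in S\setminus S'$. In every case the extra condition either bounds the length of a monotone block by $2$ or forces the recursive middle block to disappear (or to become strictly decreasing), which instantly reduces the family to the permutations listed; the reverse inclusion --- that each listed involution really does avoid all three patterns and is genuinely an involution --- will be a one-line verification from the inflation description.

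For parts (ii), (iii), (v), (vi) I would take $S'$ to be the pair inside $S$ that omits the ``extreme'' pattern (that is, $123$ or $321$), so that by Proposition~\ref{prop:twoavoid}(iv) or (vii) one starts from $\iota=12[\dd_k,\ii_{n-k}]$ or $\iota=12[\ii_k,\dd_{n-k}]$. Avoiding the omitted monotone pattern then kills any monotone block of length $\ge 3$; one must additionally rule out the forbidden pattern straddling the two blocks --- for instance the bottom entry of a $\dd_k$ block together with two ascending entries of an $\ii_{n-k}$ block forms a $123$ --- and that cross-boundary analysis is short because the two blocks occupy complementary value intervals. After these reductions only two permutations survive, exactly the two named in each part (for example $\dd_n$ and $12[\dd_{n-1},1]$ for (ii), and $\ii_n$ and $12[21,\ii_{n-2}]$ for (v)). For parts (i), (iv), (vii) I would instead start from $\cI_n(132,213)$ (Proposition~\ref{prop:twoavoid}(vi)), where $\iota=\ii_n$ or $\iota=321[\ii_k,\tau,\ii_k]$ with $\tau\in\cI_{n-2k}(132,213)$. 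In (i), avoiding $123$ forces $k\le 2$ (since $\ii_3$ contains $123$) and, $\tau$ being a subword of $\iota$, forces $\tau\in\cI_{n-2k}(S)$, giving precisely $321[1,\tau,1]$ and $321[12,\tau,12]$; no $123$ can cross the three blocks because the first block carries the largest values in the leftmost positions. In (iv) the skeleton $321$ of $321[\ii_k,\tau,\ii_k]$ \emph{is} the forbidden pattern, so a nonempty $\tau$ produces a $321$ by taking one entry from each block; hence $\tau=\epsilon$, $n=2k$, and $\iota$ is $\ii_n$ or $21[\ii_k,\ii_k]$. In (vii), avoiding $231$ first forces $k=1$ (two ascending entries in the top block together with any entry to their right form a $231$), and then forces $\tau$ to be strictly decreasing (an ascent in $\tau$ together with the bottom value $1$ forms a $231$), so $\tau=\dd_{n-2}$ and $\iota=321[1,\dd_{n-2},1]=\dd_n$, leaving $\{\ii_n,\dd_n\}$.

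The converse direction is routine: for each listed permutation one checks avoidance directly from the inflation structure (a monotone block cannot host a non-monotone pattern, and the skeletons in play are short) and confirms it is an involution by exhibiting its two-cycles --- $12[\dd_{n-1},1]$ is the product of the transpositions $(i,n-i)$, $12[21,\ii_{n-2}]$ is the transposition $(1,2)$, $21[\ii_k,\ii_k]$ is the product of the transpositions $(i,k+i)$, and so on --- while recording the harmless degeneracies for small $n$ (for instance $\ii_n=\dd_n$ when $n\le 1$, and $\ii_2=321[1,\epsilon,1]$).

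I expect the main obstacle to be bookkeeping rather than any real difficulty: in each part one must systematically enumerate \emph{all} the ways the third forbidden pattern could sit inside the pair-decomposition, crucially including occurrences that span block boundaries, and verify that each such occurrence is either impossible or forces the claimed collapse. Once one adopts the two guiding principles ``a monotone block of length $\ge 3$ already contains the corresponding monotone pattern'' and ``the inflation skeleton may itself contain the forbidden pattern,'' every case reduces to a handful of such checks. A minor secondary point is to choose, for parts (ii), (iii), (v), (vi), the pair $S'$ that omits $123$ or $321$, since doing so lets the monotone-block length bound do essentially all of the work.
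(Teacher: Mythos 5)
Your proposal is correct and takes essentially the same route as the paper, which obtains these cardinalities by first recording the triple-avoidance decompositions (its preceding proposition, stated without proof) and then reading off the counts; your derivation of those decompositions from the pair-avoidance structure in Proposition~\ref{prop:twoavoid} --- the block-length bound $k\le 2$, the cross-block checks, the skeleton-$321$ argument forcing $\tau=\epsilon$ (hence the parity dichotomy for $\{321,132,213\}$), and the collapse to $\dd_n$ in the $\{213,132,231\}$ case --- is exactly the work the paper leaves implicit. The only step you leave unstated is converting the recursive decomposition for $S=\{123,132,213\}$ into the Fibonacci count via $|\cI_n(S)|=|\cI_{n-2}(S)|+|\cI_{n-4}(S)|$ together with the small-$n$ initial values, which is immediate; also a harmless slip: $321[1,\epsilon,1]=21=\dd_2$, not $\ii_2$.
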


The generating functions for three patterns are in Table~\ref{triple}.

\begin{prop} The decompositions for involutions that avoid four patterns. 
\begin{enumerate}[(i)]
\item If $\iota\in\cI_n(123,213,132,312)$ then $\iota = \dd_n$. 
\item If $\iota\in\cI_n(321,213,132,312)$ then $\iota = \ii_n$. \hqed
\end{enumerate}
\end{prop}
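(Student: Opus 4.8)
The plan is to deduce both parts from the two-pattern decompositions already recorded in Proposition~\ref{prop:twoavoid}, since each four-pattern class sits inside a two-pattern class whose structure we know. Concretely, both $\cI_n(123,213,132,312)$ and $\cI_n(321,213,132,312)$ are contained in $\cI_n(132,312)$, so by Proposition~\ref{prop:twoavoid}(iv) any such $\iota$ has the form $\iota = 12[\dd_k,\ii_{n-k}] = k(k-1)\cdots 1\,(k+1)(k+2)\cdots n$ for some $1\le k\le n$. It then remains to use the two remaining forbidden patterns to force $k$ to a single value.

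For part (i) the extra conditions are avoidance of $123$ and $213$. If $n-k\ge 2$, then (using $k\ge 1$) positions $1,k+1,k+2$ of $12[\dd_k,\ii_{n-k}]$ carry the increasing values $k,k+1,k+2$, an occurrence of $123$; hence $n-k\le 1$. If $n-k=0$ we are done, as $\iota=\dd_n$. If $n-k=1$, so $\iota=(n-1)(n-2)\cdots 1\,n$, then for $n\ge 3$ positions $1,2,n$ carry $n-1,n-2,n$, an occurrence of $213$ --- impossible. Hence $\iota=\dd_n$. Part (ii) is analogous: the extra conditions are avoidance of $321$ and $213$. The first block $\dd_k$ of $\iota=12[\dd_k,\ii_{n-k}]$ is itself an occurrence of $321$ once $k\ge 3$, so $k\le 2$; if $k=1$ then $\iota=\ii_n$ as desired, and if $k=2$ then $\iota=2\,1\,3\,4\cdots n$, whose first three entries form a $213$ once $n\ge 3$ --- impossible. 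Hence $\iota=\ii_n$. The small cases $n\le 2$ are immediate by inspection; one should note that at $n=2$ the stated uniqueness is degenerate, since $\cI_2=\{12,21\}$ and both words avoid every length-three pattern, so the conclusion is really to be read for $n\ge 3$ (it holds trivially at $n=0,1$ as well).

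I do not expect a serious obstacle here: the substantive work is already contained in Proposition~\ref{prop:twoavoid}(iv), and what remains is bookkeeping. The one point that needs care is checking that $\iota=12[\dd_k,\ii_{n-k}]$ contains the claimed patterns exactly when asserted, so that no unwanted value of $k$ survives; this is routine from the explicit one-line form. If one prefers a proof that does not cite Proposition~\ref{prop:twoavoid}, part (i) also has a short direct argument --- an ascent at a position $i\ge 2$ makes the triple at positions $1,i,i+1$ (whose last two entries ascend) an occurrence of one of $123,213,312$, all forbidden, so $\iota(2)>\cdots>\iota(n)$, and $\iota^2=\text{id}$ then rules out an ascent at position $1$ as well --- but the corresponding direct argument for part (ii) is appreciably more delicate, because the pattern $231$ is \emph{not} forbidden there, so the cleaner route is through Proposition~\ref{prop:twoavoid}(iv).
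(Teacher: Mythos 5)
Your main argument is correct. The paper offers no proof of this proposition at all --- the multiple-pattern section explicitly states that proofs are omitted --- so there is no written argument to compare against; your route, placing both four-pattern classes inside $\cI_n(132,312)$ so that Proposition~\ref{prop:twoavoid}(iv) gives $\iota=12[\dd_k,\ii_{n-k}]$, and then using the two remaining forbidden patterns to force $k=n$ in part (i) and $k=1$ in part (ii), is exactly the routine verification the paper intends, and your case analysis ($123$ kills $n-k\ge 2$, $213$ kills $n-k=1$; $321$ kills $k\ge 3$, $213$ kills $k=2$) is right. Your remark that the statement is degenerate at $n=2$, since both elements of $\cI_2$ avoid every length-three pattern, is a fair observation about the proposition (and the corresponding entry of Table~\ref{four}) rather than a defect of your proof. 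One small caution about your optional aside for part (i): the claim that $\iota^2=\mathrm{id}$ by itself rules out an ascent at position $1$ once $\iota(2)>\cdots>\iota(n)$ is not quite right, since $\iota=1\,n\,(n-1)\cdots 2$ is an involution of exactly that shape; it is excluded because its first three entries form a $132$ (forbidden here), not by the involution condition. As this aside is offered only as an alternative to citing Proposition~\ref{prop:twoavoid}, the slip does not affect your proof of the proposition.
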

The generating functions for four patterns are in Table~\ref{four}.
Also,  the $I\cI$-Wilf and $M\cI$-Wilf  equivalence classes for multiple patterns mirror those in the singleton case. Given a set $S$ of patterns define $r_m(S)=\{r_m(\pi):\pi\in S\}$ and similarly define $R_{\theta}(S)$. 
\begin{thm}
The $I\cI$-Wilf  and $M\cI$-Wilf  equivalence classes for multiple patterns in $\fS_3$ are described as follows. 
\begin{enumerate}[(i)]
\item The only equalities between $I\cI$-Wilf equivalence classes for  sets $S\subseteq \fS_3$ of the same size are between $S$, $r_1(S)$, $r_{-1}(S)$ and $R_{180}(S)$. 
\item The only equalities between $M\cI$-Wilf equivalence classes for  sets $S\subseteq \fS_3$ of the same size are between $S$ and $r_1(S)$. \hqed
\end{enumerate}
\end{thm}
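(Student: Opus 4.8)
The plan is to prove the two directions separately. The ``if'' direction — that $S$ is always $I\cI$-Wilf equivalent to each of $r_1(S)$, $r_{-1}(S)$, $R_{180}(S)$, and $M\cI$-Wilf equivalent to $r_1(S)$ — follows immediately from the background lemmas. For any $\rho\in\{R_0,r_1,r_{-1},R_{180}\}$, Lemma~\ref{lemma:operations and involutions} gives that $\rho$ restricts to a bijection $\cI_n\to\cI_n$, and since $\iota$ contains $\pi$ if and only if $\rho(\iota)$ contains $\rho(\pi)$, it restricts further to a bijection $\cI_n(S)\to\cI_n(\rho(S))$ for every $S\subseteq\fS_3$. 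By Lemma~\ref{lemma:inv preserved} each such $\rho$ preserves $\inv$, so $\inv$ is equidistributed over $\cI_n(S)$ and $\cI_n(\rho(S))$, giving the $I\cI$-Wilf equivalences; and since $r_1$ is the identity on involutions, in fact $\cI_n(S)=\cI_n(r_1(S))$ as sets, so these two classes have identical generating functions for every statistic, giving the $M\cI$-Wilf equivalence of $S$ and $r_1(S)$.

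For the ``only if'' direction I would argue by a finite case analysis. The group $\{R_0,r_1,r_{-1},R_{180}\}$ is a Klein four-group acting on the subsets of $\fS_3$ and it preserves cardinality, so it suffices to compare generating functions of orbit representatives within each fixed size $j\in\{2,3,4,5,6\}$. Almost all orbits are covered by data already assembled: Proposition~\ref{prop:twoavoid} and its three- and four-pattern analogues give the block decomposition of every $\iota$ in the avoidance class, and Tables~\ref{double}, \ref{triple} and~\ref{four} record $F_n(S;p,q,t)$ in closed or recursive form. From these one reads off, within each size, that representatives of distinct orbits have distinct $\inv$-generating functions $F_n(\cdot;q,1,1)$ (no extra $I\cI$-Wilf coincidences) and that within an orbit the two $r_1$-cosets have distinct $\maj$-generating functions $F_n(\cdot;1,q,1)$ whenever the orbit is not already a single $r_1$-coset (no extra $M\cI$-Wilf coincidences); in every case a bounded range of small $n$ suffices to separate the candidates. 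Two families of sets are intentionally omitted from the tables and need separate handling. If $S\supseteq\{231,312\}$ then $\cI_n(231)=\cI_n(312)$ by Proposition~\ref{prop:S(213,312)=I(231)}, so $\cI_n(S)=\cI_n(S\setminus\{312\})$ collapses $S$ to a strictly smaller set whose generating function is already known; one then only checks that no set arising this way matches a competing set of the \emph{same} size (these sets have constant or linearly growing cardinality, making the check short). If $S\supseteq\{123,321\}$ then by Erd\H{o}s--Szekeres any involution avoiding both has length at most $4$, so $\cI_n(S)=\varnothing$ and $F_n(S)=0$ for all $n\ge 5$, whereas any other subset of the same size has nonempty avoidance class — it contains $\ii_n$ if it omits $123$ and $\dd_n$ if it omits $321$ — so only other $\{123,321\}$-supersets of that size could match, and those finitely many comparisons are settled at $n\le 4$ by hand. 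Assembling the comparisons yields parts (i) and (ii).

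The genuine work lies in the bookkeeping of the second stage rather than in any individual computation. One must correctly enumerate the orbits at each size, recognize that the sets containing $\{231,312\}$ are exactly the ones producing the cross-size coincidences (such as $\cI_n(231,312)=\cI_n(231)$) that the ``same size'' hypothesis excludes, and carefully keep the coarse $I\cI$-comparison — which merges each orbit into one class — distinct from the finer $M\cI$-comparison — which splits each orbit along its $r_1$-cosets. Once Proposition~\ref{prop:twoavoid} and Tables~\ref{double}, \ref{triple} and~\ref{four} are in hand each comparison reduces to reading off low-order coefficients, so no single step is hard; the number of cases — on the order of a dozen orbits spread over sizes $2$ through $6$ — makes completeness the main obstacle.
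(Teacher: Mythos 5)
Your proposal is correct and follows essentially the route the paper intends: the equivalences come from the symmetry lemmas (Lemmas~\ref{lemma:operations and involutions} and~\ref{lemma:inv preserved}, with $r_1$ acting as the identity on involutions for the $M\cI$ case), and the distinctness is a finite check of the generating-function data in Proposition~\ref{prop:twoavoid} and Tables~\ref{double}--\ref{four} at small $n$, with the $\{231,312\}$- and $\{123,321\}$-containing sets handled separately exactly as you describe. The paper omits this proof entirely, so your write-up, modulo actually executing the routine small-$n$ comparisons you outline, supplies at least as much detail as the source.
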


%%%%%%%%%%%%%%%%%%%%%%%%%%%%%%%%%%
%%%%%%%%%%%%%%%%%%%%%%%%%%%%%%%%%%
%%%%%%%%%%%%%%%%%%%%%%%%%%%%%%%%%%

\section{Symmetries for permutations}
\label{permsymm}

In Sections~\ref{maj213} and~\ref{maj123}  we demonstrated that the pairs of patterns 123 and 321 as well as 132 and 213 exhibit the symmetry $M\cI_n(\pi_1)=q^{\binom{n}{2}}M\cI_n(\pi_2;q^{-1})$. In both cases this symmetry holds for the larger class of permutations in that $M_n(\pi_1)=q^{\binom{n}{2}}M_n(\pi_2;q^{-1})$. We thank Vasu Tewari for asking about this generalization. We prove this by describing maps  $\fS_n(\pi_1)\rightarrow\fS_n(\pi_2)$ that commute with $r_1$ (i.e. taking inverses) as well as if $\sigma_1\mapsto\sigma_2$ then $\Asc(\sigma_1)=\Des(\sigma_2)$.

The map $\fS_n(123)\rightarrow \fS_n(321)$ with the stated properties is classical and an elegant generalization of the map for involutions in Proposition~\ref{thm:123&321 symmetry}.

\begin{prop}There exists a map $\fS_n(123)\rightarrow\fS_n(123)$ that
\begin{enumerate}
\item commutes with $r_1$ and 
\item if $\sigma_1\mapsto\sigma_2$ then $\Asc(\sigma_1)=\Des(\sigma_2)$.
\end{enumerate}
\end{prop}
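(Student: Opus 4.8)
The plan is to construct the required map as the natural extension to all of $\fS_n(123)$ of the involution map built in Proposition~\ref{thm:123&321 symmetry}, i.e.\ the one produced by the RSK correspondence followed by transposition of tableaux. Given $\sigma_1\in\fS_n(123)$, I would first apply RSK to obtain the pair $(P,Q)$ of standard Young tableaux of the same shape. Since $\sigma_1$ avoids $123$ its longest increasing subsequence has length at most two, so by Proposition~\ref{SYTfacts}(iv) the common shape of $P$ and $Q$ has at most two columns. Transposing both tableaux produces a pair $(P^{T},Q^{T})$ of equal shape with at most two rows; reading off the corresponding permutation via RSK gives an element whose longest decreasing subsequence has length at most two, hence an element of $\fS_n(321)$ by Proposition~\ref{SYTfacts}(v). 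To return inside $\fS_n(123)$, as the statement demands, I would post-compose with the reflection $r_0$ of the square (the complement), which carries $\fS_n(321)$ bijectively onto $\fS_n(123)$; call the resulting self-map $\Phi$ and set $\sigma_2=\Phi(\sigma_1)$.

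The key steps, in order, would be these. First, record that $\sigma_1\in\fS_n(123)$ is equivalent, via Proposition~\ref{SYTfacts}(iv), to the shape of $(P,Q)$ having at most two columns, and that transposition then yields a shape with at most two rows. Second, compute descent sets using Proposition~\ref{SYTfacts}(i), $\Des(\sigma_1)=\Des(Q)$, together with the standard fact that transposing a standard Young tableau complements its descent set, $\Des(Q^{T})=[n-1]\setminus\Des(Q)$; this is the mechanism that turns ascents into descents. Third, I would verify that $\Phi$ commutes with $r_1$: by Proposition~\ref{SYTfacts}(ii) taking inverses swaps $P$ and $Q$, transposition of both tableaux commutes with this swap, and it remains to check that $r_0$ interacts with $r_1$ compatibly. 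Fourth, I would track the effect of the final reflection $r_0$ on the descent set in order to read off $\Des(\sigma_2)$ and confirm the target identity $\Asc(\sigma_1)=\Des(\sigma_2)$.

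The main obstacle lies in the third and fourth steps. The RSK$+$transpose construction lands most naturally in $\fS_n(321)$ and realizes the ascent-to-descent exchange there, whereas the proposition asks for a map into $\fS_n(123)$, so the whole difficulty is to select the correcting symmetry of the square and then verify simultaneously that (a) the composite stays inside $\fS_n(123)$, (b) it still commutes with $r_1$, and (c) the descent set of the image equals exactly $\Asc(\sigma_1)$. Reconciling these three requirements is the crux, since $r_0$ and $r_1$ do not commute outright and the complement alters the descent bookkeeping; pinning down precisely how transposition and the chosen reflection act on descent sets is where all the care goes. Once that is settled, summing $q^{\maj}$ over $\fS_n(123)$ and using $\maj(\sigma_2)=\binom{n}{2}-\maj(\sigma_1)$, which follows from $\Des(\sigma_2)=\Asc(\sigma_1)$ and $\Des\cup\Asc=[n-1]$, would deliver the stated symmetry $M_n(\pi_1;q)=q^{\binom{n}{2}}M_n(\pi_2;q^{-1})$ for this pair of patterns.
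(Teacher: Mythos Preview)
Your core instinct is exactly what the paper does: apply RSK to $\sigma_1\in\fS_n(123)$, transpose both tableaux, and read back a permutation. That permutation lies in $\fS_n(321)$, and the paper's proof stops there. The codomain ``$\fS_n(123)$'' in the proposition is a typo for ``$\fS_n(321)$'' (the very next sentence of the paper's proof says ``This certainly defines a bijection $\fS_n(123)\rightarrow\fS_n(321)$'', and the corollary that follows is $M_n(123)=q^{\binom{n}{2}}M_n(321;q^{-1})$). So the first half of your plan is precisely the intended argument, and your descent calculation $\Des(Q^T)=[n-1]\setminus\Des(Q)=\Asc(\sigma_1)$ is the whole content.

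Where your proposal goes wrong is in trying to force the image back into $\fS_n(123)$ by post-composing with $r_0$. That cannot work, and not merely for bookkeeping reasons: the literal statement with codomain $\fS_n(123)$ is false. For $n\ge 3$ the permutation $\dd_n=n(n-1)\cdots 1\in\fS_n(123)$ has $\Asc(\dd_n)=\emptyset$, so its image would need $\Des(\sigma_2)=\emptyset$, i.e.\ $\sigma_2=\ii_n$, which is not in $\fS_n(123)$. Concretely, your candidate $\Phi=r_0\circ T$ fails both properties: since $r_0$ swaps ascents and descents you get $\Des(\Phi(\sigma_1))=\Asc(T(\sigma_1))=\Des(\sigma_1)$ rather than $\Asc(\sigma_1)$; and since $r_0r_1\neq r_1r_0$ (they are the two different $90^\circ$ rotations) while $T$ commutes with $r_1$, the composite $r_0T$ does not commute with $r_1$.

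In short: drop the $r_0$ step and read the codomain as $\fS_n(321)$. Then your RSK-plus-transpose argument is complete and matches the paper verbatim.
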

\begin{proof} In this proof we use facts stated in Proposition~\ref{SYTfacts}. We will define a bijective map $\fS_n(123)\rightarrow\fS_n(123)$ that has the two stated properties. To define the map we first take  a permutation $\sigma\in \fS_n(123)$, which by  RSK corresponds to a pair of SYT $(P,Q)$ of the same shape. This shape has at most two-columns because the longest increasing sequence of $\sigma$ has length at most two. The transposed pair $(P^T,Q^T)$ of SYT of the same shape have at most two rows which will correspond to another permutation that has the longest decreasing sequence  length at most two, so is in $\fS_n(321)$. This certainly defines a bijection $\fS_n(123)\rightarrow\fS_n(321)$. 

We have proven before in Proposition~\ref{thm:123&321 symmetry} that $\Des(Q)=\Asc(Q^T)$.  If $\sigma_1\mapsto\sigma_2$ and $(P,Q)$ and $(P^T,Q^T)$ correspond to $\sigma_1$ and $\sigma_2$ by RSK respectively we know that $\Des(\sigma_1)=\Des(Q)$ and $\Des(\sigma_2)=\Des(Q^T)=Asc(\sigma_1)$, which proves property (ii). 

To show that this map commutes with $r_1$ we only need to show that if $\sigma_1\mapsto\sigma_2$ then $r_1(\sigma_1)\mapsto r_1(\sigma_2)$. Because $r_1(\sigma)$ is the inverse of $\sigma$ we must have that  $r_1(\sigma_1)$ corresponds to $(Q,P)$. Then $r_1(\sigma_1)$ will map to the permutation associated to $(Q^T,P^T)$ that is the inverse of $\sigma_2$ or equivalently $r_1(\sigma_2)$, which proves property (i). 
\end{proof}

\begin{cor}
For $n\geq 0$ we have the symmetry
$$M_n(123)=q^{\binom{n}{2}}M_n(321;q^{-1}).$$
\vspace{-1.1cm}

\hqed
\end{cor}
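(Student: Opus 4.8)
The plan is to obtain this identity as an immediate consequence of the bijection constructed in the Proposition just above. That Proposition produces a bijection, call it $\Phi\colon\fS_n(123)\to\fS_n(321)$, with the property that $\Asc(\sigma)=\Des(\Phi(\sigma))$ for every $\sigma\in\fS_n(123)$. First I would record, exactly as in Lemma~\ref{lemma:majproperties}, that for any $\sigma\in\fS_n$ the sets $\Asc(\sigma)$ and $\Des(\sigma)$ partition $[n-1]$; hence $\Des(\Phi(\sigma))=[n-1]\setminus\Des(\sigma)$ and therefore
$$\maj(\Phi(\sigma))=\sum_{i\in[n-1]\setminus\Des(\sigma)}i=\binom{n}{2}-\maj(\sigma).$$

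Next, summing over $\fS_n(123)$ and using that $\Phi$ is a bijection onto $\fS_n(321)$, I would write
$$M_n(321;q)=\sum_{\sigma\in\fS_n(321)}q^{\maj(\sigma)}=\sum_{\sigma\in\fS_n(123)}q^{\maj(\Phi(\sigma))}=\sum_{\sigma\in\fS_n(123)}q^{\binom{n}{2}-\maj(\sigma)}=q^{\binom{n}{2}}M_n(123;q^{-1}).$$
Substituting $q\mapsto q^{-1}$ gives $M_n(321;q^{-1})=q^{-\binom{n}{2}}M_n(123;q)$, and multiplying through by $q^{\binom{n}{2}}$ yields the claimed symmetry $M_n(123)=q^{\binom{n}{2}}M_n(321;q^{-1})$. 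The boundary cases $n=0,1$ are trivial, since both sides equal $1$.

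There is no genuine obstacle here: all the substantive work has already been carried out in building the bijection $\Phi$, which is the permutation-level analogue of the involution map used in Proposition~\ref{thm:123&321 symmetry}. The only point requiring attention is the bookkeeping of which generating function carries $q$ and which carries $q^{-1}$, and this is dispatched by the single substitution $q\mapsto q^{-1}$ at the end.
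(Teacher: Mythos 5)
Your proof is correct and follows exactly the route the paper intends: the corollary is stated with no written proof precisely because it is the immediate consequence of the preceding proposition's bijection (ascents mapped to descents, hence $\maj\mapsto\binom{n}{2}-\maj$) together with the standard substitution $q\mapsto q^{-1}$, which is what you have spelled out.
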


Though the proof for Proposition~\ref{thm:123&321 symmetry} generalizes quickly to permutations the proof provided in Theorem~\ref{thm:132symm213} for the pair of patterns 132 and 213 does not leave room for an obvious generalization. However, just defining a map to prove the symmetry between these two patterns for permutations is not too difficult to  either. One can define a map $\tilde\theta:\fS_n(132)\rightarrow\fS_n(213)$  inductively by mapping $\sigma=231[\alpha,1,\beta]$ that avoids 132 to $\sigma=312[\tilde\theta(\alpha),1,\tilde\theta(\beta)]$ that avoids 213. This map certainly changes ascents to descents so will map a permutation with an ascent set of $A$ to a permutation with a descent set of $A$. On the other hand, this map also certainly does not commute with $r_1$, a property we are interested in.

The rest of this section is dedicated to defining a map  $\theta:\fS_n(132)\rightarrow\fS_n(213)$ that has the additionally property of commuting with $r_1$, $\theta \circ r_1=r_1\circ\theta$. Before we define the map we need some definitions. We say that $\sigma(i)$ is a  {\it left-to-right maximum} of  $\sigma$ if $\sigma(i)$ is larger than everything to its left, $\sigma(i)=\max\{\sigma(1)\dots \sigma(i)\}$. The {\it LR maximums} of $\sigma$ will refer to the subsequence of all {\it left-to-right maximums} of $\sigma$. Similarly, $\sigma(i)$ is a  {\it right-to-left minimum} of  $\sigma\in \fS_n$ if $\sigma(i)$ is  smaller than everything to its right,  $\sigma(i)=\min\{\sigma(i)\dots \sigma(n)\}$. The {\it RL minimums} of $\sigma$ will refer to the subsequence of all {\it right-to-left minimums} of $\sigma$. We can similarly define RL maximums. For example the LR maximums of $371958264$ are $3,7,9$, the RL minimums are $1,2,4$ and the RL maximums  are $9,8,6,4$.  See Figure~\ref{fig:LRmaxExample} for an illustration. Note that the RL minimums and the LR maximums are increasing sequences but the RL maximums form a decreasing sequence. This is a fact we use often throughout the rest of the section. The next lemma details a few more specific properties of these subsequences that will be needed in proving properties of  our map $\theta$.

\begin{figure}
\begin{center}
\begin{tikzpicture} [scale = .4]
\begin{scope}[shift={(0,0)}]
\draw[gray,fill] (3,1) rectangle (1,9);
\draw[gray,fill] (7,2) rectangle (1,9);
\draw[gray,fill] (9,4) rectangle (1,9);
\draw[step=1cm,black,dashed] (1,1) grid (9,9);
\filldraw [black] 
(1,3) circle (5pt)
(2,7) circle (5pt)
(3,1) circle (5pt)
(4,9) circle (5pt)
(5,5) circle (5pt)
(6,8) circle (5pt)
(7,2) circle (5pt)
(8,6) circle (5pt)
(9,4) circle (5pt);
\draw (1,1) rectangle (9,9);
%\draw (7,3) node {$\overset{\theta}{\rightarrow}$};
\end{scope}
\begin{scope}[shift={(11,0)}]
\draw[gray,fill] (1,3) rectangle (9,1);
\draw[gray,fill] (2,7) rectangle (9,1);
\draw[gray,fill] (4,9) rectangle (9,1);
\draw[step=1cm,black,dashed] (1,1) grid (9,9);
\filldraw [black] 
(1,3) circle (5pt)
(2,7) circle (5pt)
(3,1) circle (5pt)
(4,9) circle (5pt)
(5,5) circle (5pt)
(6,8) circle (5pt)
(7,2) circle (5pt)
(8,6) circle (5pt)
(9,4) circle (5pt);
\draw (1,1) rectangle (9,9);
%\draw (7,3) node {$\overset{\theta}{\rightarrow}$};
\end{scope}
\end{tikzpicture}
\caption{The LR maximums of   $371958264$  are $3,7,9$, the RL minimums are $1,2,4$ and the RL maximums  are $9,8,6,4$.}
\label{fig:LRmaxExample}
\end{center}
\end{figure}
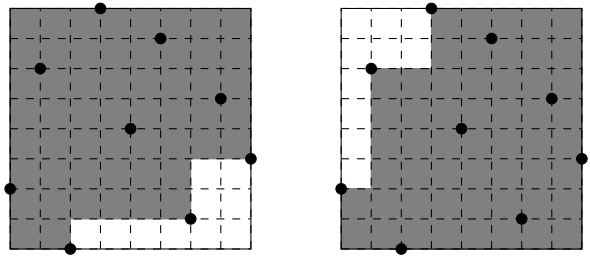

\begin{lemma} We have the following facts about RL minimums and LR maximums. 
\begin{enumerate}[(i)]
\item Given $x\in \fS_n$ with RL minimums $x(i_1),x(i_2),\dots ,x(i_a)$ we must have the LR maximums in $r_1(x)$ be $i_1,i_2,\dots, i_a$. 
\item  Given any $x,y$ that avoid 213 the LR maximums occur on consecutive indices and the RL minimums occur on consecutive values. This means the LR maximums of $y$ are  $y(1),y(2),\dots, y(b)$ for some $b$ with $y(b)=n$ and the RL minimums of $x$ are  $x(i_1),x(i_2),\dots, x(i_a)=1,2,\dots, a$  for some $a$ where $i_a=n$.
\item Given $y\in \fS_n(213)$ the union of the LR maximums and RL minimums  form the pattern $132[\ii_r,\ii_s,\ii_t]$ in $y$. 
\end{enumerate}
\label{lem:LRmaxFacts}
\end{lemma}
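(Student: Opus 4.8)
The plan is to prove the three statements in the order given, using (i) as a tool inside (ii) and (ii) as a tool inside (iii); I expect everything but the bookkeeping in (iii) to be routine. For (i) I would simply unwind the definitions, recalling that $r_1$ sends a permutation to its inverse. A value $k$ is an RL minimum of $x$ exactly when $k$ occurs in $x$ to the right of every smaller value, i.e. when the position $x^{-1}(k)$ of $k$ exceeds $x^{-1}(u)$ for every $u<k$, i.e. when the entry of $x^{-1}$ at position $k$ is a left-to-right maximum of $x^{-1}$. The RL minimums $x(i_1)<x(i_2)<\dots<x(i_a)$ of $x$ necessarily sit at positions $i_1<i_2<\dots<i_a$, and in $x^{-1}$ the entry $i_j$ occupies position $x(i_j)$; since $x(i_1)<\dots<x(i_a)$, scanning $x^{-1}$ from left to right turns up $i_1,i_2,\dots,i_a$ as its LR maximums, which is the claim.

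For (ii) I would first show that for any $z\in\fS_n(213)$ the LR maximums occupy an initial segment of positions, with the entry $n$ at the last of them. If the LR-maximum positions were not an initial segment, I could pick consecutive LR maximums at positions $p<p'$ and a non-LR-maximum position $q$ with $p<q<p'$; then $z(q)$ lies strictly below the running maximum $z(p)$ at $q$, while $z(p)<z(p')$ since LR maximums strictly increase, so $z(p)\,z(q)\,z(p')$ is a $213$ pattern in $z$, a contradiction; and $n$, always an LR maximum, is the last one. The statement about RL minimums I would then get from (i) together with the fact that $213$ is its own inverse: $z^{-1}\in\fS_n(213)$, so its LR maximums occupy an initial segment of positions, and translating this back through the bijection of (i) forces the RL-minimum values of $z$ to be $\{1,2,\dots,a\}$ and the largest RL-minimum position of $z$ to equal $n$.

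For (iii) I would apply (ii) to $y$, so that the LR maximums are $y(1)<\dots<y(b)=n$ and the RL minimums sit at positions $i_1<\dots<i_a=n$ with $y(i_j)=j$. Let $k_0$ be the number of RL minimums among the first $b$ positions, so $i_{k_0}\le b<i_{k_0+1}$. Their values $1,\dots,k_0$ occur among the increasing entries $y(1)<\dots<y(b)$ and are the $k_0$ smallest integers, so $y(j)=j$ and $i_j=j$ for $j\le k_0$; thus the LR-maximum positions and the RL-minimum positions overlap exactly in $\{1,\dots,k_0\}$, and each further LR-maximum entry $y(k_0+1)<\dots<y(b)$, not being an RL minimum, exceeds $a$. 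Listing the union of the LR maximums and RL minimums of $y$ by increasing position therefore produces the three increasing runs
\[
1,2,\dots,k_0\ \mid\ y(k_0+1)<\dots<y(b)=n\ \mid\ k_0+1,k_0+2,\dots,a,
\]
consisting respectively of the smallest $k_0$, the largest $b-k_0$, and the middle $a-k_0$ of all $a+b-k_0$ entries involved, hence order-isomorphic to $132[\ii_{k_0},\ii_{b-k_0},\ii_{a-k_0}]$ -- the asserted shape, with $(r,s,t)=(k_0,\,b-k_0,\,a-k_0)$.

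The main obstacle will be this last paragraph: pinning down $k_0$, checking that the two subsequences overlap only on $\{1,\dots,k_0\}$, and verifying that the tail values $k_0+1,\dots,a$ fall strictly between the small block and the large block. Parts (i) and (ii) are short, and once the initial-segment structure from (ii) is in hand, (iii) is essentially a careful transcription of it.
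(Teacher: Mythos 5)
Your proof is correct and takes essentially the same route as the paper: part (i) via the inverse/reflection correspondence, part (ii) via a 213-pattern contradiction for the LR maxima together with applying (i) to the (213-avoiding) inverse, and part (iii) by identifying the overlap of the two subsequences as an initial run of fixed points and splitting the remainder into a large block followed by a middle block. The only differences are cosmetic: the paper argues (i) geometrically with shaded diagrams and handles (iii) by a case analysis on whether $y(1)=1$, whereas you carry out the same bookkeeping with the count $k_0$ and deduce the needed inequalities from the fact that the RL-minimum values are exactly $1,\dots,a$.
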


\begin{proof} To prove (i) we will consider the diagram for a permutation $x$. 
Given the diagram shade  everything above, to the left and between for every point. The right-to-left minimums will be those dots on the edge of the shading. We illustrate this shading in Figure~\ref{fig:LRmaxExample}. Similarly given the diagram of $x$  shade for every point everything below, to the right and between. The left-to-right maximums will be those dots on the edge of the shading. Using these facts it is easy to see that if the  RL minimums of $x$ are $x(i_1),x(i_2),\dots ,x(i_a)$ then the LR maximums in $r_1(x)$ are $i_1,i_2,\dots, i_a$.

Next, we prove (ii). Let $y$ avoid 213 and $y(j_1),y(j_2),\dots, y(j_b)$ be its LR maximums. Certainly $j_1=1$. We will also assume that there exists some index $k<j_a$ where $y(k)$ is not a left-to-right maximum. Because $j_1=1$ there exists a left-to-right maximum to the left of $y(k)$ because $y(k)$ is not a left-to-right maximum so there is some $y(j_p)>y(k)$ to the left of $y(k)$. The subsequence $y(j_p)y(k)y(i_a)$ forms the pattern 213 and we have a contradiction. Hence  the LR maximums of $y$ are  $y(1),y(2),\dots, y(b)$ for some $b$ with $y(b)=n$. Recall that $r_1(x)$  avoids 213 if $x$ avoids 213. We get the second part of (ii) using part (i) so the RL minimums of $x$ are  $x(i_1),x(i_2),\dots, x(i_a)=1,2,\dots, a$  for some $a$ where $i_a=n$.

Lastly we prove (iii). Using part (ii) we know that  the LR maximums of $y$ are $y(1),y(2),\dots, y(b)$ for some $b$ with $y(b)=n$ and the RL minimums are  $y(i_1),y(i_2),\dots ,y(i_c)=1,2,\dots, c$  for some $c$ where $i_c=n$. 
Consider the case where $y(1)\neq y(i_1)=1$. This implies that $c<y(1)$ since otherwise $y(1)1c$ forms the pattern 213 in $y$. Because $c<y(1)$  the two sequences form the pattern $21[\ii_b,\ii_c]$. Now consider if $y(1)= y(i_1)=1$ and further $y(i)=i$ for all $i\leq j$ for some $j$ with $y(j+1)\neq j+1$. This implies that our two sequences intersect for the first $j$ terms and then form the pattern $21[\ii_{b-j},\ii_{c-j}]$. All together we have that the LR maximums and the RL minimums form the pattern $132[\ii_j,\ii_{b-j},\ii_{c-j}]$, which finishes  part (iii). 
\end{proof}

In order to more easily define the map $\theta:\fS_n(132)\rightarrow\fS_n(213)$ we will define two operations $*$ and $\star$ on permutations and prove some properties about these operations. After we finish these lemmas defining our map $\theta$  and proving all the properties  for $\theta$  will be effortless. 

We define an operation  $x*y$ on permutations $x$ and $y$, which will be a key feature in our map. 
\begin{enumerate}
\item Let $x(i_1),x(i_2),\dots, x(i_a)$ be the sequence of RL minimums of $x$ and $y(j_1),y(j_2),\dots, y(j_b)$ be the sequence of LR maximums of $y$.
\item Note that $x(i_1),\dots, x(i_a),y(j_1),\dots ,y(j_b)$ forms the pattern $21[\ii_a,\ii_b]$ in $21[x,y]$.  Replace this pattern of $21[\ii_a,\ii_b]$ with $\ii_{a+b}$ in $21[x,y]$ to get $x*y$. 
\end{enumerate}
 See Figure~\ref{fig:*andstarExample} for an example. This operation $*$ has some nice properties all of which we prove in the next lemma.

\begin{lemma} Let $x\in \fS_k$ and $y\in \fS_{\ell}$. The operation $*$ is 
\begin{enumerate}[(i)]
\item  associative on permutations that avoid 213,  
\item takes  $x,y$ that avoid 213 to $x*y$ that avoids 213, 
\item has $r_1(x*y)=r_1(y)*r_1(x)$ and 
\item has $\Des(x*y)=\Des(x)\cup(\Des(y)+k)$.
\item If $x$ and $y$ avoid 213 then the left $k$ points of $x*y$ form the pattern $x$ and the bottom $\ell$ points of $x*y$ form the pattern $y$. 
\end{enumerate}
\label{lem:*facts}
\end{lemma}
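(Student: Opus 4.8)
\textbf{Proof plan for Lemma~\ref{lem:*facts}.}

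The strategy is to understand the operation $*$ geometrically via the diagram picture in Figure~\ref{fig:*andstarExample}. By construction, $x*y$ is obtained from $21[x,y]$ by taking the RL minimums of the upper-left block $x$ (which, since $x$ avoids $213$, occur on consecutive values $1,\dots,a$ by Lemma~\ref{lem:LRmaxFacts}(ii)) together with the LR maximums of the lower-right block $y$ (which occur on consecutive indices $1,\dots,b$ and whose largest value is $|y|$), and collapsing the resulting $21[\ii_a,\ii_b]$ pattern into a single increasing run $\ii_{a+b}$. So the first task is to record precisely \emph{where} these points sit after the collapse: the merged increasing run occupies a contiguous set of indices straddling the boundary between the two blocks and a contiguous set of values straddling that same boundary, while everything else in $x$ keeps its position relative to $x$ and everything else in $y$ keeps its position relative to $y$. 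I would draw this once carefully and then read off the five claims.

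For part (v): the left $|x|=k$ points of $x*y$ are exactly the points coming from $x$ (the RL minimums of $x$ were already the leftmost-within-their-values points, and the collapse only shifts them rightward into the gap, not past any non-$x$ point to their left that was part of $x$) — so restricting to the first $k$ columns recovers the pattern $x$; symmetrically the bottom $\ell=|y|$ points recover $y$. This is really just unwinding the definition. Part (ii) follows from (v) plus a short case check: a $213$ pattern in $x*y$ cannot lie entirely in the $x$-part or entirely in the $y$-part (those avoid $213$), and any pattern using points from both parts has its middle and last entries... here I would argue that the only points of $x$ that can serve as the ``2'' of a $213$ with later points from $y$ are RL minimums of $x$, and those have been merged into the increasing run, so the ``2'' and ``3'' of the run are in increasing position with everything in $y$ that lies above them — a direct contradiction with the pattern shape. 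Part (iv) is a descent-set computation: within the $x$-block descents are unchanged, within the $y$-block they are shifted by $k$, and the boundary index between the two blocks is \emph{not} a descent because the point there is part of the increasing run $\ii_{a+b}$ (its left neighbor in the run is smaller), giving $\Des(x*y)=\Des(x)\cup(\Des(y)+k)$.

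Part (iii), $r_1(x*y)=r_1(y)*r_1(x)$, is the statement that $*$ is an ``anti-involution'' for inverse. Here I would use Lemma~\ref{lem:LRmaxFacts}(i): $r_1$ swaps the roles of RL minimums and LR maximums, and the block reversal $r_1(21[x,y])=12[r_1(y),r_1(x)]$... more carefully, $r_1$ turns the upper-left block into a lower-right-ish block, so one checks that the point set of $r_1(x*y)$ agrees with that of $r_1(y)*r_1(x)$ by comparing both to $r_1$ applied to the explicit description from the first paragraph. Finally part (i), associativity on $213$-avoiders, I would get by describing $x*y*z$ directly: iterating the construction, one merges the RL minimums of $x$, \emph{all} of $y$'s relevant extremes, and the LR maximums of $z$ into one big increasing run, and checks that $(x*y)*z$ and $x*(y*z)$ both produce this same configuration — here the key auxiliary fact is that after forming $x*y$, its RL minimums are exactly the RL minimums of $y$ together with (the merged run, whose lower portion came from $x$), and similarly for LR maximums, so the second application of $*$ sees the right data; parts (ii) and (v) are what license this bookkeeping.

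\medskip
\textbf{Main obstacle.} The genuinely fiddly step is associativity (part (i)): one has to verify that forming $x*y$ does not destroy or relocate the RL-minimum/LR-maximum structure that the \emph{next} application of $*$ needs to see, and that the two bracketings merge the same points into the same increasing run. I expect this to require the careful explicit description of the point set of $x*y$ (using (ii) and (v) as lemmas) rather than a slick argument; everything else — (ii), (iii), (iv), (v) — follows fairly mechanically once that geometric picture is pinned down.
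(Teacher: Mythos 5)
Your overall strategy is the same in spirit as the paper's (pin down the point configuration of $x*y$ explicitly and read the claims off it), and your sketches of (iii), (iv) and (v) essentially match the paper's arguments. But two steps have genuine gaps. First, in (ii): you dismiss a $213$ lying entirely among the points coming from $y$ on the grounds that $y$ avoids $213$, but part (v) only says the \emph{bottom} $\ell$ points form $y$; the right $\ell$ positions of $x*y$ are \emph{not} order isomorphic to $y$ (the paper notes this explicitly in its proof of (iv)), because the LR maximums of $y$ get raised above values coming from $x$. The claim is true, but proving it -- and proving the cross case where the ``2'' is a merged RL minimum of $x$ -- requires pulling the would-be pattern back to a $213$ in $y$, using that any raised run value that lies below a later unraised $y$-value must equal an original LR-maximum value of $y$; your phrase about the ``2'' and ``3'' of the run being in increasing position with everything in $y$ above them does not handle the case where the ``3'' is an unraised point of $y$. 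The paper sidesteps all of this by proving (ii) by induction on $|x|$, splitting on whether the maximum of the $x$-block is an RL minimum, together with (v).

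Second, associativity -- the step you yourself flag as the crux -- rests in your plan on a bookkeeping claim that is false: the RL minimums of $x*y$ are not ``the RL minimums of $y$ together with the merged run whose lower portion came from $x$.'' For $x=12$, $y=3412$ one gets $x*y=345612$, whose RL minimums are just $1,2$ (the transported RL minimums of $y$); the $x$-portion of the run ($3,4$ here) is in general not right-to-left minimal in $x*y$. The paper's proof of (i) hinges on identifying the RL minimums of $x*y$ correctly as exactly the $\ii_r$ and $\ii_t$ portions of the $132[\ii_r,\ii_s,\ii_t]$ configuration from Lemma~\ref{lem:LRmaxFacts}(iii), and then checking that both bracketings replace the same pattern $52431[\ii_a,\ii_r,\ii_s,\ii_t,\ii_c]$ inside $321[x,y,z]$ by $132[\ii_r,\ii_{a+s},\ii_{t+c}]$. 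With the bookkeeping as you state it, the second application of $*$ would merge the wrong points, so the plan as written fails at precisely the step you identified as the main obstacle.
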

\begin{proof} First we will show (i) that $*$ is associative on permutations that avoid 213. Let $x$, $y$ and $z$ be permutations avoiding 213. We will show that $(x*y)*z=x*(y*z)$. 
By Lemma~\ref{lem:LRmaxFacts} if $y$ avoids 213 the union of the LR maximums and RL minimums  form the pattern $132[\ii_r,\ii_s,\ii_t]$. The RL minimums of $x$ will form the pattern $\ii_a$   and the LR maximums of $z$ will form the pattern $\ii_c$. All together these LR maximums and RL minimums form the pattern $52431[\ii_a,\ii_r,\ii_s,\ii_t,\ii_c]$ in $321[x,y,z]$. We will show that we replace this pattern with $132[\ii_r,\ii_{a+s},\ii_{t+c}]$ in either $(x*y)*z$ or $x*(y*z)$, which will prove $(x*y)*z=x*(y*z)$. 

When determining $x*y$ we find the RL minimums of $x$ and the LR maximums of $y$ and then replace the pattern $21[\ii_a,\ii_{r+s}]$ in $21[x,y]$ with $\ii_{a+r+s}$ and get $x*y=\tau$. 
In a larger view the union of the LR maximums of  $y$ and the RL minimums of $x$ and $y$ form the pattern  $4132[\ii_a,\ii_r,\ii_s,\ii_t]$ in $21[x,y]$ that we replace with $132[\ii_r,\ii_{a+s},\ii_t]$ to get $\tau = x*y$ with the $\ii_r$ and $\ii_t$ portion forming the RL minimums of $\tau$.
So the RL minimums of $\tau$ are from the pattern $\ii_{r+t}$. To  find $\tau*z$ we need the  LR maximums of $z$, which form the pattern $\ii_c$. We replace the pattern $21[\ii_{r+t},\ii_{c}]$ in $21[\tau,z]$ with $\ii_{r+t+c}$. In conclusion we have replaced the pattern $52431[\ii_a,\ii_r,\ii_s,\ii_t,\ii_c]$ in $321[x,y,z]$ with $132[\ii_r,\ii_{a+s},\ii_{t+c}]$. By a very similar argument when determining $x*(y*z)$ we replace the pattern $52431[\ii_a,\ii_r,\ii_s,\ii_t,\ii_c]$ in $321[x,y,z]$ with $132[\ii_r,\ii_{a+s},\ii_{t+c}]$, which proves $(x*y)*z=x*(y*z)$. 

Secondly, we will show (v).  Note that the RL minimums of $x$ in $21[x,y]$  decrease in value in forming $x*y$ but remain an increasing subsequence. The values in the $x$ part of $21[x,y]$ not part of the RL minimums of $x$ remain unchanged in $x*y$. Since $x$ avoids 213 we know from part (ii) of Lemma~\ref{lem:LRmaxFacts} the RL minimums of $x$ are  $x(i_1),x(i_2),\dots, x(i_a)=1,2,\dots, a$  for some $a$ where $i_a=n$. This means that the left $|x|$ points of $x*y$ are order isomorphic to $x$. Using part (i) of Lemma~\ref{lem:LRmaxFacts} we can conclude that the bottom $|y|$ points of $x*y$ are order isomorphic to $y$.

Next we show (ii) by showing that $x*y$ avoids 213 if both $x$ and $y$ avoid 213. We will do so by induction on the length of $x$. Let $x\in \fS_k(213)$ and $y\in \fS_{n-k}(213)$. The base case is when $k=0$ and $\epsilon *y=y$ avoids 213 by assumption. We now assume that $k>0$. We must have $n$ occurring at some index $i$ in the $x$ part of $21[x,y]$. 
Let $\bar{x}$ be $x$ with $x(i)$ removed. The first case is if $x(i)$ is part of the RL minimums of $x$. This would means since $x$ avoids 213 that $i=k$, $x = \ii_k$ and $\bar{x} = \ii_{k-1}$. The LR maximums of $y$ by Lemma~\ref{lem:LRmaxFacts} must be $y(1),y(2),\dots ,y(b)$ for some $b$. We then have that $\bar{x}*y$ is $y(1)\dots y(b)(n-k+1)(n-k+2)\dots (n-1)y(b+1)\dots y(n-k)$, which avoids 213 by induction. Further we know that ${x}*y$ is $y(1)\dots y(b)(n-k+1)(n-k+2)\dots (n) y(b+1)\dots y(n-k)$. If ${x}*y$ contained a 213 then  $n$ must play the role of 3, which is impossible because ${x}*y$ strictly increases before $n$. 

The next case is when $x(i)$ is not part of the RL minimums of $x$. Then $x*y$ is $\bar{x}*y$ but we insert $n$ at index $i$ in $\bar{x}*y$. By induction  $\bar{x}*y$ avoids 213 so if $x*y$ contains a 213 then $n$ plays the role of 3 and the pattern is in the left $k$ indices of $x*y$. By part (v) the left $k$ points of $x*y$ are order isomorphic to $x$ so if $x*y$ contains a 213 in the left $k$ points then $x$ contains the pattern 213, which is a contraction. 

Next we will show (iii), that $r_1(x*y)=r_1(y)*r_1(x)$. Note that in forming $x*y$ we needed to find $x(i_1),x(i_2),\dots ,x(i_a)$ the RL minimums of $x$ and $y(j_1),y(j_2),\dots, y(j_b)$ the LR maximums of $y$. These points form the pattern $21[\ii_a,\ii_b]$ in $21[x,y]$ and we replace this pattern with $\ii_{a+b}$. By Lemma~\ref{lem:LRmaxFacts} the RL minimums of $r_1(y)$ are $j_1,j_2,\dots, j_b$ and the LR maximums of $r_1(x)$ are $i_1,i_2,\dots, i_a$. So in forming $r_1(y)*r_1(x)$ we replace the pattern  $21[\ii_b,\ii_a]$ in $21[r_1(y),r_1(x)]$  with $\ii_{a+b}$, which is  the same thing as $r_1(x*y)$. 

Finally we prove (iv) that if $x\in \fS_k(213)$ and $y\in \fS_{\ell}(213)$ then $\Des(x*y)=\Des(x)\cup(\Des(y)+k)$. Consider $\tau=x*y$. By part (v) the left $k$ points of $x*y$ are order isomorphic to $x$. Thus, the first $k$ indices of $\tau$ have the same descents as $x$. By Lemma~\ref{lem:LRmaxFacts}  the RL maximums of $y$ are $y(1),y(2),\dots,x(b)$. In forming $\tau$ these points increase in value. The subsequence $\tau(k+1)\tau(k+2)\dots\tau(n)$ while isn't order isomorphic to $y$, it does have the same ascents and descents. Because $i_a=k$, the first point in $y$ is a LR maximum and  the last point in $x$ is a RL minimum we must have an increase at $k$ in $\tau$. All together this implies that $\Des(x*y)=\Des(x)\cup(\Des(y)+k)$.  
\end{proof}

\begin{figure}
\begin{center}
\begin{tikzpicture} [scale = .5]
\begin{scope}[shift={(0,1)}]
\filldraw [black] 
(0,3) circle (5pt)
(1,1) circle (5pt)
(2,2) circle (5pt)
(3,0) circle (5pt);
\draw (0,0) rectangle (3,3);
\draw (4,1.5) node {$*$};
\end{scope}
\begin{scope}[shift={(5,1.5)}]
\filldraw [black] 
(0,0) circle (5pt)
(1,2) circle (5pt)
(2,1) circle (5pt);
\draw (0,0) rectangle (2,2);
\draw (3,1) node {$=$};
\end{scope}
\begin{scope}[shift={(9,0)}]
%\draw (0,2.5) rectangle (3.5,6);
%\draw (3.5,0) rectangle (6,2.5);
\filldraw [black] 
(0,6) circle (5pt)
(1,4) circle (5pt)
(2,5) circle (5pt)
(3,0) circle (5pt)
(4,2) circle (5pt)
(5,3) circle (5pt)
(6,1) circle (5pt);
\draw (0,0) rectangle (6,6);
%\draw (7,3) node {$\overset{\theta}{\rightarrow}$};
\end{scope}
\begin{scope}[shift={(18,1)}]
\filldraw [black] 
(0,2) circle (5pt)
(1,3) circle (5pt)
(2,1) circle (5pt)
(3,0) circle (5pt);
\draw (0,0) rectangle (3,3);
\draw (5,1.5) node {$\star$\hspace{.05cm} $1=$};
\end{scope}
\begin{scope}[shift={(25,.5)}]
%\draw (0,0) rectangle (5,5);
\filldraw [black] 
(0,2) circle (5pt)
(1,4) circle (5pt)
(2,3) circle (5pt)
(3,1) circle (5pt)
(4,0) circle (5pt);
\draw (0,0) rectangle (4,4);
%\draw (5,2) node {$\overset{\theta}{\rightarrow}$};
\end{scope}
 \end{tikzpicture}
 \caption{On the left $4231*132=7561342$ and on the right $3421\star 1=35421$.}
 \label{fig:*andstarExample}
  \end{center}
 \end{figure}
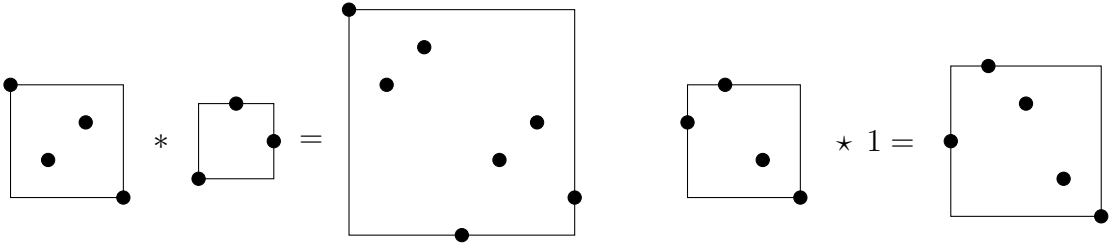
 
 We next  define our second operation  $\star$  for $\sigma\star 1$ when $\sigma$  avoids 213. Because $\sigma$ avoids $213$ we can write $\sigma=21[x,y]$ with $x,y\neq \epsilon$ or $\sigma=12[1,z]$. We now define $\sigma\star 1$. 
\begin{enumerate}
\item We define $1\star 1=21$. 
\item If $\sigma=12[1,z]$ then $\sigma\star 1=12[1,z\star 1]$. 
\item If $\sigma=21[x,y]$ with $x,y\neq \epsilon$ let $\bar{x}=x\star 1$ and $\bar{y}=y\star 1$. Say $\bar{x}$ has RL minimums $\bar{x}(i_1),\bar{x}(i_2),\dots, \bar{x}(i_a)$ and $\bar{y}$ has LR maximums $\bar{y}(j_1),\bar{x}(j_2)\dots ,\bar{y}(j_b)$. These together  form the pattern $21[\ii_a,\ii_{b}]$ in $21[\bar{x},\bar{y}]$. We replace this  pattern with $\ii_{a+b}$ but remove the point at $(i_a,y(j_b))$. We define $\sigma\star 1$ to be this new permutation.  
\end{enumerate}
Note that step (3) is like defining $21[x,y]\star 1 = (x\star 1) * (y\star 1)$ but we remove the point $(i_a,y(j_b))=(|x|+1,|y|+1)$  where $|x|$ gives the length of a permutation. In Figure~\ref{fig:*andstarExample} we illustrate $3421\star 1=35421$.  The operation $\star$ has some nice properties that we prove in the next lemma. 

\begin{lemma}Let $\sigma\in \fS_n(213)$. The operation $\star$ is
\begin{enumerate}[(i)]
\item  well defined in  that the decomposition choice of $\sigma=21[x,y]$ plays no role in the output,
\item takes a $\sigma$ avoiding 213 and outputs $\sigma\star 1$ which avoids 213, 
\item has $r_1(\sigma\star 1)=r_1(\sigma)\star 1$ and 
\item $\Des(\sigma\star 1)=\Des(\sigma)\cup\{n\}$.
\end{enumerate}
\label{lem:starfacts}
\end{lemma}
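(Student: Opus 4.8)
The plan is to prove the four properties of $\star$ simultaneously by induction on $n$, following the recursive definition of $\sigma \star 1$ and leaning heavily on the already-established facts about $*$ in Lemma~\ref{lem:*facts} and about LR maximums / RL minimums in Lemma~\ref{lem:LRmaxFacts}. The base case $\sigma = 1$, $\sigma \star 1 = 21$ is immediate for all four parts. For the inductive step I would split along the two cases of the definition exactly as stated: either $\sigma = 12[1,z]$ with $z$ avoiding 213, or $\sigma = 21[x,y]$ with $x,y \neq \epsilon$ both avoiding 213.

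For well-definedness (i), the only ambiguity is when $\sigma$ can be written as $21[x,y]$ in more than one way, i.e. when $\sigma$ has several ``splitting points''; here the key observation is that step (3) reads $21[x,y]\star 1 = (x\star 1)*(y\star 1)$ with the single point at position $(|x|+1,|y|+1)$ deleted, and since $*$ is associative on 213-avoiders (Lemma~\ref{lem:*facts}(i)), two different splittings $\sigma = 21[x_1,y_1] = 21[x_2,y_2]$ give the same iterated $*$-product up to the bookkeeping of which single point is removed; one checks the removed point is always the global position corresponding to the ``$12$'' in the top-level $21$, which is intrinsic to $\sigma$. I would also need to reconcile the $\sigma = 12[1,z]$ branch with the $\sigma = 21[x,y]$ branch in the overlap case $x = 1$ — here $21[1,y]\star 1$ via step (3) should agree with $12[1,y]\star 1 = 12[1, y\star 1]$ via step (2), which is a short direct check using that $1 \star 1 = 21$ and the definition of $*$.

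For (ii) that $\sigma \star 1$ avoids 213: in the $12[1,z]$ case, $\sigma\star 1 = 12[1, z\star 1]$ and $z \star 1$ avoids 213 by induction, and prepending a new minimum at the front cannot create a 213 (the new point can only play the role of ``1'', and it is leftmost). In the $21[x,y]$ case, $\bar x = x\star 1$ and $\bar y = y \star 1$ avoid 213 by induction, $\bar x * \bar y$ avoids 213 by Lemma~\ref{lem:*facts}(ii), and deleting one point from a 213-avoider keeps it 213-avoiding. For (iii), commuting with $r_1$: in the $12[1,z]$ case use $r_1(12[1,z]) = 12[1, r_1(z)]$ and induction; in the $21[x,y]$ case use $r_1(21[x,y]) = 21[r_1(y), r_1(x)]$, then $r_1(\sigma\star 1) = r_1((\bar x * \bar y) \text{ minus a point}) = (r_1(\bar y) * r_1(\bar x))$ minus the image point, invoking Lemma~\ref{lem:*facts}(iii) and the inductive hypothesis $r_1(x\star 1) = r_1(x)\star 1$, and checking the removed point $(|x|+1,|y|+1)$ maps under $r_1$ to the point removed in forming $r_1(y)\star 1 * \dots$; this is the spot that requires the most care. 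For (iv), the descent set: in the $12[1,z]$ case $\Des(\sigma) = \Des(z)+1$ and $\Des(\sigma\star 1) = \Des(z\star 1)+1 = (\Des(z)\cup\{|z|\})+1 = \Des(\sigma)\cup\{n\}$ by induction (noting $n = |z|+1$). In the $21[x,y]$ case, $\Des(\bar x * \bar y) = \Des(\bar x) \cup (\Des(\bar y) + |\bar x|)$ by Lemma~\ref{lem:*facts}(iv), plug in the inductive hypotheses $\Des(\bar x) = \Des(x)\cup\{|x|\}$, $\Des(\bar y) = \Des(y)\cup\{|y|\}$, and then account for the single deleted point: deleting the point at position $|x|+1$ (value $|y|+1$) merges the forced descent at $|x|$ coming from $x\star 1$ with the structure after it, and a careful bookkeeping shows the net effect is $\Des(\sigma\star 1) = \Des(x)\cup(\Des(y)+|x|)\cup\{n\} = \Des(\sigma)\cup\{n\}$, using $\Des(\sigma) = \Des(21[x,y]) = \Des(x)\cup\{|x|\}\cup(\Des(y)+|x|)$ — wait, I should double check: $\Des(21[x,y])$ has a descent at $|x|$ automatically, so actually $\Des(\sigma) = \Des(x)\cup\{|x|\}\cup(\Des(y)+|x|)$, and the claim becomes clean.

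The main obstacle I anticipate is part (iv) in the $21[x,y]$ branch: tracking precisely what happens to the descent set when the single point $(|x|+1,|y|+1)$ is removed from $(x\star 1)*(y\star 1)$, since the position $|x|+1$ already carries a ``forced'' descent from $x\star 1$ and an ascent into $y\star 1$, and one must verify no spurious descent is created or destroyed at the seam. A close second is the well-definedness check (i), because the interaction of the deleted point with different choices of splitting point is fiddly; I would handle it by reducing everything to the associative $*$-product and showing the deleted point is the one canonically attached to the leftmost-maximal ``$1$ over a block'' structure of $\sigma$, which does not depend on the splitting. Once these are settled, properties (ii) and (iii) follow routinely from Lemma~\ref{lem:*facts} and the inductive hypothesis.
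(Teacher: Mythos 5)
Your plan follows essentially the same route as the paper's proof: a simultaneous induction on $n$ over the two cases $\sigma=21[x,y]$ and $\sigma=12[1,z]$, using associativity of $*$ (via a three-block decomposition) for well-definedness, the fact that deleting a point preserves 213-avoidance for (ii), Lemma~\ref{lem:*facts}(iii) together with tracking the image of the removed point $(|x|+1,|y|+1)$ for (iii), and Lemma~\ref{lem:*facts}(iv) plus the seam bookkeeping at index $|x|$ for (iv), which is exactly the verification the paper carries out. One small remark: the ``overlap case'' you worry about in (i) does not occur, since $21[1,y]$ and $12[1,y]$ are different permutations, so the two branches of the definition never apply to the same $\sigma$ and need no reconciliation.
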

\begin{proof}
First we will prove that the decomposition choice of $\sigma=21[x,y]$ plays no role in the output $\sigma\star 1$. Consider $\sigma=321[x,y,z]$ that avoids 213. Let $\alpha = 21[x,y]$. By definition $\alpha \star 1$ is $(x\star 1)*(y\star 1)$ with $(|x|+1,|y|+1)$ removed and $21[\alpha,z]\star 1$ is $(\alpha\star 1)*(z\star 1)$ with $(|\alpha|+1,|z|+1)$ removed. Putting this together we have $321[x,y,z]\star 1$ equal to $((x\star 1)*(y\star 1))*(z\star 1)=(x\star 1)*(y\star 1)*(z\star 1)$ by Lemma~\ref{lem:*facts} with points $(|x|+1,|z|+|y|+2)$ and $(|x|+|y|+2,|z|+1)$ removed. This is the same output we would get if we had instead combined $y$ and $z$ first.

%
% Let $\bar{x}=x\star 1$, $\bar{y}=y\star 1$ and $\bar{z}=z\star 1$. We want to show that if we first combine $\bar{x}$ and $\bar{y}$ as stated in step (3) of $\star$ to get $\tau$ and afterwards combine $\tau$ and $\bar{z}$ as in step (3) then we get the exact same output as if we had instead combined $\bar{y}$ and $\bar{z}$ first.  By our note about step (3) we know that  step (3) is very similar to $*$ which we have already shown is associative in Lemma~\ref{lem:*facts}. The removal of the appropriate points indicated in step 3  doesn't effect the associativity since we could just remove the points $(|x|+1,|z|+|y|+2)$ and $(|x|+|y|+2,|z|+1)$ in $\bar{x}*\bar{y}*\bar{z}$ at the end instead. 

Next we will show (ii) that if  $\sigma$ avoid 213 then $\sigma\star 1$ avoids 213 by inducting on the length $|\sigma|=n$. Certainly if $n=1$ then $\sigma$ avoids 213 and $\sigma\star 1=21$ avoids 213. Now assume $n>1$.  Because $\sigma$ avoids 213 we have either $\sigma=21[x,y]$ with $x,y\neq\epsilon$ or $\sigma=12[1,z]$. If $\sigma=21[x,y]$ then because $x$ and $y$ have length smaller than $n$ we can use our inductive assumption and can assume that  $\bar{x}=x\star 1$ and $\bar{y}=y\star 1$ avoid 213. By Lemma~\ref{lem:*facts} we know that $\bar{x}*\bar{y}$ avoids 213 and since $\sigma\star 1$ is a pattern of $\bar{x}*\bar{y}$ we can conclude $\sigma\star 1$ avoids 213 in this case. Our other case is when $\sigma=12[1,z]$ and we defined $\sigma \star 1=12[1,z\star 1]$. Because $z$ has length smaller than $n$ we can assume that $z\star 1$ avoids 213, which implies that $\sigma \star 1=12[1,z\star 1]$ avoids 213. 

We will also show (iii), $r_1(\sigma\star 1)=r_1(\sigma)\star 1$,  by induction. Certainly $r_1(\sigma\star 1)=r_1(\sigma)\star 1$ if $\sigma = 1$. Assume the length of $\sigma$ is greater than 1. Again because  $\sigma$ avoids 213 we have either $\sigma=21[x,y]$ with $x,y\neq\epsilon$ or $\sigma=12[1,z]$. If $\sigma=21[x,y]$ then $\sigma\star 1=(x\star 1)*(y\star 1)$ with the point $(|x|+1,|y|+1)$ removed. Consider $r_1(\sigma)=21[r_1(y),r_1(x)]$ then $r_1(\sigma)\star 1=(r_1(y)\star 1)*(r_1(x)\star 1)$ with $(|y|+1,|x|+1)$ removed. By our inductive assumption $r_1(x\star 1)=r_1(x)\star 1$ and $r_1(y\star 1)=r_1(y)\star 1$ so using Lemma~\ref{lem:*facts} we have that $r_1(\sigma)\star 1=r_1(y\star 1)*r_1(x\star 1)=r_1((x\star 1)*(y\star 1))$ with $(|y|+1,|x|+1)$ removed because $(|x|+1,|y|+1)$ was removed from $(x\star 1)*(y\star 1)$. This proves for this case $r_1(\sigma\star 1)=r_1(\sigma)\star 1$ as we wanted. The other case is when $\sigma=12[1,z]$. Then $r_1(\sigma\star 1)=r_1(12[1,z\star 1])=12[1,r_1(z)\star 1]$ because $r_1(z\star 1)=r_1(z)\star 1$ by our inductive assumption. This proves $r_1(\sigma\star 1)=12[1,r_1(z)]\star 1=r_1(\sigma)\star 1$.

Finally, we show (iv) that $\Des(\sigma\star 1)=\Des(\sigma)\cup\{n\}$, which we also prove by inducting on $|\sigma|=n$. Certainly if $n=1$ then $\Des(\sigma\star1)=\{1\}$ so we can assume that $n>1$. Again, since $\sigma$ avoids 213 we have either $\sigma=21[x,y]$ with $x,y\neq\epsilon$ or $\sigma=12[1,z]$. In the first case Let $|x|=k$ and $|y|=\ell$ so if $\sigma=21[x,y]$ then $\Des(\sigma)=\Des(x)\cup \{k\}\cup(\Des(y)+k)$. By induction $\Des(x\star1)=\Des(x)\cup\{k\}$ and $\Des(y\star 1)=\Des(y)\cup\{\ell\}$. By Lemma~\ref{lem:*facts} we have that $\Des((x\star1)*(y\star1))=\Des(x)\cup\{k\}\cup (\Des(y)+k+1)\cup\{\ell+k+1\}$. Now we only have to consider the removal of the point $(k+1,\ell+1)$. We have that $k\in \Des(x\star 1)$ so the point at index $k$ is higher than the one at index $k+1$ implying that the value at index $k$ is not a RL minimum of $x\star 1$. Further the value at $k+1$ in $21[x\star 1,y\star 1]$ is the maximum of the union of RL minimums of $x\star 1$ and LR maximums of $y\star 1$ in $21[x\star 1,y\star 1]$. That means when we do the pattern replacement in forming $(x\star1)*(y\star1)$ we still have a descent at index $k$ even after removing the point $(k+1,\ell+1)$. As result $\Des(\sigma\star 1)=\Des(x)\cup\{k\}\cup (\Des(y)+k)\cup\{\ell+k\}=\Des(\sigma)\cup\{n\}$. Next we consider the case where $\sigma=12[1,z]$ so $\Des(\sigma)=\Des(z)+1$. By induction $\Des(z\star 1)=\Des(z)\cup\{n-1\}$ so since $\sigma\star 1 = 12[1,z\star 1]$  has descent set $\Des(z\star 1)+1$ we can conclude that $\Des(\sigma\star 1)=\Des(\sigma)\cup\{n\}$. 
\end{proof}

At this point we have defined all the operations and facts we need to define $\theta:\fS_n(132)\rightarrow\fS_n(213)$ and swiftly prove that $\theta$ commutes with $r_1$ and $\Asc(\sigma)=\Des(\theta(\sigma))$. We will define $\theta$ inductively. Consider a permutation $\sigma$ that avoids 132. Let $\theta(1)=1$ and assume now that $\sigma$ has length at least two. Either $\sigma$ can be decomposed  as $\sigma=21[\alpha,\beta]$ for $\alpha,\beta\neq \epsilon$ or $\sigma=12[\gamma, 1]$. In the first case we define
 \begin{equation}
 \theta(21[\alpha,\beta])=\theta(\alpha)*\theta(\beta)
 \label{eq:theta1}
 \end{equation}
 and in the second case
 \begin{equation}
 \theta(12[\gamma, 1])=\theta(\gamma)\star 1.
  \label{eq:theta2}
  \end{equation}
 See Figure~\ref{fig:132to213permex} for an example. We  will now prove that $\theta$ is well-defined, commutes with $r_1$ and has $\Asc(\sigma)=\Des(\theta(\sigma))$.

\begin{lemma} The map $\theta:\fS_n(132)\rightarrow\fS_n(213)$ defined above is well defined,
\begin{enumerate}[(i)]
\item commutes with $r_1$ and
\item has $\Asc(\sigma)=\Des(\theta(\sigma))$.
\end{enumerate}
\label{lem:almostTheta}
\end{lemma}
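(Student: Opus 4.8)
The plan is to prove all three assertions simultaneously by induction on $n$, exploiting the recursive structure of the decompositions in Proposition~\ref{prop:decomp}(i) together with the properties of $*$ and $\star$ already established in Lemmas~\ref{lem:*facts} and~\ref{lem:starfacts}. The base case $n=1$ is immediate: $\theta(1)=1$, $r_1(1)=1$, and $\Asc(1)=\Des(1)=\emptyset$. For the inductive step, take $\sigma\in\fS_n(132)$ with $n\geq 2$; by Proposition~\ref{prop:decomp}(i) (in the form noted just before, $\sigma=231[\sigma_1,1,\sigma_2]$, equivalently $\sigma=12[\cdot,\cdot]$ or $21[\cdot,\cdot]$ decompositions appropriate to 132-avoiders), $\sigma$ either has the form $21[\alpha,\beta]$ with $\alpha,\beta\neq\epsilon$ or the form $12[\gamma,1]$; moreover this decomposition is essentially forced once we insist $\beta$ (resp.\ $\gamma$) be taken maximal/minimal, so $\theta$ is unambiguously defined provided the two clauses agree on any overlap — but the clauses are mutually exclusive since in $12[\gamma,1]$ the value $n$ sits at the last position while in $21[\alpha,\beta]$ with $\beta\neq\epsilon$ it does not. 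One still must check $*$ and $\star$ are well-defined on the outputs, which they are by Lemmas~\ref{lem:*facts}(ii) and~\ref{lem:starfacts}(ii) since $\theta(\alpha),\theta(\beta),\theta(\gamma)$ avoid $213$ by induction; this also shows $\theta(\sigma)\in\fS_n(213)$.

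For property (i), that $\theta$ commutes with $r_1$, I would observe that $r_1$ interacts with the two decomposition types by $r_1(21[\alpha,\beta])=21[r_1(\beta),r_1(\alpha)]$ and $r_1(12[\gamma,1])=12[1,r_1(\gamma)]$. In the first case $\theta(r_1(\sigma))=\theta(r_1(\beta))*\theta(r_1(\alpha))=r_1(\theta(\beta))*r_1(\theta(\alpha))$ by induction, which equals $r_1(\theta(\alpha)*\theta(\beta))=r_1(\theta(\sigma))$ by Lemma~\ref{lem:*facts}(iii). The second case is a little more delicate because $r_1(12[\gamma,1])$ is of the form $12[1,r_1(\gamma)]$, which is \emph{not} of the shape $12[\delta,1]$ unless $r_1(\gamma)$ itself decomposes suitably — so here I would need to know how $\theta$ behaves on permutations of the form $12[1,z]$. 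The cleanest route: verify directly from the definitions of $*$ and $\star$ that $\theta(12[1,z])=1\ast\theta(z)$ in a suitable sense, or rather unwind that $\theta(21[1,z])$ and $\theta(12[1,z])$ coincide and invoke the recursive clause of $\star$ from step (2) of its definition, $\sigma\star 1=12[1,z\star 1]$ when $\sigma=12[1,z]$; then $\theta(r_1(12[\gamma,1]))=\theta(12[1,r_1(\gamma)])=1\ast\theta(r_1(\gamma))=1\ast r_1(\theta(\gamma))$, and comparing with $r_1(\theta(\gamma)\star 1)=r_1(\theta(\gamma))\star 1$ via Lemma~\ref{lem:starfacts}(iii) finishes it after checking $1\ast r_1(w)=r_1(w)\star 1$, which should follow because $\star$ by a single point and $\ast$ by the one-element permutation on the left are the same operation.

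For property (ii), $\Asc(\sigma)=\Des(\theta(\sigma))$, I would again split into the two cases. If $\sigma=21[\alpha,\beta]$ with $|\alpha|=k$, then $\Des(\sigma)=\Des(\alpha)\cup\{k\}\cup(\Des(\beta)+k)$, hence (since $\Asc$ and $\Des$ partition $[n-1]$, by Lemma~\ref{lemma:ascdessymm}) $\Asc(\sigma)=\Asc(\alpha)\cup(\Asc(\beta)+k)$ — note $k\notin\Asc(\sigma)$ exactly because the block structure forces a descent there. By induction $\Asc(\alpha)=\Des(\theta(\alpha))$ and $\Asc(\beta)=\Des(\theta(\beta))$, and by Lemma~\ref{lem:*facts}(iv), $\Des(\theta(\alpha)*\theta(\beta))=\Des(\theta(\alpha))\cup(\Des(\theta(\beta))+k)$, which matches $\Asc(\sigma)$ exactly. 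If instead $\sigma=12[\gamma,1]$ with $|\gamma|=n-1$, then $\Des(\sigma)=\Des(\gamma)$, so $\Asc(\sigma)=\Asc(\gamma)\cup\{n-1\}$; by induction $\Asc(\gamma)=\Des(\theta(\gamma))$ and by Lemma~\ref{lem:starfacts}(iv), $\Des(\theta(\gamma)\star 1)=\Des(\theta(\gamma))\cup\{n-1\}$, which again matches.

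The main obstacle I anticipate is purely bookkeeping around well-definedness and the $12[1,z]$ edge case in the $r_1$-commutation argument: one must be careful that the two defining clauses of $\theta$ really do not overlap and that applying $r_1$ to a $12[\gamma,1]$-type permutation lands in a situation the recursion handles — this is exactly why the recursive clause (2) in the definition of $\star$ was isolated, and tracing that through cleanly is the one place where a sloppy argument could go wrong. Everything else is a direct consequence of the already-proven Lemmas~\ref{lem:*facts} and~\ref{lem:starfacts}, so the proof should be short once the case analysis is set up correctly.
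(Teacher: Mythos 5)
Your overall strategy (simultaneous induction, splitting into the $21[\alpha,\beta]$ and $12[\gamma,1]$ cases, and citing Lemmas~\ref{lem:*facts} and~\ref{lem:starfacts}) is the paper's argument, and your treatment of well-definedness, of the $*$-case of $r_1$-commutation, and of property (ii) is essentially correct. But the second case of (i) contains a genuine error: the identity $r_1(12[\gamma,1])=12[1,r_1(\gamma)]$ is false. Since $\sigma=12[\gamma,1]$ has the fixed point $\sigma(n)=n$, which lies on the main diagonal, reflection across that diagonal preserves it, so $r_1(12[\gamma,1])=12[r_1(\gamma),1]$. Your claimed form $12[1,r_1(\gamma)]$ places the value $1$ at position $1$, which is what $r_{-1}$ (or $R_{180}$), not $r_1$, would produce; worse, $12[1,r_1(\gamma)]$ need not avoid $132$ at all (it does so only when $r_1(\gamma)$ is increasing), so $\theta$ is not even defined on it, and the whole ``delicate'' detour you build is resting on a wrong premise. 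The patch you propose inside that detour is also false: $1*w$ and $w\star 1$ are different operations --- for instance $1*1=12$ while $1\star 1=21$, and more structurally $\Des(1*w)=\Des(w)+1$ never contains the last position whereas $\Des(w\star 1)=\Des(w)\cup\{|w|\}$ always does.

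With the correct identity the case is immediate and matches the paper: $\theta(r_1(12[\gamma,1]))=\theta(12[r_1(\gamma),1])=\theta(r_1(\gamma))\star 1=r_1(\theta(\gamma))\star 1=r_1(\theta(\gamma)\star 1)=r_1(\theta(\sigma))$, using the inductive hypothesis and Lemma~\ref{lem:starfacts}(iii). One smaller point: the decomposition $\sigma=21[\alpha,\beta]$ with $\alpha,\beta\neq\epsilon$ is in general not unique, and the definition of $\theta$ does not impose your ``maximal $\beta$'' convention; well-definedness in that clause should instead be justified by the associativity of $*$ (Lemma~\ref{lem:*facts}(i)) together with Lemma~\ref{lem:starfacts}(i), which is how the paper's earlier lemmas are set up to be used.
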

\begin{proof}
Let $\sigma\in \fS_n(132)$. We have two cases to consider in proving all the properties, which are either $\sigma=21[\alpha,\beta]$ with $\alpha,\beta\neq\epsilon$ or $\sigma=12[\gamma,1]$. First we will mention why this map is well defined and show that $\theta(\sigma)$ avoids 213 by inducting on $n$. The $n=1$ case is straight forward so assume $n>1$. In the first case if $\sigma=21[\alpha,\beta]$ then $\theta(\sigma)=\theta(\alpha)*\theta(\beta)$. By our inductive assumption $\theta(\alpha)$ and $\theta(\beta)$ avoid 213. Using  Lemma~\ref{lem:*facts} we can conclude that $\theta(\alpha)*\theta(\beta)=\theta(\sigma)$ also avoids 213. In the second case where  $\sigma=12[\gamma,1]$ we have that $\theta(\sigma)=\theta(\gamma)\star 1$. By our inductive assumption $\theta(\gamma)$ avoids 213 so by  Lemma~\ref{lem:starfacts}  we can conclude that $\theta(\gamma)\star 1$ also avoids 213. 

Next, we show that $\theta$ commutes with $r_1$. Consider  $\theta\circ r_1(\sigma)$. If we have the first case that  $\sigma=21[\alpha,\beta]$ then $\theta\circ r_1(\sigma)=\theta(21[r_1(\beta),r_1(\alpha)])=r_1(\beta)*r_1(\alpha)=r_1(\alpha*\beta)$ by Lemma~\ref{lem:*facts}. Since $r_1(\alpha*\beta)=r_1\circ\theta(\sigma)$ we can conclude that $\theta$ and $r_1$ commute in this case. Next consider when $\sigma=12[\gamma,1]$ then $(\theta\circ r_1)(\sigma)=\theta(12[r_1(\gamma),1])=r_1(\gamma)\star 1=r_1(\gamma\star 1)$ by Lemma~\ref{lem:starfacts}. Since $r_1(\gamma\star 1)=r_1\circ \theta(\sigma)$ we can conclude that $\theta$ and $r_1$ commute in all cases. 

Finally, we will prove that $\Asc(\sigma)=\Des(\theta(\sigma))$. We will prove this by inducting on $n$. The case of $n=1$ is again straight forward so we will assume $n>1$. 
First consider if  $\sigma=21[\alpha,\beta]$ where $\alpha\in \fS_k(132)$, $\beta\in \fS_{\ell}(132)$ and $k,\ell \neq 0$. By induction we know $\Asc(\alpha)=\Des(\theta(\alpha))$ and $\Asc(\beta)=\Des(\theta(\beta))$. 
Also we know $\Des(\sigma)=\Des(\alpha)\cup\{k\}\cup(\Des(\beta)+k)$, so all we need to show that is $\Asc(\sigma)=\Asc(\alpha)\cup(\Asc(\beta)+k)$. We have $\theta(\sigma)=\theta(\alpha)*\theta(\beta)$ so $\Des(\theta(\sigma))=\Des(\theta(\alpha))\cup(\Des(\theta(\beta))+k)$ by Lemma~\ref{lem:*facts}. This equals $\Asc(\alpha)\cup(\Asc(\beta)+k)$ by our inductive assumptions so we are done in this case. Now consider if $\sigma=12[\gamma,1]$ so $\Des(\sigma)=\Des(\gamma)$. We want to show that $\Des(\theta(\sigma))=\Asc(\gamma)\cup\{n-1\}$. Using Lemma~\ref{lem:starfacts} we get $\Des(\theta(\gamma))=\Des(\theta(\gamma)\star 1)=\Des(\theta(\gamma))\cup\{n-1\}$. By our inductive assumption $\Asc(\gamma)=\Des(\theta(\gamma))$ so we further have $\Des(\theta(\sigma))=\Asc(\gamma)\cup\{n-1\}$, which completes the proof. 
 \end{proof}

\begin{figure}
\begin{center}
\begin{tikzpicture} [scale = .5]
\begin{scope}[shift={(0,0)}]
\draw (0,0) rectangle (3,3);
\filldraw [black] 
(4,4) circle (5pt)
(0,3) circle (5pt)
(1,0) circle (5pt)
(2,1) circle (5pt)
(3,2) circle (5pt);
\draw (0,0) rectangle (4,4);
\draw (5,2) node {$\overset{\theta}{\rightarrow}$};
\end{scope}
\begin{scope}[shift={(6,0)}]
%\draw (0,0) rectangle (5,5);
\filldraw [black] 
(0,2) circle (5pt)
(1,4) circle (5pt)
(2,3) circle (5pt)
(3,1) circle (5pt)
(4,0) circle (5pt);
\draw (0,0) rectangle (4,4);
%\draw (5,2) node {$\overset{\theta}{\rightarrow}$};
\end{scope}
\begin{scope}[shift={(15,0)}]
\draw (0,2.5) rectangle (3.5,6);
\draw (3.5,0) rectangle (6,2.5);
\filldraw [black] 
(0,5) circle (5pt)
(1,6) circle (5pt)
(2,3) circle (5pt)
(3,4) circle (5pt)
(4,1) circle (5pt)
(5,0) circle (5pt)
(6,2) circle (5pt);
\draw (0,0) rectangle (6,6);
\draw (7,3) node {$\overset{\theta}{\rightarrow}$};
\end{scope}
\begin{scope}[shift={(23,0)}]
%\draw (0,2.5) rectangle (3.5,6);
%\draw (3.5,0) rectangle (6,2.5);
\filldraw [black] 
(0,6) circle (5pt)
(1,4) circle (5pt)
(2,5) circle (5pt)
(3,0) circle (5pt)
(4,2) circle (5pt)
(5,3) circle (5pt)
(6,1) circle (5pt);
\draw (0,0) rectangle (6,6);
%\draw (7,3) node {$\overset{\theta}{\rightarrow}$};
\end{scope}
 \end{tikzpicture}
 \caption{On the left $\theta(41235)=35421$ and on the right $\theta(6745231)=7561342$.}
 \label{fig:132to213permex}
  \end{center}
 \end{figure}
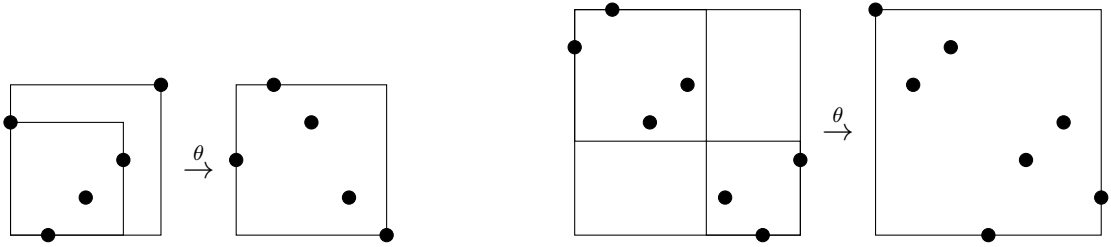

So far we have shown that $\theta$ has all the properties we want save for showing that this map is indeed a bijection. We first define some helpful terminology. Let $\sigma\in \fS_n(213)$. We will say that $\sigma$ is {\it $*$-splittable} if $\sigma=x*y$ for some permutations $x,y\neq\epsilon$ that avoid 213. Consider the sequence $\sigma(m_1),\sigma(m_2),\dots, \sigma(m_l)$ of the RL maximums of $\sigma$. Also let $u_i=m_{i}-m_{i-1}$ and  $m_0=0$. This keeps track of the number of indices to the left of the $i$th RL maximum but to the right of the $(i-1)$st RL maximum including the $i$th RL maximum. Let $v_i=\sigma(m_{i})-\sigma(m_{i+1})$ and $\sigma(m_{l+1})=0$. This keeps track of the number of indices below the $i$th RL maximum but above the $(i+1)$st RL maximum including the $i$th RL maximum. Because $\sigma$ avoids 213 we know that $\sigma$ will be strictly increasing for all points that are to the left and below of  $(m_i,\sigma(m_i))$. Let $p_i$ equal the number of points that are to the left and below  of $(m_i,\sigma(m_i))$  including $(m_i,\sigma (m_i))$. We illustrate this geometrically in Figure~\ref{fig:uivipiDiagram}. We will use this notation of $u_i$, $v_i$, $p_i$ and $\sigma(m_i)$ for the next couple lemmas. Recall when constructing $x*y$ we find $x(i_1)\dots x(i_a)$ the RL minimums of $x$ and $y(j_1)\dots y(j_b)$ the LR maximums for $y$. These values form the pattern $21[\ii_a,\ii_b]$ in $21[x,y]$, which we replace with $\ii_{a+b}$. The result of this is that the point $(y(j_b),x(i_a))$ becomes a RL maximum in $\sigma=x*y$. Say this RL maximum is $\sigma(m_j)$. We  further  have that $u_j\geq b+1$, $v_j\geq a+1$ and $p_j=a+b$. From this we can conclude that $u_j+v_j\geq p_j+2$. This is actually a sufficient condition for $*$-splittable. 

\begin{figure}
\begin{center}
\begin{tikzpicture} [scale = .5]
%\draw (0,2.5) rectangle (3.5,6);
%\draw (3.5,0) rectangle (6,2.5);
\draw[fill,gray] (0,0) rectangle (3.6,3.6);
\draw[dashed] (1,5)--(0,5);
\draw[dashed] (1,5)--(1,0);
\draw[dashed] (3.5,3.5)--(0,3.5);
\draw[dashed] (3.5,3.5)--(3.5,0);
\draw[dashed] (5,1)--(5,0);
\draw[dashed] (5,1)--(0,1);
\filldraw [black] 
(1,5) circle (3pt)
(3.5,3.5) circle (3pt)
(5,1) circle (3pt);
\draw (0,0) rectangle (6,6);
\draw (-.3,5.5) node {\tiny $(m_{i-1},\sigma(m_{i-1}))$};
\draw (5,4) node {\tiny$(m_{i},\sigma(m_{i}))$};
\draw (6.6,1.5) node {\tiny$(m_{i+1},\sigma(m_{i+1}))$};
\draw (2.3,4.3) node {\tiny$u_i$};
\draw [decorate,decoration={brace,amplitude=6pt},rotate=0] (1.1,3.7) -- (3.5,3.7);
\draw [decorate,decoration={brace,amplitude=6pt},rotate=270] (-1.1,0) -- (-3.5,0);
\draw (-.7,2.2) node {\tiny$v_i$};
\draw (2,2) node {\tiny$p_i$};
 \end{tikzpicture}
 \caption{Given the RL maximums $(m_i,\sigma(m_i))$ of $\sigma$ we illustrate  the values $u_i=m_{i}-m_{i-1}$, $v_i=\sigma(m_{i})-\sigma(m_{i+1})$ and $p_i$, which is the number of points to the left and below the $i$th RL maximum.}
 \label{fig:uivipiDiagram}
  \end{center}
 \end{figure}
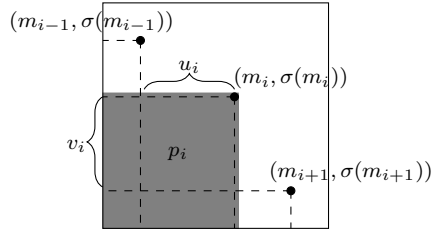

\begin{lemma}A permutation $\sigma$ is $*$-splittable if there exists a RL maximum $\sigma(m_j)$ such that $u_j+v_j\geq p_j+2$. Particularly if $\sigma$ avoids 213 and $\sigma = x*y$ then both $x$ and $y$ avoid 213. 
\label{lem:*split}
\end{lemma}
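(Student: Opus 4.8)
**Proof proposal for Lemma~\ref{lem:*split}.**

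The plan is to prove the two assertions in sequence. First I would establish that the existence of a right-to-left maximum $\sigma(m_j)$ with $u_j+v_j\geq p_j+2$ forces $\sigma$ to be $*$-splittable, by explicitly reconstructing the two factors $x$ and $y$. The key geometric picture is the one in Figure~\ref{fig:uivipiDiagram}: the point $(m_j,\sigma(m_j))$ has exactly $p_j$ points weakly below and to its left, and because $\sigma$ avoids nothing a priori here we only use the counting inequality. The idea is to ``undo'' the pattern replacement in the definition of $*$: set $a=v_j-1$ and $b=u_j-1$, so that $a+b=u_j+v_j-2\geq p_j$, and in fact I would argue $a+b=p_j$ must hold. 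I would let $x$ be the permutation read off from the first $m_j-1$ indices of $\sigma$ together with index $m_j$, and $y$ the permutation read off from the values below $\sigma(m_j)$ together with the value $\sigma(m_j)$, with the staircase block of $p_j$ consecutive increasing points at position $(m_j,\sigma(m_j))$ split back into an $\ii_a$ tail (the RL minimums of $x$) and an $\ii_b$ head (the LR maximums of $y$). One then checks that $x*y$, as defined in step (2) of the definition of $*$, recovers $\sigma$ exactly. The verification is a routine bookkeeping of indices and values using the fact that $(m_j,\sigma(m_j))$ being a RL maximum means everything strictly to its right is both higher-indexed and lower-valued, and everything strictly above it lies strictly to its left.

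Next I would prove the second sentence: if $\sigma$ avoids $213$ and $\sigma=x*y$, then both $x$ and $y$ avoid $213$. This is essentially immediate from part~(v) of Lemma~\ref{lem:*facts}, read contrapositively — but I must be careful, since that lemma assumed $x,y$ avoid $213$ to begin with, so I cannot cite it directly. Instead I would argue directly from the construction of $x*y$: the point set of $x$ sits inside the point set of $\sigma=x*y$ as the leftmost $|x|$ columns, up to the RL-minimum staircase of $x$ having been shifted down in value (monotonically, preserving relative order), and similarly $y$ sits as the bottom $|y|$ rows with its LR-maximum staircase shifted up. Since an order-preserving relabeling of a subset of the points of $\sigma$ cannot create a $213$ pattern where $\sigma$ had none, both $x$ and $y$ avoid $213$. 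More precisely: any three points of $x$ (resp.\ $y$) correspond to three points of $\sigma$ in the same relative horizontal order and the same relative vertical order, because the value-shifts applied in forming $x*y$ are monotone on the relevant staircase and identity elsewhere; hence a $213$ occurrence in $x$ would yield one in $\sigma$.

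The main obstacle I expect is the first part — correctly identifying the two factors $x$ and $y$ from the single inequality $u_j+v_j\geq p_j+2$ and showing the split is consistent, i.e.\ that the $a$ RL minimums of the candidate $x$ really are $1,2,\dots,a$ in value and the $b$ LR maximums of the candidate $y$ really occur on the first $b$ indices, so that the pattern being replaced is genuinely $21[\ii_a,\ii_b]$. This requires knowing that $\sigma$ is increasing on the $p_j$ points weakly below-and-left of $(m_j,\sigma(m_j))$; for $213$-avoiding $\sigma$ this is built into the setup (as the paragraph before the lemma notes), and for general $\sigma$ the inequality $u_j+v_j\geq p_j+2$ together with $(m_j,\sigma(m_j))$ being a RL maximum should be shown to force exactly $p_j$ such points arranged in a single increasing run — this is the one place where a short but genuine argument, rather than pure bookkeeping, is needed. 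Once that structural fact is pinned down, defining $x$ and $y$ and checking $x*y=\sigma$ is mechanical.
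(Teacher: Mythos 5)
Your strategy for the first claim (undo the pattern replacement by splitting the increasing run of $p_j$ points into a tail that becomes the RL minimums of $x$ and a head that becomes the LR maximums of $y$) is exactly the paper's approach, but your parameter choice contains a false step: you set $a=v_j-1$, $b=u_j-1$ and assert that $a+b=p_j$ ``must hold,'' i.e.\ that $u_j+v_j=p_j+2$. This fails in general: for $\sigma=34512$ (which avoids $213$) and its first RL maximum $(3,5)$ one has $u_1=v_1=p_1=3$, so $u_1+v_1=p_1+3$. The run has only $p_j$ points, so you cannot give $v_j-1$ of them to $x$ and $u_j-1$ of them to $y$ simultaneously; the inequality $u_j+v_j\geq p_j+2$ should instead be used to show that some split of the run into $a+b=p_j$ points with $a\leq v_j-1$ and $b\leq u_j-1$ exists, e.g.\ $b=u_j-1$ and $a=p_j-u_j+1\leq v_j-1$, which is precisely the paper's choice $k=p_j-u_j+1$. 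With your $a,b$ the ``mechanical'' verification that $x*y=\sigma$ cannot go through. (Also, the quantities $u_j,v_j,p_j$ are defined in the paper only for $\sigma\in\fS_n(213)$, so the increasing-run structure of the $p_j$ points is part of the setup rather than something to be forced from the inequality.)

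For the second claim your caution about citing part (v) of Lemma~\ref{lem:*facts} is reasonable (the paper itself leans on (v) there), but your replacement argument is not correct: the relabeling performed in forming $x*y$ is \emph{not} order-preserving on the left $|x|$ points when $x$ is arbitrary, since the demoted staircase values can fall below non-staircase values of $x$ that they previously exceeded. Concretely, for $x=2413$ and $y=1$ one gets $x*y=35124$, whose left four points form the pattern $3412\neq 2413$, and the $213$ occurrence $2,1,3$ of $x$ at positions $1,3,4$ becomes $3,1,2$ there; so ``a $213$ in $x$ yields a $213$ in $\sigma$ at the same three points'' is false. The conclusion is still true, but it needs either the restriction to the $x,y$ produced by the first-part construction (for those, the left $|x|$ and bottom $|y|$ points of $\sigma$ are order isomorphic to $x$ and $y$ by construction, because the replacement only reorders run points lying in the two different halves -- this is what the paper's appeal to (v) is implicitly doing), or a genuine repair in the general case: if the ``$3$'' of the pattern in $x$ is an RL minimum that got demoted, replace it by the point at position $|x|+j_b$, which receives the maximum modified value $x(i_a)+|y|$ and still lies to the right of the ``$1$''; the statement for $y$ then follows from the statement for $x$ using $r_1(x*y)=r_1(y)*r_1(x)$ and the fact that $213$ is its own inverse.
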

\begin{proof}
Consider $\sigma$ with all the mentioned conditions. The $p_j$ points to the left and below of $(m_j,\sigma(m_j))$ and including $(m_j,\sigma(m_j))$ form the pattern $\ii_{p_j}$. We will replace this pattern with $21[\ii_k,\ii_{p_j-k}]$ for some $k\in [p_j-1]$. In all cases our new permutation has the decomposition $21[x,y]$. Note that if $p_j-k\leq u_j-1$ that the right-most part of $\ii_k$ is a RL minimum of $x$. Also if $k\leq v_j-1$ then the right-most part of $\ii_{p_j-k}$ becomes a LR maximum of $y$. If we choose $k=p_j-u_j+1$ and $p_j-k=u_j-1$ we get both conditions $p_j-k\leq u_j-1$ and $k\leq v_j-1$. It follows that  the points from $\ii_k$ in $x$ become the RL minimums of $x$ and the points from $\ii_{p_j-k}$ in $y$ become the LR maximums of $y$. This implies $\sigma=x*y$ so $\sigma$ is $*$-splittable. 

Consider the case where $\sigma$ is $*$-splittable as in the previous paragraph. We want to show that $x$ and $y$ avoid 213. By part (v) of Lemma~\ref{lem:*facts}  the left $|x|$ points of $\sigma=x*y$  are order isomorphic to $x$. Because $\sigma$ avoided 213 we must have that $x$ does as well. By part (v) of Lemma~\ref{lem:*facts} the bottom $|y|$ points of $\sigma=x*y$ are isomorphic to $y$. Because $\sigma$ avoided 213 we must have that $y$ does as well. 
\end{proof}

Now consider the case where $\sigma\in \fS_n(213)$ is not $*$-splittable, so $u_i+v_i\leq p_i+1$ for all $i$. We want to show in this case that  $\sigma$ is {\it $\star$-splittable} where we can write $\sigma=z\star 1$ for some $z$. Recall when constructing $z\star 1$ where $z=21[x,y]$ with $x\in \fS_s(213)$ and $y\in \fS_t(213)$ that we had $z\star 1=(x\star 1)*(y\star 1)$ 
with the point $(s+1,t+1)$ removed. Certainly $(x\star 1)*(y\star 1)$ is $*$-splittable so by Lemma~\ref{lem:*split} there is some index $j$ with $u_j+v_j\geq p_j+2$. Note that the point $(s+1,t+1)$ is above the $(j+1)$st RL maximum and to the right of the $(j-1)$st RL maximum, so when removing  
the point $(s+1,t+1)$ to get $z\star1$  we decrease $u_j$, $v_j$ and $p_j$ by  one. This implies our inequality $u_j+v_j\geq p_j+2$ becomes $u_j+v_j\geq p_j+1$. This together with the assumption that $\sigma$ is not $*$-splittable  implies that $u_j+v_j= p_j+1$. We will show that $u_i+v_i\leq p_i+1$ for all $i$ is a sufficient condition for a permutation avoiding 213 to be $\star$-splittable. 

\begin{lemma}A permutation $\sigma$ avoiding 213 is $\star$-splittable if $u_i+v_i\leq p_i+1$ for all $i$. Particularly if $\sigma$ avoids $213$ and $\sigma = w\star 1$ then $w$ avoids $213$.
\label{lem:starsplit}
\end{lemma}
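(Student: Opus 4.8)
\textbf{Proof proposal for Lemma~\ref{lem:starsplit}.}

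The plan is to mirror the proof of Lemma~\ref{lem:*split}, but to reverse-engineer the $\star$-construction rather than the $*$-construction. Let $\sigma\in\fS_n(213)$ satisfy $u_i+v_i\le p_i+1$ for all RL maximums $\sigma(m_i)$. We want a permutation $w$ avoiding $213$ with $\sigma=w\star 1$. First I would handle the base/degenerate cases: if $\sigma=12\dots n$ then $\sigma(1)=1$ and $\sigma=12[1,(12\dots(n-1))]$, and we may recurse into the second clause of the definition of $\star$; and if $\sigma$ has length $1$ there is nothing to do. So assume $\sigma$ is not the identity. Since $\sigma$ avoids $213$, it decomposes as $\sigma=21[a,b]$ with $a,b\neq\epsilon$ or $\sigma=12[1,c]$; the latter case reduces to applying the second clause and inducting on $c$, so the substantive case is $\sigma=21[a,b]$.

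The key idea, reading the last paragraph before the lemma backwards, is this: the hypothesis $u_i+v_i\le p_i+1$ for all $i$, when we \emph{re-insert} a point $(s+1,t+1)$ at the appropriate spot — namely in the ``gap'' of the unique RL maximum $\sigma(m_j)$ where equality $u_j+v_j=p_j+1$ is achieved — produces a permutation $\sigma'$ with $u_j+v_j\ge p_j+2$, hence $*$-splittable by Lemma~\ref{lem:*split}, say $\sigma'=x*y$ with $x,y$ avoiding $213$ (also by Lemma~\ref{lem:*split}). One then needs to check that the reinserted point is exactly the point $(|x|+1,|y|+1)$ and that the RL minimums of $x$ and the LR maximums of $y$ used in forming $x*y$ are the images, under the earlier $*$-unsplitting, of RL minimums/LR maximums produced by applying $\star$ to the two halves; equivalently one must verify that $x=x'\star 1$ and $y=y'\star 1$ for the appropriate smaller permutations $x',y'$, which is where the induction hypothesis of the lemma (applied to $x$ and $y$, which are shorter and still avoid $213$) gets used. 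Setting $w=21[x',y']$ then gives $\sigma=w\star 1$ by the well-definedness clause (i) of Lemma~\ref{lem:starfacts}, and $w$ avoids $213$ because $x'$ and $y'$ do and they are in the $21[\,\cdot\,,\,\cdot\,]$ position. The final sentence ``$w$ avoids $213$'' then also follows from part (v)-type reasoning: the left/bottom points of $w\star1$ being order-isomorphic to $x'\star1$ and $y'\star1$ respectively, as in the proof of Lemma~\ref{lem:*split}.

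The main obstacle I anticipate is making precise the claim that there is a \emph{unique} RL maximum $\sigma(m_j)$ at which to reinsert the point, and that reinsertion there is forced — i.e. that the $u_i+v_i\le p_i+1$ inequalities, together with avoidance of $213$ and non-identity-ness, single out exactly one admissible ``splitting seam.'' This requires a careful bookkeeping argument: one should show that among all RL maximums, equality $u_j+v_j=p_j+1$ holds at precisely the seam separating $a$ from $b$ in the decomposition $\sigma=21[a,b]$ (when $\sigma$ is not of the $12[1,c]$ form), and that the reinserted point lands strictly above the $(j+1)$st RL maximum and strictly right of the $(j-1)$st, so that inserting it increments exactly $u_j,v_j,p_j$ each by one. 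Once that geometric picture is nailed down, the rest is a routine appeal to Lemma~\ref{lem:*split}, Lemma~\ref{lem:*facts}, and Lemma~\ref{lem:starfacts}(i) together with the inductive hypothesis. I would also double-check the edge case where one of $x,y$ is trivial (length $1$), since $1\star1=21$ is defined by hand and should be the induction's true base case.
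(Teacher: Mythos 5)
Your central idea --- reinsert a point at an RL maximum where $u_j+v_j=p_j+1$ to make the permutation $*$-splittable, then invoke Lemma~\ref{lem:*split} and the inductive hypothesis on the two halves --- is exactly the paper's argument for one of its two cases, but your plan has two genuine gaps. The first is the reduction of the case $\sigma=12[1,c]$ by ``applying the second clause and inducting on $c$.'' Passing from $\sigma$ to $c$ decreases every $p_i$ by one (the point $(1,1)$ lies below and to the left of every RL maximum) while leaving $u_i$ and $v_i$ unchanged at all interior RL maxima, so the hypothesis $u_i+v_i\leq p_i+1$ for $\sigma$ only yields $u_i+v_i\leq p_i+2$ for $c$, and the induction hypothesis of the lemma need not apply to $c$. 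Concretely, $\sigma=15342$ avoids $213$, satisfies $u_i+v_i\leq p_i+1$ at all three of its RL maxima (in fact with equality at each, which also refutes your uniqueness claim for the equality index), and has $\sigma=12[1,c]$ with $c=4231$; but $4231$ has $u_2+v_2=4=p_2+2$ at its middle RL maximum, so your induction cannot be invoked on it. (It happens that $4231=312\star 1$, but that needs an argument your scheme does not supply.) The paper avoids this trap by splitting on the inequalities rather than on the block decomposition: it only reduces to $12[1,\tau]$ after establishing the stronger bounds $u_i+v_i\leq p_i$ at all interior RL maxima, which is precisely what makes the hypothesis transfer to $\tau$; permutations like $15342$ are instead handled by the insertion argument.

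The second gap is that in your ``substantive case'' $\sigma=21[a,b]$ you assume an equality index exists, is unique, and sits at the seam between $a$ and $b$, and you explicitly defer the justification as ``careful bookkeeping.'' That justification is the crux of the proof, not a routine check, and the uniqueness/location claims are neither needed nor true. What must actually be proved is the dichotomy: either some \emph{interior} RL maximum achieves $u_j+v_j=p_j+1$ (the insertion argument needs interiority so the new point lands strictly between the neighbouring RL maxima), or else $u_i+v_i\leq p_i$ at every interior RL maximum, in which case a case analysis on where $\sigma(1)$ can sit relative to the RL maxima --- using $213$-avoidance to derive contradictions in all but one subcase --- forces $\sigma=12[1,\tau]$, with the small cases $l\leq 2$ (e.g.\ $\sigma=12[\ii_{n-2},\dd_2]=\ii_{n-1}\star 1$) done by hand. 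Your proposal contains no substitute for this analysis, so the case where equality occurs only at the first or last RL maximum, or not at all, is left unresolved. Two smaller points: the identity $\ii_n$ with $n\geq 2$ has $u_1=v_1=p_1=n$, so it violates the hypothesis and is not $\star$-splittable (by Lemma~\ref{lem:starfacts}(iv) every $w\star 1$ has a descent); it should be excluded, not recursed on. And when you do apply the inductive hypothesis to the halves $\bar{x},\bar{y}$ of the $*$-split, you must verify, as the paper does, that their $u,v,p$ values again satisfy $u_i+v_i\leq p_i+1$.
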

\begin{proof}
Consider $\sigma$ with all the mentioned conditions. 
Note that we are only concerned with the cases where $n=|\sigma|>1$. 
Also assume that there exists a $j$ such that $u_j+v_j=p_j+1$ for a RL maximum that is not the first or the last. We will first show that $\sigma$ is $\star$-splittable with this condition by induction on $|\sigma|$. If $|\sigma|=2$ then only $\sigma=21=1\star 1$ satisfies the conditions but then $\sigma$ is $\star$-splittable. Let $|\sigma|>2$. The $p_j$ points to the left and below of $(m_j,\sigma(m_j))$ and including $(m_j,\sigma(m_j))$ form the pattern $\ii_{p_j}$. Consider the subcollection of points counted by $p_j$ that are to the right of $(m_{j-1},\sigma(m_{j-1}))$ and above $(m_{j+1},\sigma(m_{j+1}))$. This subcollection isn't empty since it contain $(m_j,\sigma(m_j))$ so there exists a left-most point in the subcollection. We will insert a new point $(s,t)$ just below and to the left of this point. Note that with this we increase $u_j$, $v_j$ and $p_j$ by one, which we will notate $\check{u}_j$, $\check{v}_j$ and $\check{p}_j$  so we have $\check{u}_j+\check{v}_j=\check{p}_j+2$. If we take the pattern $\ii_{\check{p}_j}$ created by these $\check{p}_j$ points and replace it with $21[\ii_{\check{p}_j-\check{u}_j+1},\ii_{\check{u}_j-1}]$ we create a permutation  with decomposition $21[\bar{x},\bar{y}]$ just as we had in Lemma~\ref{lem:*split} with $\sigma=\bar{x}*\bar{y}$. 
Note that this means that $s=|\bar{x}|$ and $t=|\bar{y}|$ where $(s,t)$ was the point we added earlier. 
The associated $u^{\bar{x}}_i$, $v^{\bar{x}}_i$ and $p^{\bar{x}}_i$ values for $\bar{x}$ are the old values $u^{\bar{x}}_i=u_i$, $v^{\bar{x}}_i=v_i$ and $p^{\bar{x}}_i=p_i$ for $1\leq i\leq j-1$ and $u^{\bar{x}}_j=1$, $v^{\bar{x}}_j=\check{p}_j-\check{u}_j+1$ and $p^{\bar{x}}_j=\check{p}_j-\check{u}_j+2$. So $u^{\bar{x}}_i+v^{\bar{x}}_i\leq p^{\bar{x}}_i+1$ for all $i$.  By induction $\bar{x}$ is $\star$-splittable and $\bar{x}=x\star 1$ for some $x$ avoiding 213. 
The associated $u^{\bar{y}}_i$, $v^{\bar{y}}_i$ and $p^{\bar{y}}_i$ values for $\bar{y}$ are the old values $u^{\bar{y}}_i=u_i$, $v^{\bar{y}}_i=v_i$ and $p^{\bar{y}}_i=p_i$ for $j+1\leq i\leq l$ and $u^{\bar{y}}_j=\check{u}_j+1$, $v^{\bar{y}}_j=1$ and $p^{\bar{y}}_j=\check{u}_j+2$. So $u^{\bar{y}}_i+v^{\bar{y}}_i\leq p^{\bar{x}}_i+1$ for all $i$.
By induction $\bar{y}$ is $\star$-splittable and $\bar{y}=y\star 1$ for some $y$ avoiding 213. 
We now have $\sigma=(x\star 1)*(y\star 1)$ with the point $(|x|+1,|y|+1)=(s,t)$ removed. Hence $\sigma=21[x,y]\star 1$  is $\star$-splittable. 

We now will show in this case that if $\sigma$ avoids 213 and $\sigma = w\star 1$ then $w$ avoids 213. We will do so by induction on $|\sigma|$. The base case is straightforward so assume $|\sigma |>2$. We will first argue  that when we introduced the point $(s,t)$ into $\sigma$ that we did not create the pattern 213. By the  way we included this point we know that there is another point $(s+1,t+1)$ since we inserted $(s,t)$ just to the left and below of another point. If we did create the pattern 213 by inserting $(s,t)$ then this pattern can not involve $(s+1,t+1)$. However, we can replace the point $(s,t)$ in the  213 pattern with the point $(s+1,t+1)$ and maintain the  213 pattern, which is a contradiction  since $\sigma$ avoids 213. Since $\sigma$ with this point included is $\bar{x}*\bar{y}$  we can conclude that $\bar{x}$ and $\bar{y}$ avoid 213 by Lemma~\ref{lem:*split}. By induction this means both $x$ and $y$ avoid 213 so $21[x,y]$ avoids 213, which completes our argument in this case because $\sigma=21[x,y]\star 1$.

Next we consider the case where  $u_i+v_i\neq p_i+1$ for all $i\neq 1,l$ so $u_i+v_i\leq p_i$ for all $i\neq 1,l$ and $u_i+v_i\leq p_i+1$ for  $i= 1,l$. If $l$, the number of RL maximums, is $1$ then $\sigma=\ii_n$ and $u_1=v_1=p_1$, which contradicts $u_1+v_1\leq p_1+1$ unless $n=1$. If $l=2$ then we have $u_1=p_1$ and $v_2=p_2$, which implies $v_1=u_2=1$ and $\sigma=12[\ii_{n-2},\dd_2]$. By definition the definition of $\star$ we have $\sigma = \ii_{n-1}\star 1$ so $\sigma$ is $\star$-splittable. Now assume $l>2$. 
We will consider several cases depending on $\sigma(1)$ and will end up showing $\sigma=12[1,\tau]$. The first case is when $\sigma(1)$ is a RL maximum, which forces $\sigma(1)=n$ where $|\sigma|=n$. It follows that the second RL maximum is $(m_2,n-1)$ and all the points counted by $p_2$  are also counted by $u_2$. It follows that $u_2+v_2\geq p_2+1$ and because $l>2$ we have a contradiction. Our second case is when $\sigma(1)$ is below the first RL maximum but above the second. Because $\sigma$ avoids 213  all the points to the left of the first RL maximum must be weakly above $(1,\sigma(1))$. Because the second RL maximum is below $(1,\sigma(1))$ we have $u_1=v_1=p_1$. For the inequality $u_1+v_1\leq p_1+1$ to hold we need $p_1=1$, which contradicts $(1,\sigma(1))$ not being a RL maximum. 
Our third case is when $\sigma(1)$ is below the second RL maximum but is above the last. Say that $\sigma(1)$ is above $(m_j,\sigma(m_j))$ but is below the $(j-1)$st RL maximum. Consider the $p_{j-1}$ points associated to the $(j-1)$st RL maximum. Because $\sigma$ avoids 213 none of these points can be below $(1,\sigma(1))$ and to the left of $(m_{j-1},\sigma(m_{j-1}))$, so $v_{j-1}$ counts all the $p_{j-1}$ points. We then have $u_{j-1}+v_{j-1}\geq p_{j-1}+1$, which is a contradiction. Our last case is when $\sigma(1)$ is below the last RL maximum. Because $\sigma$ avoids 213 we must then have that $\sigma(1)=1$ so $\sigma=12[1,\tau]$. Note that in $\tau$ the $u_i$'s and $v_i$'s are the same as they were in $\sigma$ except $u_1$ decreases by one and $v_l$ decreases by one where $l$ indicates the number of RL maximums. We also have that $p_i$ decreases by 1 for all $i$. Because we assumed that $u_i+v_i\leq p_i$ for all $i\neq 1,l$ and $u_i+v_i\leq p_i+1$ for $i= 1,l$ in $\sigma$ we now have that $u^{\tau}_i+v^{\tau}_i\leq p_i^{\tau}+1$ for all $i$ for the associated values in $\tau$. This means $\tau$ is $\star$-splittable so $\tau=z\star 1$ for some $z$. It follows that $\sigma=12[1,z\star 1]=12[1,z]\star 1$ and $\sigma$ is $\star$-splittable. 

Lastly, we will show in this second case that if $\sigma$ avoids 213 and $\sigma = w\star 1$ then $w$ avoids 213 by induction on $|\sigma|$. Again the base case is straightforward so assume $|\sigma|>2$. Because $\sigma$ avoids 213 and $\sigma=12[1,\tau]$ we must have that $\tau$ avoids 213. By induction if $\tau$ avoids 213 and $\tau=z\star 1$ then $z$ avoids 213, which further implies that $12[1,z]$ avoids 213. This completes our argument in this case because $\sigma=12[1,z]\star 1$.
\end{proof}

Now that we proven the conditions for $*$-splittable and $\star$-splittable we can prove that $\theta$ is a bijection. 

\begin{thm}
The map $\theta:\fS_n(132)\rightarrow\fS_n(213)$ is a well-defined bijection that  commutes with the map $r_1$ and has $\Asc(\sigma)=\Des(\theta(\sigma))$. 
\label{thm:theta}
\end{thm}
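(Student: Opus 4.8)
The plan is to deduce the theorem from Lemma~\ref{lem:almostTheta} together with Lemmas~\ref{lem:*split} and~\ref{lem:starsplit}. Lemma~\ref{lem:almostTheta} already shows that $\theta$ is a well-defined map $\fS_n(132)\to\fS_n(213)$ that commutes with $r_1$ and satisfies $\Asc(\sigma)=\Des(\theta(\sigma))$, so the only remaining task is to prove that $\theta$ is a bijection. Since the dihedral operation $r_{-1}$ is a bijection $\fS_n\to\fS_n$ with $r_{-1}(132)=213$, it restricts to a bijection $\fS_n(132)\to\fS_n(213)$, so the two sets have the same (finite) cardinality; it therefore suffices to prove that $\theta$ is surjective.

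I would prove surjectivity by induction on $n$. The cases $n\le 1$ are immediate. Let $n>1$ and take $\tau\in\fS_n(213)$, and consider the quantities $u_i,v_i,p_i$ attached to the RL maximums of $\tau$ as defined just before Lemma~\ref{lem:*split}. One of two alternatives must hold. If some index $j$ satisfies $u_j+v_j\ge p_j+2$, then by Lemma~\ref{lem:*split} the permutation $\tau$ is $*$-splittable, so $\tau=x*y$ for nonempty $x,y$, both avoiding $213$ (again by Lemma~\ref{lem:*split}) and both of length less than $n$; by the inductive hypothesis $x=\theta(\alpha)$ and $y=\theta(\beta)$ for some $\alpha,\beta$ avoiding $132$, the inflation $21[\alpha,\beta]$ avoids $132$, and by~\eqref{eq:theta1} we get $\theta(21[\alpha,\beta])=\theta(\alpha)*\theta(\beta)=x*y=\tau$. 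Otherwise $u_i+v_i\le p_i+1$ for every $i$, so by Lemma~\ref{lem:starsplit} the permutation $\tau$ is $\star$-splittable, say $\tau=z\star 1$ with $z$ avoiding $213$ and $|z|=n-1$; by induction $z=\theta(\gamma)$ for some $\gamma$ avoiding $132$, the inflation $12[\gamma,1]$ avoids $132$, and by~\eqref{eq:theta2} we get $\theta(12[\gamma,1])=\theta(\gamma)\star 1=z\star 1=\tau$. In either case $\tau$ lies in the image of $\theta$, so $\theta$ is onto and hence a bijection.

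The heavy lifting has already been done in Lemmas~\ref{lem:*split} and~\ref{lem:starsplit}; the main thing to get right in assembling the theorem is that the two alternatives above are exhaustive for a $213$-avoider of length greater than one, which is clear since $\neg(u_j+v_j\ge p_j+2)$ is exactly $u_j+v_j\le p_j+1$. The only remaining care is routine: checking that $21[\alpha,\beta]$ and $12[\gamma,1]$ avoid $132$ whenever their pieces do (the largest entry of such an inflation can never play the role of the ``$3$'' or the ``$2$'' in a forbidden $132$), and that the pieces passed to the inductive hypothesis genuinely have smaller length (both factors of a $*$-split are nonempty, and a $\star$-split drops exactly one point). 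Finally, the property $\Asc(\sigma)=\Des(\theta(\sigma))$ turns a term $q^{\maj(\sigma)}$ for $\sigma\in\fS_n(132)$ into $q^{\binom{n}{2}-\maj(\theta(\sigma))}$, so summing over $\fS_n(132)$ yields the permutation symmetry $M_n(132)=q^{\binom{n}{2}}M_n(213;q^{-1})$, in complete parallel with the involution statement of Theorem~\ref{thm:132symm213}.
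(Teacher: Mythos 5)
Your proposal is correct and follows essentially the same route as the paper: invoke Lemma~\ref{lem:almostTheta} for well-definedness, commutation with $r_1$, and $\Asc(\sigma)=\Des(\theta(\sigma))$, then prove surjectivity by induction on $n$ using the dichotomy between the $*$-splittable case (Lemma~\ref{lem:*split}) and the $\star$-splittable case (Lemma~\ref{lem:starsplit}), which together with equal cardinalities of $\fS_n(132)$ and $\fS_n(213)$ gives bijectivity. The only cosmetic difference is that you justify equal cardinalities via the reflection $r_{-1}$ while the paper cites the Catalan count; both are fine.
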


\begin{proof}
From Lemma~\ref{lem:almostTheta} we know that $\theta$ is well-defined, commutes with $r_1$ and has $\Asc(\sigma)=\Des(\theta(\sigma))$. The last thing we need to show is that $\theta$ is indeed a bijection. Because $|\fS_n(132)|=|\fS_n(213)|=C_n$ for all $n$ it suffices to show that $\theta$ is surjective. We will prove this by induction on $n$. The base case of $n=1$ is true so assume that $n>1$ and $\theta:\fS_k(132)\rightarrow\fS_k(213)$ is bijective for all $k<n$.

Let $\sigma\in \fS_n(213)$ and $u_i$, $v_i$ and $p_i$ be defined as in Lemmas~\ref{lem:*split} and \ref{lem:starsplit}. If there exists a $j$ such that $u_j+v_j\geq p_j+2$ then by Lemma~\ref{lem:*split} we know that $\sigma$ is $*$-splittable and $\sigma=x*y$ where $x$ and $y$ avoid 213. By induction there exists $\alpha$ and $\beta$ avoiding 132 such that $\theta(\alpha)=x$ and $\theta(\beta)=y$, which implies $\theta(21[\alpha,\beta])=\theta(\alpha)*\theta(\beta)=\sigma$. 

The alternative is that $u_i+v_i\leq  p_i+1$ for all $i$. By Lemma~\ref{lem:starsplit} we know that $\sigma$ is $\star$-splittable so $\sigma=z\star 1$ for some $z$ that avoids 213. By induction there exists a $\gamma$ avoiding 132 such that $\theta(\gamma)=z$. Further $\theta(12[\gamma,1])=\theta(\gamma)\star 1 = \sigma$. Hence $\theta$ is surjective for all $n$ and thus a bijection. 
\end{proof}

The map in Theorem~\ref{thm:theta} is sufficient to prove the symmetry $M_n(132)=q^{\binom{n}{2}}M_n(213;q^{-1}).$

\begin{cor}
For $n\geq 0$ we have the symmetry
$$M_n(132)=q^{\binom{n}{2}}M_n(213;q^{-1}).$$
\vspace{-1.2cm}

\hqed
\end{cor}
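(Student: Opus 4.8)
The plan is to derive this symmetry directly from the bijection $\theta:\fS_n(132)\to\fS_n(213)$ constructed in Theorem~\ref{thm:theta}, in exact parallel with the way the involution identity in Theorem~\ref{thm:132symm213} was obtained from its descent-reversing bijection. The only additional ingredient is the elementary identity relating $\maj$ and $\comaj$: since every index $i\in[n-1]$ is either an ascent or a descent of a length-$n$ permutation, the sets $\Des(\sigma)$ and $\Asc(\sigma)$ partition $[n-1]$, so $\maj(\sigma)=\binom{n}{2}-\comaj(\sigma)$ for all $\sigma\in\fS_n$; this is part (1) of Lemma~\ref{lemma:majproperties}.

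First I would fix $\sigma\in\fS_n(132)$ and set $\tau=\theta(\sigma)\in\fS_n(213)$. By Theorem~\ref{thm:theta} (and Lemma~\ref{lem:almostTheta}) we have $\Asc(\sigma)=\Des(\tau)$, hence
\[
\maj(\tau)=\sum_{i\in\Des(\tau)}i=\sum_{i\in\Asc(\sigma)}i=\comaj(\sigma)=\binom{n}{2}-\maj(\sigma),
\]
so $\maj(\sigma)=\binom{n}{2}-\maj(\tau)$. Summing this relation over all $\sigma\in\fS_n(132)$ and using that $\theta$ is a bijection onto $\fS_n(213)$ then gives
\[
M_n(132;q)=\sum_{\sigma\in\fS_n(132)}q^{\maj(\sigma)}=\sum_{\tau\in\fS_n(213)}q^{\binom{n}{2}-\maj(\tau)}=q^{\binom{n}{2}}\sum_{\tau\in\fS_n(213)}q^{-\maj(\tau)}=q^{\binom{n}{2}}M_n(213;q^{-1}),
\]
which is the asserted identity.

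There is essentially no obstacle remaining at this stage: all the work has already been absorbed into Theorem~\ref{thm:theta}, whose proof required the operations $*$ and $\star$ together with Lemmas~\ref{lem:*facts}, \ref{lem:starfacts}, \ref{lem:*split} and~\ref{lem:starsplit}. The one point worth emphasizing is that $\theta$ interchanges ascents and descents in the strong sense $\Asc(\sigma)=\Des(\theta(\sigma))$ of \emph{sets}, not merely of cardinalities, which is precisely what Lemma~\ref{lem:almostTheta}(ii) provides; with that in hand the corollary is immediate. Alternatively one could phrase the computation through the $\bar{M}\cI$-style bivariate generating functions as in Lemma~\ref{lemma:majproperties}, but the direct summation above is the cleanest route and is the one I would carry out.
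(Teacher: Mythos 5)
Your proposal is correct and is exactly the argument the paper intends: the corollary is deduced immediately from Theorem~\ref{thm:theta} by using $\Asc(\sigma)=\Des(\theta(\sigma))$, the identity $\maj(\sigma)=\binom{n}{2}-\comaj(\sigma)$, and the bijectivity of $\theta$ to rewrite the sum over $\fS_n(132)$ as a sum over $\fS_n(213)$. No gaps; your write-up simply makes explicit the short computation the paper leaves to the reader.
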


Because $\theta$ commutes with $r_1$ we can conclude that $\theta$ restricts to involutions implying that $\theta$ is also a bijection $\cI_n(132)\rightarrow\cI_n(213)$. This reproves Theorem~\ref{thm:132symm213}, a result we had shown in  Section~\ref{maj213}. 

\begin{cor}
For $n\geq 0$ we have the symmetry
$$M\cI_n(132)=q^{\binom{n}{2}}M\cI_n(213;q^{-1}).$$
\vspace{-1.2cm}

\hqed
\end{cor}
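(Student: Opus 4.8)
The plan is to deduce this corollary directly from Theorem~\ref{thm:theta}, which supplies a bijection $\theta:\fS_n(132)\to\fS_n(213)$ that commutes with $r_1$ and satisfies $\Asc(\sigma)=\Des(\theta(\sigma))$. The first step is to observe that an element $\iota\in\fS_n$ is an involution precisely when $r_1(\iota)=\iota$, since $r_1$ is the ``take the inverse'' operation on $\fS_n$ (as noted in Section~2, and recorded for involutions in Lemma~\ref{lemma:operations and involutions}). Hence, for $\iota\in\cI_n(132)$ we compute $r_1(\theta(\iota))=\theta(r_1(\iota))=\theta(\iota)$, so $\theta(\iota)\in\cI_n$, and since $\theta(\iota)\in\fS_n(213)$ it lies in $\cI_n(213)$. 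Running the same argument on the inverse bijection $\theta^{-1}$ (which also commutes with $r_1$ because $\theta$ does) shows $\theta$ restricts to a bijection $\cI_n(132)\to\cI_n(213)$.

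The second step is the descent-complementation bookkeeping. If $\iota\in\cI_n(132)$ and $\tau=\theta(\iota)\in\cI_n(213)$, then $\Des(\tau)=\Asc(\iota)=[n-1]\setminus\Des(\iota)$ by Theorem~\ref{thm:theta} together with Lemma~\ref{lemma:ascdessymm}. Summing indices gives $\maj(\tau)=\sum_{i\in[n-1]}i-\sum_{i\in\Des(\iota)}i=\binom{n}{2}-\maj(\iota)$. Therefore
$$M\cI_n(213;q)=\sum_{\tau\in\cI_n(213)}q^{\maj(\tau)}=\sum_{\iota\in\cI_n(132)}q^{\binom{n}{2}-\maj(\iota)}=q^{\binom{n}{2}}M\cI_n(132;q^{-1}),$$
and replacing $q$ by $q^{-1}$ and rearranging yields $M\cI_n(132)=q^{\binom{n}{2}}M\cI_n(213;q^{-1})$, which is exactly the claimed symmetry; the $n=0$ case is the trivial identity $1=1$.

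There is essentially no genuine obstacle here: all of the real work is absorbed into Theorem~\ref{thm:theta}, whose proof (via the operations $*$ and $\star$ and the $*$-splittable/$\star$-splittable characterizations) establishes both that $\theta$ is a bijection and that it commutes with $r_1$. The only point requiring a sentence of care is the ``restricts to involutions'' argument — specifically that one needs $\theta$ to be a \emph{bijection} commuting with $r_1$, not merely a map, so that both $\theta$ and $\theta^{-1}$ send $r_1$-fixed points to $r_1$-fixed points and the restriction is again a bijection. Once that is in place, the statement follows, and it is worth remarking (as the surrounding text does) that this reproves Theorem~\ref{thm:132symm213} by a route independent of the $\varphi$/$f$/$\psi$ construction of Section~\ref{maj213}.
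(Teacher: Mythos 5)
Your proposal is correct and follows essentially the same route as the paper: the paper deduces this corollary directly from Theorem~\ref{thm:theta} by noting that $\theta$ commutes with $r_1$ (whose fixed points are exactly the involutions), hence restricts to a bijection $\cI_n(132)\rightarrow\cI_n(213)$ sending $\Asc$ to $\Des$, and the descent-complementation then gives $\maj\mapsto\binom{n}{2}-\maj$. Your added remarks about invoking $\theta^{-1}$ for the restricted bijectivity and the $n=0$ case are just the routine details the paper leaves implicit.
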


This symmetry does not seem to be limited to the pairs of patterns 123 and 321 as well as 132 and 213. It appears we have the symmetries $M_n(\pi_1)=q^{\binom{n}{2}}M_n(\pi_2;q^{-1})$ and $M\cI_n(\pi_1)=q^{\binom{n}{2}}M\cI_n(\pi_2;q^{-1})$ for any pair of patterns of the form $\pi_1=12[\ii_k,\dd_{m-k}]$ and $\pi_2=12[\dd_{k+1},\ii_{m-k-1}]$ for any $n$, $m$ and $k\in\{0,\dots m\}$. 
\begin{conj}
For the pair of patterns $\pi_1=12[\ii_k,\dd_{m-k}]$ and $\pi_2=12[\dd_{k+1},\ii_{m-k-1}]$ we have  for $n\geq 0$ the symmetry
$$M_n(\pi_1)=q^{\binom{n}{2}}M_n(\pi_2;q^{-1}).$$
\label{conj}
\end{conj}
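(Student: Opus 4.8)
The plan is first to remove the $q^{\binom{n}{2}}$-flip, so that the conjecture turns into a pure $M$-Wilf equivalence. The complement operation $r_0$ (reflection of the diagram across the horizontal line through the center) is a bijection $\fS_n(\alpha)\to\fS_n(r_0\alpha)$ with $\Des(r_0\sigma)=[n-1]\setminus\Des(\sigma)$, hence $\maj(r_0\sigma)=\binom n2-\maj(\sigma)$; thus $M_n(\alpha;q)=q^{\binom n2}M_n(r_0\alpha;q^{-1})$ for \emph{every} pattern $\alpha$, with no hypothesis. Applying this with $\alpha=\pi_1$ and using $r_0\bigl(12[\ii_k,\dd_{m-k}]\bigr)=21[\dd_k,\ii_{m-k}]$, Conjecture~\ref{conj} is equivalent to the assertion that $\pi_2=12[\dd_{k+1},\ii_{m-k-1}]$ and $\rho:=21[\dd_k,\ii_{m-k}]$ are $M$-Wilf equivalent, i.e. $M_n(\pi_2;q)=M_n(\rho;q)$ for all $n$. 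Two immediate checks: when $k=0$ or $k=m-1$ one has $r_0\pi_1=\pi_2$ exactly, so the statement is trivial in those cases; for $m=3$, $k=1$ it reads $M_n(213)=M_n(312)$, which is part of Theorem~\ref{thm:Dokos}. So the genuine content is the range $1\le k\le m-2$. Note also that both $\pi_2$ and $\rho$ are ``valley'' permutations — one maximal descending run followed by one maximal ascending run — of length $m$, and one computes $\Des(\pi_2)=\Des(\rho)=\{1,2,\dots,k\}$; the only difference between them is that the descending run of $\pi_2$ uses the $k{+}1$ smallest values while that of $\rho$ uses the $k$ largest.

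Granting this reduction, it suffices to produce a $\maj$-preserving bijection $\fS_n(\pi_2)\to\fS_n(\rho)$, ideally one preserving the descent set outright. I would try to build it recursively in the spirit of the map $\theta$ of Section~\ref{permsymm}: decompose a $\pi_2$- (resp.\ $\rho$-) avoiding permutation according to its left-to-right maxima and right-to-left minima, and define the bijection by induction on $n$ by matching the ways a new largest element can be attached on the two sides, using generalizations of the operations $*$ and $\star$ of Lemmas~\ref{lem:*facts} and~\ref{lem:starfacts}. The ingredients one needs are (a) a clean splittability dichotomy — every $\pi_2$-avoider (resp.\ $\rho$-avoider) is obtained from smaller avoiders by attaching a maximum in one of finitely many ``slots,'' analogous to Lemmas~\ref{lem:*split} and~\ref{lem:starsplit} — and (b) that each slot operation has a predictable effect on the descent set. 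With (a) and (b) in hand the descent bookkeeping is forced and the bijection essentially writes itself.

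Two alternative routes seem worth keeping open, especially for general $m$. One is to bypass an explicit map and instead exhibit a common generating tree (succession rule) for $\fS_n(\pi_2)$ and $\fS_n(\rho)$ whose labels record enough data to control $\maj$ — for instance the current number of descents together with the list of positions at which a new maximum may legally be inserted — and weight each transition by the appropriate power of $q$; equality of the rules then gives $M_n(\pi_2)=M_n(\rho)$ directly. The other is a growth-diagram/Fomin-style argument: $\pi_2$ and $\rho$ are, up to the symmetries already used, of Backelin–West–Xin type $12\cdots i\,\tau$ versus $i\cdots21\,\tau$, but the known BWX bijection is compatible with taking inverses rather than with the descent set, so one would need a descent-tracking refinement of it.

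The main obstacle I anticipate is ingredient (a) for general $m$ and $k$: in the length-three case the decomposition of a $132$-avoider as $231[\sigma_1,1,\sigma_2]$ is canonical and the splittability lemmas are short, but $\fS_n(\pi_2)$ has no such tidy two-block description for larger $m$, the number of relevant attachment types grows with $m$, and verifying that each attachment preserves avoidance of $\rho$ and behaves correctly on descents becomes a lengthy case analysis. I would therefore first settle $m=4$ completely — the instances $M_n(1243;q)\overset{?}{=}q^{\binom n2}M_n(2134;q^{-1})$ and $M_n(1342;q)\overset{?}{=}q^{\binom n2}M_n(3124;q^{-1})$, which after the $r_0$ reduction become $M_n(2134)=M_n(4123)$ and $M_n(2413)=M_n(...)$ respectively — to identify the correct family of slot operations, then look for an induction on $k$ within the range $1\le k\le m-2$ that propagates the bijection from one valley pattern to the next. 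Finally I should stress that the $r_0$ shortcut does \emph{not} settle the companion conjecture for involutions, since $r_0$ does not preserve $\cI_n$; there one genuinely needs a bijection $\fS_n(\pi_1)\to\fS_n(\pi_2)$ commuting with $r_1$, so of the three routes above only the $\theta$-style construction is a candidate for also proving the involution case, exactly as Theorem~\ref{thm:theta} does for $m=3$.
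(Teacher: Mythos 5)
This statement is Conjecture~\ref{conj}: the paper offers no proof of it, only computational verification for $m,n\leq 9$, so there is no argument of the paper to measure your attempt against. Your opening reduction is correct and is in fact already present in the paper's closing discussion: since $r_0$ complements the descent set, $M_n(\alpha;q)=q^{\binom{n}{2}}M_n(r_0\alpha;q^{-1})$ holds for every pattern $\alpha$, so the conjecture is equivalent to the $M$-Wilf equivalence of $\pi_2=12[\dd_{k+1},\ii_{m-k-1}]$ and $\rho=r_0(\pi_1)=21[\dd_k,\ii_{m-k}]$; your sanity checks ($k=0$, $k=m-1$ trivial; $m=3$, $k=1$ reducing to $M_n(213)=M_n(312)$ from Theorem~\ref{thm:Dokos}) are right, and this is exactly why the paper's new content at length three is the involution statement, which needs the $\theta$-map of Theorem~\ref{thm:theta} because $r_0$ does not preserve $\cI_n$ --- a point you correctly flag for Conjecture~\ref{conj:invo}.

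The genuine gap is that nothing beyond this reduction is actually proved. The equivalence $M_n(\pi_2)=M_n(\rho)$ for $1\leq k\leq m-2$ and $m\geq 4$ (e.g.\ $M_n(2134)=M_n(4123)$) is precisely the open content of the conjecture, and your ingredients (a) and (b) --- a splittability dichotomy generalizing Lemmas~\ref{lem:*split} and~\ref{lem:starsplit}, with controlled effect on descents --- are asserted as desiderata, not established; there is no evidence offered that the $*$ and $\star$ operations, which rely heavily on the rigid two-block structure of $213$-avoiders (Proposition~\ref{prop:decomp}), admit analogues for avoiders of a length-$m$ valley pattern, and indeed no such canonical decomposition is known. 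The generating-tree route and the descent-tracking refinement of Backelin--West--Xin are likewise only named (and, as you note, BWX is compatible with inverses, not with descent sets, so it is not clear it can be adapted at all). So as written the proposal is a correct restatement of the problem plus a research plan, not a proof; to count as progress it would need at least the $m=4$ cases carried out in full, or a proof of the $M$-Wilf equivalence of the reduced pair by some other means, and note that even a full proof along the $r_0$ line would still leave Conjecture~\ref{conj:invo} untouched, since only an $r_1$-equivariant bijection in the style of $\theta$ restricts to involutions.
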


\begin{conj}
For the pair of patterns $\pi_1=12[\ii_k,\dd_{m-k}]$ and $\pi_2=12[\dd_{k+1},\ii_{m-k-1}]$ we have  for $n\geq 0$ the symmetry
$$M\cI_n(\pi_1)=q^{\binom{n}{2}}M\cI_n(\pi_2;q^{-1}).$$
\label{conj:invo}
\end{conj}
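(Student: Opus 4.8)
The plan is to derive Conjecture~\ref{conj:invo} from the permutation statement Conjecture~\ref{conj}, in the same way Theorem~\ref{thm:132symm213} is recovered from Theorem~\ref{thm:theta}. It suffices to construct, for $\pi_1=12[\ii_k,\dd_{m-k}]$ and $\pi_2=12[\dd_{k+1},\ii_{m-k-1}]$, a bijection $\Theta\colon\fS_n(\pi_1)\to\fS_n(\pi_2)$ that (a) commutes with $r_1$, i.e.\ with taking inverses, and (b) satisfies $\Asc(\sigma)=\Des(\Theta(\sigma))$. Since $\Des$ and $\Asc$ partition $[n-1]$, (b) forces $\maj(\Theta(\sigma))=\binom n2-\maj(\sigma)$, which by Lemma~\ref{lemma:majproperties} is exactly Conjecture~\ref{conj}; and because $\Theta$ commutes with $r_1$ it sends involutions to involutions and, being injective, restricts to a bijection $\cI_n(\pi_1)\to\cI_n(\pi_2)$ enjoying the same descent--ascent interchange, which gives Conjecture~\ref{conj:invo}. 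Two families of cases are already settled. At the extremes $k\in\{0,m-1\}$ the patterns are $\ii_m$ and $\dd_m$; since a permutation avoids $\ii_m$ (resp.\ $\dd_m$) precisely when its RSK shape has fewer than $m$ columns (resp.\ rows), transposition of tableaux furnishes such a $\Theta$, and the descent argument in the proof of Proposition~\ref{thm:123&321 symmetry} applies verbatim with no bound on the number of rows. The case $m=3$, $k=1$ is Theorem~\ref{thm:132symm213}, realised by the map $\theta$ of Section~\ref{permsymm}.

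For the remaining cases I would imitate the construction of $\theta$. One first needs a recursive block decomposition of $\fS_n(\pi_1)$ extending Proposition~\ref{prop:decomp}, obtained by locating the value $n$ (dually, by reading off the left-to-right maxima): avoiding $12[\ii_k,\dd_{m-k}]$ should force a $\pi_1$-avoider to be assembled, recursively, from a bounded number of shorter $\pi_1$-avoiding blocks glued along monotone runs of lengths controlled by $k$ and $m-k$. On the target side one would define gluing operations $*$ and $\star$ generalizing those behind~\eqref{eq:theta1}--\eqref{eq:theta2}: $x*y$ merges the critical decreasing run on the right of $x$ with the critical increasing run on the left of $y$, and $\sigma\star 1$ adjoins a single point; and one would re-prove for $\pi_2$-avoiders the package of Lemmas~\ref{lem:*facts} and~\ref{lem:starfacts} --- associativity, closure, the interchange $r_1(x*y)=r_1(y)*r_1(x)$, and descent-additivity $\Des(x*y)=\Des(x)\cup(\Des(y)+|x|)$. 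Properties (a), (b) of $\Theta$ then follow formally by induction from this package, and surjectivity --- hence bijectivity, granting the equicardinality $|\fS_n(\pi_1)|=|\fS_n(\pi_2)|$ --- would follow from ``splittability'' criteria generalizing Lemmas~\ref{lem:*split} and~\ref{lem:starsplit}.

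The main obstacle is the structural input about $\pi_2$-avoiders. For $m=3$ everything rests on Lemma~\ref{lem:LRmaxFacts}: a $213$-avoider has its left-to-right maxima on a prefix of positions and its right-to-left minima on a prefix of values, so together they form the single rigid shape $132[\ii_r,\ii_s,\ii_t]$, and the $u_i,v_i,p_i$ bookkeeping of Lemmas~\ref{lem:*split}--\ref{lem:starsplit} is built on that picture. For longer $\pi_2=12[\dd_{k+1},\ii_{m-k-1}]$ the ``skeleton'' of a $\pi_2$-avoider is instead a layered, staircase-like configuration with several tiers, so the analogue of Lemma~\ref{lem:LRmaxFacts} and of the splittability analysis becomes considerably more delicate; verifying associativity of $*$ and that $\Theta$ is independent of the decomposition choices --- already the subtle points at $m=3$ --- is where I expect the real work. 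A natural alternative, modelled on Section~\ref{maj213} instead, would be to build a descent-set bijection $\cI_n(\pi_1)\to\cI_n(\pi_2)$ sending $D$ to $[n-1]\setminus D$ through maps $\iota\mapsto\Des(\iota)$ as in Lemmas~\ref{lemma:varphi},~\ref{lemma:psi} and~\ref{lemma:f}; there the obstacle becomes describing the two families of admissible descent sets (the analogues of $G_n$ and $L_n$) and checking that complementation interchanges them, which again requires understanding the layered structure of these classes. Either way I would first work out the smallest genuinely new instance, $(m,k)=(4,2)$, i.e.\ the pair $1243$ and $3214$, to fix the correct definitions of the generalized $*$ and $\star$ before attempting the general statement.
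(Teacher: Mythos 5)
The statement you are trying to prove is, in the paper, only a conjecture: the author verifies it by computer for all $m,n\leq 9$ and offers no proof, so there is no argument of record to compare yours against. Your submission is likewise a strategy rather than a proof, and as it stands it has a genuine gap: the entire content of the argument would live in the generalized gluing operations $*$ and $\star$ for $12[\dd_{k+1},\ii_{m-k-1}]$-avoiders and in the analogues of Lemmas~\ref{lem:LRmaxFacts}, \ref{lem:*facts}, \ref{lem:starfacts}, \ref{lem:*split} and~\ref{lem:starsplit}, none of which you define or verify. The claim that avoiding $12[\ii_k,\dd_{m-k}]$ forces a recursive decomposition into boundedly many shorter avoiding blocks glued along monotone runs is asserted, not established, and for $m\geq 4$ the skeleton of a $\pi_2$-avoider is no longer the single rigid shape $132[\ii_r,\ii_s,\ii_t]$ on which the whole $u_i,v_i,p_i$ bookkeeping rests, so there is no reason yet to believe the splittability dichotomy survives. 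Deriving the involution statement from Conjecture~\ref{conj} also does not shorten the work, since that conjecture is equally open and your route to it is the very map you have not constructed.

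What is sound in your outline: the reduction scheme itself is correct (a bijection $\Theta:\fS_n(\pi_1)\to\fS_n(\pi_2)$ commuting with $r_1$ and satisfying $\Asc(\sigma)=\Des(\Theta(\sigma))$ would restrict to involutions and, via Lemma~\ref{lemma:majproperties}, give both symmetries at once), and it matches the paper's own expectation, stated after Theorem~\ref{thm:theta}, that such maps should exist. The extreme cases $k\in\{0,m-1\}$, where the pair is $\{\ii_m,\dd_m\}$, are indeed settled by RSK and tableau transposition exactly as in Proposition~\ref{thm:123&321 symmetry}, since the descent argument there uses no bound on the number of rows; and $(m,k)=(3,1)$ is Theorem~\ref{thm:132symm213}. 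But beyond these known instances your proposal identifies where the difficulty lies without resolving it, so it should be read as a plausible research plan, not a proof of the conjecture.
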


This has been confirmed for all $m,n\leq 9$ and $k\in\{0,\dots m\}$ and this symmetry for involutions does not seem to happen for any other pairs of patterns. However, this symmetry for permutations does appear to happen for many more pairs. These additional pairs seem to arise from the pairs $\pi$ and $r_0(\pi)$ and equalities coming from the $M$-Wilf equivalence classes. This is because $r_0:\fS_n(\pi)\rightarrow \fS_n(r_0(\pi))$ is a bijection with $\Asc(\sigma)=\Des(r_0(\sigma))$. However, the only pairs whose symmetry  further restricts to involutions does seem to just be the mentioned pairs.
This suggests there does exist maps $\fS_n(\pi_1)\rightarrow\fS_n(\pi_2)$ that commute with $r_1$ and has $\Asc(\sigma)=\Des(\theta(\sigma))$.

One implication of Conjecture~\ref{conj} is another conjecture by Dokos et.\ al. Yan, Ge and Zhang~\cite{YGZ15} (Theorem 1.3) proved  Conjecture~\ref{conj:dokos} in the case of $k = 1$. 

\begin{conj}[Dokos et.\ al.~\cite{DDJSS12} Conjecture 2.7] The following pairs are $M$-Wilf equivalent. 
\begin{enumerate}[(i)]
\item $132[\ii_m,1,\dd_k]$ and $231[\ii_m,1,\dd_k]$.
\item $213[\dd_m,1,\ii_k]$ and $312[\dd_m,1,\ii_k]$.
\end{enumerate}
\label{conj:dokos}
\end{conj}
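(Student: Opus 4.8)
The plan is to deduce Conjecture~\ref{conj:dokos} from Conjecture~\ref{conj}; the two are in fact equivalent (in the nondegenerate range), so a genuine proof of the former is the same thing as a proof of the latter, and I will indicate how one might attack that. The starting point is a handful of elementary identities between inflations. Writing $X=12[\ii_m,\dd_{k+1}]$ and $Y=12[\dd_{m+1},\ii_k]$, one checks straight from the definition of an inflation that
$$132[\ii_m,1,\dd_k]=X,\qquad 213[\dd_m,1,\ii_k]=Y,$$
and, with $r_0$ the complement (horizontal reflection) map from Section~\ref{permsymm},
$$231[\ii_m,1,\dd_k]=r_0(Y),\qquad 312[\dd_m,1,\ii_k]=r_0(X).$$
For instance $132[\ii_m,1,\dd_k]$ is the word $1\,2\cdots m\,(m+k+1)\,(m+k)\cdots(m+1)$, which is exactly $12[\ii_m,\dd_{k+1}]$, and the other three are verified the same way. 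Now $r_0$ carries the ascent set of a permutation onto the descent set of its image at the same positions, so $\maj(r_0\sigma)=\binom{n}{2}-\maj(\sigma)$ and hence $M_n(r_0(\pi);q)=q^{\binom{n}{2}}M_n(\pi;q^{-1})$ for every pattern $\pi$. Applying this to $Y$ and to $X$ gives $M_n(231[\ii_m,1,\dd_k])=q^{\binom{n}{2}}M_n(Y;q^{-1})$ and $M_n(312[\dd_m,1,\ii_k])=q^{\binom{n}{2}}M_n(X;q^{-1})$. But $(X,Y)$ is precisely a pair of the type in Conjecture~\ref{conj}: with the parameters $m$ and $k$ of that conjecture replaced by $m+k+1$ and $m$ respectively, its $\pi_1$ is $12[\ii_m,\dd_{k+1}]=X$ and its $\pi_2$ is $12[\dd_{m+1},\ii_k]=Y$, so Conjecture~\ref{conj} asserts $M_n(X)=q^{\binom{n}{2}}M_n(Y;q^{-1})$. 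Combining these equalities yields $M_n(132[\ii_m,1,\dd_k])=M_n(X)=q^{\binom{n}{2}}M_n(Y;q^{-1})=M_n(231[\ii_m,1,\dd_k])$, which is part~(i); substituting $q\mapsto q^{-1}$ in Conjecture~\ref{conj} gives $M_n(Y)=q^{\binom{n}{2}}M_n(X;q^{-1})$, whence $M_n(213[\dd_m,1,\ii_k])=M_n(Y)=q^{\binom{n}{2}}M_n(X;q^{-1})=M_n(312[\dd_m,1,\ii_k])$, which is part~(ii). Reading the argument backwards shows Conjecture~\ref{conj:dokos} and the $\maj$-part of Conjecture~\ref{conj} are equivalent, and the theorem of Yan--Ge--Zhang~\cite{YGZ15} is exactly the slice $k=1$.

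It therefore remains to prove Conjecture~\ref{conj}. I would imitate Section~\ref{permsymm}: construct, for each $a\ge 0$ and $b\ge 1$, an explicit bijection $\theta_{a,b}\colon\fS_n(12[\ii_a,\dd_b])\to\fS_n(12[\dd_{a+1},\ii_{b-1}])$ that commutes with $r_1$ and satisfies $\Asc(\sigma)=\Des(\theta_{a,b}(\sigma))$. Since then $\maj(\theta_{a,b}(\sigma))=\binom{n}{2}-\maj(\sigma)$, such a map immediately gives $M_n(12[\ii_a,\dd_b];q)=q^{\binom{n}{2}}M_n(12[\dd_{a+1},\ii_{b-1}];q^{-1})$, i.e.\ Conjecture~\ref{conj}, and, because $\theta_{a,b}$ commutes with $r_1$, it restricts to involutions and also proves Conjecture~\ref{conj:invo}. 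The map $\theta=\theta_{1,2}$ of Theorem~\ref{thm:theta} is exactly the case $a=1,\ b=2$ (note $132=12[\ii_1,\dd_2]$ and $213=12[\dd_2,\ii_1]$): it is built recursively from the two decompositions $\sigma=21[\alpha,\beta]$ and $\sigma=12[\gamma,1]$ of a $132$-avoider, together with the left-to-right-maximum/right-to-left-minimum surgeries $*$ and $\star$, and one would want analogues of all of these ingredients for the classes $\fS_n(12[\ii_a,\dd_b])$.

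The hard part is that for $a\ge1$, $b\ge2$, $a+b\ge4$ the class $\fS_n(12[\ii_a,\dd_b])$ is genuinely richer than $\fS_n(132)$ — larger and no longer Catalan — and lacks the clean two-way decomposition of Proposition~\ref{prop:decomp} that drives the recursion for $\theta$. One would need, first, a structural description of $\fS_n(12[\ii_a,\dd_b])$ generalizing Proposition~\ref{prop:decomp}; next, multi-run generalizations of $*$ and $\star$ that correctly bookkeep the up-to-$a$ monotone segments among the left-to-right maxima and the up-to-$b$ segments among the right-to-left minima; and finally an invertibility argument in the spirit of the ``$*$-splittable versus $\star$-splittable'' dichotomy of Lemmas~\ref{lem:*split} and~\ref{lem:starsplit}. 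Arranging a single recursion that simultaneously lands in the correct target class, is bijective, commutes with $r_1$, and reverses ascents and descents is the real difficulty; the uniform behaviour observed computationally up to length $9$ strongly suggests such a map exists, but its construction is open. (A self-contained alternative would be to produce, for each pair directly, a $\maj$-complementing bijection $\fS_n(132[\ii_m,1,\dd_k])\to\fS_n(231[\ii_m,1,\dd_k])$ sending a descent set $D$ to $[n-1]\setminus D$, but this merely repackages the same obstacle.)
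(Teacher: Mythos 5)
Your reduction is exactly the paper's: the identities $132[\ii_m,1,\dd_k]=12[\ii_m,\dd_{k+1}]$ and $213[\dd_m,1,\ii_k]=12[\dd_{m+1},\ii_k]$, together with the fact that $r_0$ is a descent-complementing bijection $\fS_n(\pi)\rightarrow\fS_n(r_0(\pi))$ sending $132[\ii_m,1,\dd_k]\mapsto 312[\dd_m,1,\ii_k]$ and $231[\ii_m,1,\dd_k]\mapsto 213[\dd_m,1,\ii_k]$, are precisely how the paper argues that Conjecture~\ref{conj} implies Conjecture~\ref{conj:dokos}, and your verification of these identities and of the $q\mapsto q^{-1}$ bookkeeping is correct. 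Keep in mind, however, that this is not a proof of the statement: it is stated as a conjecture in the paper too, and both you and the paper establish only the implication from Conjecture~\ref{conj}, which remains open (only the $k=1$ slice is known, by Yan--Ge--Zhang), so your concluding sketch of how to generalize $\theta$ is a reasonable program rather than part of a completed argument.
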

Conjecture~\ref{conj} implies Conjecture~\ref{conj:dokos} because of the following. Certainly the pair $\pi_1=132[\ii_m,1,\dd_k]$ and $\pi_2=312[\dd_m,1,\ii_k]$ has the symmetry $M_n(\pi_1)=q^{\binom{n}{2}}M_n(\pi_2;q^{-1})$ because $r_0(132[\ii_m,1,\dd_k])=312[\dd_m,1,\ii_k]$. For the same reason the pair $\pi_3=231[\ii_m,1,\dd_k]$ and $\pi_4=213[\dd_m,1,\ii_k]$ displays the same symmetry. If Conjecture~\ref{conj} was true we then would certainly have the equalities  of $M_n(\pi_1)=M_n(\pi_3)$ and $M_n(\pi_2)=M_n(\pi_4)$ that Conjecture~\ref{conj:dokos} implies.

\section*{Acknowledgements}\label{sec:acknow} The author would like to thank Bruce Sagan, Stephanie van Willigenburg and Vasu Tewari for the mentorship and conversations that motivated this research.

%: Bibliography

\bibliographystyle{plain}

\bibliography{dahlref}
\end{document}